\normalfont\fontsize{14}{15}\bfseries}{\thesection}{1em}{}
\normalfont\fontsize{14}{15}\bfseries}{\thesubsection}{1em}{}
\newcommand{\footremember}[2]{%
   \footnote{#2}
    \newcounter{#1}
    \setcounter{#1}{\value{footnote}}%
}
\providecommand{\keywords}[1]
{
  \textbf{\textit{Keywords---}} #1
}
\numberwithin{equation}{section}
\newtheorem{theorem}{Theorem}
\newtheorem{corl}{Corollary}
\newtheorem{defn}{Definition}
\definecolor{newcolor}{rgb}{.8,.349,.1}
\begin{document}



\title{\Large{Flux-corrected transport for scalar hyperbolic conservation laws and convection-diffusion equations by using linear programming }}

\author{Sergii Kivva\footremember{alley}{Institute of Mathematical Machines and System Problems, National Academy of Sciences, Ukraine}, 
} 
\date{\Large}




\maketitle

\begin{abstract}
Flux-corrected transport (FCT) is one of the flux limiter methods. Unlike the total variation diminishing methods, obtaining the known FCT formulas for computing flux limiters is not quite transparent, and their transformation is not obvious when the original differential operator changes. 
We propose a novel formal mathematical approach to design flux correction for weighted hybrid difference schemes by using linear programming. The hybrid scheme is a linear combination of a monotone scheme and a high order scheme. The determination of maximal antidiffusive fluxes is treated as an optimization problem with a linear objective function. To obtain constraints for the optimization problem, inequalities that are valid for the monotone difference scheme are applied to the hybrid difference scheme. The numerical solution of the nonlinear optimization problem is reduced to the iterative solution of linear programming problems. A nontrivial approximate solution of the corresponding linear programming problem can be treated as the required flux limiters. We present flux correction formulas for scalar hyperbolic conservation laws and convection-diffusion equations. The designed flux-corrected transport for scalar hyperbolic conservation laws yields entropy solutions. Numerical results are presented.
\end{abstract} 

\keywords{flux corrected transport, linear programming, scalar hyperbolic conservation law, convection-diffusion equation, entropy solution, difference scheme}




  
\section{Introduction} \label{Sec1}
At present, schemes with flux-corrected transport (FCT) and schemes with total variation diminishing (TVD) are widely used in the simulation of various physical processes among difference schemes on a fixed stencil. Within these schemes, the flux at the cell interface is computed as a weighted combination of fluxes of a monotone low-order scheme and a high-order scheme. The basic idea is to switch between high-order scheme and positivity preserving low-order scheme to provide oscillation free good resolution in steep gradient areas, while at the same time preserve at least second-order accuracy in smooth regions. The two-step FCT scheme was firstly developed by Boris and Book \cite{b1} for solving the transient continuity equation. The procedure of two-step flux correction consists of computing the time advanced low order solution in the first step and to correct solution by adding antidiffusive fluxes in the second step to produce accurate and monotone results. Antidiffusive fluxes, which are defined as the difference between the high and low-order fluxes, are limited in such a way that neither new extrema are created nor existing extrema are increased. Zalesak~\cite{b2,b3} extended the FCT algorithm for multidimensional explicit difference schemes. Several implicit FEM-FCT schemes for unstructured grids were proposed by Kuzmin and his coworkers~\cite{b4,b5,b6,b7}. Many variations and generalizations of FCT and their applications are given in~\cite{b36}. It should be noted that known FCT algorithms could also produce entropy violating solutions.

TVD schemes are also flux-limiter schemes that were originally introduced by Harten~\cite{b8,b9}. Using the TVD conditions Sweby~\cite{b10} derived conditions that flux limiters should satisfy. A number of TVD schemes have been described in the literature \cite{b11,b12,b13,b14}.
Unlike the TVD methods, it is not obvious what transformation in the well-known FCT formulas for computing flux limiters should be done when changing the original differential equations.
 
To construct flux correction formulas with maximal antidiffusive fluxes, following the FCT approach we consider a hybrid difference scheme consisting of a linear combination of low and high-order schemes. We calculate the low-order flux by using a weighted first-order upwind flux. The algorithm of flux correction design for the hybrid difference scheme can be formulated as follows:

1. We determine constraints that are valid for the low-order difference scheme.

2. We treat the finding flux limiters as an optimization problem with the above constraints that are applied to the hybrid scheme.

3. A nontrivial approximate solution of the corresponding linear programming problem is used as the flux limiter for the hybrid scheme.

It is easy to see that in such a way defined optimization problem is always solvable. Moreover, this approach reduces the traditional two-step FCT method for explicit schemes to a one-step method. The finding of flux limiters for an implicit difference scheme is reduced to a nonlinear optimization problem or an iterative sequence of linear programming problems. Also, this approach allows designing of flux correction with desired properties. For example, it is known that a weak solution of hyperbolic conservation laws may not be unique. However, the physical solution is unique and should satisfy the entropy condition \cite{b15,b16}. Using so-called proper numerical entropy fluxes introduced by Merriam~\cite{b17}, Sonar~\cite{b18}, and Zhao and Wu~\cite{b19}, we design the flux correction to find an entropy solution for scalar hyperbolic conservation laws. 

 The notion of entropy solution is the cornerstone in obtaining a physically relevant solution in the theory of hyperbolic systems of nonlinear conservation laws. We mention here the pioneering studies of the entropy solutions by Oleinik~\cite{b15}, Lax~\cite{b37} and Kruzhkov~\cite{b16}. A difference scheme is called entropy stable if computed solutions satisfy the discrete cell entropy inequality.
Entropy stable schemes have been developed by several authors \cite{b8,b20,b42,b22,b23,b24,b39}. Harten et al.~\cite{b42} showed that for scalar conservation laws all explicit monotone schemes are entropy stable. A class of E-schemes which includes monotone schemes and is entropy stable was introduced by Osher~\cite{b22,b23}. 
Tadmor~\cite{b26, b30} introduced general families of entropy-conservative schemes and investigated the entropy stability of difference approximations to nonlinear hyperbolic conservation laws by comparing the entropy production of a given scheme against properly chosen entropy-conservative schemes.
Chalons and LeFloch~\cite{b40} studied limiting solutions of fully discrete finite-difference schemes for a diffusive-dispersive conservation law. They investigated the dependence of these solutions on balance between dissipative and disperse forces and producing non-classical shock solutions violating the standard entropy criterion. In~\cite{b21} LeFloch et al. proposed a general approach to construct second- and third-order accurate, fully discrete implicit entropy-conservative schemes. 
  Zhao and Wu~\cite{b19} proved that a three-point monotone semi-discrete schemes in conservative form satisfy corresponding semi-discrete entropy inequality with the proper numerical entropy flux.
Construction of other high-order accurate, fully discrete entropy stable schemes can be found in \cite{b38,b41,b35}.

Note that the numerical entropy flux consistent with the entropy flux function is not unique.  Tadmor~\cite{b25,b26} proposed another form of the numerical entropy flux that differs from the proper numerical entropy flux.  
Numerical examples show that not all numerical solutions which satisfy the discrete cell entropy inequality with the Tadmor's numerical entropy flux are physically correct solutions.

In this paper, the flux correction formulas for scalar hyperbolic conservation laws and convective-diffusion equations are derived. Some of the obtained formulas are similar to the Zalesak formulas for FCT methods and the Kuzmin formulas of algebraic flux corrections. 

The paper is organized as follows. Section~\ref{Sec2} describes the design of flux correction for conservative difference schemes for linear scalar hyperbolic conservation laws. Section~\ref{Sec21} recalls the basic properties for weighted monotone schemes. In Section~\ref{Sec22}, we give solvability conditions for a hybrid weighted scheme. We describe the iterations for flux limiter calculation by using linear programming and also prove the convergence of these iterations in Section~\ref{Sec23}. The approximate solution of the corresponding linear programming problem is obtained in Section~\ref{Sec24}.

We approximate the entropy inequality by the discrete cell entropy inequality with the proper numerical entropy fluxes in Section~\ref{Sec3}. Section~\ref{Sec31} describes the iterative algorithm using linear programming to calculate the flux limiters to obtain numerical entropy solutions. Flux correction formula that is based on the approximate solution of the linear programming problem is derived in Section~\ref{Sec32}.
Section~\ref{Sec4} extends the results of Section~\ref{Sec2} to schemes for 2D convection-diffusion equations. The results of numerical experiments are given in Section~\ref{Sec5}. Concluding remarks are drawn in Section~\ref{Sec6}.

\section{Linear Scalar Conservation Laws}  \label{Sec2}
On an interval $\left[ {a,b} \right]$, we consider  the initial boundary value problem (IBVP) for the advection equation
\begin{equation}
\label{eq:21} 
\frac{\partial \rho }{\partial t} + \frac{\partial }{\partial x} \left( {u\rho} \right) = 0,	\qquad	t > 0								
\end{equation}
with initial condition
\begin{equation}
\label{eq:22} 
\rho \left( {x,0} \right) = {\rho ^0}\left( x \right)
\end{equation}	
						 				
For a unique solution of the problem \eqref{eq:21}-\eqref{eq:22}, the number of conditions specified on each boundary should be equal to the number of characteristics entering the domain. Let ${\rho _0}$ and ${\rho _{N + 1}}$ denote the values of $\rho (x,t)$ at the left and right ends of the segment $\left[ {a,b} \right]$.

On $\left[ {a,b} \right]$, we introduce a nonuniform grid ${\Omega _h}$
\begin{equation}
{\Omega _h} = \left\{ {{x_i}: \; {\rm{    }}{x_{i + 1}} = {x_i} + 
{\Delta _{i + 1 / 2}}x, \quad {\rm{    }}i = \overline {1,N - 1} ; \quad {\rm{   }}{x_1} > a, \; {\rm{   }}{x_{\rm{N}}} < b} \right\}				\label{eq:23} 
\end{equation}

We denote the inner product and norm of grid functions $y({x_i})$ defined on ${\Omega _h}$ as 
\begin{displaymath}
\left\langle {\boldsymbol{y},\boldsymbol{v}} \right\rangle  = \sum\limits_i {{y_i}{v_i}},	 \qquad
\left\| \boldsymbol{y} \right\| = {\left\langle {\boldsymbol{y},\boldsymbol{y}} \right\rangle ^{1 / 2}}
\end{displaymath}

We will omit the limits of summation where this will not cause confusion. The vector norm and the corresponding consistent matrix norm are denoted as
\begin{displaymath}
{\left\| \boldsymbol{y} \right\|_1} = \sum\limits_i {\left| {{y_i}} \right|} ,	\qquad		{\left\| A \right\|_1} = \mathop {\max }\limits_j \sum\limits_i {\left| {{a_{ij}}} \right|} 
\end{displaymath}

We approximate \eqref{eq:21} by the weighted conservative difference scheme
\begin{equation}
\label{eq:24} 
y_i^{n + 1} - y_i^n + \frac{{\Delta t}}{{\Delta {x_i}}}\left[ {h_{i + {1 / 2}}^{(\sigma )} - h_{i - {1 \mathord{\left/ {\vphantom {1 2}} \right.
 \kern-\nulldelimiterspace} 2}}^{(\sigma )}} \right] = 0 	
\end{equation} 
where $y_i^n = y({x_i},{t^n})$, $\Delta t$ and $\Delta {x_i}$ are respectively the temporal and spatial grid size, \\ $\Delta {x_i} = \frac{1}{2}\left( {{x_{i + 1}} - {x_{i - 1}}} \right)$, 
$h_{i + {1 / 2}}^{(\sigma )} = \sigma h_{i + {1 \mathord{\left/ {\vphantom {1 2}} \right.
 \kern-\nulldelimiterspace} 2}}^{n + 1} + (1 - \sigma )h_{i + {1 / 2}}^n$,  $\sigma\in[0,1]$. The numerical flux $h_{i + {1 / 2}}^n = h(y_{i - l + 1}^n,...,y_{i + r}^n)$ is the Lipschitz continuous and consistent with the differential flux $f(\rho )$, that is
	$h(\rho ,...,\rho ) = f(\rho )$. 
Moreover, we consider the numerical flux $h_{i + {1 / 2}}^n$ as a linear combination of lower-order and high-order fluxes
\begin{displaymath}
h_{i + {1/2}}^n = h_{i + {1 / 2}}^{L,n} + \alpha _{i + {1 / 2}}^n\left( {h_{i + {1 / 2}}^{H,n} - h_{i + {1 / 2}}^{L,n}} \right) = h_{i + {1 / 2}}^{L,n} + \alpha _{i + {1 / 2}}^nh_{i + {1 / 2}}^{d,n}		\qquad \text {for} \;\; {0 \le \alpha _{i + {1 / 2}}^n \le 1} 
\end{displaymath}
where $h_{i + {1 / 2}}^{L,n}$ and $h_{i + {1 / 2}}^{H,n}$ are the low-order and high-order fluxes, respectively; $\alpha _{i + {1 / 2}}^n$ is a flux limiter and $h_{i + {1 / 2}}^{d,n} = h_{i + {1 / 2}}^{H,n} - h_{i + {1 / 2}}^{L,n}$.
Then the numerical flux $h_{i + {1 / 2}}^\sigma $ can be represented as
\begin{equation}
\begin{split}
\label{eq:25} 
h_{i + {1 / 2}}^\sigma  = \sigma \left( {h_{i + {1 / 2}}^{L,n + 1} + \alpha _{i + {1 / 2}}^{n + 1}h_{i + {1 / 2}}^{d,n + 1}} \right) + (1 - \sigma )\left( {h_{i + {1 / 2}}^{L,n} + \alpha _{i + {1 / 2}}^nh_{i + {1 / 2}}^{d,n}} \right) = \\
 h_{i + {1 / 2}}^{L,(\sigma )} + \sigma \alpha _{i + {1 / 2}}^{n + 1}h_{i + {1 / 2}}^{d,n + 1} + (1 - \sigma )\alpha _{i + {1 / 2}}^nh_{i + {1 / 2}}^{d,n} = h_{i + {1 / 2}}^{L,(\sigma )} + (\alpha {h^d})_{i + {1 / 2}}^{(\sigma )}
\end{split} 						
\end{equation}

Substituting \eqref{eq:25} into \eqref{eq:24}, we rewrite the scheme in the form
\begin{equation}
\label{eq:26} 
\frac{{\Delta {x_i}}}{{\Delta t}}\left( {y_i^{n + 1} - y_i^n} \right) + \left[ {h_{i + {1 / 2}}^{L,(\sigma )} - h_{i - {1 / 2}}^{L,(\sigma )} + (\alpha {h^d})_{i + {1 / 2}}^{(\sigma )} - (\alpha {h^d})_{i - {1 / 2}}^{(\sigma )}} \right] = 0
\end{equation}

We can consider the third and fourth terms in square brackets as antidiffusive fluxes or flux correction at the cell boundaries. Further, we will consider the flux correction only at the grid cell interfaces.

For example, if we choose 
\begin{equation}
\label{eq:27} 
h_{i + {1 / 2}}^L = u_{i + {1 / 2}}^ + {y_i} + u_{i + {1 / 2}}^ - {y_{i + 1}}
\end{equation}
\begin{equation}
\label{eq:28} 
h_{i + {1 / 2}}^H = \frac{1}{2}{u_{i + {1 / 2}}} \; ({y_i} + {y_{i + 1}})
\end{equation}
then the numerical flux $h_{i + {1 / 2}}^{(\sigma )}$ can be presented as
\begin{equation}
\begin{split}
\label{eq:29} 
h_{i + {1 / 2}}^{(\sigma )} = \sigma \left[ {u_{i + {1 / 2}}^{ + ,n + 1} \; y_i^{n + 1} + u_{i + {1 / 2}}^{ - ,n + 1} \; y_{i + 1}^{n + 1} + \frac{1}{2} \alpha _{i + {1 / 2}}^{n + 1} \; \left| {u_{i + {1 / 2}}^{n + 1}} \right| \; (y_{i + 1}^{n + 1} - y_i^{n + 1})} \right] + \\				
 + (1 - \sigma )\left[ {u_{i + {1 / 2}}^{ + ,n} \; y_i^n + u_{i + {1 / 2}}^{ - ,n} \; y_{i + 1}^n + \frac{1}{2}\alpha _{i + {1 / 2}}^n \; \left| {u_{i + {1 / 2}}^n} \right| \; (y_{i + 1}^n - y_i^n)} \right] 
\end{split} 
\end{equation}  
where ${u^ \pm } = 0.5(u \pm \left| u \right|)$. 
 
We rewrite the difference scheme \eqref{eq:24} with the numerical flux \eqref{eq:29} in matrix form
\begin{equation}
\label{eq:210} 
\begin{split}
\left[ {E + \Delta t\;\sigma \left( {{A^{n + 1}} - {B^{n + 1}}(\boldsymbol{\alpha} )} \right)} \right]{\boldsymbol{y}^{n + 1}} & = \left[ {E - \Delta t\;(1 - \sigma )\left( {{A^n} - {B^n}(\boldsymbol{\alpha} )} \right)} \right]{\boldsymbol{y}^n} \\
& + \Delta t\;\left[ {\sigma {\boldsymbol{g}^{n + 1}} + (1 - \sigma ){\boldsymbol{g}^n}} \right] 
\end{split} 
\end{equation}
where $A = \left\{ {{a_{ij}}} \right\}_i^j$ and $B = \left\{ {{b_{ij}}} \right\}_i^j$ are tridiagonal square matrices of order {\it N}, $\boldsymbol{g}$  is the vector of boundary conditions and $\boldsymbol{\alpha}  = {\left( {\alpha _{3/2} ,...,{\alpha _{N - {1 / 2}}}} \right)^T} \in {R^{N - 1}}$ is the numerical vector of flux limiters determining the order of spatial approximation of the difference scheme \eqref{eq:210}. The values of the velocity $u\left( {x,t} \right)$ at the ends of the segment $\left[ {a,b} \right]$  at the points $x _0 = a $  and   $x _{N+1} = b $ are denoted, respectively, as $u _{1/2}$  and  $u _{N + 1/2}$. The boundary condition vector {\it g}  is specified by $\boldsymbol{g} = {\left( {u_{1 / 2}^ + \; {y_0},0,...,0, - u_{N + {1 / 2}}^ - \; {y_{N + 1}}} \right)^T}$. Elements of the matrices $A$ and $B$ are calculated as
\begin{equation}
\begin{split}
\label{eq:211} 
& a_{ii} = {{\left( {u_{i + {1 / 2}}^ +  - u_{i - {1 / 2}}^ - } \right)} \mathord{\left/ {\vphantom {{\left( {u_{i + {1 / 2}}^ +  - u_{i - {1 / 2}}^ - } \right)} {\Delta {x_i}}}} \right. \kern-\nulldelimiterspace} {\Delta {x_i}}}; \qquad	\qquad \qquad {a_{ii \mp 1}} =  \mp {{u_{i \mp {1 / 2}}^ \pm } \mathord{\left/ {\vphantom {{u_{i \mp {1 / 2}}^ \pm } {\Delta {x_i}}}} \right. \kern-\nulldelimiterspace} {\Delta {x_i}}}  \\
& b_{ii \mp 1} \left( \boldsymbol{\alpha}  \right) = \, - 0.5\,{\alpha _{i \mp {1 / 2}}}\;\,{{\left| {{u_{i \mp {1 / 2}}}} \right|} \mathord{\left/ {\vphantom {{\left| {{u_{i \mp {1 / 2}}}} \right|} {\Delta {x_i}}}} \right. \kern-\nulldelimiterspace} {\Delta {x_i}}};	\qquad	{b_{ii}}\left( \boldsymbol{\alpha}  \right) =  - {b_{ii - 1}}(\boldsymbol{\alpha} ) - {b_{ii + 1}}(\boldsymbol{\alpha} )			
\end{split}
\end{equation}

Further, we consider difference schemes that can be represented in the form \eqref{eq:210} for which the matrix $A = \left\{ {{a_{ij}}} \right\}_i^j$ satisfies the conditions
\begin{equation}
\label{eq:212} 
a_{ii} \ge 0; \qquad a_{ij} \le 0 \qquad {\text {for}} \quad \forall \;i,j = \overline {1,N}  
\end{equation}

In general case, elements of the matrix $B = \left\{ {{b_{ij}}} \right\}_i^j$ are linear functions of coordinates of the vector  $\boldsymbol{\alpha} $ and the following equalities are hold
\begin{equation}
\label{eq:213} 
\sum\limits_j {{b_{ij}}}  = 0											\end{equation}

The number of nonzero columns in an {\it i}-th row of the matrix $B$ depends on the scheme stencil ${S_i}$ at the {\it i}-th grid node. On the other hand, $B(\boldsymbol{\alpha} )y$ can be represented in the form
\begin{equation}
\label{eq:214} 
\sum\limits_j {{b_{ij}}(\boldsymbol{\alpha} ) \; {y_j}}  = \sum\limits_k {{c_{ik}} \;  (\boldsymbol{{\Delta _i}y}){\alpha _{k + {1 / 2}}}} 								
\end{equation}
where ${c_{ik}}(\boldsymbol{{\Delta _i}y})$ is a linear combination of coordinates of the vector $\boldsymbol{{\Delta _i}y} = {({y_1} - {y_i}, \ldots ,{y_N} - {y_i})^T}$. The 
$c _{ik}(\boldsymbol{{\Delta _i}y})$ represents the antidiffusive term that is equal to the difference between high and low order fluxes at the interface of {\it i}-th grid node. By conservative discretization \eqref{eq:24}, each k-th column of the $N \times(N-1)$ matrix $C = \left\{ {{c_{ik}}} \right\}_i^k$   contains exactly two elements corresponding to ${\alpha _{k + 1 / 2}}$, which differ only in sign.

Further, we suppose that there exists a diagonal matrix $D = {\left\{ {{d_{ii}}} \right\}_i}$ such that
\begin{equation}
\label{eq:215} 
{d_{ii}} > 0, \qquad  \sum\limits_i {d_{ii} \; a_{ij}}  \ge 0 \qquad {\text {and} } \qquad \sum\limits_i {d_{ii} \; b_{ij} = 0} \qquad  {\text  {for all}} \quad i,j = \overline {1,N} 
\end{equation}

For example, for the difference scheme \eqref{eq:210}-\eqref{eq:211}, we can choose the matrix with diagonal elements ${d_{ii}} = \Delta {x_i}$ as the matrix {\it D}.

\subsection{Monotone Difference Scheme} \label{Sec21}
We consider the system of equations \eqref{eq:210} for ${\boldsymbol{\alpha} ^n},{\boldsymbol{\alpha} ^{n + 1}} = 0$

\begin{equation}
\label{eq:216} 
\left[ {E + \Delta t\;\sigma {A^{n + 1}}} \right]{\boldsymbol{y}^{n + 1}} = \left[ {E - \Delta t\;(1 - \sigma ){A^n}} \right]{\boldsymbol{y}^n} + \Delta t\;{\boldsymbol{g}^{(\sigma ),n}}	\end{equation}
where ${\boldsymbol{g}^{(\sigma ),n}} = \sigma {\boldsymbol{g}^{n + 1}} + (1 - \sigma ){\boldsymbol{g}^n}$. 

\begin{theorem}
\label{th:1}
Let conditions \eqref{eq:212} and \eqref{eq:215} be satisfied and $\sigma  \in [0,1]$. 
 If $\Delta t$ satisfies
\begin{equation}
\label{eq:217} 
\Delta t \; (1 - \sigma ) \; \mathop {\max }\limits_i a_{ii}^n \le 1
\end{equation}
then the difference scheme \eqref{eq:216} is monotone.
\end{theorem}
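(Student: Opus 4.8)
The plan is to show that the one-step map $\boldsymbol{y}^n \mapsto \boldsymbol{y}^{n+1}$ defined implicitly by \eqref{eq:216} is order-preserving, i.e. that $\boldsymbol{v}^n \le \boldsymbol{w}^n$ (componentwise, including the boundary data $y_0,y_{N+1}$) implies $\boldsymbol{v}^{n+1}\le\boldsymbol{w}^{n+1}$; this is the Harten-type notion of monotonicity appropriate to an implicit scheme. Write $M = E + \Delta t\,\sigma A^{n+1}$ and $P = E - \Delta t\,(1-\sigma)A^n$, so that \eqref{eq:216} reads $M\boldsymbol{y}^{n+1} = P\boldsymbol{y}^n + \Delta t\,\boldsymbol{g}^{(\sigma),n}$. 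Subtracting the equations for $\boldsymbol{v}$ and $\boldsymbol{w}$ gives $M(\boldsymbol{w}^{n+1}-\boldsymbol{v}^{n+1}) = P(\boldsymbol{w}^n-\boldsymbol{v}^n) + \Delta t(\boldsymbol{g}^{(\sigma)}_w-\boldsymbol{g}^{(\sigma)}_v)$. Hence it suffices to prove that (i) $M$ is nonsingular with $M^{-1}\ge 0$ entrywise, (ii) $P\ge 0$ entrywise, and (iii) $\boldsymbol{g}$ depends on the boundary data with nonnegative coefficients; then the right-hand side is $\ge 0$ and $\boldsymbol{w}^{n+1}-\boldsymbol{v}^{n+1} = M^{-1}(\cdots)\ge 0$. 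Point (iii) is immediate from $\boldsymbol{g} = (u_{1/2}^+y_0,0,\dots,0,-u_{N+1/2}^-y_{N+1})^T$ with $u_{1/2}^+\ge 0$ and $-u_{N+1/2}^-\ge 0$.

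For (ii): the off-diagonal entries of $P$ equal $-\Delta t(1-\sigma)a_{ij}^n\ge 0$ because $a_{ij}^n\le 0$ for $i\ne j$ by \eqref{eq:212} and $\Delta t(1-\sigma)\ge 0$; the diagonal entries equal $1-\Delta t(1-\sigma)a_{ii}^n$, which is $\ge 0$ precisely by the CFL-type restriction \eqref{eq:217} together with $a_{ii}^n\ge 0$. Note that only $A^n$ enters this step, which explains why the time-step condition involves $a_{ii}^n$ and not $a_{ii}^{n+1}$: the implicit part carries no such restriction.

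For (i): by \eqref{eq:212}, $M$ has diagonal entries $1+\Delta t\,\sigma a_{ii}^{n+1}\ge 1>0$ and nonpositive off-diagonal entries, so $M$ (and hence $M^T$) is a $Z$-matrix. Taking $\mathbf d = (d_{11},\dots,d_{NN})^T>0$ from the diagonal matrix $D$ of \eqref{eq:215}, one computes $(M^T\mathbf d)_j = \sum_i d_{ii}M_{ij} = d_{jj} + \Delta t\,\sigma\sum_i d_{ii}a_{ij}^{n+1}\ge d_{jj}>0$ for every $j$. Thus $M^T$ is a $Z$-matrix mapping a strictly positive vector to a strictly positive vector, hence a nonsingular $M$-matrix, so $(M^T)^{-1}\ge 0$ and therefore $M^{-1} = \big((M^T)^{-1}\big)^T\ge 0$. (If one prefers to avoid invoking $M$-matrix theory, the same positivity of $M^{-1}$ can be extracted from a Neumann-series argument after the rescaling $M\mapsto D M D^{-1}$, but the $M$-matrix route is shorter.) Combining (i)--(iii) completes the proof.

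The main obstacle is step (i), and the subtlety worth highlighting is that $A^{n+1}$ need not be diagonally dominant by rows: for the concrete scheme \eqref{eq:211} the $i$-th row sum of $A$ equals the discrete divergence $(u_{i+1/2}-u_{i-1/2})/\Delta x_i$, which can have either sign, so a naive row-dominance argument fails. Hypothesis \eqref{eq:215} is introduced exactly to furnish the weighted \emph{column} dominance that restores the $M$-matrix property of $M$; for \eqref{eq:211} with $D = \mathrm{diag}(\Delta x_i)$ one checks $\sum_i d_{ii}a_{ij}=0$, so the hypothesis holds. One should also note that all estimates are uniform in the frozen velocities $u_{i\pm1/2}^{n},u_{i\pm1/2}^{n+1}$, so the conclusion is genuinely a statement about the scheme and not about a particular data set.
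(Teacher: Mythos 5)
Your proposal is correct and follows essentially the same route as the paper: you establish $\left[E+\Delta t\,\sigma A^{n+1}\right]^{-1}\ge 0$ through an $M$-matrix argument built on the sign conditions \eqref{eq:212} and the weighted column dominance \eqref{eq:215} (your criterion $M^{T}\mathbf{d}>0$ is equivalent to the paper's observation that $D\left[E+\Delta t\,\sigma A^{n+1}\right]$ is an $M$-matrix), and you obtain entrywise nonnegativity of $E-\Delta t\,(1-\sigma)A^{n}$ from \eqref{eq:217}. Your additional remarks on the boundary vector $\boldsymbol{g}$ and on why row dominance alone would not suffice are consistent with, and slightly more explicit than, the paper's argument.
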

\begin{proof}
From conditions \eqref{eq:212} and \eqref{eq:215} it follows that the matrix $G = D\left[ {E + \Delta t\,\sigma {A^{n + 1}}} \right]$ is a M-matrix. Then the inverse matrix ${G^{ - 1}}$ is a matrix with nonnegative elements. Therefore, the matrix ${\left[ {E + \Delta t\;\sigma {A^{n + 1}}} \right]^{ - 1}} = {G^{ - 1}}D$ is also a matrix with nonnegative elements. 

Nonnegativity of the elements of ${\left[ {E + \Delta t\;\sigma {A^{n + 1}}} \right]^{ - 1}}\left[ {E - \Delta t(1 - \sigma ){A^n}} \right]$ and hence monotonicity of the scheme \eqref{eq:216} follows from the nonnegativity of the elements $\left[ {E - \Delta t(1 - \sigma ){A^n}} \right]$ for $\Delta t$ that satisfies \eqref{eq:217}.
\end{proof}

\begin{theorem}
\label{th:2}
 Let the matrices A and D satisfy the conditions \eqref{eq:212} and \eqref{eq:215}. Then the matrix $\left[ {E + \Delta t \;\sigma A} \right]$ is invertible for any $0 \le \sigma  \le 1$, $\Delta t\; > 0$ and
\begin{equation}
\label{eq:218}
\frac{{{{\left\| D \right\|}_1}}}{{\mathop {\max }\limits_j \left( {{d_{jj}} + \Delta t\;\sigma \sum\limits_i {{d_{ii}}{a_{ij}}} } \right)}} \le {\left\| {{{\left[ {E + \Delta t\;\sigma A} \right]}^{ - 1}}} \right\|_1} \le \frac{{{{\left\| D \right\|}_1}}}{{\mathop {\min }\limits_j \left( {{d_{jj}} + \Delta t\;\sigma \sum\limits_i {{d_{ii}}{a_{ij}}} } \right)}} 	\end{equation}  
\end{theorem}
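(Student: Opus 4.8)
The plan is to build directly on the M-matrix argument already used in the proof of Theorem~\ref{th:1}. Write $\boldsymbol{d} = (d_{11},\dots,d_{NN})^T$, let $\boldsymbol{1}$ denote the vector of ones, and set $s_j := d_{jj} + \Delta t\,\sigma\sum_i d_{ii}a_{ij}$, so that $\boldsymbol{d}^T\left[E+\Delta t\,\sigma A\right] = \boldsymbol{s}^T$ with $\boldsymbol{s} = (s_1,\dots,s_N)^T$. First I would observe that conditions \eqref{eq:212} and \eqref{eq:215} force $G = D\left[E + \Delta t\,\sigma A\right]$ to have positive diagonal entries and nonpositive off-diagonal entries, while $G^T\boldsymbol{1} = \left[E+\Delta t\,\sigma A\right]^T\boldsymbol{d} = \boldsymbol{s}$ is strictly positive componentwise because $d_{jj}>0$ and $\sum_i d_{ii}a_{ij}\ge 0$. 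Hence $G^T$, and therefore $G$, is a nonsingular M-matrix for every $\sigma\in[0,1]$ and $\Delta t>0$; consequently $G^{-1}\ge 0$ entrywise, $\left[E+\Delta t\,\sigma A\right] = D^{-1}G$ is invertible, and $M := \left[E+\Delta t\,\sigma A\right]^{-1} = G^{-1}D$ has only nonnegative entries. Since all entries of $M$ are nonnegative, $\left\|M\right\|_1 = \max_j\sum_i M_{ij}$, the largest column sum; likewise $\left\|D\right\|_1 = \max_j d_{jj}$, as $D$ is diagonal with positive entries.

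The heart of the estimate is the identity obtained by right-multiplying $\boldsymbol{d}^T\left[E+\Delta t\,\sigma A\right] = \boldsymbol{s}^T$ by $M$, namely $\boldsymbol{d}^T = \boldsymbol{s}^T M$, i.e.\ $d_{jj} = \sum_i s_i M_{ij}$ for each $j$. Because $M_{ij}\ge 0$ and $s_i>0$, each such sum is sandwiched as $\left(\min_i s_i\right)\sum_i M_{ij}\le d_{jj}\le\left(\max_i s_i\right)\sum_i M_{ij}$, which rearranges to
\begin{displaymath}
\frac{d_{jj}}{\max_i s_i}\;\le\;\sum_i M_{ij}\;\le\;\frac{d_{jj}}{\min_i s_i}.
\end{displaymath}
Taking the maximum over $j$, and using $\max_j d_{jj} = \left\|D\right\|_1$ together with the fact that $\min_i s_i$ and $\max_i s_i$ are precisely the two denominators appearing in \eqref{eq:218}, yields the two-sided bound at once.

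I expect no serious obstacle. The only points needing care are: (i) justifying $M\ge 0$ cleanly — this is the same M-matrix fact invoked in Theorem~\ref{th:1}, now observed to hold for \emph{all} $\Delta t>0$ and $\sigma\in[0,1]$, since the restriction \eqref{eq:217} was needed there only to keep $\left[E-\Delta t(1-\sigma)A^n\right]$ nonnegative and that matrix does not occur in the present statement; and (ii) keeping in mind that $\left\|\cdot\right\|_1$ for matrices is the maximum absolute column sum, so it is exactly the nonnegativity of $M$ and the diagonal form of $D$ that turn $\left\|M\right\|_1$ and $\left\|D\right\|_1$ into the quantities linked directly by the identity $\boldsymbol{d}^T = \boldsymbol{s}^T M$.
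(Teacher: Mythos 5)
Your proof is correct and follows essentially the same route as the paper: the M-matrix/column-diagonal-dominance argument gives nonnegativity of $\left[E+\Delta t\,\sigma A\right]^{-1}$, and your identity $d_{jj}=\sum_i s_i M_{ij}$ is exactly the paper's relation \eqref{eq:220}, sandwiched by $\min_i s_i$ and $\max_i s_i$ in the same way. The only difference is presentational (you bound every column sum before maximizing over $j$, whereas the paper works with the extremal columns directly), so nothing further is needed.
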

 
\begin{proof}
 Under conditions \eqref{eq:212} and \eqref{eq:215}, the matrix $D\left[ {E +\Delta t\; \sigma A} \right]$ is a strongly column diagonally dominant matrix with positive diagonal and nonpositive off-diagonal elements. Consequently, the matrix $\left[ {E + \Delta t\;\sigma A} \right]$ is a nonsingular M-matrix and the elements of its inverse matrix are nonnegative.
  
To obtain the inequality \eqref{eq:218}, we use the approach given in \cite{b27}. For the inverse matrix 
 ${\left[ {E + \Delta t\; \sigma A} \right]^{ - 1}} = \left\{ {{{\bar a}_{ij}}} \right\}_i^j$ and matrix $\left[ {E +  \Delta t\;\sigma  A} \right] = \left\{ {{{\tilde a}_{ij}}} \right\}_i^j$, it is valid equality \\ $\left[ {E + \Delta t\; \sigma A} \right]{\left[ {E + \Delta t\;\sigma A} \right]^{ - 1}} = E$ or
\begin{equation}
\label{eq:219}
\sum\limits_k {{{\tilde a}_{ik}}{{\bar a}_{kj}} = {\delta _{ij}}}  			\end{equation}
where ${\delta _{ik}}$ is the Kronecker symbol. Multiplying \eqref{eq:218} by ${d_{ii}}$ and summing over {\it i}, we obtain
\begin{equation}
\label{eq:220}
\sum\limits_k {{{\bar a}_{kj}}} \sum\limits_i {{d_{ii}}} {\tilde a_{ik}} = {d_{jj}}
\end{equation}
										
Let ${\left\| {{{\left[ {E + \Delta t\;\sigma A} \right]}^{ - 1}}} \right\|_1} = \sum\limits_k {{{\bar a}_{kr}}} $ and ${\left\| D \right\|_1} = {d_{ss}}$. Since ${\bar a_{kj}} \ge 0$ and $\sum\limits_i {{d_{ii}}{a_{ik}}}  \ge 0$, it follows from \eqref{eq:220}
\[\mathop {\min }\limits_j \sum\limits_i {{d_{ii}}{{\tilde a}_{ij}}} \,{\left\| {{{\left[ {E + \Delta t\;\sigma A} \right]}^{ - 1}}} \right\|_1} \le \sum\limits_k {{{\bar a}_{kr}}} \sum\limits_i {{d_{ii}}} {\tilde a_{ik}} = {d_{rr}} \le {\left\| D \right\|_1}\] 
\[\mathop {\max }\limits_j \sum\limits_i {{d_{ii}}{{\tilde a}_{ij}}} \,{\left\| {{{\left[ {E + \Delta t\;\sigma A} \right]}^{ - 1}}} \right\|_1} \ge \sum\limits_k {{{\bar a}_{ks}}} \sum\limits_i {{d_{ii}}} {\tilde a_{ik}} = {d_{ss}} = {\left\| D \right\|_1}\]
which was to be proved.
\end{proof}

\begin{theorem}
\label{th:3}
 Let the matrices A and D satisfy the conditions \eqref{eq:212} and \eqref{eq:215}. 
 Then the difference scheme \eqref{eq:216} is unconditionally stable for $\sigma  \ge 0.5$ and the following inequality holds
\begin{equation}
\label{eq:221}
\left\| {\left( {{D^{{1 / 2}}} + \Delta t\;\sigma {D^{{1 / 2}}}{A^{n + 1}}} \right){\boldsymbol{y}^{n + 1}}} \right\| \le \left\| {\left( {{D^{{1 / 2}}} + \Delta t\;\sigma {D^{{1 / 2}}}{A^0}} \right){\boldsymbol{y}^0}} \right\| + \Delta t\;\sum\limits_{k = 0}^n {\left\| {{D^{{1 / 2}}}{\boldsymbol{g}^{(\sigma ),k}}} \right\|} 
\end{equation} 		
\end{theorem}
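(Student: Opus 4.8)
The plan is to rewrite \eqref{eq:216} as a one-step recursion for the vector $\boldsymbol{z}^{n}:=D^{1/2}\bigl(E+\Delta t\,\sigma A^{n}\bigr)\boldsymbol{y}^{n}$, whose norm is exactly the left-hand side of \eqref{eq:221}, and to show that this recursion is nonexpansive up to the source term. First I would invoke Theorem~\ref{th:2}: under \eqref{eq:212} and \eqref{eq:215} the matrix $E+\Delta t\,\sigma A^{n+1}$ is invertible for every $\Delta t>0$ and every $\sigma\in[0,1]$, so \eqref{eq:216} determines $\boldsymbol{y}^{n+1}$ uniquely and no restriction on $\Delta t$ is imposed --- this is where ``unconditional'' comes from. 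Multiplying \eqref{eq:216} on the left by $D^{1/2}$ gives
\[
\boldsymbol{z}^{n+1}=D^{1/2}\bigl(E-\Delta t(1-\sigma)A^{n}\bigr)\boldsymbol{y}^{n}+\Delta t\,D^{1/2}\boldsymbol{g}^{(\sigma),n},
\]
so by the triangle inequality the whole estimate reduces to the contraction bound
\[
\bigl\|D^{1/2}\bigl(E-\Delta t(1-\sigma)A\bigr)\boldsymbol{w}\bigr\|\le\bigl\|D^{1/2}\bigl(E+\Delta t\,\sigma A\bigr)\boldsymbol{w}\bigr\|
\]
for every grid function $\boldsymbol{w}$, applied at each time level with $A=A^{n}$.

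To prove this bound I would square both sides. Since $D$ is diagonal, $\boldsymbol{w}^{T}A^{T}D\boldsymbol{w}=\boldsymbol{w}^{T}DA\boldsymbol{w}$, and expanding the two squared norms yields
\[
\bigl\|D^{1/2}\bigl(E+\Delta t\,\sigma A\bigr)\boldsymbol{w}\bigr\|^{2}-\bigl\|D^{1/2}\bigl(E-\Delta t(1-\sigma)A\bigr)\boldsymbol{w}\bigr\|^{2}=2\,\Delta t\,\langle DA\boldsymbol{w},\boldsymbol{w}\rangle+\Delta t^{2}(2\sigma-1)\,\bigl\|D^{1/2}A\boldsymbol{w}\bigr\|^{2}.
\]
For $\sigma\ge\tfrac12$ the last term is nonnegative, so the contraction bound follows as soon as $\langle DA\boldsymbol{w},\boldsymbol{w}\rangle\ge0$ for all $\boldsymbol{w}$, i.e.\ as soon as the symmetric part of the low-order operator $DA$ is positive semidefinite.

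Establishing $\langle DA\boldsymbol{w},\boldsymbol{w}\rangle\ge0$ is the step I expect to be the main obstacle. Conditions \eqref{eq:212}, \eqref{eq:215} only say that $DA$ has nonpositive off-diagonal entries, nonnegative diagonal entries and nonnegative column sums $\sum_{i}d_{ii}a_{ij}\ge0$, and that sign pattern by itself does not force the symmetric part to be semidefinite, so one has to exploit the conservative structure of $A$ --- each interface $x_{i+1/2}$ contributes exactly two entries that differ only in sign, cf.\ \eqref{eq:214} --- together with the weights $D$ supplied by \eqref{eq:215}. Concretely, for the upwind operator \eqref{eq:210}--\eqref{eq:211} with $d_{ii}=\Delta x_{i}$ a discrete summation by parts puts $\langle DA\boldsymbol{v},\boldsymbol{v}\rangle$ into the form $\tfrac12\sum_{i}\lvert u_{i+1/2}\rvert\,(v_{i}-v_{i+1})^{2}$ plus boundary terms, which is manifestly nonnegative; I would treat the general case by the same interface-by-interface grouping, using \eqref{eq:215} to control what remains.

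Finally, the contraction bound together with the one-step identity gives $\|\boldsymbol{z}^{n+1}\|\le\|\boldsymbol{z}^{n}\|+\Delta t\,\|D^{1/2}\boldsymbol{g}^{(\sigma),n}\|$, and iterating this from level $n$ down to $0$ reproduces \eqref{eq:221} verbatim (with $\boldsymbol{z}^{0}=D^{1/2}(E+\Delta t\,\sigma A^{0})\boldsymbol{y}^{0}$ and $\boldsymbol{z}^{n+1}=D^{1/2}(E+\Delta t\,\sigma A^{n+1})\boldsymbol{y}^{n+1}$). Since the argument uses only $\sigma\ge\tfrac12$ and holds for every $\Delta t>0$, unconditional stability follows. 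The squared-norm identity and the telescoping are routine bookkeeping; the genuine content is the discrete dissipativity inequality $\langle DA\boldsymbol{w},\boldsymbol{w}\rangle\ge0$.
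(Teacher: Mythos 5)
Your reduction is exactly the one in the paper: multiply \eqref{eq:216} by $D^{1/2}$, apply the triangle inequality, prove the one-level contraction \eqref{eq:224} via the squared-norm identity $\|D^{1/2}(E+\Delta t\,\sigma A)\boldsymbol{w}\|^{2}-\|D^{1/2}(E-\Delta t(1-\sigma)A)\boldsymbol{w}\|^{2}=2\Delta t\,\langle \boldsymbol{w},DA\boldsymbol{w}\rangle+\Delta t^{2}(2\sigma-1)\|D^{1/2}A\boldsymbol{w}\|^{2}$, and telescope; the invertibility remark and the bookkeeping are fine. The genuine gap is the step you yourself flag and then leave open: you never prove $\langle \boldsymbol{w},DA\boldsymbol{w}\rangle\ge 0$. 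Your supporting computation for the upwind operator is only valid for constant velocity: grouping by interfaces gives $\tfrac12\sum_i|u_{i+1/2}|(w_i-w_{i+1})^2$ \emph{plus} the sign-indefinite term $\tfrac12\sum_i (u_{i+1/2}-u_{i-1/2})\,w_i^2$, which you omit. Moreover, your fallback plan of handling the general case ``by the same interface-by-interface grouping, using \eqref{eq:215} to control what remains'' cannot succeed from \eqref{eq:212} and \eqref{eq:215} alone: take $N=2$, $u_{1/2}=u_{5/2}=0$, $u_{3/2}=1$, $d_{ii}=\Delta x_i$ in \eqref{eq:211}; then $DA$ has rows $(1,0)$ and $(-1,0)$, satisfies the nonnegative-diagonal, nonpositive-off-diagonal and nonnegative-column-sum conditions, yet $\langle\boldsymbol{w},DA\boldsymbol{w}\rangle=w_1^2-w_1w_2=-2<0$ for $\boldsymbol{w}=(1,3)^{T}$. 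So the ``genuine content'', as you call it, is missing, and it cannot be supplied by the hypotheses in the way you propose to use them.

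For comparison, the paper closes this step by noting that $DA$ is weakly column diagonally dominant with nonnegative diagonal, invoking Gershgorin to conclude its eigenvalues are nonnegative, and then writing $\langle\boldsymbol{y},DA\boldsymbol{y}\rangle\ge\lambda_{\min}\langle\boldsymbol{y},\boldsymbol{y}\rangle\ge0$. Your skepticism about the sign pattern is well placed: for a nonsymmetric matrix the bound $\langle\boldsymbol{y},M\boldsymbol{y}\rangle\ge\lambda_{\min}(M)\|\boldsymbol{y}\|^{2}$ concerns the symmetric part, so this step needs more than eigenvalue nonnegativity (symmetry or normality of $DA$, or an additional structural assumption such as nonnegative row sums of $DA$ --- e.g.\ a non-compressive velocity field --- under which the symmetric part is diagonally dominant and the dissipativity follows). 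To complete your argument you must add and use such a hypothesis, or restrict to the constant-coefficient case where your summation-by-parts identity is exact; with that inequality in hand the rest of your proof does deliver \eqref{eq:221}.
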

\begin{proof}
 Under the conditions \eqref{eq:212} and \eqref{eq:215}, the matrix $\it DA$ is a weakly column diagonally dominant matrix. Moreover, as follows from the Gershgorin disks, the eigenvalues of the matrix $\it DA$ are nonnegative.
 
We multiply both sides of the equation \eqref{eq:216} by the matrix $D^{1/2}$
\begin{equation}
\label{eq:222}
\left( {D^{1/2} + \Delta t \; \sigma D^{1/2} A^{n+1}} \right)\boldsymbol{y}^{n + 1} = \left[ {D^{1/2} - \Delta t \; (1 - \sigma ){D^{1/2}}{A^n}} \right]{\boldsymbol{y}^n} + \Delta t \, {D^{1/2}}{\boldsymbol{g}^{(\sigma ),n}}			
\end{equation} 		

From the triangle inequality we have
\begin{equation}
\label{eq:223}
\left\| {\left( {D^{1/2} + \Delta t\;\sigma {D^{1/2}}A^{n+1}} \right){\boldsymbol{y}^{n+1}}} \right\| \le \left\| {\left[ {D^{1/2} - \Delta t\;(1 - \sigma ){D^{1/2}}{A^n}} \right]{\boldsymbol{y}^n}} \right\| + \Delta t\;\left\| {D^{1/2}{\boldsymbol{g}^{(\sigma ),n}}} \right\|		
\end{equation} 		

We show that for any $\boldsymbol{y}$ defined on ${\Omega _h}$ and $\sigma \ge 0.5$ the following inequality holds
\begin{equation}
\label{eq:224}
\left\| {\left[ {D^{1/2} - \Delta t \;(1 - \sigma ){D^{1/2}}A} \right]\boldsymbol{y} }  \right\| \le \left\| {\left( D^{1/2} + \Delta t \; \sigma D^{1/2} A \right) \boldsymbol{y}}  \right\|					
\end{equation} 		

In fact, for $\sigma \ge 0.5$ we have
\begin{eqnarray*}
{\left\| {\left( {D^{1/2} + \Delta t \; \sigma {D^{1/2}}A} \right)\boldsymbol{y}} \right\|^2} - {\left\| {\left[ {D^{1/2} - \Delta t \; (1 - \sigma ){D^{1/2}}A} \right] \boldsymbol{y}} \right\|^2} = \\
 = 2\Delta t\;\left\langle {\boldsymbol{y},DA\boldsymbol{y}} \right\rangle  + \Delta {t^2}(2\sigma  - 1){\left\| {{D^{{1 / 2}}}A\boldsymbol{y}} \right\|^2} \ge 0,							
\end{eqnarray*} 		
so far as 
\begin{equation*}
\left\langle {\boldsymbol{y},DA\boldsymbol{y}} \right\rangle  \ge {\lambda _{\min }}\left\langle {\boldsymbol{y},\boldsymbol{y}} \right\rangle  \ge 0 									
\end{equation*} 		
where ${\lambda _{\min }}$ is the minimal eigenvalue of the matrix {\it DA}. 

Substituting \eqref{eq:224} into \eqref{eq:223} yields
\begin{equation}
\label{eq:225}
\left\| {\left( {{D^{{1 / 2}}} + \Delta t\;\sigma {D^{{1 / 2}}}{A^{n + 1}}} \right){\boldsymbol{y}^{n + 1}}} \right\| \le \left\| {\left( {{D^{{1 / 2}}} + \Delta t\;\sigma {D^{{1 / 2}}}{A^n}} \right){\boldsymbol{y}^n}} \right\| + \Delta t\;\left\| {{D^{{1 / 2}}}{\boldsymbol{g}^{(\sigma ),n}}} \right\|			
\end{equation}
 		
Summing \eqref{eq:225} over time {\it n}, we obtain the inequality \eqref{eq:221}.
\end{proof}

\begin{theorem}
\label{th:4}
 Let the matrix A satisfies the condition \eqref{eq:212}. Then for
\begin{equation}
\label{eq:226}
\Delta t\; \le \frac{-1}{{(1 - \sigma )\sum\limits_{j \ne i} a_{ij}^n }},	\end{equation}
the solution of the system of equations \eqref{eq:216} satisfies the following inequalities   
\begin{equation}
\label{eq:227}
\begin{split}
\mathop {\min }\limits_{j \in {S_i}} y_j^n & - \Delta t\,(1 - \sigma )\,y_i^n\sum\limits_j {a_{ij}^n}  + \Delta t\,{g_i^{(\sigma ),n}} \le y_i^{n + 1} + \Delta t\,\sigma \sum\limits_j {a_{ij}^{n + 1}} y_j^{n + 1} \le  \\ 
& \le \mathop {\max }\limits_{j \in {S_i}} y_j^n - \Delta t\,(1 - \sigma )\,y_i^n\sum\limits_j {a_{ij}^n}  + \Delta t\,{g_i^{(\sigma ),n}} \qquad	\forall \quad i = \overline {1,N} 					
\end{split} 
\end{equation} 		
where ${S_i}$ is the stencil of the difference scheme \eqref{eq:216} for i-th grid node. 
\end{theorem}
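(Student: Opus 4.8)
\emph{Proof idea.} The plan is to observe that the double inequality \eqref{eq:227} is, term by term, a restatement of a convexity property of the explicit part of the scheme. First I would write out the $i$-th scalar equation of the system \eqref{eq:216}: it reads $y_i^{n+1} + \Delta t\,\sigma\sum_j a_{ij}^{n+1}y_j^{n+1} = y_i^n - \Delta t(1-\sigma)\sum_j a_{ij}^n y_j^n + \Delta t\,g_i^{(\sigma),n}$. Thus the quantity sandwiched in \eqref{eq:227} equals the right-hand side above, and after cancelling the common summands $-\Delta t(1-\sigma)y_i^n\sum_j a_{ij}^n$ and $\Delta t\,g_i^{(\sigma),n}$, the claim collapses to the purely algebraic bound $\min_{j\in S_i}y_j^n \le y_i^n + \Delta t(1-\sigma)\sum_j a_{ij}^n(y_i^n-y_j^n) \le \max_{j\in S_i}y_j^n$, where the sum ranges effectively over $j\in S_i$ because $a_{ij}^n=0$ outside the stencil, and the $j=i$ contribution vanishes identically.

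Next I would rewrite the middle expression as a convex combination of the stencil values. Putting $\mu := \Delta t(1-\sigma)\ge 0$ and using $a_{ij}^n = -|a_{ij}^n|$ for $j\ne i$ from \eqref{eq:212}, one gets $y_i^n + \mu\sum_{j\ne i}a_{ij}^n(y_i^n-y_j^n) = \bigl(1-\mu\sum_{j\ne i}|a_{ij}^n|\bigr)y_i^n + \sum_{j\ne i}\mu|a_{ij}^n|\,y_j^n$. The weights $\mu|a_{ij}^n|$ ($j\ne i$) are nonnegative, they sum together with $1-\mu\sum_{j\ne i}|a_{ij}^n|$ to exactly $1$, and the restriction \eqref{eq:226} --- which, since $\sum_{j\ne i}a_{ij}^n<0$, is precisely $\mu\sum_{j\ne i}|a_{ij}^n| = -\mu\sum_{j\ne i}a_{ij}^n\le 1$ --- guarantees that the remaining weight $1-\mu\sum_{j\ne i}|a_{ij}^n|$ is also nonnegative. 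Hence the middle expression is a convex combination of $\{y_j^n : j\in S_i\}$ (and $i\in S_i$), so it lies between $\min_{j\in S_i}y_j^n$ and $\max_{j\in S_i}y_j^n$, which is exactly \eqref{eq:227}. If one prefers, the lower bound follows from the upper bound applied to $-y^n$.

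There is no genuinely hard step here; the care points are: (i) verifying that $\sum_j a_{ij}^n y_j^n$ involves only stencil neighbours, so that $S_i$ is the correct index range for the min and max --- this uses the sparsity structure underlying \eqref{eq:211} together with the definition of $S_i$; (ii) getting the sign right in \eqref{eq:226}, namely that $\sum_{j\ne i}a_{ij}^n\le 0$ makes $-1/\bigl((1-\sigma)\sum_{j\ne i}a_{ij}^n\bigr)$ a positive (or vacuous) upper bound on $\Delta t$; and (iii) the degenerate cases $\sigma=1$ or $\sum_{j\ne i}a_{ij}^n=0$, where \eqref{eq:226} imposes nothing, $\mu\sum_{j\ne i}|a_{ij}^n|=0$, and the middle expression reduces to $y_i^n$, which is trivially in range.
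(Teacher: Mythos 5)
Your proposal is correct and follows essentially the same route as the paper: both isolate the $i$-th row of \eqref{eq:216}, move the term $-\Delta t\,(1-\sigma)\,y_i^n\sum_j a_{ij}^n$ and $\Delta t\,g_i^{(\sigma),n}$ aside, and recognize the remainder as a convex combination of the stencil values $y_j^n$, with \eqref{eq:226} ensuring the weight on $y_i^n$ is nonnegative. Your explicit handling of the sign convention in \eqref{eq:226} and of the degenerate cases $\sigma=1$ or $\sum_{j\ne i}a_{ij}^n=0$ is a small but welcome addition to what the paper states tersely.
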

\begin{proof}
 We rewrite the {\it i}-th row of the system of equations \eqref{eq:216} in the form
\begin{equation}
\label{eq:228}
\begin{split}
y_i^{n + 1} + \Delta t\,\sigma \sum\limits_j {a_{ij}^{n + 1}y_j^{n + 1}} &  = \left[ {1 + \Delta t\,(1 - \sigma )\sum\limits_{j \ne i} {a_{ij}^n} } \right]y_i^n - \Delta t\,(1 - \sigma )\sum\limits_{j \ne i} {a_{ij}^ny_j^n} \\
& - \Delta t\,(1 - \sigma )\,y_i^n\sum\limits_j {a_{ij}^n}  + \Delta t\,{g_i^{(\sigma ),n}} 
\end{split} 
\end{equation} 		

Under the condition \eqref{eq:226}, the first two terms on the right-hand side of \eqref{eq:228} are a convex combination of the coordinates $y_j^n$, therefore

\begin{equation*}
y_i^{n + 1} + \Delta t\,\sigma \sum\limits_j {a_{ij}^{n + 1}y_j^{n + 1}}  \le \mathop {\max }\limits_{j \in {S_i}} y_j^n - \Delta t\,(1 - \sigma )\,y_i^n\sum\limits_j {a_{ij}^n}  + \Delta t\,{g_i^{(\sigma ),n}}
\end{equation*} 
		
Similarly, the lower bound \eqref{eq:227} is obtained, which proves the theorem.
\end{proof}

\subsection{Hybrid Scheme} \label{Sec22}
Let ${U_{ad}}$ denote the set of vectors ${\boldsymbol{\alpha} ^n},{\boldsymbol{\alpha} ^{n + 1}}$, which is defined as the Cartesian product of (N-1)-vectors
\begin{equation}
\label{eq:229}
  U^{ad}= \left\lbrace {\left( {\boldsymbol{\alpha}^n,\boldsymbol{\alpha}^{n+1} } \right) \in R^{N-1} \times R^{N-1}: \quad 0\leq \alpha_{i+1/2}^k\leq 1, \;\; i=\overline{1,N-1}; \;\; k=n,n+1  }\right\rbrace 
\end{equation}

Consider the weighted difference scheme \eqref{eq:210}.
\begin{theorem}
\label{th:5}
 Let $\boldsymbol{\alpha} \in R^{N-1}$  and $\boldsymbol{\alpha}, \sigma \geq 0$. Let the matrices A, B and D satisfy the conditions \eqref{eq:212}, \eqref{eq:213} and \eqref{eq:215}. Then, in order that the matrix  $\left[ {E + \Delta t\,\sigma \left( {A - B(\boldsymbol{\alpha} )} \right)} \right]$ to be invertible, it is sufficient to choose $\Delta t\,$ satisfying the inequality
\begin{equation}
\label{eq:230}
 \Delta t \, {\kern 1pt} \sigma \,{\left\| {DB(\boldsymbol{\alpha} )} \right\|_1} \,\, < \mathop {\min }\limits_j \left( {d_{jj} + \Delta t \,\sigma \sum\limits_i {d_{ii}{a_{ij}}} } \right)							
\end{equation}
\end{theorem}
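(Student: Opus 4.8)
The plan is to peel off the nonsingular diagonal matrix $D$ and treat the antidiffusive part as a small perturbation of the monotone matrix via a Neumann‑series argument. Set $M_0:=D\bigl[E+\Delta t\,\sigma A\bigr]=D+\Delta t\,\sigma DA$, so that $D\bigl[E+\Delta t\,\sigma(A-B(\boldsymbol{\alpha}))\bigr]=M_0-\Delta t\,\sigma DB(\boldsymbol{\alpha})$. Exactly as in the proof of Theorem~\ref{th:2}, conditions \eqref{eq:212} and \eqref{eq:215} (together with $\Delta t,\sigma\ge 0$) make $M_0$ a Z‑matrix that is strictly column diagonally dominant with positive diagonal, hence a nonsingular M‑matrix with $M_0^{-1}\ge 0$; consequently $M_0$ is invertible and $M_0^{-1}=\bigl[E+\Delta t\,\sigma A\bigr]^{-1}D^{-1}\ge 0$. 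Factoring $M_0-\Delta t\,\sigma DB(\boldsymbol{\alpha})=M_0\bigl(E-\Delta t\,\sigma M_0^{-1}DB(\boldsymbol{\alpha})\bigr)$ and recalling that $D$ is invertible, the whole statement reduces to showing $\Delta t\,\sigma\,\|M_0^{-1}DB(\boldsymbol{\alpha})\|_1<1$, which renders $E-\Delta t\,\sigma M_0^{-1}DB(\boldsymbol{\alpha})$ invertible.

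The main obstacle is that the naive submultiplicative bound $\|M_0^{-1}DB(\boldsymbol{\alpha})\|_1\le\|M_0^{-1}\|_1\,\|DB(\boldsymbol{\alpha})\|_1$ is too wasteful to reach the right‑hand side of \eqref{eq:230}. The trick I would use is to combine $M_0^{-1}\ge 0$ with the column‑sum structure of $M_0$. Summing the entries of the $j$‑th column of $M_0=D+\Delta t\,\sigma DA$ gives $\mathbf 1^{T}M_0=\boldsymbol s^{T}$, where $s_j:=d_{jj}+\Delta t\,\sigma\sum_i d_{ii}a_{ij}$, and $s_j\ge d_{jj}>0$ by \eqref{eq:212}, \eqref{eq:215}. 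Transposing $\mathbf 1^{T}=\boldsymbol s^{T}M_0^{-1}$ yields $\sum_i s_i\,(M_0^{-1})_{ij}=1$ for every $j$; since $(M_0^{-1})_{ij}\ge 0$ and $s_i\ge\min_l s_l$, this forces the column sums of $M_0^{-1}$ to satisfy $\sum_i (M_0^{-1})_{ij}\le 1/\min_l s_l$.

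With this bound the estimate is routine. Because $D$ is diagonal, $(M_0^{-1}DB(\boldsymbol{\alpha}))_{ik}=\sum_j (M_0^{-1})_{ij}\,d_{jj}\,b_{jk}$, so using $(M_0^{-1})_{ij}\ge 0$ and $d_{jj}>0$,
\[
\sum_i\bigl|(M_0^{-1}DB(\boldsymbol{\alpha}))_{ik}\bigr|\le\sum_j d_{jj}\,|b_{jk}|\sum_i (M_0^{-1})_{ij}\le\frac{1}{\min_l s_l}\sum_j |d_{jj}b_{jk}|\le\frac{\|DB(\boldsymbol{\alpha})\|_1}{\min_l s_l}.
\]
Taking the maximum over $k$ gives $\|M_0^{-1}DB(\boldsymbol{\alpha})\|_1\le\|DB(\boldsymbol{\alpha})\|_1/\min_l s_l$, whence
$\Delta t\,\sigma\,\|M_0^{-1}DB(\boldsymbol{\alpha})\|_1\le \Delta t\,\sigma\,\|DB(\boldsymbol{\alpha})\|_1\big/\min_j\bigl(d_{jj}+\Delta t\,\sigma\sum_i d_{ii}a_{ij}\bigr)<1$ by hypothesis \eqref{eq:230}. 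Hence $E-\Delta t\,\sigma M_0^{-1}DB(\boldsymbol{\alpha})$ is invertible, so is $M_0-\Delta t\,\sigma DB(\boldsymbol{\alpha})$, and therefore so is $E+\Delta t\,\sigma(A-B(\boldsymbol{\alpha}))$. (The case $\sigma=0$ is trivial; note that \eqref{eq:213} is used only implicitly, through the fact that $B(\boldsymbol{\alpha})$ is the conservative antidiffusive matrix, and plays no further role in this argument.)
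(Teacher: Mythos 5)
Your proposal is correct and is essentially the paper's own argument: since $M_0^{-1}DB(\boldsymbol{\alpha})=\left[E+\Delta t\,\sigma A\right]^{-1}B(\boldsymbol{\alpha})$, your factorization coincides with the paper's splitting $\left[E+\Delta t\,\sigma(A-B)\right]=\left[E+\Delta t\,\sigma A\right]\left[E-\Delta t\,\sigma\left[E+\Delta t\,\sigma A\right]^{-1}B\right]$, and your column-sum identity $\sum_i s_i\,(M_0^{-1})_{ij}=1$ is exactly the identity \eqref{eq:231a} from which the paper gets the bound \eqref{eq:231} before invoking the same Neumann-series/Banach-lemma conclusion. The only differences are cosmetic (you re-derive the inverse-norm bound directly rather than citing \eqref{eq:220}, and estimate $\|M_0^{-1}DB(\boldsymbol{\alpha})\|_1$ column by column instead of by bare submultiplicativity).
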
 
\begin{proof}
 Under conditions \eqref{eq:212} and \eqref{eq:215} the matrix $\left[ {D + \Delta t\,\sigma DA} \right]$ is a strongly column diagonally dominant matrix, therefore, we rewrite \eqref{eq:220} in the form 
\begin{equation}
\label{eq:231a}
\sum\limits_k {{{\bar a}_{kj}}} d_{jj}^{ - 1}\sum\limits_i {{d_{ii}}} {\tilde a_{ik}} = 1
\end{equation}
 
It follows from \eqref{eq:231a} that 
\begin{equation}
\label{eq:231}
{\left\| {{{\left[ {D + \Delta t\,\sigma DA} \right]}^{ - 1}}} \right\|_1} \le {\left( {\mathop {\min }\limits_j \left[ {{d_{jj}} + \Delta t\,\sigma \sum\limits_i {{d_{ii}}{a_{ij}}} } \right]} \right)^{ - 1}} 				\end{equation}

Then from \eqref{eq:230} and \eqref{eq:231} we have that
\[\Delta t\,\sigma {\left\| {{{\left[ {E + \Delta t\,\sigma A} \right]}^{ - 1}}B\left( \boldsymbol{\alpha}  \right)} \right\|_1} = \Delta t\,\sigma {\left\| {{{\left[ {E + \Delta t\,\sigma A} \right]}^{ - 1}}{D^{ - 1}}DB\left( \boldsymbol{\alpha}  \right)} \right\|_1} < 1\]	
				 
Therefore, $\left[ {E - \Delta t\,\sigma {{\left[ {E + \Delta t\,\sigma A} \right]}^{ - 1}}B} \right]$ is a nonsingular matrix \cite{b28}.

Thus, $\left[ {E + \Delta t\,\sigma \left( {A - B} \right)} \right]$ can be represented as the product of two nonsingular matrices
\[ \left[ {E + \Delta t\,\sigma \left( {A - B)} \right)} \right] = \left[ {E + \Delta t\,\sigma A} \right]\left[ {E - \Delta t\,\sigma {{\left[ {E + \Delta t\,\sigma A} \right]}^{ - 1}}B} \right] \]
which was to be proved.
\end{proof}

\begin{corl}
 Let the matrices A, B and D satisfy conditions \eqref{eq:212}-\eqref{eq:213} and \eqref{eq:215}. Let $\boldsymbol{\alpha}  \in {U_{ad}}$, $0 \le \sigma  \le 1$ and $\Delta t \,$ satisfies the inequality 
\begin{equation}
\label{eq:232}
\Delta t\,\sigma \,\,\,{\left\| {D{B^{n + 1}}(\boldsymbol{1})} \right\|_1}\, \le \gamma \mathop {\min }\limits_j \left( {{d_{jj}} + \Delta t\,\sigma \sum\limits_i {{d_{ii}}a_{ij}^{n + 1}} } \right) \qquad		0 < \gamma  < 1			\end{equation}

Then the set of solutions of the system of linear equations \eqref{eq:210} is bounded on ${U_{ad}}$ and
\begin{equation}
\label{eq:233}
{\left\| {{\boldsymbol{y}^{n + 1}}} \right\|_1} \le M\left( {{M_1}{{\left\| {{\boldsymbol{y}^n}} \right\|}_1} + \Delta t\,{{\left\| {{\boldsymbol{g}^{(\sigma ),n}}} \right\|}_1}} \right)							
\end{equation}
\[M = \frac{{{{\left\| D \right\|}_1}}}{{1 - \gamma }}\;\mathop {\max }\limits_j {\left( {{d_{jj}} + \Delta t\,\sigma \sum\limits_i {{d_{ii}}a_{ij}^{n + 1}} } \right)^{ - 1}}\] 
\[{M_1} = {\left\| {E - \Delta t\,(1 - \sigma ){A^n}} \right\|_1} + \Delta t\,(1 - \sigma ){\left\| {{B^n}(\boldsymbol{1})} \right\|_1}\] 
where $B(\boldsymbol{1})$ is the matrix $B(\boldsymbol{\alpha} )$ for $\alpha_i  = 1$.  
\end{corl}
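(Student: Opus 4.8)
The plan is to derive the corollary from Theorem~\ref{th:2} and Theorem~\ref{th:5} by a Neumann-series argument, the key preliminary step being a reduction of hypothesis~\eqref{eq:232}, which is stated in terms of $B^{n+1}(\boldsymbol{1})$, to hypothesis~\eqref{eq:230} of Theorem~\ref{th:5} for the prescribed $\boldsymbol{\alpha}\in U_{ad}$.

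First I would establish the monotonicity bounds
\[
\|DB^{n+1}(\boldsymbol{\alpha})\|_1 \le \|DB^{n+1}(\boldsymbol{1})\|_1, \qquad \|B^n(\boldsymbol{\alpha})\|_1 \le \|B^n(\boldsymbol{1})\|_1 \qquad \text{for all } \boldsymbol{\alpha}\in U_{ad}.
\]
By~\eqref{eq:214} the matrix $B(\boldsymbol{\alpha})$ is linear in $\boldsymbol{\alpha}$ and vanishes at $\boldsymbol{\alpha}=0$; on $U_{ad}$ its off-diagonal entries keep a fixed sign and its diagonal entries are the negatives of the corresponding off-diagonal sums, cf.~\eqref{eq:211} and~\eqref{eq:213}, while by~\eqref{eq:215} the $D$-weighted column sums vanish, so that $\sum_i d_{ii}|b_{ij}(\boldsymbol{\alpha})| = 2\sum_i d_{ii}\,b_{ij}^{+}(\boldsymbol{\alpha})$. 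Hence each $D$-weighted column sum of $|B(\boldsymbol{\alpha})|$ is a nondecreasing function of every coordinate of $\boldsymbol{\alpha}$ on $[0,1]^{N-1}$ and is therefore maximal at $\boldsymbol{\alpha}=\boldsymbol{1}$; the bound for $\|B^n(\boldsymbol{\alpha})\|_1$ is obtained the same way with $D=E$. Consequently~\eqref{eq:232} implies $\Delta t\,\sigma\|DB^{n+1}(\boldsymbol{\alpha})\|_1 < \min_j(d_{jj}+\Delta t\,\sigma\sum_i d_{ii}a_{ij}^{n+1})$, which is exactly~\eqref{eq:230}, so Theorem~\ref{th:5} guarantees that $[E+\Delta t\,\sigma(A^{n+1}-B^{n+1}(\boldsymbol{\alpha}))]$ is invertible and~\eqref{eq:210} has a unique solution $\boldsymbol{y}^{n+1}$ for each $\boldsymbol{\alpha}\in U_{ad}$.

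Next I would bound the inverse. Using the factorization from the proof of Theorem~\ref{th:5},
\[
[E+\Delta t\,\sigma(A^{n+1}-B^{n+1}(\boldsymbol{\alpha}))]^{-1} = [E - \Delta t\,\sigma\,(E+\Delta t\,\sigma A^{n+1})^{-1}B^{n+1}(\boldsymbol{\alpha})]^{-1}\,(E+\Delta t\,\sigma A^{n+1})^{-1},
\]
and rewriting $(E+\Delta t\,\sigma A^{n+1})^{-1}B^{n+1}(\boldsymbol{\alpha}) = (D+\Delta t\,\sigma DA^{n+1})^{-1}DB^{n+1}(\boldsymbol{\alpha})$, estimate~\eqref{eq:231} together with the monotonicity bound and~\eqref{eq:232} give $\Delta t\,\sigma\|(E+\Delta t\,\sigma A^{n+1})^{-1}B^{n+1}(\boldsymbol{\alpha})\|_1 \le \gamma < 1$, so the Neumann series yields $\|[E - \Delta t\,\sigma\,(E+\Delta t\,\sigma A^{n+1})^{-1}B^{n+1}(\boldsymbol{\alpha})]^{-1}\|_1 \le (1-\gamma)^{-1}$. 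Multiplying by the upper bound $\|(E+\Delta t\,\sigma A^{n+1})^{-1}\|_1 \le \|D\|_1\,(\min_j(d_{jj}+\Delta t\,\sigma\sum_i d_{ii}a_{ij}^{n+1}))^{-1}$ from~\eqref{eq:218} gives $\|[E+\Delta t\,\sigma(A^{n+1}-B^{n+1}(\boldsymbol{\alpha}))]^{-1}\|_1 \le M$. It then remains to bound the right-hand side of~\eqref{eq:210}: by the triangle inequality, submultiplicativity and the monotonicity bound, $\|[E-\Delta t(1-\sigma)(A^n-B^n(\boldsymbol{\alpha}))]\boldsymbol{y}^n\|_1 \le M_1\|\boldsymbol{y}^n\|_1$, while the boundary term equals $\Delta t\|\boldsymbol{g}^{(\sigma),n}\|_1$; multiplying by the bound just obtained yields~\eqref{eq:233}. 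Since $M$ and $M_1$ do not depend on $\boldsymbol{\alpha}$, the solution set $\{\boldsymbol{y}^{n+1}:\boldsymbol{\alpha}\in U_{ad}\}$ is bounded on $U_{ad}$.

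The step I expect to require the most care is the first one, namely showing that the $D$-weighted column sums of $|B(\boldsymbol{\alpha})|$ are monotone in $\boldsymbol{\alpha}$ on $U_{ad}$ using only the abstract conditions~\eqref{eq:213}--\eqref{eq:215} and the conservative representation~\eqref{eq:214} (the explicit formulas~\eqref{eq:211} make it transparent, but the corollary is stated for the general class); once this reduction to~\eqref{eq:230} is in place, the remainder is a routine chain of submultiplicative norm estimates combining Theorems~\ref{th:2} and~\ref{th:5}.
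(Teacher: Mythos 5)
Your proposal is correct and follows essentially the same route as the paper's proof: reduce hypothesis \eqref{eq:232} to the condition of Theorem~\ref{th:5}, use the factorization together with the Neumann-series bound $\left\| {\left[ {E - \Delta t\,\sigma {{\left[ {E + \Delta t\,\sigma A} \right]}^{ - 1}}B(\boldsymbol{\alpha})} \right]^{-1}} \right\|_1 \le (1-\gamma)^{-1}$ and estimate \eqref{eq:218}, then bound the right-hand side vector to obtain $M$ and $M_1$. The only difference is that you make explicit the monotonicity $\left\| {DB(\boldsymbol{\alpha})} \right\|_1 \le \left\| {DB(\boldsymbol{1})} \right\|_1$ on $U_{ad}$, which the paper uses implicitly; this is a welcome clarification, not a departure.
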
 
\begin{proof}
 The proof of the Theorem~\ref{th:5} shows that, under condition \eqref{eq:232}, we have
\[\sigma \Delta t\,{\left\| {{{\left[ {E + \sigma \Delta t\,A} \right]}^{ - 1}}B\left( \boldsymbol{\alpha}  \right)} \right\|_1} \le \gamma \qquad \text {for any} \;\; \boldsymbol{\alpha}  \in {U_{ad}}\].
				
Then
\[{\left\| {{{\left[ {E - \Delta t\,\sigma {{\left[ {E + \Delta t\,\sigma A} \right]}^{ - 1}}B} \right]}^{ - 1}}} \right\|_1} \le \frac{1}{{1 - \gamma }}\]
and taking into account \eqref{eq:218} 
\begin{equation*} 
\begin{split}  
{\left\| {{{\left[ {E + \sigma \,\Delta t\left( {A - B)} \right)} \right]}^{ - 1}}} \right\|_1} & \le {\left\| {{{\left[ {E + \sigma \,\Delta t\,A} \right]}^{ - 1}}} \right\|_1}{\left\| {{{\left[ {E - \Delta t\,\sigma {{\left[ {E + \Delta t\,\sigma A} \right]}^{ - 1}}B} \right]}^{ - 1}}} \right\|_1} \\
& \le \frac{1}{{1 - \gamma }}\frac{{{{\left\| D \right\|}_1}}}{{\mathop {\min }\limits_j \left( {{d_{jj}} + \Delta t\,\sigma \sum\limits_i {{d_{ii}}{a_{ij}}} } \right)}}
\end{split}
\end{equation*}

Therefore
\[{\left\| {{\boldsymbol{y}^{n + 1}}} \right\|_1} \le {\left\| {{{\left[ {E + \Delta t\,\sigma \left( {{A^{n + 1}} - {B^{n + 1}})} \right)} \right]}^{ - 1}}} \right\|_1}{\left\| \boldsymbol{\varphi}  \right\|_1} \le \frac{1}{{1 - \gamma }}\frac{{{{\left\| D \right\|}_1}}}{{\mathop {\min }\limits_j \left( {{d_{jj}} + \Delta t\,\sigma \sum\limits_i {{d_{ii}}a_{ij}^{n + 1}} } \right)}}{\left\| \boldsymbol{\varphi}  \right\|_1}\]		
where
$\boldsymbol{\varphi}  = \left[ {E - \Delta t\,(1 - \sigma )\left( {{A^n} - {B^n}(\boldsymbol{\alpha} )} \right)} \right]{\boldsymbol{y}^n} + \Delta t\,\left[ {\sigma {\boldsymbol{g}^{n + 1}} + (1 - \sigma ){\boldsymbol{g}^n}} \right]$

For ${\left\| \boldsymbol{\varphi}  \right\|_1}$ we have
\[{\left\| \boldsymbol{\varphi}  \right\|_1} \le \left\{ {{{\left\| {E - \Delta t\,(1 - \sigma ){A^n}} \right\|}_1} + \Delta t\,(1 - \sigma ){{\left\| {{B^n}(\boldsymbol{1})} \right\|}_1}} \right\}{\left\| {{\boldsymbol{y}^n}} \right\|_1} + \Delta t\,{\left\| {{\boldsymbol{g}^{(\sigma ),n}}} \right\|_1}\]
which proves the theorem.
\end{proof}

\subsection{Finding Flux Limiters} \label{Sec23}
Our goal is to find maximal values of the flux limiters for which solution of the difference scheme \eqref{eq:210} is similar to the solution of the monotone difference scheme \eqref{eq:216}. For this, we will require that the difference scheme \eqref{eq:210} satisfies inequalities \eqref{eq:227}. Then the problem of finding the flux limiters can be considered as the following optimization problem
\begin{equation}
\label{eq:234}
 \Im ({\boldsymbol{\alpha} ^n},{\boldsymbol{\alpha} ^{n + 1}}) = \left\langle {\boldsymbol{c_1},{\boldsymbol{\alpha} ^n}} \right\rangle  + \left\langle {\boldsymbol{c_2},{\boldsymbol{\alpha} ^{n + 1}}} \right\rangle  \to \mathop {\max }\limits_{{\boldsymbol{\alpha} ^n},{\boldsymbol{\alpha} ^{n + 1}} \in {U_{ad}}}  \qquad	\boldsymbol{c_1},\boldsymbol{c_2} > 0 				
\end{equation}
subject to
\begin{eqnarray}
\label{eq:235} 
 E{\underline {\boldsymbol{y}}^n} - \Delta t \,(1 - \sigma ) \, diag\left[ {\sum\limits_j {a_{ij}^n} }  \right]{\boldsymbol{y}^n} & \le &
\left[ {E - \Delta t \, (1 - \sigma )\left( {{A^n} - {B^n}({\boldsymbol{\alpha} ^n})} \right)} \right]{\boldsymbol{y}^n} + {B^{n + 1}}({\boldsymbol{\alpha} ^{n + 1}}){\boldsymbol{y}^{n + 1}} \nonumber \\
& \le & E{\bar {\boldsymbol{y}}^n} - \Delta t \,(1 - \sigma ) \, diag\left[ {\sum\limits_j {a_{ij}^n} }  \right]{\boldsymbol{y}^n}  
\end{eqnarray}
\begin{equation}
\label{eq:236} 
\begin{split} 
 \left[ {E + \Delta t\, \sigma \,\left( {{A^{n + 1}} - {B^{n + 1}}({\boldsymbol{\alpha} ^{n + 1}})} \right)} \right] & {\boldsymbol{y}^{n + 1}}  \\
& = \left[ {E - \Delta t\, (1 - \sigma )\,\left( {{A^n} - {B^n}({\boldsymbol{\alpha} ^n})} \right)} \right]{\boldsymbol{y}^n} + \Delta t \,{\boldsymbol{g}^{(\sigma ),n}}	
\end{split} 
\end{equation}
where $diag\left[ {\sum\limits_j {a_{ij}^n} }  \right]$ is the diagonal matrix with elements $\sum\limits_j {a_{ij}^n}$, ${\underline y} {_i} = \mathop {\min }\limits_{j \in {S_i}} {y_j}$ , ${\bar y_i} = \mathop {\max }\limits_{j \in {S_i}} {y_j}$.
 
Note that for $\sigma  = 0$, this is a linear programming problem, and for $\sigma  > 0$ this is a non-linear programming problem.

To solve the non-linear optimization problem \eqref{eq:234}-\eqref{eq:236} at one time step, we will use the following iterative process: 
\begin{enumerate}[\bfseries   Step 1.]
\item \label {it:1}
 Initialize positive numbers $\delta ,{\varepsilon _1},{\varepsilon _2} > 0$ and vectors $\boldsymbol{c_1},\boldsymbol{c_2} > 0$. Set $p = 0$, ${\boldsymbol{y}^{n + 1,0}} = {\boldsymbol{y}^n}$, ${\boldsymbol{\alpha} ^{n,0}},{\boldsymbol{\alpha} ^{n + 1,0}} = 0$.
\item \label {it:2}
 Find the solution ${\boldsymbol{\alpha} ^{n,p + 1}},{\boldsymbol{\alpha} ^{n + 1,p + 1}}$ of the following linear programming problem
\begin{equation}
\label{eq:237}
\Im  = \left\langle {\boldsymbol{c_1},{\boldsymbol{\alpha} ^{n,p + 1}}} \right\rangle  + \left\langle {\boldsymbol{c_2},{\boldsymbol{\alpha} ^{n + 1,p + 1}}} \right\rangle  \to \mathop {\max }\limits_{{\boldsymbol{\alpha} ^{n,p + 1}},{\boldsymbol{\alpha} ^{n + 1,p + 1}} \in {U_{ad}}} 			\end{equation}
\begin{eqnarray}
\label{eq:238}
&  \mathop {\min }\limits_{j \in {S_i}} y_j^n - y_i^n + \Delta t\,(1 - \sigma )\sum\limits_{j \ne i} {a_{ij}^n} \left( {y_j^n - y_i^n} \right) \nonumber \\
&  \le \Delta t \, (1 - \sigma )\sum\limits_{j \ne i} {b_{ij}^n} ({\boldsymbol{\alpha} ^{n,p + 1}})\left( {y_j^n - y_i^n} \right) + \Delta t\,\sigma \sum\limits_{j \ne i} {b_{ij}^{n + 1}({\boldsymbol{\alpha} ^{n + 1,p + 1}})\left( {y_j^{n + 1,p} - y_i^{n + 1,p}} \right)}  	\nonumber	\\
&  \le \mathop {\max }\limits_{j \in {S_i}} y_j^n - y_i^n + \Delta t\,(1 - \sigma )\sum\limits_{j \ne i} {a_{ij}^n} \left( {y_j^n - y_i^n} \right) 
\end{eqnarray} 	
\item \label {it:3}
 For the ${\boldsymbol{\alpha} ^{n,p + 1}},{\boldsymbol{\alpha} ^{n + 1,p + 1}}$, find $y_i^{n + 1,p + 1}$ from the system of linear equations
\begin{equation}
\label{eq:239}
\begin{split} 
 \left[ {E + \Delta t\,\sigma {A^{n + 1}}} \right]{\boldsymbol{y}^{n + 1,p + 1}} & = \left[ {E - \Delta t\,(1 - \sigma )\left( {{A^n} - {B^n}({\boldsymbol{\alpha} ^{n,p + 1}})} \right)} \right]{\boldsymbol{y}^n} \\ 
& + \Delta t\,\sigma {B^{n + 1}}({\boldsymbol{\alpha} ^{n + 1,p + 1}}){\boldsymbol{y}^{n + 1,p}} + \Delta t\,{\boldsymbol{g}^{(\sigma ),n}}  
\end{split} 	
\end{equation}
\item \label {it:4}
 Algorithm stop criterion
\begin{equation}
\label{eq:240}
\frac{{\left| {y_i^{n + 1,p + 1} - y_i^{n + 1,p}} \right|}}{{\max \left( {\delta ,\left| {y_i^{n + 1,p + 1}} \right|} \right)}} < {\varepsilon _1},   \; \left| {\alpha _{i + {1 / 2}}^{n + 1,p + 1} - \alpha _{i + {1 / 2}}^{n + 1,p}} \right| < {\varepsilon _2} \;\; \text {and} \; \left| {\alpha _{i + {1 / 2}}^{n,p + 1} - \alpha _{i + {1 / 2}}^{n,p}} \right| < {\varepsilon _2} 		
\end{equation}
If conditions \eqref{eq:240} hold, then ${\boldsymbol{y}^{n + 1}} = {\boldsymbol{y}^{n + 1,p + 1}}$, otherwise $p = p + 1$ and go to {\bf Step \ref{it:2}}. 
\end{enumerate} 

\begin{theorem}
\label{th:6}
Let the square matrices A, B and D satisfy conditions \eqref{eq:212}-\eqref{eq:213} and \eqref{eq:215}. If $\Delta t \,$ satisfies \eqref{eq:226}, then the linear programming problem \eqref{eq:237}-\eqref{eq:238} is solvable.
\end{theorem}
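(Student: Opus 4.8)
The plan is to show that the feasible set of the linear program \eqref{eq:237}--\eqref{eq:238} is a nonempty compact subset of $R^{2(N-1)}$, and then to conclude solvability from the continuity of the linear objective $\Im$ (the extreme value theorem). Since the constraint region is contained in $U_{ad}=[0,1]^{N-1}\times[0,1]^{N-1}$, which is compact, and since for fixed $\boldsymbol{y}^n$ and $\boldsymbol{y}^{n+1,p}$ each of the two-sided inequalities \eqref{eq:238} is linear in $(\boldsymbol{\alpha}^{n,p+1},\boldsymbol{\alpha}^{n+1,p+1})$ (recall from \eqref{eq:211} and \eqref{eq:213}--\eqref{eq:214} that $b_{ij}(\boldsymbol{\alpha})$ depends linearly on $\boldsymbol{\alpha}$), the feasible set is closed, hence compact. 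The only substantive point is therefore its nonemptiness.

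For nonemptiness I would exhibit the explicit feasible point $\boldsymbol{\alpha}^{n,p+1}=\boldsymbol{\alpha}^{n+1,p+1}=0$. With this choice the middle member of \eqref{eq:238} vanishes, so the claim reduces to
\[
\min_{j\in S_i} y_j^n - y_i^n + \Delta t\,(1-\sigma)\sum_{j\neq i} a_{ij}^n (y_j^n - y_i^n)\;\le\;0\;\le\;\max_{j\in S_i} y_j^n - y_i^n + \Delta t\,(1-\sigma)\sum_{j\neq i} a_{ij}^n (y_j^n - y_i^n)
\]
for every $i=\overline{1,N}$. This is precisely the estimate already used inside the proof of Theorem~\ref{th:4}: setting $\theta_i = 1 + \Delta t\,(1-\sigma)\sum_{j\neq i} a_{ij}^n$ and $\beta_{ij} = -\Delta t\,(1-\sigma)\,a_{ij}^n$ for $j\neq i$, condition \eqref{eq:212} gives $\beta_{ij}\ge 0$ and condition \eqref{eq:226} gives $\theta_i\ge 0$, while $\theta_i+\sum_{j\neq i}\beta_{ij}=1$. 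Hence
\[
y_i^n - \Delta t\,(1-\sigma)\sum_{j\neq i} a_{ij}^n (y_j^n - y_i^n) = \theta_i\,y_i^n + \sum_{j\neq i}\beta_{ij}\,y_j^n
\]
is a convex combination of the values $\{y_j^n:j\in S_i\}$, so it lies between $\min_{j\in S_i}y_j^n$ and $\max_{j\in S_i}y_j^n$; rearranging yields the displayed two-sided inequality, i.e.\ the zero vector is feasible.

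Finally, $\Im(\boldsymbol{\alpha}^{n,p+1},\boldsymbol{\alpha}^{n+1,p+1}) = \left\langle \boldsymbol{c_1},\boldsymbol{\alpha}^{n,p+1}\right\rangle + \left\langle \boldsymbol{c_2},\boldsymbol{\alpha}^{n+1,p+1}\right\rangle$ is linear, hence continuous, on the nonempty compact feasible set, so it attains its maximum there, and that maximizer is the required solution of \eqref{eq:237}--\eqref{eq:238}. The only place where real work is hidden is the convexity argument establishing feasibility of the zero vector, and there the essential ingredient is the time-step restriction \eqref{eq:226}, which guarantees $\theta_i\ge 0$; without it the zero vector need not be feasible and the argument breaks down. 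I expect this to be the main (indeed essentially the only) obstacle, and it is dispatched by reusing the reasoning of Theorem~\ref{th:4}.
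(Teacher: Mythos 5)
Your proposal is correct and follows essentially the same route as the paper: the paper also argues solvability by noting the objective is bounded (since the limiters lie in $[0,1]$) and the feasible set is nonempty because the zero vectors $\boldsymbol{\alpha}^n=\boldsymbol{\alpha}^{n+1}=0$ satisfy \eqref{eq:238} under \eqref{eq:226}. Your only addition is to spell out, via the convex-combination argument already used in Theorem~\ref{th:4}, why the zero vector is feasible — a detail the paper leaves implicit — so there is nothing to correct.
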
 
\begin{proof}
 To prove that the problem \eqref{eq:237}-\eqref{eq:238} is solvable, it is sufficient to show that the objective function $\Im ({\boldsymbol{\alpha} ^n},{\boldsymbol{\alpha} ^{n + 1}})$ is bounded and the feasible set is non-empty. The boundedness of the function \eqref{eq:237} follows from the boundedness of the vectors ${\boldsymbol{\alpha} ^n}$ and ${\boldsymbol{\alpha} ^{n + 1}}$ whose coordinates vary from zero to one. On another hand, if \eqref{eq:226} is valid, the zero vectors ${\boldsymbol{\alpha} ^n}$ and ${\boldsymbol{\alpha} ^{n + 1}}$ satisfy the system of inequalities \eqref{eq:238}.
 
 This completes the proof of the Theorem. 
\end{proof}
\begin{theorem}
\label{th:7}
 Let the matrices A, B and D satisfy conditions \eqref{eq:212}-\eqref{eq:213} and \eqref{eq:215}. Let $\left( {{\boldsymbol{\alpha} ^{n,p}},{\boldsymbol{\alpha} ^{n + 1,p}}} \right) \in {U_{ad}}$, $\sigma  \ge 0$ and $\Delta t\,$ satisfies the inequality 
\begin{equation}
\label{eq:241} 
\begin{split} 
 \Delta t\,\max \left\{ {(1 - \sigma )\,{{\left\| {D{B^n}(\boldsymbol{1})} \right\|}_1},\sigma {{\left\| {D{B^{n + 1}}(\boldsymbol{1})} \right\|}_1}\,\,} \right\} \\
 \le \gamma \mathop {\min }\limits_j \left( {{d_{jj}} + \Delta t\,\sigma \sum\limits_i {{d_{ii}}a_{ij}^{n + 1}} } \right),	 \qquad 0 < \gamma  < 1	  
\end{split}  
\end{equation}

Then the solution of the system of equations \eqref{eq:239} can be represented as the convergent series
\begin{equation}
\label{eq:242}
 {\boldsymbol{y}^{n + 1,p}} = \left[ {\sum\limits_{i = 1}^p {{T_i}} \prod\limits_{j = i + 1}^p {{Q_j}}  + \prod\limits_{i = 1}^p {{Q_i}} } \right]{\boldsymbol{y}^n} + \left[ {E + \sum\limits_{i = 2}^p {\prod\limits_{j = i}^p {{Q_i}} } } \right]\boldsymbol{r} 		\end{equation}
where $ {T_p} = \Delta t\,(1 - \sigma ){\left[ {E + \Delta t\,\sigma {A^{n + 1}}} \right]^{ - 1}}{B^n}({\boldsymbol{\alpha} ^{n,p}})$,  ${Q_p} = \Delta t\,\sigma {\left[ {E + \Delta t\,\sigma {A^{n + 1}}} \right]^{ - 1}}{B^{n + 1}}({\boldsymbol{\alpha} ^{n + 1,p}})$,
\begin{flushleft} 
$ \boldsymbol{r} = {\left[ {E + \Delta t\,\sigma {A^{n + 1}}} \right]^{ - 1}}\left\{ {\left[ {E - \Delta t\,(1 - \sigma ){A^n}} \right]{\boldsymbol{y}^n} + \Delta t\,{\boldsymbol{g}^{(\sigma ),n}}} \right\}$.  
\end{flushleft} 
\end{theorem}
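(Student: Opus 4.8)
The plan is to recognize \eqref{eq:239} as an affine one‑step recursion for the iterates and to unroll it. Since $A^n,A^{n+1}$ satisfy \eqref{eq:212} and $D$ satisfies \eqref{eq:215}, Theorem~\ref{th:2} guarantees that $M:=\left[E+\Delta t\,\sigma A^{n+1}\right]^{-1}$ exists. Multiplying \eqref{eq:239} on the left by $M$ and grouping the right‑hand side exactly according to the definitions of $T_{p+1}$, $Q_{p+1}$ and $\boldsymbol{r}$ given in the statement, the scheme of Step~3 together with the initialization $\boldsymbol{y}^{n+1,0}=\boldsymbol{y}^n$ from Step~1 becomes
\[
\boldsymbol{y}^{n+1,p+1}=Q_{p+1}\,\boldsymbol{y}^{n+1,p}+T_{p+1}\,\boldsymbol{y}^n+\boldsymbol{r},\qquad \boldsymbol{y}^{n+1,0}=\boldsymbol{y}^n .
\]
I would then prove \eqref{eq:242} by induction on $p$. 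For $p=1$ the recursion gives $\boldsymbol{y}^{n+1,1}=(T_1+Q_1)\boldsymbol{y}^n+\boldsymbol{r}$, which is \eqref{eq:242} with the usual conventions (empty product $=E$, empty sum $=0$). For the inductive step one inserts the formula for $\boldsymbol{y}^{n+1,p}$ into the recursion and collects the coefficients of $\boldsymbol{y}^n$ and of $\boldsymbol{r}$. Because the matrices do not commute, the symbol $T_i\prod_{j=i+1}^{p}Q_j$ in \eqref{eq:242} must be read as $\left(Q_pQ_{p-1}\cdots Q_{i+1}\right)T_i$, i.e. with the larger $Q$‑index on the left; with this reading, left multiplication by $Q_{p+1}$ sends $\prod_{j=i+1}^{p}Q_j\mapsto\prod_{j=i+1}^{p+1}Q_j$, $\prod_{i=1}^{p}Q_i\mapsto\prod_{i=1}^{p+1}Q_i$ and $\prod_{j=i}^{p}Q_j\mapsto\prod_{j=i}^{p+1}Q_j$, while the fresh terms $T_{p+1}$ and $Q_{p+1}$ are precisely the $i=p+1$ summands of the two sums in \eqref{eq:242}. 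This reproduces \eqref{eq:242} at level $p+1$.

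The substantive step is the uniform bound $\|T_i\|_1\le\gamma$ and $\|Q_i\|_1\le\gamma$ for every $i$, which is where hypothesis \eqref{eq:241} enters. Factoring $M=\left[D+\Delta t\,\sigma DA^{n+1}\right]^{-1}D$ gives
\[
T_i=\Delta t(1-\sigma)\left[D+\Delta t\,\sigma DA^{n+1}\right]^{-1}DB^n(\boldsymbol{\alpha}^{n,i}),\qquad Q_i=\Delta t\,\sigma\left[D+\Delta t\,\sigma DA^{n+1}\right]^{-1}DB^{n+1}(\boldsymbol{\alpha}^{n+1,i}).
\]
By \eqref{eq:231} one has $\left\|\left[D+\Delta t\,\sigma DA^{n+1}\right]^{-1}\right\|_1\le\left(\min_j\left(d_{jj}+\Delta t\,\sigma\sum_i d_{ii}a_{ij}^{n+1}\right)\right)^{-1}$. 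Since $DB(\boldsymbol{\alpha})$ has a nonnegative diagonal, nonpositive off‑diagonal entries and zero column sums (using \eqref{eq:213} and \eqref{eq:215}), each absolute column sum of $DB(\boldsymbol{\alpha})$ equals twice the corresponding diagonal entry, and these diagonal entries depend monotonically on the components of $\boldsymbol{\alpha}$; hence $\|DB(\boldsymbol{\alpha})\|_1\le\|DB(\boldsymbol{1})\|_1$ for $\boldsymbol{\alpha}\in U_{ad}$ — the same estimate already exploited in the proof of the corollary following Theorem~\ref{th:5}. Multiplying the two bounds and invoking \eqref{eq:241}, which caps both $\Delta t(1-\sigma)\|DB^n(\boldsymbol{1})\|_1$ and $\Delta t\,\sigma\|DB^{n+1}(\boldsymbol{1})\|_1$ by $\gamma\min_j(\,\cdot\,)$, yields $\|T_i\|_1\le\gamma$ and $\|Q_i\|_1\le\gamma$.

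Finally, because $\gamma<1$, the operator coefficients in \eqref{eq:242} are dominated term by term by a geometric progression: $\left\|\prod_{i=1}^p Q_i\right\|_1\le\gamma^p$, $\left\|\sum_{i=1}^p T_i\prod_{j=i+1}^p Q_j\right\|_1\le\sum_{k=1}^{p}\gamma^{k}\le\gamma/(1-\gamma)$ and $\left\|E+\sum_{i=2}^p\prod_{j=i}^p Q_j\right\|_1\le 1/(1-\gamma)$. Thus the right‑hand side of \eqref{eq:242} is a well‑defined, absolutely (geometrically) convergent matrix series, and in particular $\|\boldsymbol{y}^{n+1,p}\|_1\le(1-\gamma)^{-1}\left(\|\boldsymbol{y}^n\|_1+\|\boldsymbol{r}\|_1\right)$ uniformly in $p$. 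I expect the only real difficulty to be organizational — keeping the non‑commuting $Q$‑products in the correct order through the induction, and reading off from \eqref{eq:241} that a single $\gamma$ simultaneously controls the explicit $(1-\sigma)$‑ and the implicit $\sigma$‑scaled antidiffusive blocks; once Theorem~\ref{th:5} and estimate \eqref{eq:231} are in hand, the norm bounds are routine.
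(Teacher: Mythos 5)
Your proposal is correct and follows essentially the same route as the paper: represent the iterates \eqref{eq:239} as the unrolled series \eqref{eq:242}, bound $\left\| T_p \right\|_1 \le \gamma$ and $\left\| Q_p \right\|_1 \le \gamma$ via the factorization through $D$, the estimate \eqref{eq:231} and hypothesis \eqref{eq:241}, and conclude convergence by majorization with a geometric series. The only difference is that you spell out details the paper leaves implicit (the induction with the ordering of the non-commuting $Q$-products and the bound $\left\| DB(\boldsymbol{\alpha}) \right\|_1 \le \left\| DB(\boldsymbol{1}) \right\|_1$ on $U_{ad}$), which is a welcome but inessential elaboration.
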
 
\begin{proof}
 In matrix form, the solution \eqref{eq:239} can be represented as the series \eqref{eq:242}. For the convergence of the series in square brackets on the right-hand side \eqref{eq:242}, it is sufficient to show the convergence of series of the norms of their members. Indeed, under conditions \eqref{eq:241}, we have that for any p and 
 $\left( {{\boldsymbol{\alpha} ^{n,p}},{\boldsymbol{\alpha} ^{n + 1,p}}} \right) \in {U_{ad}}$

\begin{eqnarray*} 
& {\left\| {{T_p}} \right\|_1} \le \Delta t\,(1 - \sigma )\dfrac{{{{\left\| {D{B^n}(\boldsymbol{1})} \right\|}_1}}}{{\mathop {\min }\limits_j \left( {{d_{jj}} + \Delta t\,\sigma \sum\limits_i {{d_{ii}}a_{ij}^{n + 1}} } \right)}} \le \gamma \quad \text  {and} \\ \quad  
& {\left\| {{Q_p}} \right\|_1} \le \Delta t\,\sigma \dfrac{{{{\left\| {D{B^{n + 1}}(\boldsymbol{1})} \right\|}_1}}}{{\mathop {\min }\limits_j \left( {{d_{jj}} + \Delta t\,\sigma \sum\limits_i {{d_{ii}}a_{ij}^{n + 1}} } \right)}} \le \gamma 
\end{eqnarray*}
Consequently, the series of the norms of members are majorized by the convergent numerical power series, as required.
\end{proof}

In the inequalities \eqref{eq:238}, ${b_{ij}}$ is multiplied by the differences of coordinates of the vectors ${\boldsymbol{y}^n}$ and ${\boldsymbol{y}^{n + 1,p}}$. These differences can be arbitrarily small and can lead to instability in the numerical solution of the linear programming problem \eqref{eq:237}-\eqref{eq:238}. Therefore, we reformulate the linear programming problem \eqref{eq:237}-\eqref{eq:238} in other variables.

Denote by ${\beta _k} = {\alpha _{k + {1 / 2}}}\left| {{c_{ik}}({\Delta _i}y)} \right|$ and ${s_{ik}} = {\mathop{\rm sgn}} ({c_{ik}})$, where ${c_{ik}}$ are elements of the matrix C from \eqref{eq:214}. Then we can rewrite \eqref{eq:237}-\eqref{eq:238} in the form
\begin{equation}
\label{eq:243}
\Im ({\boldsymbol{\beta} ^n},{\boldsymbol{\beta} ^{n + 1}}) = \sum\limits_k {\left( {\beta _k^n + \beta _k^{n + 1}} \right)}  \to \max  							
\end{equation}
\begin{equation}
\label{eq:244}
 - (1 - \sigma )\sum\limits_k {s_{ik}^n\beta _k^n}  - \sigma \sum\limits_k {s_{ik}^{n + 1}\beta _k^{n + 1}}  \ge \frac{1}{{\Delta t}}\left( {\mathop {\min }\limits_{j \in {S_i}} y_j^n - y_i^n} \right) + (1 - \sigma )\sum\limits_{j \ne i} {a_{ij}^n} \left( {y_j^n - y_i^n} \right) 		
\end{equation}
\begin{equation}
\label{eq:245}
 - (1 - \sigma )\sum\limits_k {s_{ik}^n\beta _k^n}  - \sigma \sum\limits_k {s_{ik}^{n + 1}\beta _k^{n + 1}}  \le \frac{1}{{\Delta t}}\left( {\mathop {\max }\limits_{j \in {S_i}} y_j^n - y_i^n} \right) + (1 - \sigma )\sum\limits_{j \ne i} {a_{ij}^n} \left( {y_j^n - y_i^n} \right)		
\end{equation}
\begin{equation}
\label{eq:246}
0 \le \beta _k^n \le \left| {c_{ik}^n} \right|, \quad 0 \le \beta _k^{n + 1} \le \left| {c_{ik}^{n + 1}} \right|								
\end{equation}

\subsection{Approximate Solution of the Optimization Problem} \label{Sec24}
We will look for a nonzero $\left( {\boldsymbol{\alpha} ^n},{\boldsymbol{\alpha} ^{n + 1}} \right) \in {U_{ad}}$ that satisfies the system of inequalities \eqref{eq:238}. To do this, we rewrite these inequalities in accordance with \eqref{eq:214} in the form 
\begin{equation}
\label{eq:247} 
\begin{split} 
(1 - \sigma )\sum\limits_k {\alpha _{k + {1 / 2}}^nc_{ik}^n(\boldsymbol{{\Delta _i}y}^n)} & + \sigma \sum\limits_k {\alpha _{k + {1 / 2}}^{n + 1}c_{ik}^{n + 1}(\boldsymbol{{\Delta _i}y}^{n + 1})}  \le \frac{1}{{\Delta t}}\left( {\mathop {\max }\limits_{j \in {S_i}} y_j^n - y_i^n} \right) \\
& + (1 - \sigma )\sum\limits_{j \ne i} {a_{ij}^n} \left( {y_j^n - y_i^n} \right) 
\end{split}
\end{equation}
\begin{equation}
\label{eq:248}
\begin{split} 
(1 - \sigma )\sum\limits_k {\alpha _{k + {1 / 2}}^nc_{ik}^n(\boldsymbol{{\Delta _i}y}^n)} & + \sigma \sum\limits_k {\alpha _{k + {1 / 2}}^{n + 1}c_{ik}^{n + 1}(\boldsymbol{{\Delta _i}y}^{n + 1})}  \ge \frac{1}{{\Delta t}}\left( {\mathop {\min }\limits_{j \in {S_i}} y_j^n - y_i^n} \right) \\
& + (1 - \sigma )\sum\limits_{j \ne i} {a_{ij}^n} \left( {y_j^n - y_i^n} \right) 
\end{split}
\end{equation}
\begin{equation}
\label{eq:249}
 0 \le \alpha _{k + {1 / 2}}^n \le 1, \quad  0 \le \alpha _{k + {1 / 2}}^{n + 1} \le 1		   						
\end{equation}

For the left-hand sides of the inequalities \eqref{eq:247} and \eqref{eq:248}, the following estimates are valid
\begin{equation}
\begin{split}
\label{eq:250}  
(1 - \sigma )\sum\limits_k {\alpha _{k + {1 / 2}}^nc_{ik}^n(\boldsymbol{{\Delta _i}y}^n)}  + \sigma \sum\limits_k {\alpha _{k + {1 / 2}}^{n + 1}c_{ik}^{n + 1}(\boldsymbol{{\Delta _i}y}^{n + 1})}  \le \\
 \le \mathop {\max }\limits_{k \in K(i)} {\left\{ {\alpha _{k + {1 / 2}}^n,\alpha _{k + {1 / 2}}^{n + 1}} \right\}^ + }\sum\limits_k {\left[ {(1 - \sigma )\max \left( {0,c_{ik}^n} \right) + \sigma \max \left( {0,c_{ik}^{n + 1}} \right)} \right]}  				
\end{split} 
\end{equation}
\begin{equation}
\begin{split}
\label{eq:251}
 (1 - \sigma )\sum\limits_k {\alpha _{k + {1 / 2}}^nc_{ik}^n(\boldsymbol{{\Delta _i}y}^n)}  + \sigma \sum\limits_k {\alpha _{k + {1 / 2}}^{n + 1}c_{ik}^{n + 1}(\boldsymbol{{\Delta _i}y}^{n + 1})}  \ge \\
  \ge \mathop {\max }\limits_{k \in K(i)} {\left\{ {\alpha _{k + {1 / 2}}^n,\alpha _{k + {1 / 2}}^{n + 1}} \right\}^ - }\sum\limits_k {\left[ {(1 - \sigma )\min \left( {0,c_{ik}^n} \right) + \sigma \min \left( {0,c_{ik}^{n + 1}} \right)} \right]} 				
\end{split} 
\end{equation}
where $\mathop {\max }\limits_{k \in K(i)} {\left\{ {\alpha _{k + {1 / 2}}^n,\alpha _{k + {1/2}}^{n + 1}} \right\}^ + }$ and $\mathop {\max }\limits_{k \in K(i)} {\left\{ {\alpha _{k + {1 / 2}}^n,\alpha _{k + {1 / 2}}^{n + 1}} \right\}^ - }$ are the maximal components of the vectors ${\boldsymbol{\alpha}^n}$ and $\boldsymbol{\alpha}^{n+1}$ for non-negative and non-positive terms on the left-hand sides of \eqref{eq:250} and \eqref{eq:251}, respectively; $K(i)$ is the set of subscript numbers {\it k} for the components of the vectors ${\boldsymbol{\alpha}^n}$ and $\boldsymbol{\alpha}^{n+1}$ in \eqref{eq:247} and \eqref{eq:248} that corresponds to the {\it i}-th node of the difference scheme. 
 
 Let ${\alpha _{k + {1 / 2}}}$ belongs to {\it k}-th column of the matrix {\it C} and corresponds to the elements ${c_{ik}}$ and ${c_{jk}}$. Substituting \eqref{eq:250} into \eqref{eq:247}, and \eqref{eq:251} into \eqref{eq:248} yields
\begin{equation}
\label{eq:252}
 {\alpha _{k + {1 / 2}}} = \left\{ {\begin{array}{*{20}{c}}{\min (R_i^ + ,R_j^ - ), \qquad {\rm{    }}{c_{ik}} > 0}\\{\min (R_i^ - ,R_j^ + ), \qquad {\rm{    }}{c_{ik}} < 0}\end{array}} \right. 								
\end{equation}
where 
\begin{equation}
\label{eq:253}
 R_i^ \pm  = \mathop {\max }\limits_{k \in K(i)} {\left\{ {\alpha _{k + {1 / 2}}^n,\alpha _{k + {1 / 2}}^{n + 1}} \right\}^ \pm } = \min (1,{{Q_i^ \pm } \mathord{\left/ {\vphantom {{Q_i^ \pm } {P_i^ \pm ),}}} \right. \kern-\nulldelimiterspace} {P_i^ \pm ),}} 					
\end{equation}
\begin{equation}
\label{eq:254}
 Q_i^ +  = \frac{1}{{\Delta t}}\left( {\mathop {\max }\limits_{j \in {S_i}} y_j^n - y_i^n} \right) + (1 - \sigma )\sum\limits_{j \ne i} {a_{ij}^n} \left( {y_j^n - y_i^n} \right) 						
\end{equation}
\begin{equation}
\label{eq:255}
 Q_i^ -  = \frac{1}{{\Delta t}}\left( {\mathop {\min }\limits_{j \in {S_i}} y_j^n - y_i^n} \right) + (1 - \sigma )\sum\limits_{j \ne i} {a_{ij}^n} \left( {y_j^n - y_i^n} \right)						
\end{equation}
\begin{equation}
\label{eq:256}
 P_i^ +  = \sum\limits_k {\left[ {(1 - \sigma )\max \left( {0,c_{ik}^n} \right) + \sigma \max \left( {0,c_{ik}^{n + 1}} \right)} \right]}  			\end{equation}
\begin{equation}
\label{eq:257}
 P_i^ -  = \sum\limits_k {\left[ {(1 - \sigma )\min \left( {0,c_{ik}^n} \right) + \sigma \min \left( {0,c_{ik}^{n + 1}} \right)} \right]} 			\end{equation}

Note that the formulas \eqref{eq:252}-\eqref{eq:257} are similar to the FCT formulas of Zalesak \cite{b2} for explicit schemes and the Kuzmin formulas \cite{b7,b5,b6,b4} for implicit schemes.

\section{Scalar Nonlinear Conservation Laws} \label{Sec3}
We consider the numerical solution of the initial value problem (IVP) for scalar conservation laws
\begin{equation}
\label{eq:31}
 \frac{{\partial \rho }}{{\partial t}} + \frac{\partial }{{\partial x}}f(\rho ) = 0 \qquad 	t > 0 	\quad  	 - \infty  < x < \infty  			\end{equation}
\begin{equation}
\label{eq:32}
 \rho (x,0) = {\rho ^0}(x) 										
\end{equation}
where {\it f} is a smooth flux function.

Solutions of nonlinear scalar conservation laws \eqref{eq:31}-\eqref{eq:32} may develop singularities in finite time, even for smooth initial data. Hence, such solutions of \eqref{eq:31}-\eqref{eq:32} are sought in the weak sense. Unfortunately, the weak solution of conservation laws is not unique. In order to select a physically relevant solution, the weak solution $\rho (x,t)$ of \eqref{eq:31} should satisfy the inequality
\begin{equation}
\label{eq:33}
 \frac{{\partial U(\rho )}}{{\partial t}} + \frac{{\partial F(\rho )}}{{\partial x}} \le 0 										
\end{equation}
in the sense of distribution for every entropy pair ({\it U},{\it F}), where $U(\rho )$ is a convex function with corresponding function $F(\rho )$ satisfying $F'(\rho ) = U'(\rho )f'(\rho )$. {\it U} is called an entropy function and {\it F} is called an entropy flux associated with the conservation law \eqref{eq:31}.

For the numerical solution of the scalar conservation laws \eqref{eq:31}, we apply the weighted conservative difference scheme \eqref{eq:26} for which the lower-order and high-order numerical fluxes $h_{i + {1/2}}^L = {h^L}(y_{i - l + 1}^{},...,y_{i + r}^{})$ and $h_{i + {1/2}}^H = {h^H}(y_{i - l + 1}^{},...,y_{i + r}^{})$ are the Lipschitz continuous and consistent with the differential flux $f(\rho )$, that is
${h^L}(\rho ,...,\rho ) = f(\rho )$ and	${h^H}(\rho ,...,\rho ) = f(\rho )$.

We consider the following three-point conservative scheme as the lower-order scheme
\begin{equation}
\label{eq:34a}
 y_i^{n + 1} - y_i^n + \frac{{\Delta t}}{{\Delta {x_i}}}\left[ {\sigma \left( {h_{i + {1 /2}}^{L,n + 1} - h_{i - {1 /2}}^{L,n + 1}} \right) + (1 - \sigma )\left( {h_{i + {1 /2}}^{L,n} - h_{i - {1 /2}}^{L,n}} \right)} \right] = 0		\end{equation}
where $h_{i + {1 /2}}^L = {h^L}(v,u)$ satisfies the inequalities 
\begin{equation}
\label{eq:35a}
 \frac{\partial {h^L}(v,u)}{\partial v} \ge 0	\qquad	\frac{\partial {h^L}(v,u)}{\partial u} \le 0 \qquad	\forall \; v,u \in R				\end{equation}

\begin{theorem}
\label{th8}
Assume that the numerical fluxes $h_{i + {1 /2}}^L$ of the three-point scheme \eqref{eq:34a} satisfy inequalities \eqref{eq:35a} for all ${y_i} \in R$. If for all i $\Delta t$ satisfies the conditions
 
\begin{equation}
\label{eq:36a}
\Delta t\;(1 - \sigma )\;\left[ {\frac{\partial }{{\partial {y_i}}}h_{i + {1 /2}}^L({y_i^n},{y_{i + 1}^n}) - \frac{\partial }{{\partial {y_i}}}h_{i - {1 /2}}^L({y_{i - 1}^n},{y_i^n})} \right] \le \Delta {x_i} 						
\end{equation}
then for the fully discrete scheme \eqref{eq:34a} the following inequalities hold

\begin{equation}
\label{eq:37a}
{\underline y}_i^n \le y_i^{n + 1} + \frac{{\Delta t}}{{\Delta {x_i}}}\sigma \left( {h_{i + {1 /2}}^L(y_i^{n + 1},y_{i + 1}^{n + 1}) - h_{i - {1 /2}}^L(y_{i - 1}^{n + 1},y_i^{n + 1})} \right) \le \bar y_i^n					\end{equation}

\begin{equation}
\label{eq:38a}
{\underline y}^n \le y_i^{n + 1} \le {\bar y^n} 								\end{equation}
where ${\underline y}_i = \min ({y_{i - 1}},{y_i},{y_{i + 1}})$, ${\bar y_i} = \max ({y_{i - 1}},{y_i},{y_{i + 1}})$, $\underline y = \mathop {\min }\limits_j {y_j}$ and $\bar y = \mathop {\max }\limits_j {y_j}$. 
\end{theorem}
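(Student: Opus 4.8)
The plan is to mimic the argument of Theorem~\ref{th:4}, but now accounting for the nonlinearity of the flux $h^L$ through the mean-value theorem. First I would write the $i$-th equation of the scheme~\eqref{eq:34a} by splitting off the implicit part: set
\[
\Phi_i(\boldsymbol{y}^{n+1}) := y_i^{n+1} + \frac{\Delta t}{\Delta x_i}\,\sigma\left(h^L_{i+1/2}(y_i^{n+1},y_{i+1}^{n+1}) - h^L_{i-1/2}(y_{i-1}^{n+1},y_i^{n+1})\right),
\]
so that~\eqref{eq:34a} reads $\Phi_i(\boldsymbol{y}^{n+1}) = y_i^n - \frac{\Delta t}{\Delta x_i}(1-\sigma)\bigl(h^L_{i+1/2}(y_i^n,y_{i+1}^n) - h^L_{i-1/2}(y_{i-1}^n,y_i^n)\bigr)$. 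The goal~\eqref{eq:37a} is exactly the statement that the right-hand side lies between $\underline{y}_i^n$ and $\bar y_i^n$.

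The key step is to rewrite that explicit right-hand side as a convex combination of $y_{i-1}^n, y_i^n, y_{i+1}^n$. Using the mean value theorem on each flux difference, there exist intermediate values such that
\[
h^L_{i+1/2}(y_i^n,y_{i+1}^n) - h^L_{i-1/2}(y_{i-1}^n,y_i^n)
= \mu_{i+1/2}^{(2)}(y_{i+1}^n - y_i^n) + \nu_{i-1/2}^{(1)}(y_i^n - y_{i-1}^n) + \left[\partial_1 h^L_{i+1/2} - \partial_2 h^L_{i-1/2}\right]\!\big|_{\xi}(y_i^n - y_i^n)\cdots
\]
— more carefully, I would group terms so that the coefficient of $y_{i-1}^n$ is $\frac{\Delta t}{\Delta x_i}(1-\sigma)\,\partial_1 h^L_{i-1/2}\ge 0$ by the left inequality in~\eqref{eq:35a}, the coefficient of $y_{i+1}^n$ is $-\frac{\Delta t}{\Delta x_i}(1-\sigma)\,\partial_2 h^L_{i+1/2}\ge 0$ by the right inequality in~\eqref{eq:35a}, and the coefficient of $y_i^n$ is $1 - \frac{\Delta t}{\Delta x_i}(1-\sigma)\bigl(\partial_1 h^L_{i+1/2} - \partial_2 h^L_{i-1/2}\bigr)$, which is nonnegative precisely under the CFL-type hypothesis~\eqref{eq:36a}. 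Since the three coefficients sum to $1$ (consistency forces the $y_i$-terms from the two flux evaluations to telescope correctly — this must be checked), the right-hand side is a convex combination, hence lies in $[\underline{y}_i^n,\bar y_i^n]$; all partial derivatives here are evaluated at mean values lying between the nodal data, so $\Delta t$ satisfying~\eqref{eq:36a} at the actual data points suffices once monotonicity~\eqref{eq:35a} is assumed globally. This yields~\eqref{eq:37a}.

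For~\eqref{eq:38a} I would argue that~\eqref{eq:37a} is a discrete comparison principle: the operator $\Phi$ is, under~\eqref{eq:35a}, an M-function (its Jacobian is an M-matrix — diagonal entries $1+\frac{\Delta t}{\Delta x_i}\sigma(\partial_1 h^L_{i+1/2}-\partial_2 h^L_{i-1/2})\ge 1$, off-diagonal entries $\mp\frac{\Delta t}{\Delta x_i}\sigma\,\partial_2 h^L_{i+1/2}$ and similar, all $\le 0$, with column/row sums $\ge 1$), so $\Phi$ is inverse-monotone. Applying $\Phi$ to the constant vectors $\underline{y}^n\boldsymbol{1}$ and $\bar y^n\boldsymbol{1}$ and using consistency ($h^L(\rho,\rho)=f(\rho)$ makes the flux differences vanish on constants), we get $\Phi(\underline{y}^n\boldsymbol{1})_i = \underline{y}^n \le$ RHS$_i \le \bar y^n = \Phi(\bar y^n\boldsymbol{1})_i$ for every $i$, and inverse-monotonicity gives $\underline{y}^n\le y_i^{n+1}\le\bar y^n$.

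The main obstacle I anticipate is the bookkeeping in the convex-combination step: one must carefully apply the mean value theorem to a two-argument function, track at which intermediate points each partial derivative is evaluated, and verify that the coefficient of $y_i^n$ collected from both flux evaluations is exactly $1 - \frac{\Delta t}{\Delta x_i}(1-\sigma)[\partial_1 h^L_{i+1/2} - \partial_2 h^L_{i-1/2}]$ with no leftover terms — this is where consistency of the numerical flux and the telescoping structure of the conservative form are essential, and where a naive application of the mean value theorem can produce spurious cross terms. The inverse-monotonicity argument for~\eqref{eq:38a} is then routine given Theorem~\ref{th:1}-style reasoning already in the paper.
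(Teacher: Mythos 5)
Your argument for \eqref{eq:37a} is essentially the paper's own: the paper introduces the explicit update function $\Phi(y_{i-1},y_i,y_{i+1})=y_i-(1-\sigma)\frac{\Delta t}{\Delta x_i}\left[h^L_{i+1/2}(y_i,y_{i+1})-h^L_{i-1/2}(y_{i-1},y_i)\right]$, observes that it is non-decreasing in each argument (the outer arguments by \eqref{eq:35a}, the middle one by \eqref{eq:36a}) and equals the identity on constant states by consistency, so $\underline y_i^n=\Phi(\underline y_i^n,\underline y_i^n,\underline y_i^n)\le\Phi(y_{i-1}^n,y_i^n,y_{i+1}^n)\le\Phi(\bar y_i^n,\bar y_i^n,\bar y_i^n)=\bar y_i^n$; your mean-value/convex-combination bookkeeping is the same computation in different clothing, and the evaluation-point caveat you flag (derivatives appear at intermediate states rather than exactly at the data of \eqref{eq:36a}) is equally present in the paper's monotonicity claim for $\Phi$, so it is not held against you. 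Where you genuinely diverge is \eqref{eq:38a}: the paper does not use inverse monotonicity of the implicit operator at all; it takes the index $k$ at which $y^{n+1}$ attains its global minimum (maximum), notes that by \eqref{eq:35a} the implicit flux difference $h^L(\underline y^{n+1},y_{k+1}^{n+1})-h^L(y_{k-1}^{n+1},\underline y^{n+1})\le 0$, and concludes $\underline y^{n+1}\ge\Phi(y_{k-1}^n,y_k^n,y_{k+1}^n)\ge\Phi(\underline y^n,\underline y^n,\underline y^n)=\underline y^n$, with no matrix machinery. Your M-function route can be made to work, but not with the justification as written: the Jacobian's row sums involve differences of $(\partial_1+\partial_2)h^L$ at the two interfaces and need not be $\ge 1$, and the unweighted column sums equal $1$ only on a uniform grid; to obtain an M-matrix you must, as elsewhere in the paper, left-multiply by $D=\mathrm{diag}(\Delta x_i)$, after which $DJ$ is strictly column diagonally dominant (column sums exactly $\Delta x_j>0$), and you still need an integrated-Jacobian or M-function argument (Ortega--Rheinboldt) to pass from ``Jacobian is an M-matrix everywhere'' to inverse monotonicity of the nonlinear map. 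With that repair both routes are valid; the paper's extremum-index argument is shorter, uses only \eqref{eq:35a} together with the already-established \eqref{eq:37a}, and avoids the weighting and M-function technicalities altogether.
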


\begin{proof}
 Consider the function 
\begin{equation*}
\Phi ({y_{i - 1}},{y_i},{y_{i + 1}}) = {y_i} - (1 - \sigma )\frac{{\Delta t}}{{\Delta {x_i}}}\left[ {h_{i + {1 /2}}^L({y_i},{y_{i + 1}}) - h_{i - {1 /2}}^L({y_{i - 1}},{y_i})} \right] 
\end{equation*}
which is non-decreasing for its arguments. Then we have

\begin{equation*}
 {\underline y}_i^n = \Phi ({\underline y}_i^n, {\underline y}_i^n, {\underline y}_i^n) \le \Phi (y_{i - 1}^n,y_i^n,y_{i + 1}^n) \le \Phi (\bar y_i^n,\bar y_i^n,\bar y_i^n) = \bar y_i^n 
\end{equation*}
 
Let $y_k^{n + 1} = {{\underline y}^{n + 1}}$. Then
\begin{equation*}
\begin{split}
 & {\underline y}^{n + 1} = {{\underline y}^{n + 1}} + \sigma \frac{{\Delta t}}{{\Delta {x_k}}}\left( {{h^L}({{\underline y}^{n + 1}},{{\underline y}^{n + 1}}) - {h^L}({{\underline y}^{n + 1}},{{\underline y}^{n + 1}})} \right) \\ 
 \ge {\underline y}^{n + 1} + \sigma \frac{\Delta t}{\Delta {x_k}} & \left( {{h^L}({{\underline y}^{n + 1}},y_{k + 1}^{n + 1}) - {h^L}(y_{k - 1}^{n + 1},{{\underline y}^{n + 1}})} \right) 
 = \Phi (y_{k - 1}^n,y_k^n,y_{k + 1}^n) \ge \Phi ({{\underline y}^n},{{\underline y}^n},{{\underline y}^n}) = {{\underline y}^n}
 \end{split}
\end{equation*}
Therefore, $y_i^{n + 1} \ge {\underline y}^n$ for all $i$. Similarly, we can show that $y_i^{n + 1} \le {\bar y^n}$ for all $i$.
\end{proof}

We discretize the entropy inequality \eqref{eq:33} by the following weighted conservative scheme 
\begin{equation}
\label{eq:39}
 U(y_i^{n + 1}) - U(y_i^n) + \frac{{\Delta t}}{{\Delta {x_i}}}\left[ {H_{i + {1 / 2}}^{(\sigma )} - H_{i - {1 / 2}}^{(\sigma )}} \right] \le 0			\end{equation}
where $H_{i + {1 / 2}}^{(\sigma )} = \sigma H_{i + {1 / 2}}^{n + 1} + (1 - \sigma )H_{i + {1 / 2}}^n$ and ${H_{i + {1 / 2}}} = H({y_{i - l + 1}},...,{y_{i + r}})$ is the numerical entropy flux consistent with the differential flux $H(u,...,u) = F(u)$. The numerical entropy flux $H({y_{i - l + 1}},...,{y_{i + r}})$ for {\it F} is not unique. We use the so-called proper numerical entropy flux introduced by Merriam \cite{b17}, Sonar \cite{b18}, and Zhao and Wu \cite{b19}. 

For the conservative scheme \eqref{eq:24}, we define the proper numerical entropy flux in a similar way to Zhao and Wu \cite{b19}.
\begin{defn}
 We call the numerical entropy flux $H({y_{i - l + 1}},...,{y_{i + r}})$ of the conservative scheme \eqref{eq:24} is proper, if
\begin{equation}
\label{eq:310}
 \frac{\partial }{{\partial {y_j}}}H({y_{i - l + 1}},...,{y_{i + r}}) = \frac{{dU({y_j})}}{{d{y_j}}}\frac{\partial }{{\partial {y_j}}}h({y_{i - l + 1}},...,{y_{i + r}})  \qquad	{\rm for} \quad j = i - l + 1,...,i + r 		
\end{equation}
\end{defn}

The proper numerical entropy flux for the difference scheme \eqref{eq:26} can be written in the form
\begin{equation}
\label{eq:311a}
H_{i + {1/2}}^{(\sigma )} = H_{i + {1/2}}^{L,(\sigma )} + \left( {\alpha {H^d}} \right)_{i + {1/2}}^{(\sigma )} 
\end{equation}
where $H_{i+{1/2}}^d = H_{i+{1/2}}^H - H_{i+{1/2}}^{L}$, $H_{i+{1/2}}^L$ and $H_{i+{1/2}}^H$ are the low-order and high-order proper numerical entropy fluxes corresponding to the low-order and high-order numerical fluxes  $h_{i+{1/2}}^L$ and $h_{i+{1/2}}^H$. 

\begin{theorem}
\label{th9}
 Let a convex function $U \in {C^2}(R)$, $f \in {C^1}(R)$ and for all i $\Delta t$ satisfies the conditions \eqref{eq:36a} and
\begin{equation}
\label{eq:312a} 
\begin{split} 
& (1 - \sigma ) \; \Delta t \; {\left( {U''({{\bar y}^n})} \right)^2}{\left( {h_{i + {1 /2}}^{L,n} - h_{i - {1 /2}}^{L,n}} \right)^2} \\
\le 2\Delta {x_i} \; & U''({\underline y}^n)\left[ {U'(y_i^n)\left( {h_{i + {1 /2}}^{L,n} - h_{i - {1 /2}}^{L,n}} \right) - H_{i + {1 /2}}^{L,n} + H_{i - {1 /2}}^{L,n}} \right]
\end{split} 
\end{equation}

Then the fully discrete scheme \eqref{eq:34a}-\eqref{eq:35a} satisfies the following cell entropy inequality

\begin{equation}
\label{eq:313a}
\begin{split}
U(y_i^{n + 1}) & - U(y_i^n) + \sigma \frac{{\Delta t}}{{\Delta {x_i}}}\left[ {H_{i + {1 /2}}^L(y_i^{n + 1},y_{i + 1}^{n + 1}) - H_{i + {1 /2}}^L(y_{i - 1}^{n + 1},y_i^{n + 1})} \right]  \\ 				
& + (1 - \sigma )\frac{{\Delta t}}{{\Delta {x_i}}}\left[ {H_{i + {1 /2}}^L(y_i^n,y_{i + 1}^n) - H_{i + {1 /2}}^L(y_{i - 1}^n,y_i^n)} \right] \le 0 
\end{split}
\end{equation}
where $H_{i + {1 /2}}^L$ is the proper numerical entropy flux corresponding to the numerical flux $h_{i + {1 /2}}^L$. 
\end{theorem}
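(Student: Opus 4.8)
The plan is to reduce the cell entropy inequality \eqref{eq:313a} to the single pointwise hypothesis \eqref{eq:312a} by combining the discrete maximum principle of Theorem~\ref{th8}, a discrete entropy-production estimate for the three-point low-order flux, an exact second-order Taylor expansion of $U$, and one Young inequality with a sharply chosen parameter. Throughout I would write $\lambda_i=\Delta t/\Delta{x_i}$, $\Delta h_i^{L,k}=h_{i+1/2}^{L,k}-h_{i-1/2}^{L,k}$, $\Delta H_i^{L,k}=H_{i+1/2}^{L,k}-H_{i-1/2}^{L,k}$ for $k=n,n+1$, and $w_i=\sigma\,\Delta h_i^{L,n+1}+(1-\sigma)\,\Delta h_i^{L,n}$, so that scheme \eqref{eq:34a} reads $y_i^{n+1}-y_i^n=-\lambda_i w_i$ and \eqref{eq:313a} becomes $U(y_i^{n+1})-U(y_i^n)+\sigma\lambda_i\Delta H_i^{L,n+1}+(1-\sigma)\lambda_i\Delta H_i^{L,n}\le0$. (If $\sigma=1$ all terms carrying $1-\sigma$ drop out and the claim is immediate from Step~2 below, so I would assume $\sigma<1$.) Since \eqref{eq:36a} holds, Theorem~\ref{th8} gives $y_i^{n+1}\in[{\underline y}^n,{\bar y}^n]$, hence every point between $y_i^n$ and $y_i^{n+1}$ lies in that interval, on which I bound $U''$ from below by $U''({\underline y}^n)>0$ and from above by $U''({\bar y}^n)$.

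\emph{Step 1 (entropy production of the low-order flux).} The key lemma is that for arbitrary states $a,b,c$,
\[
 U'(b)\bigl[h^L(b,c)-h^L(a,b)\bigr]-\bigl[H^L(b,c)-H^L(a,b)\bigr]\ \ge\ 0 .
\]
I would prove this by path integration. Freezing the left state at $b$, the properness relation \eqref{eq:310} gives $\frac{d}{dc}\bigl(U'(b)h^L(b,c)-H^L(b,c)\bigr)=\bigl(U'(b)-U'(c)\bigr)\,\partial_c h^L(b,c)$; since $\partial_c h^L\le0$ by \eqref{eq:35a} and $U'$ is nondecreasing ($U$ convex), the integrand keeps a fixed sign, so the integral from $b$ to $c$ is nonnegative, and with consistency ($h^L(b,b)=f(b)$, $H^L(b,b)=F(b)$) this yields $U'(b)h^L(b,c)-H^L(b,c)\ge U'(b)f(b)-F(b)$. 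The mirror computation in the left slot, using $\partial_a h^L(a,b)\ge0$, gives $U'(b)h^L(a,b)-H^L(a,b)\le U'(b)f(b)-F(b)$; subtracting proves the lemma. Evaluating it at $\bigl(y_{i-1}^k,y_i^k,y_{i+1}^k\bigr)$ produces the nonnegative quantities $E_i^k:=U'(y_i^k)\,\Delta h_i^{L,k}-\Delta H_i^{L,k}\ge0$, $k=n,n+1$.

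\emph{Step 2 (Taylor expansion and insertion of the entropy fluxes).} Expanding $U$ about $y_i^{n+1}$ with exact remainder and using $y_i^{n+1}-y_i^n=-\lambda_i w_i$,
\[
 U(y_i^{n+1})-U(y_i^n)=-\lambda_i U'(y_i^{n+1})\,w_i-\tfrac12 U''(\tilde\xi_i)\,\lambda_i^2 w_i^2 ,
\]
$\tilde\xi_i$ between $y_i^n$ and $y_i^{n+1}$; the negativity of this remainder is why $y_i^{n+1}$, not $y_i^n$, is the right expansion point. Split $U'(y_i^{n+1})w_i$ into its $\sigma$- and $(1-\sigma)$-parts. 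For the implicit part, Step~1 at level $n+1$ gives $U'(y_i^{n+1})\Delta h_i^{L,n+1}=\Delta H_i^{L,n+1}+E_i^{n+1}$. For the explicit part, write $U'(y_i^{n+1})=U'(y_i^n)-\lambda_i U''(\zeta_i)w_i$ (mean value theorem, $\zeta_i$ between $y_i^n$ and $y_i^{n+1}$) and then $U'(y_i^n)\Delta h_i^{L,n}=\Delta H_i^{L,n}+E_i^n$. Collecting terms,
\begin{equation*}
\begin{split}
 & U(y_i^{n+1})-U(y_i^n)+\sigma\lambda_i\Delta H_i^{L,n+1}+(1-\sigma)\lambda_i\Delta H_i^{L,n} \\
 & \qquad =-\sigma\lambda_i E_i^{n+1}-(1-\sigma)\lambda_i E_i^n+(1-\sigma)\lambda_i^2 U''(\zeta_i)\,w_i\,\Delta h_i^{L,n}-\tfrac12 U''(\tilde\xi_i)\,\lambda_i^2 w_i^2 .
\end{split}
\end{equation*}

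\emph{Step 3 (absorbing the remainder).} Drop $-\sigma\lambda_i E_i^{n+1}\le0$. Using $U''(\zeta_i)\le U''({\bar y}^n)$, $U''(\tilde\xi_i)\ge U''({\underline y}^n)$ and $|w_i|\,|\Delta h_i^{L,n}|\le\frac1{2t}w_i^2+\frac t2(\Delta h_i^{L,n})^2$ with $t=(1-\sigma)U''({\bar y}^n)/U''({\underline y}^n)$ makes the $w_i^2$-coefficient vanish and leaves
\[
 U(y_i^{n+1})-U(y_i^n)+\sigma\lambda_i\Delta H_i^{L,n+1}+(1-\sigma)\lambda_i\Delta H_i^{L,n}\ \le\ -(1-\sigma)\lambda_i E_i^n+\frac{(1-\sigma)^2\,(U''({\bar y}^n))^2}{2\,U''({\underline y}^n)}\,\lambda_i^2\,(\Delta h_i^{L,n})^2 .
\]
Dividing the requirement that the right-hand side be $\le0$ by $(1-\sigma)\lambda_i>0$ and inserting $\lambda_i=\Delta t/\Delta{x_i}$, $E_i^n=U'(y_i^n)\,\Delta h_i^{L,n}-\Delta H_i^{L,n}$, this is equivalent to
\[
 2\Delta{x_i}\,U''({\underline y}^n)\bigl[\,U'(y_i^n)\,\Delta h_i^{L,n}-\Delta H_i^{L,n}\,\bigr]\ \ge\ (1-\sigma)\,\Delta t\,(U''({\bar y}^n))^2\,(\Delta h_i^{L,n})^2 ,
\]
which, written out in terms of $h_{i\pm1/2}^{L,n}$ and $H_{i\pm1/2}^{L,n}$, is exactly hypothesis \eqref{eq:312a}. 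Hence \eqref{eq:313a} holds.

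The step I expect to be the main obstacle is Step~1: the nonnegativity of the discrete entropy production of the monotone low-order flux is the only place where both the properness relation \eqref{eq:310} and the sign conditions \eqref{eq:35a} are genuinely used, and it is the fully discrete, $\sigma$-weighted counterpart of the semi-discrete inequality of Zhao and Wu. A secondary delicate point is that \eqref{eq:312a} is tuned precisely so that, after the sharp choice of the Young parameter $t$, the negative Taylor remainder $-\tfrac12 U''(\tilde\xi_i)\lambda_i^2 w_i^2$ compensates exactly the cross term produced by the explicit part; a cruder estimate — for instance using only the convexity bound $U(y_i^{n+1})-U(y_i^n)\le U'(y_i^{n+1})(y_i^{n+1}-y_i^n)$ and discarding the quadratic remainder — fails to close for $\sigma>0$. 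Finally, \eqref{eq:36a} is needed only to invoke Theorem~\ref{th8}, i.e. to keep $y_i^{n+1}$, $\tilde\xi_i$ and $\zeta_i$ in $[{\underline y}^n,{\bar y}^n]$ where the bounds on $U''$ are valid.
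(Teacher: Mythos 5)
Your proposal is correct and follows essentially the same route as the paper's proof: expanding $U$ about $y_i^{n+1}$ and using the scheme reproduces exactly the paper's decomposition into the quadratic remainder, the cross term $(1-\sigma)\lambda_i^2 U''(\eta)w_i\Delta h_i^{L,n}$, and the two entropy-production terms, which the paper also shows are nonpositive via the same properness-plus-monotonicity integral argument that underlies your Step~1 lemma. Your explicit Young parameter and the final division by $(1-\sigma)\lambda_i$ recover precisely the paper's estimate \eqref{eq:315a} and its comparison with hypothesis \eqref{eq:312a}, so the two arguments coincide in substance.
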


\begin{proof}
 Multiplying the equation \eqref{eq:34a} by $U'(y_i^{n + 1})$ and subtracting it from the left-hand side of the inequality \eqref{eq:313a}, we have
\begin{equation}
\label{eq:314a}
\begin{split}
&  U(y_i^{n + 1}) - U(y_i^n) - U'(y_i^{n + 1})(y_i^{n + 1} - y_i^n) - (1 - \sigma )\frac{{\Delta t}}{{\Delta {x_i}}}\left( {U'(y_i^{n + 1}) - U'(y_i^n)} \right)\left( {h_{i + {1 /2}}^{L,n} - h_{i - {1 /2}}^{L,n}} \right)  \\
& \qquad \qquad \qquad + \sigma \frac{{\Delta t}}{{\Delta {x_i}}}\left[ {H_{i + {1 /2}}^{L,n + 1} - H_{i - {1 /2}}^{L,n + 1} - U'(y_i^{n + 1})\left( {h_{i + {1 /2}}^{L,n + 1} - h_{i - {1 /2}}^{L,n + 1}} \right)} \right] \\ 
& \qquad \qquad \qquad   + (1 - \sigma )\frac{{\Delta t}}{{\Delta {x_i}}}\left[ {H_{i + {1 /2}}^{L,n} - H_{i - {1 /2}}^{L,n} - U'(y_i^n)\left( {h_{i + {1 /2}}^{L,n} - h_{i - {1 /2}}^{L,n}} \right)} \right]  \\
&   = \left[ { - \frac{1}{2}U''(\xi ){{\left( {y_i^{n + 1} - y_i^n} \right)}^2} - (1 - \sigma )\frac{{\Delta t}}{{\Delta {x_i}}}U''(\eta )\left( {y_i^{n + 1} - y_i^n} \right)\left( {h_{i + {1 /2}}^{L,n} - h_{i - {1 /2}}^{L,n}} \right)} \right]   + \sigma \frac{\Delta t}{\Delta {x_i}} \\ 
& \times  \left[ {\int_{y_i^{n + 1}}^{y_{i + 1}^{n + 1}} {\left( {U'(v) - U'(y_i^{n + 1})} \right)\frac{\partial }{{\partial v}}h(y_i^{n + 1},v)dv + \int_{y_{i - 1}^{n + 1}}^{y_i^{n + 1}} {\left( {U'(v) - U'(y_i^{n + 1})} \right)\frac{\partial }{{\partial v}}h(v,y_i^{n + 1})dv} } } \right]  \\ 
& + (1 - \sigma )\frac{{\Delta t}}{{\Delta {x_i}}}\left[ {\int_{y_i^n}^{y_{i + 1}^n} {\left( {U'(v) - U'(y_i^n)} \right)\frac{\partial }{{\partial v}}h(y_i^n,v)dv + \int_{y_{i - 1}^n}^{y_i^n} {\left( {U'(v) - U'(y_i^n)} \right)\frac{\partial }{{\partial v}}h(v,y_i^n)dv} } } \right]
\end{split}
\end{equation}
where $\xi  = {\theta _1}y_i^{n + 1} + (1 - {\theta _1})y_i^n$, $\eta  = {\theta _2}y_i^{n + 1} + (1 - {\theta _2})y_i^n$ and $0 < {\theta _1},{\theta _2} < 1$. 
 
We show that the second and third terms in square brackets on the right-hand side of \eqref{eq:314a} are non-positive. Indeed, for a convex function $U(v) \in {C^1}(R)$, the inequality $\left[ {U'(v) - U'(w)} \right](v - w) \ge 0$ holds for any $v,w \in R$. Therefore, the sign of the expression $\left[ {U'(v) - U'(w)} \right]$ coincides with $sgn(v - w)$ and the integrands do not change the sign on the integration interval. Therefore,
\begin{equation*}
{\mathop{\rm sgn}} \left[ {\int_{{y_i}}^{{y_{i + 1}}} {\left( {U'(v) - U'({y_i})} \right)\frac{\partial }{{\partial v}}h({y_i},v)dv} } \right] =  - {{\mathop{\rm sgn}} ^2}({y_{i + 1}} - {y_i}) 
\end{equation*}

\begin{equation*}
{\mathop{\rm sgn}} \left[ {\int_{{y_{i - 1}}}^{{y_i}} {\left( {U'(v) - U'({y_i})} \right)\frac{\partial }{{\partial v}}h(v,{y_i})dv} } \right] =  - {{\mathop{\rm sgn}} ^2}({y_i} - {y_{i - 1}})
\end{equation*}

Substituting \eqref{eq:34a} into the first term in square brackets on the right-hand side of \eqref{eq:314a} and using the Young’s $\varepsilon$-inequality, we obtain that
\begin{equation}
\label{eq:315a}
\begin{split}
 - \frac{1}{2}U''(\xi ) & {\left( {y_i^{n + 1} - y_i^n} \right)^2} - (1 - \sigma )\frac{{\Delta t}}{{\Delta {x_i}}}U''(\eta )\left( {y_i^{n + 1} - y_i^n} \right)\left( {h_{i + {1 /2}}^{L,n} - h_{i - {1 /2}}^{L,n}} \right)  \\ 		& \qquad \le \frac{{{{(1 - \sigma )}^2}}}{2}{\left( {\frac{{\Delta t}}{{\Delta {x_i}}}} \right)^2}\frac{{{{\left( {U''(\eta )} \right)}^2}}}{{U''(\xi )}}{\left( {h_{i + {1 /2}}^{L,n} - h_{i - {1 /2}}^{L,n}} \right)^2} \\
& \qquad \le \frac{{{{(1 - \sigma )}^2}}}{2}{\left( {\frac{{\Delta t}}{{\Delta {x_i}}}} \right)^2}\frac{{{\left( {U''({\bar y}^n)} \right)}^2}}{U''({\underline y}^n)}{\left( {h_{i + {1/2}}^{L,n} - h_{i - {1/2}}^{L,n}} \right)^2} 
\end{split}
\end{equation}
This completes the proof of the theorem.
\end{proof}

Further, we consider the Rusanov scheme as low-order difference scheme \eqref{eq:34a} 
\begin{equation}
\label{eq:35}
 \frac{{\Delta {x_i}}}{{\Delta t}}\left( {y_i^{n + 1} - y_i^n} \right) + \sigma \left[ {h_{i + {1 / 2}}^{Rus,n + 1} - h_{i - {1 / 2}}^{Rus,n + 1}} \right] + (1 - \sigma )\left[ {h_{i + {1 / 2}}^{Rus,n} - h_{i - {1 / 2}}^{Rus,n}} \right] = 0 					 
\end{equation}
with the Rusanov numerical flux $h_{i + {1/2}}^{Rus}$
 
\begin{equation}
\label{eq:34}
 h_{i + {1 / 2}}^{Rus}({y_i},{y_{i + 1}}) = \frac{1}{2}\left[ {f({y_i}) + f({y_{i + 1}}) - \mathop {\max }\limits_{s \in [{y_i},{y_{i + 1}}]} \left| {f'(s)} \right|({y_{i + 1}} - {y_i})} \right] 				
\end{equation}
If {\it f} is a convex function than the expression $\mathop {\max }\limits_{s \in \left[ {{y_i},{y_{i + 1}}} \right]} \left| {f'(s)} \right|$ can be simplified to \[\mathop {\max }\limits_{s \in \left[ {{y_i},{y_{i + 1}}} \right]} \left| {f'(s)} \right|\; = \max \left( {\left| {f'({y_i})} \right|,\left| {f'({y_{i + 1}})} \right|} \right)\].

The proper numerical entropy flux for the Rusanov numerical flux can be written as
\begin{equation}
\label{eq:311}
  H_{i + {1/2}}^{Rus}({y_i},{y_{i + 1}}) = \frac{1}{2}\left[ {F({y_i}) + F({y_{i + 1}}) - \mathop {\max }\limits_{s \in [{y_i},{y_{i + 1}}]} \left| {f'(s)} \right|\left( {U({y_{i + 1}}) - U({y_i})} \right)} \right] 		 
\end{equation}
 
If $\Delta t$ satisfies the conditions \eqref{eq:36a} then we have
 
\begin{equation}
\label{eq:38}
 \mathop {\min }\limits_{j \in {S_i}} y_j^n \le y_i^{n + 1} + \sigma \frac{{\Delta t}}{{\Delta {x_i}}}\left[ {h_{i + {1 / 2}}^{Rus,n + 1} - h_{i - {1 / 2}}^{Rus,n + 1}} \right] \le \mathop {\max }\limits_{j \in {S_i}} y_j^n 	\end{equation}

Thus, to obtain a physically relevant solution of IVP \eqref{eq:31}-\eqref{eq:32} the flux limiters $\alpha _{i+{1/2}}^{n+1}$ and $\alpha _{i+{1/2}}^n$ should satisfy  
\begin{equation}
\label{eq:312}
  y_i^{n + 1} - y_i^n + \frac{{\Delta t}}{{\Delta {x_i}}}\left[ {h_{i + {1 / 2}}^{Rus,(\sigma )} + \left( {\alpha {h^d}} \right)_{i + {1 / 2}}^{(\sigma )} - h_{i - {1 / 2}}^{Rus,(\sigma )} - \left( {\alpha {h^d}} \right)_{i - {1 / 2}}^{(\sigma )}} \right] = 0				 
\end{equation}
\begin{equation}
\label{eq:313}
  U(y_i^{n + 1}) - U(y_i^n) + \frac{{\Delta t}}{{\Delta {x_i}}}\left[ {H_{i + {1/2}}^{Rus,(\sigma )} + \left( {\alpha {H^d}} \right)_{i + {1 / 2}}^{(\sigma )} - H_{i - {1 / 2}}^{Rus,(\sigma )} - \left( {\alpha {H^d}} \right)_{i - {1 / 2}}^{(\sigma )}} \right] \le 0			
\end{equation}
 
Let $\boldsymbol{w}$ is a grid function on ${\Omega _h}$. Multiplying the inequality \eqref{eq:312} by ${w_i}$ and then subtracting it from \eqref{eq:313}, we have
\begin{equation} 
\begin{split} 
\label{eq:314}
&  \left( {U(y_i^{n + 1})  - U(y_i^n)  - {w_i}(y_i^{n + 1} - y_i^n)} \right)   + \frac{{\Delta t}}{{\Delta {x_i}}}\left[ {H_{i+{1/2}}^{Rus,(\sigma )} - {w_i}h_{i+{1/2}}^{Rus,(\sigma )} - H_{i-{1/2}}^{Rus,(\sigma )} + {w_i}h_{i-{1/2}}^{Rus,(\sigma )}} \right]  \\
&  \le \frac{{\Delta t}}{{\Delta {x_i}}}\left[ { - \left( {\alpha {H^d}} \right)_{i+{1/2}}^{(\sigma )}  + {w_i}\left( {\alpha {h^d}} \right)_{i + {1/2}}^{(\sigma )} + \left( {\alpha {H^d}} \right)_{i-{1/2}}^{(\sigma )} - {w_i}\left( {\alpha {h^d}} \right)_{i-{1/2}}^{(\sigma )}} \right]	\end{split}
\end{equation}

Further, we will consider the inequality \eqref{eq:314} instead of \eqref{eq:313}.

\subsection{Finding Flux Limiters} \label{Sec31}
Usually, when numerically solving the IVP \eqref{eq:31}-\eqref{eq:32}, the modeling area is replaced by a large but limited area. Therefore, in further, we suggest that the flux limiters are finding in the limited area \[{U_{ad}} = \left\{ {\left. {({\boldsymbol{\alpha} ^n},{\boldsymbol{\alpha} ^{n + 1}})} \right| \quad {\rm{0}} \le \alpha _{i + {1/2}}^k \le {\rm{1, \quad - }}N \le i \le N; \quad k = n,n + 1} \right\}\]

We will consider the following optimization problem to find the flux limiters for the difference scheme \eqref{eq:312} at one time step.
\begin{equation}
\label{eq:315}
 \Im ({\boldsymbol{\alpha} ^n},{\boldsymbol{\alpha} ^{n + 1}}) = \left\langle {\boldsymbol{c_1},{\boldsymbol{\alpha} ^n}} \right\rangle  + \left\langle {\boldsymbol{c_2},{\boldsymbol{\alpha} ^{n + 1}}} \right\rangle  \to \mathop {\max }\limits_{{\boldsymbol{\alpha} ^n},{\boldsymbol{\alpha} ^{n + 1}} \in {U_{ad}}} \qquad	\boldsymbol{c_1},\boldsymbol{c_2} > 0 				
\end{equation}
subject to the inequalities \eqref{eq:314} and
\begin{equation}
\label{eq:316}
\begin{split}
 \frac{{\Delta {x_i}}}{{\Delta t}}\left( {\mathop {\min }\limits_{j \in {S_i}} y_j^n - y_i^n} \right) & + (1 - \sigma )\left( {h_{i + {1/2}}^{Rus,n} - h_{i - {1/2}}^{Rus,n}} \right) \le  
    \left( {\alpha {h^d}} \right)_{i - {1/2}}^{(\sigma )} - \left( {\alpha {h^d}} \right)_{i + {1/2}}^{(\sigma )} \\ 						
   & \le \frac{{\Delta {x_i}}}{{\Delta t}}\left( {\mathop {\max }\limits_{j \in {S_i}} y_j^n - y_i^n} \right) + (1 - \sigma )\left( {h_{i + {1/2}}^{Rus,n} - h_{i - {1/2}}^{Rus,n}} \right)  
\end{split}   	
\end{equation}

Note that due to \eqref{eq:314}, the optimization problem \eqref{eq:315}-\eqref{eq:316} is nonlinear for any $\sigma $. Then the iterative process to  find the flux limiters and to solve the difference scheme \eqref{eq:312}, similarly to the algorithm \eqref{eq:237}-\eqref{eq:240}, has the following form

\begin{enumerate}[\bfseries   Step 1.]
\item \label {it:21}
 Initialize positive numbers $\delta ,\; {\varepsilon _1},{\varepsilon _2} > 0$ and vectors $\boldsymbol{c_1},\boldsymbol{c_2} \ge 0$. Set $p = 0$, ${\boldsymbol{y}^{n + 1,0}} = {\boldsymbol{y}^n}$, ${\boldsymbol{\alpha} ^{n,0}},{\boldsymbol{\alpha} ^{n + 1,0}} = \boldsymbol{0}$.
\item \label {it:22}
 Find the solution ${\boldsymbol{\alpha} ^{n,p + 1}},{\boldsymbol{\alpha} ^{n + 1,p + 1}}$ of the following linear programming problem
\begin{equation}
\label{eq:317}
  \Im ({\boldsymbol{\alpha} ^{n,p + 1}},{\boldsymbol{\alpha} ^{n + 1,p + 1}}) = \sum\limits_k {\left( {{c_{1,k}}\;\alpha _{k + {1 / 2}}^{n,p + 1} + {c_{2,k}}\;\alpha _{k + {1 / 2}}^{n + 1,p + 1}} \right)}  \to \mathop {\max }\limits_{{\boldsymbol{\alpha} ^n},{\boldsymbol{\alpha} ^{n + 1}} \in {U_{ad}}}  				
\end{equation}
\begin{eqnarray} 
& \dfrac{{\Delta {x_i}}}{{\Delta t}}\left( {\mathop {\min }\limits_{j \in {S_i}} y_j^n - y_i^n} \right) + (1 - \sigma )\left( {h_{i+{1/2}}^{Rus,n} - h_{i-{1/2}}^{Rus,n}} \right) \nonumber \\ 
 &  \le    (1 - \sigma )\left( { - \alpha _{i+{1/2}}^{n,p + 1} \; h_{i+{1/2}}^{d,n} + \alpha _{i-{1/2}}^{n,p + 1}\;h_{i-{1/2}}^{d,n}} \right) 
  + \sigma \left( { - \alpha _{i+{1/2}}^{n + 1,p + 1} \; h_{i+{1/2}}^{d,n+1,p} + \alpha _{i-{1/2}}^{n+1,p+1} \; h_{i-{1/2}}^{d,n+1,p}} \right) \nonumber \\	
&   \le \dfrac{{\Delta {x_i}}}{{\Delta t}}\left( {\mathop {\max }\limits_{j \in {S_i}} y_j^n - y_i^n} \right) 
   + (1 - \sigma )\left( {h_{i+{1/2}}^{Rus,n} - h_{i - {1 / 2}}^{Rus,n}} \right)   	
\label{eq:318}
\end{eqnarray}
\begin{eqnarray} 
\label{eq:319}
&  \dfrac{\Delta {x_i}}{\Delta t} \left( {U_i^{n + 1,p} - U_i^n - {w_i}(y_i^{n + 1,p} - y_i^n)} \right) + \sigma \left[ {H_{i + {1 / 2}}^{Rus,n + 1,p} - {w_i}h_{i + {1 / 2}}^{Rus,n + 1,p} }\right.  \\
& - \left. { H_{i - {1 / 2}}^{Rus,n + 1,p} + {w_i}h_{i - {1 / 2}}^{Rus,n + 1,p}} \right]   
   + (1 - \sigma )\left[ {H_{i+{1/2}}^{Rus,n} - {w_i}h_{i+{1/2}}^{Rus,n} - H_{i-{1/2}}^{Rus,n} + {w_i}h_{i-{1/2}}^{Rus,n}} \right] \nonumber\\    
&   \le \sigma \left( {\alpha _{i + {1 / 2}}^{n + 1,p + 1}\,\lambda _{ii + 1}^{n + 1,p}\; + \,\alpha _{i - {1 / 2}}^{n + 1,p + 1}\;\lambda _{ii - 1}^{n + 1,p}} \right) + (1 - \sigma )\left( {\alpha _{i + {1/2}}^{n,p + 1}\;\lambda _{ii + 1}^n + \alpha _{i - {1 / 2}}^{n,p + 1}\;\lambda _{ii - 1}^n} \right)  \nonumber 			   
\end{eqnarray} 
where ${\lambda_{ik}} = \left( {{w_i} h_{(i+k)/2}^d - H_{(i+k)/2}^d} \right) \;{\mathop{\rm sgn}} (k - i)$.
\item \label {it:31}
 For the ${\boldsymbol{\alpha} ^{n,p + 1}},{\boldsymbol{\alpha} ^{n + 1,p + 1}}$, we find $y_i^{n + 1,p + 1}$ from the system of equations
\begin{eqnarray} 
\label{eq:320}
&  \dfrac{{\Delta {x_i}}}{{\Delta t}}y_i^{n + 1,p + 1} + \sigma \left( {h_{i+1/2}^{Rus,n+1,p+1} - h_{i-1/2}^{Rus,n+1,p+1}} \right) = \dfrac{{\Delta {x_i}}}{{\Delta t}}y_i^n - (1 - \sigma )\left( {h_{i+1/2}^{Rus,n} - h_{i-1/2}^{Rus,n}} \right)  \nonumber \\ &  \\ 
&  - \sigma \left( {\alpha _{i+{1/2}}^{n+1,p+1} \, h_{i+{1/2}}^{d,n + 1,p} - \alpha _{i-1/2}^{n+1,p+1} \, h_{i-1/2}^{d,n+1,p}} \right) -  
  (1 - \sigma ) \left( {\alpha _{i+1/2}^{n,p+1}\,h_{i+1/2}^{d,n} - \,\alpha _{i-1/2}^{n,p+1} \; h_{i+1/2}^{d,n}} \right) \nonumber 		
\end{eqnarray}
\item \label {it:41}
 Algorithm stop criterion
\begin{equation}
\label{eq:321}
\begin{split}
\frac{{\left| {y_i^{n + 1,p + 1} - y_i^{n + 1,p}} \right|}}{{\max \left( {\delta ,\left| {y_i^{n + 1,p + 1}} \right|} \right)}} < {\varepsilon _1}  \;
  \left| {\alpha _{i+1/2}^{n+1,p+1} - \alpha _{i+1/2}^{n+1,p}} \right| < {\varepsilon _2} \; \text {and} \;\; \left| {\alpha _{i + {1 / 2}}^{n,p + 1} - \alpha_{i+1/2}^{n,p}} \right| < {\varepsilon _2} 		
\end{split}   
\end{equation}
If the conditions \eqref{eq:321} hold, then ${\boldsymbol{y}^{n + 1}} = {\boldsymbol{y}^{n + 1,p + 1}}$, otherwise $p = p + 1$ and go to {\bf Step~\ref{it:2}}. 
\end{enumerate}

\begin{theorem}
\label{th:10}
  Let $U \in {C^2}(R)$, $f \in C^1(R)$. If $\Delta t$ satisfies 
 \eqref{eq:36a} and \eqref{eq:312a} then the linear programming problem \eqref{eq:317}-\eqref{eq:319} is solvable.
\end{theorem}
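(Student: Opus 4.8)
I would follow the template used for Theorem~\ref{th:6}. A linear program is solvable as soon as (i) its objective is bounded above on the feasible set and (ii) the feasible set is nonempty. For (i) the feasible set is contained in $U_{ad}$, on which every component $\alpha_{i+1/2}^{k}$ lies in $[0,1]$, so the linear objective \eqref{eq:317} with $\boldsymbol{c_1},\boldsymbol{c_2}\ge 0$ is automatically bounded. Hence all the work is in producing one feasible point.

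The candidate is $\boldsymbol{\alpha}^{n,p+1}=\boldsymbol{\alpha}^{n+1,p+1}=\boldsymbol{0}$. With this choice the bilinear ``antidiffusive'' middle term of \eqref{eq:318} vanishes and the remaining two-sided inequality is, after multiplication by $\Delta x_i/\Delta t$, exactly the local-extremum bound \eqref{eq:38} for the Rusanov scheme. Since $h^{Rus}_{i+1/2}$ satisfies the monotonicity conditions \eqref{eq:35a}, Theorem~\ref{th8} (specialized to $h^{L}=h^{Rus}$) yields \eqref{eq:38} under the CFL restriction \eqref{eq:36a}, so \eqref{eq:318} holds at $\boldsymbol{\alpha}=\boldsymbol{0}$.

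For the entropy constraint \eqref{eq:319} with $\boldsymbol{\alpha}=\boldsymbol{0}$ the right-hand side (the terms carrying $\lambda_{ik}$) vanishes, so it remains to check that its left-hand side is $\le 0$. Grouping the terms and taking the weight $w_i=U'(y_i^{n+1,p})$, the left-hand side becomes the discrete cell entropy balance of the low-order scheme evaluated at the current iterate: it equals the entropy-flux difference minus $w_i$ times the low-order flux difference, which is non-positive by Theorem~\ref{th9}. Concretely one multiplies the low-order update relation by $w_i$, subtracts it from the entropy balance, and uses convexity of $U$ together with the proper-numerical-entropy-flux identity \eqref{eq:310} and the sign conditions \eqref{eq:35a} to see that the boundary integral contributions have the correct sign; the quadratic remainder is absorbed by Young's $\varepsilon$-inequality exactly as in \eqref{eq:315a}, and this is the step in which hypothesis \eqref{eq:312a} is used (condition \eqref{eq:36a} being already imposed). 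At $p=0$, where $\boldsymbol{y}^{n+1,0}=\boldsymbol{y}^{n}$, the quadratic remainder drops out and the inequality follows from convexity of $U$ and \eqref{eq:35a} alone.

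The one place where genuine care is required is the bookkeeping in this last step: the left-hand side of \eqref{eq:319} is written at the previous iterate $\boldsymbol{y}^{n+1,p}$ and mixes the time levels $n$ and $(n+1,p)$, so it must be matched term by term with the cell entropy inequality of Theorem~\ref{th9} for the low-order update that maps $\boldsymbol{y}^{n}$ to $\boldsymbol{y}^{n+1,p}$, and one has to verify that the chosen $w_i$ makes the $w_i$-weighted residual of that update cancel. Once such a feasible point has been exhibited, solvability of the finite-dimensional linear program is immediate, which completes the proof.
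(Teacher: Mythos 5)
Your proposal is correct and follows essentially the same route as the paper: the objective is bounded because every component of $(\boldsymbol{\alpha}^n,\boldsymbol{\alpha}^{n+1})\in U_{ad}$ lies in $[0,1]$, and the feasible set is nonempty because the zero vectors satisfy \eqref{eq:318} by Theorem~\ref{th8} and \eqref{eq:319} by Theorem~\ref{th9}. The paper's own proof is even terser---it simply cites Theorems~\ref{th8} and \ref{th9} for feasibility of $\boldsymbol{\alpha}^{n}=\boldsymbol{\alpha}^{n+1}=\boldsymbol{0}$---so the extra bookkeeping you describe at the iterate $\boldsymbol{y}^{n+1,p}$ only elaborates the same argument.
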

\begin{proof}
 To prove that the problem \eqref{eq:317}-\eqref{eq:319} is solvable, it is sufficient to show that the cost functional $\Im ({\boldsymbol{\alpha} ^n},{\boldsymbol{\alpha} ^{n + 1}})$ is bounded and the feasible set is non-empty. The boundedness of the functional \eqref{eq:317} follows from the boundedness of the vectors ${\boldsymbol{\alpha} ^n}$ and ${\boldsymbol{\alpha} ^{n + 1}}$ on ${\Omega _h}$, whose coordinates vary from zero to one. 

For ${\boldsymbol{\alpha} ^n},{\boldsymbol{\alpha}^{n+1}} = \boldsymbol{0}$, the inequalities \eqref{eq:318} and \eqref{eq:319} follow from the theorems~\ref{th8} and \ref{th9}. Therefore, the feasible set is non-empty that completes the proof.
\end{proof}

\subsection{Approximate Solution of the Optimization Problem} \label{Sec32}
A nontrivial admissible solution of the optimization problem \eqref{eq:317}-\eqref{eq:319} satisfies the inequalities \eqref{eq:318} and \eqref{eq:319}. From the inequality \eqref{eq:318}, similarly to Section~\ref{Sec24}, we have
\begin{equation}
\label{eq:323}
{\overline \alpha  _{i+1/2}} = \left\{ {\begin{array}{*{20}{c}}{\min (R_i^ + ,R_{i + 1}^ - ), \quad {\rm{ }} h_{i + 1/2}^d < 0}\\{\min (R_i^ - ,R_{i + 1}^ + ), \quad {\rm{ }} h_{i+1/2}^d > 0}\end{array}} \right.								
\end{equation}

Values $R_i^ \pm $ are calculated by the formulas
\begin{equation}
\label{eq:324}
 R_i^ \pm  = \min (1,{{Q_i^ \pm } \mathord{\left/ {\vphantom {{Q_i^ \pm } {P_i^ \pm ),}}} \right. \kern-\nulldelimiterspace} {P_i^ \pm )}} 			\end{equation}
where
\begin{eqnarray*}
& P_i^ -  = (1-\sigma) \left[ {\min \left( {0, - h_{i+1/2}^{d,n}} \right) + \min \left( {0,h_{i-1/2}^{d,n}} \right) } \right]  +\sigma \left[ {\min \left( {0, - h_{i+1/2}^{d,n+1}} \right) + \min \left( {0,h_{i-1/2}^{d,n+1}} \right) } \right] \\						
& P_i^ +  = (1-\sigma) \left[ {\max \left( {0, - h_{i+1/2}^{d,n}} \right) + \max \left( {0,h_{i-1/2}^{d,n}} \right) }\right] +\sigma \left[ {\max \left( {0, - h_{i+1/2}^{d,n+1}} \right) + \max \left( {0,h_{i-1/2}^{d,n+1}} \right) }\right] 					\end{eqnarray*}
	
\[  Q_i^ +  = \frac{\Delta {x_i}}{\Delta t}\left( {\mathop {\max }\limits_{j \in {S_i}} y_j^n - y_i^n} \right) + (1-\sigma) \left[ {h_{i+1/2}^{Rus,n} - h_{i-1/2}^{Rus,n}} \right] \]
\[  Q_i^ -  = \frac{\Delta {x_i}}{\Delta t}\left( {\mathop {\min }\limits_{j \in {S_i}} y_j^n - y_i^n} \right) +(1-\sigma) \left[ {h_{i+1/2}^{Rus,n} - h_{i-1/2}^{Rus,n}} \right] \]

On the other hand, choosing the largest value among the components of the vectors $\boldsymbol{\alpha} ^n$ and $\boldsymbol{\alpha}^{n+1}$ for negative terms, we can estimate the expression on the right-hand side of inequality \eqref{eq:319}. Then
\begin{equation} 
\begin{split} 
\label{eq:325}
 \mathop {\max }\limits_{\lambda_{ik}^n,\lambda_{ik}^{n + 1} < 0} \left\{ {\alpha _{k + {1 / 2}}^n,\alpha _{k + {1 / 2}}^{n + 1}} \right\}\sum\limits_{k = i - 1,i + 1} {\left[ {(1 - \sigma )\min (0,\lambda _{ik}^n) + \sigma \min (0,\lambda _{ik}^{n + 1})} \right]}  \\			
 \le \sum\limits_{k = i - 1,i + 1} {\left[ {\alpha _{(i+k)/2}^n \; (1 - \sigma )\, \lambda _{ik}^n + \alpha _{(i+k)/2}^{n + 1} \; \sigma \, \lambda _{ik}^{n + 1}} \right]} 
\end{split}  
\end{equation}

Substituting \eqref{eq:325} into \eqref{eq:319}, we obtain
\begin{equation}
\label{eq:326} 
\begin{split} 
 {\tilde \alpha _{i+1/2}} = \min \left\{ 1, \frac{-W_i}{Y_i}\min \left( {0,{\mathop{\rm sgn}} {\lambda _{ii+1}}} \right) + \max \left( {0,{\mathop{\rm sgn}} {\lambda _{ii+1}}} \right),  \right.  \\ 
 \left. 
 \frac{-W_{i+1}}{Y_{i+1}} \min \left( {0,{\mathop{\rm sgn}} {\lambda _{i+1i}}} \right) + \max \left( {0,{\mathop{\rm sgn}} {\lambda _{i+1i}}} \right) \right\}  
\end{split} 
\end{equation}
where
\begin{eqnarray*}
& {W_i} = \dfrac{{\Delta {x_i}}}{{\Delta t}}\left( {U_i^{n + 1} - U_i^n - {w_i}(y_i^{n + 1} - y_i^n)} \right) + \sigma \left[ {H_{i + {1 / 2}}^{Rus,n + 1} - {w_i}h_{i + {1 / 2}}^{Rus,n + 1} - H_{i - {1 / 2}}^{Rus,n + 1} + {w_i}h_{i-{1/2}}^{Rus,n+1}} \right]  \\
 &  + (1 - \sigma )\left[ {H_{i+{1/2}}^{Rus,n} - {w_i}h_{i+{1/2}}^{Rus,n} - H_{i - {1/2}}^{Rus,n} + {w_i}h_{i-{1/2}}^{Rus,n}} \right] \\ 
& {Y_i} = \sum\limits_{k = i-1,i+1} {\left[ {(1 - \sigma )\min (0,\lambda _{ik}^n) + \sigma \min (0,\lambda _{ik}^{n+1})} \right]}   
\end{eqnarray*}

Thus, from \eqref{eq:323} and \eqref{eq:326} follow that the admissible nontrivial solution can be calculated as
\begin{equation}
\label{eq:327}
  {\alpha _{i+{1/2}}} = \min ({\overline \alpha  _{i+{1/2}}},{\tilde \alpha _{i + {1/2}}}) 									
\end{equation}

\section{Two-Dimensional Convection-Diffusion Equation} \label{Sec4}
On an example of the two-dimensional convection-diffusion equation, we show how the results obtained in Sections~\ref{Sec2} and \ref{Sec3} can be extended to the multidimensional case.

We consider the two-dimensional linear convection-diffusion equation:
\begin{equation}
\label{eq:41}
  \frac{{\partial \rho }}{{\partial t}} + \sum\limits_{p = 1}^2 {\frac{\partial }{{\partial {x^{(p)}}}}\left( {{u^{(p)}}(x,t)\rho  - {k^{(p)}}(x,t)\frac{{\partial \rho }}{{\partial {x^{(p)}}}}} \right)}  = 0, \qquad   x = ({x_1},{x_2}) \in \left[ {{a_1},{b_1}} \right] \times \left[ {{a_2},{b_2}} \right]  \quad	t > 0	
\end{equation}
with initial condition
\begin{equation}
\label{eq:42}
\rho \left( {x,0} \right) = {\rho ^0}\left( x \right)						\end{equation}

For a unique solution of \eqref{eq:41}-\eqref{eq:42}, the boundary conditions are set at the boundary of the modeling area. Usually, the boundary conditions are specified either by the values of the function $\rho $, or by its fluxes on the boundary surfaces. The coefficients ${k^{(p)}}$ are bounded below and above, i.e.
\[0 < {c_1} \le {k^{(p)}}(x,t) \le {c_2}  \qquad	{c_1},{c_2} = const > 0\]
 
We introduce a non-uniform grid ${\Omega _h}$ on $\left[ {{a_1},{b_1}} \right] \times \left[ {{a_2},{b_2}} \right]$ 
\[{\Omega _h} = \left\{ {(x_i^{(1)},x_j^{(2)}): \quad {\rm{  }}x_{j+1}^{(p)} = x_j^{(p)} + h_j^{(p)}, \quad {\rm{  }}j = \overline {1,{N_p} - 1} ; \quad {\rm{ }}x_1^{(p)} > {a_p}, \, {\rm{ }}x_N^{(p)} < {b_p},{\rm{  }} \quad p = 1,2} \right\}\]

We approximate the problem \eqref{eq:41}-\eqref{eq:42} by the following conservative difference scheme with weights 
\begin{equation}
\label{eq:43} 
\begin{split} 
y_{ij}^{n + 1} - y_{ij}^n + \frac{{\Delta t}}{\Delta x_i^{(1)}}\left[ {f_{i + {1 / 2}j}^{(\sigma )} + F_{i + {1 / 2}j}^{(\sigma )} - f_{i - {1 / 2}j}^{(\sigma )} - F_{i - {1 / 2}j}^{(\sigma )}} \right] \\ 
+ \frac{{\Delta t}}{{\Delta x_j^{(2)}}}\left[ {h_{ij + {1 / 2}}^{(\sigma )} + H_{ij + {1 / 2}}^{(\sigma )} - h_{ij - {1 / 2}}^{(\sigma )} - H_{ij - {1 / 2}}^{(\sigma )}} \right] = 0	 
\end{split} 
\end{equation}
\begin{equation}
\label{eq:44}
\boldsymbol{y} = {\boldsymbol{y}^0}											
\end{equation}

The numerical fluxes are defined as follows
\[f_{i + {1 / 2}j}^{(\sigma )} = f_{i + {1 / 2}j}^{L,(\sigma )} + \left( {\alpha {f^d}} \right)_{i + {1 / 2}j}^{(\sigma )} \qquad 
 f_{i + {1 / 2}j}^d = \max \left( {0,f_{i + {1 / 2}j}^H - f_{i + {1 / 2}j}^L - f_{i + {1 / 2}j}^s} \right)\] 
\[F_{i + {1 / 2}j}^{(\sigma )} = \sigma \min \left( {0,f_{i + {1 / 2}j}^{H,n + 1} - f_{i + {1 / 2}j}^{L,n + 1} - f_{i + {1 / 2}j}^{s,n + 1}} \right) + (1 - \sigma )\min \left( {0,f_{i + {1 / 2}j}^{H,n} - f_{i + {1 / 2}j}^{L,n} - f_{i + {1 / 2}j}^{s,n}} \right)\]
\[h_{ij + {1 / 2}}^{(\sigma )} = h_{ij + {1 / 2}}^{L,(\sigma )} + \left( {\alpha {h^d}} \right)_{ij + {1 / 2}}^{(\sigma )} \qquad 
 h_{ij+1/2}^d = \max \left( {0,h_{ij+1/2}^H - h_{ij+1/2}^L - h_{ij+1/2}^s} \right) \] 
\[H_{ij +1/2}^{(\sigma )} = \sigma \min \left( {0,h_{ij+1/2}^{H,n + 1} - h_{ij + {1 / 2}}^{L,n + 1} - h_{ij +1/2}^{s,n + 1}} \right) + (1 - \sigma )\min \left( {0,h_{ij +1/2}^{H,n} - h_{ij +1/2}^{L,n} - h_{ij +1/2}^{s,n}} \right)\]
where ${f^H}$,${h^H}$ and ${f^L}$,\,${h^L}$ are the consistent numerical fluxes of high-order and low-order accuracy for convective differential fluxes, and ${f^s}$,\,${h^s}$ are the consistent numerical fluxes with the diffusive fluxes. 

Then, for example, setting \; $f_{i+1/2 j}^L = u_{i+1/2 j}^ + {y_{ij}} + u_{i+1/2 j}^ - {y_{i+1 j}}$, \; 
$f_{i+1/2 j}^H = 0.5{u_{i+1/2 j}}\left( {{y_{ij}} + {y_{i+1 j}}} \right)$, \;
$f_{i+1/2 j}^s = k_{i+1/2 j}^{(1)} \; \left( {y_{i+1 j} - y_{ij}} \right)  \mathord{\left/ {\vphantom {{\left( {y_{i + 1j} - y_{ij}} \right)} {{\Delta _{i+1/2}}{x^{(1)}}}}} \right. \kern-\nulldelimiterspace} {\Delta _{i+1/2}{x^{(1)}} } $ and similarly $h_{i j+1/2}^L$, \, $h_{i j+1/2}^H$, \, $h_{i j+1/2 }^f$ we rewrite \eqref{eq:43}-\eqref{eq:44} in the matrix form
\begin{equation}
\label{eq:45} 
\begin{split} 
 \left[ {E + \sigma \,\Delta t\left( {{A^{n + 1}} - {B^{n + 1}}(\boldsymbol{\alpha} )} \right)} \right]{\boldsymbol{y}^{n + 1}} = \left[ {E - (1 - \sigma )\,\Delta t\left( {{A^n} - {B^n}(\boldsymbol{\alpha} )} \right)} \right]{\boldsymbol{y}^n}  \\
 + \Delta t\left[ {\sigma 
   {\boldsymbol{g}^{n + 1}} + (1 - \sigma ){\boldsymbol{g}^n}} \right]   
\end{split}   
\end{equation}
where $\boldsymbol{ g}$ is the vector of boundary conditions. The matrices $ A = \left\{ {a_{rq}} \right\}_r^q $ and $B = \left\{ {{b_{rq}}} \right\}_r^q$ are five-diagonal matrices whose elements are calculated by the relations
\begin{eqnarray}
\label{eq:46}
 {a_{rr}} =  & \nonumber \\
\dfrac{1}{{\Delta x_i^{(1)}}} &  \left[ {u_{i+1/2 j}^{(1) + } - u_{i-1/2 j}^{(1) - } + \max \left( {0,\dfrac{{k_{i+1/2 j}^{(1)}}}{{{\Delta _{i +1/2}}{x^{(1)}}}} - \dfrac{{\left| {u_{i+1/2 j}^{(1)}} \right|}}{2}} \right) + \max \left( {0,\dfrac{{k_{i - {1 / 2}j}^{(1)}}}{{{\Delta _{i -1/2}}{x^{(1)}}}} - \dfrac{{\left| {u_{i -1/2 j}^{(1)}} \right|}}{2}} \right)} \right]  \nonumber\\ & \\
 +  \dfrac{1}{{\Delta x_j^{(2)}}} & \left[ {u_{ij+1/2}^{(2)+ } - u_{ij - {1 / 2}}^{(2) - } + \max \left( {0,\dfrac{{k_{ij+1/2}^{(2)}}}{{{\Delta _{j +1/2}}{x^{(2)}}}} - \dfrac{{\left| {u_{ij +1/2}^{(2)}} \right|}}{2}} \right) + \max \left( {0,\dfrac{{k_{ij -1/2}^{(2)}}}{{{\Delta _{j - {1 / 2}}}{x^{(2)}}}} - \dfrac{{\left| {u_{ij - {1 / 2}}^{(2)}} \right|}}{2}} \right)} \right]	\nonumber	
\end{eqnarray}
\begin{equation}
\label{eq:47}
  {a_{rr \mp 1}} =  \mp \frac{1}{{\Delta x_i^{(1)}}}\left[ {u_{i \mp {1 \mathord{\left/ {\vphantom {1 {2j}}} \right. \kern-\nulldelimiterspace} {2j}}}^{(1) \pm } \pm \max \left( {0,\frac{{k_{i \mp {1 / 2}j}^{(1)}}}{{{\Delta _{i \mp {1 / 2}}}{x^{(1)}}}} - \frac{{\left| {u_{i \mp {1 / 2}j}^{(1)}} \right|}}{2}} \right)} \right]  						
\end{equation}
\begin{equation}
\label{eq:48}
  {a_{rr \mp {N_1}}} =  \mp \frac{1}{{\Delta x_j^{(2)}}}\left[ {u_{ij \mp {1 / 2}}^{(2) \pm } \pm \max \left( {0,\frac{{k_{ij \mp {1 / 2}}^{(2)}}}{{{\Delta _{j \mp {1 / 2}}}{x^{(2)}}}} - \frac{{\left| {u_{ij \mp {1 / 2}}^{(2)}} \right|}}{2}} \right)} \right]  					\end{equation}
\begin{eqnarray}
\label{eq:49}
 {b_{rr}}\left( \boldsymbol{\alpha}  \right) = &   \nonumber \\
\dfrac{{ - 1}}{{\Delta x_i^{(1)}}} & \left[ {\alpha _{i - {1 / 2}j}^{(1)}\min \left( {0,\dfrac{{k_{i -1/2 j}^{(1)}}}{{{\Delta _{i - {1 / 2}}}{x^{(1)}}}} - \dfrac{{\left| {u_{i -1/2 j}^{(1)}} \right|}}{2}} \right) + \alpha _{i +1/2 j}^{(1)}\min \left( {0,\dfrac{{k_{i + {1 / 2}j}^{(1)}}}{{{\Delta _{i + {1 / 2}}}{x^{(1)}}}} - \dfrac{{\left| {u_{i + {1 / 2}j}^{(1)}} \right|}}{2}} \right)} \right]  \nonumber \\ & \\
  - \dfrac{1}{{\Delta x_j^{(2)}}} & \left[ {\alpha _{ij - {1 / 2}}^{(2)}\min \left( {0,\dfrac{{k_{ij - {1 / 2}}^{(2)}}}{{{\Delta _{j - {1 / 2}}}{x^{(2)}}}} - \dfrac{{\left| {u_{ij - {1 / 2}}^{(2)}} \right|}}{2}} \right) + \alpha _{ij + {1 / 2}}^{(2)}\min \left( {0,\dfrac{{k_{ij + {1 / 2}}^{(2)}}}{{{\Delta _{j + {1 / 2}}}{x^{(2)}}}} - \dfrac{{\left| {u_{ij + {1 / 2}}^{(2)}} \right|}}{2}} \right)} \right]  \nonumber 		
\end{eqnarray}
\begin{equation}
\label{eq:410}
  {b_{rr \mp 1}}\left( \boldsymbol{\alpha}  \right) = \frac{1}{{\Delta x_i^{(1)}}}\alpha _{i \mp {1 / 2}j}^{(1)}\min \left( {0,\frac{{k_{i \mp {1 / 2}j}^{(1)}}}{{{\Delta _{i \mp {1 / 2}}}{x^{(1)}}}} - \frac{{\left| {u_{i \mp {1 / 2}j}^{(1)}} \right|}}{2}} \right)  					
\end{equation}
\begin{equation}
\label{eq:411}
  {b_{rr \mp {N_1}}}\left( \boldsymbol{\alpha}  \right) = \frac{1}{{\Delta x_j^{(2)}}}\alpha _{ij \mp {1 / 2}}^{(2)}\min \left( {0,\frac{{k_{ij \mp {1 / 2}}^{(2)}}}{{{\Delta _{j \mp {1 / 2}}}{x^{(2)}}}} - \frac{{\left| {u_{ij \mp {1 / 2}}^{(2)}} \right|}}{2}} \right) , \quad		r = i + (j - 1){N_1} 	
\end{equation}
where \, ${u^{(p) \pm }} = 0.5\left( {{u^{(p)}} \pm \left| {{u^{(p)}}} \right|} \right)$, \, $u_{i+1/2 j}^{(1)} = {u^{(1)}}\left( {x_{i+1/2}^{(1)},x_j^{(2)},t} \right)$, \, $x_{i + 1/2 j}^{(1)} = 0.5\left( {x_{ij}^{(1)} + x_{i+1 j}^{(1)}} \right)$.

It is easy to verify that the matrices $A$ and $B$ satisfy the conditions \eqref{eq:212}-\eqref{eq:215}. As the matrix $D$, we can take the diagonal matrix with elements ${d_{rr}} = \Delta x_i^{(1)}\Delta x_j^{(2)}$. The theorems \ref{th:1} – \ref{th:4} are valid for the system of equations \eqref{eq:45}, and the iterative procedure \eqref{eq:237}-\eqref{eq:240} can be used to solve it. Therefore, the flux limiters can be calculated by using the formulas \eqref{eq:252}-\eqref{eq:257}.

\section{Numerical Results} \label{Sec5}
In our calculations, we apply GLPK (GNU Linear Programming Kit) v.4.65 set of routines for solving linear programming, mixed integer programming, and other related problem. GLPK is available at $https://www.gnu.org/software/glpk/$. 

For numerical solving of the IVP \eqref{eq:31}-\eqref{eq:32}, we use the weighted conservative scheme \eqref{eq:312}, in which the centered space flux was chosen as the high-order flux. In this case, the relations \eqref{eq:312}-\eqref{eq:313} can be written in the form
\begin{equation}
\label{eq:51}
\begin{split}
  y_i^{n + 1} - y_i^n + \frac{\Delta t}{\Delta {x_i}}\left[ {h_{i+1/2}^{Rus,(\sigma )} + {{\left( {\frac{\alpha _{i+1/2}}{2}\mathop {\max }\limits_{s \in [{y_i},{y_{i + 1}}]} \left| {f'(s)} \right|{\Delta _{i + {1 /2}}}y} \right)}^{(\sigma )}} } \right. \\ \left. {
  - h_{i -1/2}^{Rus,(\sigma )} - {{\left( {\frac{\alpha _{i - 1/2}}{2}\mathop {\max }\limits_{s \in [{y_{i-1}},{y_i}]} \left| {f'(s)} \right|{\Delta _{i -1/2}}y} \right)}^{(\sigma )}}} \right] = 0	
\end{split}  		
\end{equation}

\begin{equation}
\begin{split}
\label{eq:52}
U(y_i^{n + 1}) - U(y_i^n) + \frac{{\Delta t}}{{\Delta {x_i}}}\left[ {H_{i+1/2}^{Rus,(\sigma )} + \left( {\frac{{{\alpha _{i+1/2}}}}{2}\mathop {\max }\limits_{s \in [{y_i},{y_{i + 1}}]} \left| {f'(s)} \right|{\Delta _{i+1/2}}U} \right)_{i+1/2}^{(\sigma )}} \right. \\
  - \left. {H_{i-1/2}^{Rus,(\sigma )} - \left( {\frac{{{\alpha _{i-1/2}}}}{2}\mathop {\max }\limits_{s \in [{y_{i-1}},{y_i}]} \left| {f'(s)} \right|{\Delta _{i-1/2}}U} \right)_{i-1/2}^{(\sigma )}} \right] \le 0
\end{split}   			
\end{equation}

Then, from \eqref{eq:51}-\eqref{eq:52} we have the following inequalities for the finding of flux limiters
\begin{equation}
\label{eq:53} 
\begin{split} 
&  \frac{{\Delta {x_i}}}{{\Delta t}}\left( {\mathop {\min }\limits_{j \in {S_i}} y_j^n - y_i^n} \right) + (1 - \sigma )\left( {h_{i + {1 /2}}^{Rus,n} - h_{i - {1 /2}}^{Rus,n}} \right)  \\
& \le (1 - \sigma )\left( { - \frac{{\alpha _{i + {1 /2}}^n}}{2}\mathop {\max }\limits_{s \in \left[ {y_i^n,y_{i + 1}^n} \right]} \left| {f'(s)} \right|\;{\Delta _{i + {1 /2}}}{y^n} + \frac{{\alpha _{i - {1 /2}}^n}}{2}\mathop {\max }\limits_{s \in \left[ {y_{i - 1}^n,y_i^n} \right]} \left| {f'(s)} \right|\;{\Delta _{i - {1 /2}}}{y^n}} \right)    		\\	 
& + \sigma \left( { - \frac{{\alpha _{i + {1 /2}}^{n + 1}}}{2}\mathop {\max }\limits_{s \in \left[ {y_i^{n + 1},y_{i + 1}^{n + 1}} \right]} \left| {f'(s)} \right|\;{\Delta _{i + {1 /2}}}{y^{n + 1}} + \frac{{\alpha _{i - {1 /2}}^{n + 1}}}{2}\mathop {\max }\limits_{s \in \left[ {y_{i - 1}^{n + 1},y_i^{n + 1}} \right]} \left| {f'(s)} \right|\;{\Delta _{i - {1 /2}}}{y^{n + 1}}} \right)   \\
&   \le \frac{{\Delta {x_i}}}{{\Delta t}}\left( {\mathop {\max }\limits_{j \in {S_i}} y_j^n - y_i^n} \right) + (1 - \sigma )\left( {h_{i + {1 /2}}^{Rus,n} - h_{i - {1 /2}}^{Rus,n}} \right)  	
\end{split} 
\end{equation}
\begin{equation}
\begin{split} 
&  \frac{{\Delta {x_i}}}{{\Delta t}}\left( {U_i^{n + 1} - U_i^n - {w_i}(y_i^{n + 1} - y_i^n)} \right) + \sigma \left[ {H_{i+1/2}^{Rus,n + 1} - {w_i}h_{i +1/2}^{Rus,n + 1} - H_{i-1/2}^{Rus,n+1} + {w_i}h_{i-1/2}^{Rus,n+1}} \right]  \\
&  + (1 - \sigma )\left[ {H_{i + 1/2}^{Rus,n} } \right.  
   - {w_i}h_{i +1/2}^{Rus,n} 
- \left. {H_{i - 1/2}^{Rus,n} + {w_i}h_{i - 1/2}^{Rus,n}} \right] 
 \le \sigma \left( {\alpha _{i+1/2}^{n + 1} \, \lambda _{ii+1}^{n+1}\; - \alpha _{i-1/2}^{n + 1} \, \lambda _{ii-1}^{n+1} \; } \right) \\ 
& + (1 - \sigma )\left( {\alpha _{i+1/2}^n \; \lambda _{ii + 1}^n \; - \alpha _{i-1/2}^n \; \lambda _{ii - 1}^n \,} \right)   
\label{eq:54} 
\end{split}				
\end{equation}
where ${\lambda _{ik}} = \frac{1}{2}\mathop {\max }\limits_{s \in \left[ {{y_i},{y_k}} \right]} \left| {f'(s)} \right| \;\; \left( {w_i \; {\Delta _{(i + k)/2}} y - {\Delta _{(i + k)/2}}U} \right)\;$.
  
Also, for the numerical solving of the IVP \eqref{eq:31}-\eqref{eq:32} we apply the Godunov scheme with numerical flux 
\begin{equation}
\label{eq:55}
  f_{i + {1 /2}}^G({y_i},{y_{i + 1}}) = \left\{ {\begin{array}{*{20}{c}}{\mathop {\min \,}\limits_{{y_i} \le y \le {y_{i + 1}}} f(y) \quad {\rm{if }} \; {y_i} \le {y_{i + 1}}}\\{\mathop {\max \,}\limits_{{y_{i + 1}} \le y \le {y_i}} f(y)  \quad {\rm{if }} \; {y_i} > {y_{i + 1}}}\end{array}} \right.  						
\end{equation}

The Rusanov scheme and the Godunov scheme are $E$-schemes \cite{b22,b24}. Osher~\cite{b22,b23} showed that E schemes satisfy the entropy inequality and have no less numerical viscosity than the Godunov scheme. Therefore, we will consider Godunov numerical solution as a reference for solutions obtained by using the weighted scheme for which flux limiters are calculated by linear programming.

Below, we will mark the numerical results by the scheme name with the one of  endings LP, AP, LE, AE and LET that indicates how the flux limiters were calculated. The endings LP and AP mean that the flux limiters were calculated by using exact and approximate solutions of the linear programming problem \eqref{eq:244}-\eqref{eq:247} or \eqref{eq:317}-\eqref{eq:318} without the entropy condition \eqref{eq:52}. LE and AE denote the numerical results which in addition to previous are obtained with the entropy condition \eqref{eq:52}.  AP and AE mean that the flux limiters were calculated by approximate relations. The letter T indicates that the Tadmor’s numerical entropy flux \cite{b26,b30} was used to discretize the entropy inequality.
 
We used the square entropy function $U = 0.5{\rho ^2}$ in numerical experiments.

\begin{figure*}[!b]
\centering
\begin{tabular}[t]{ccc} 
  \includegraphics[height=3.8cm]{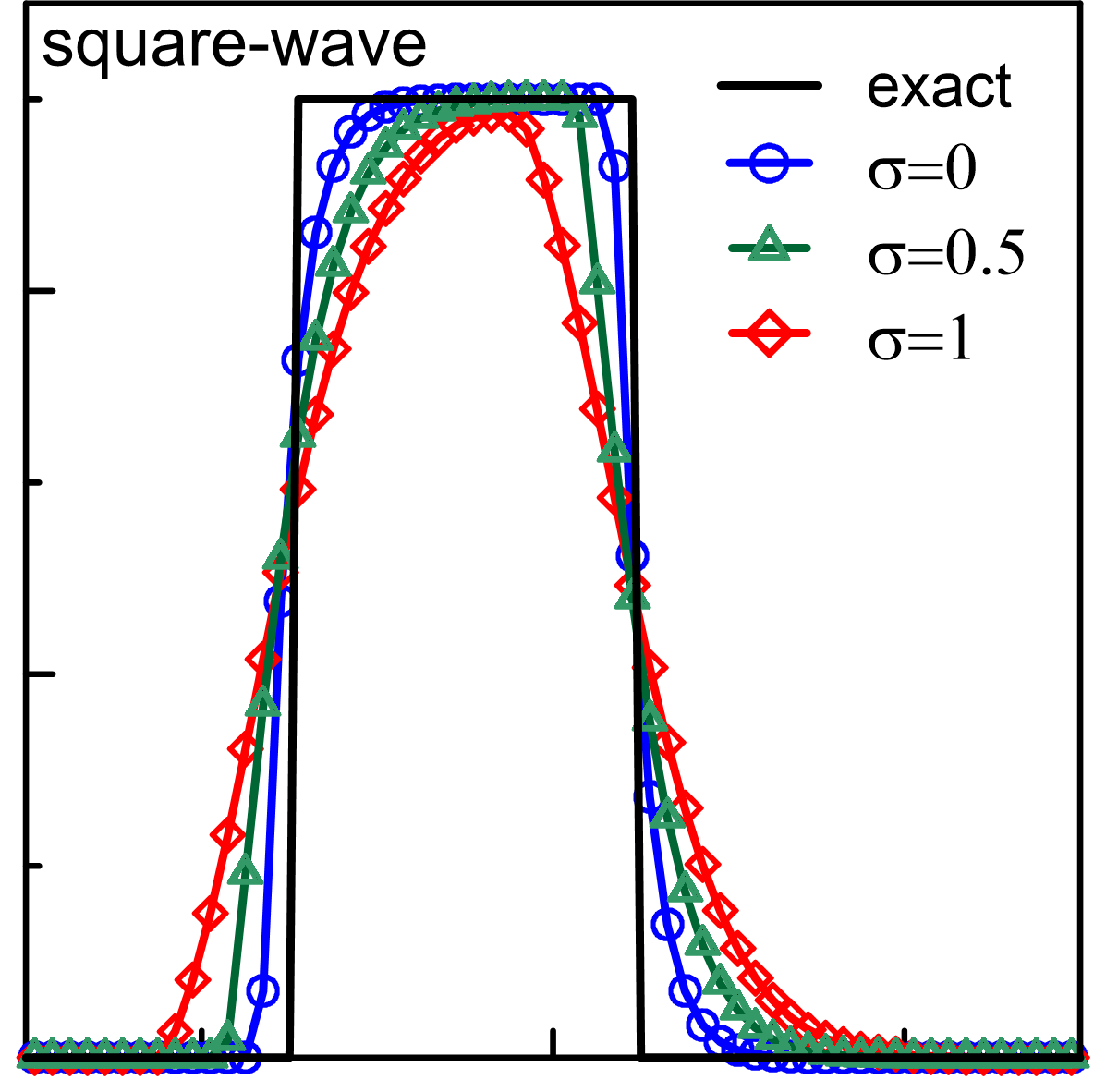}  &
  \includegraphics[height=3.8cm]{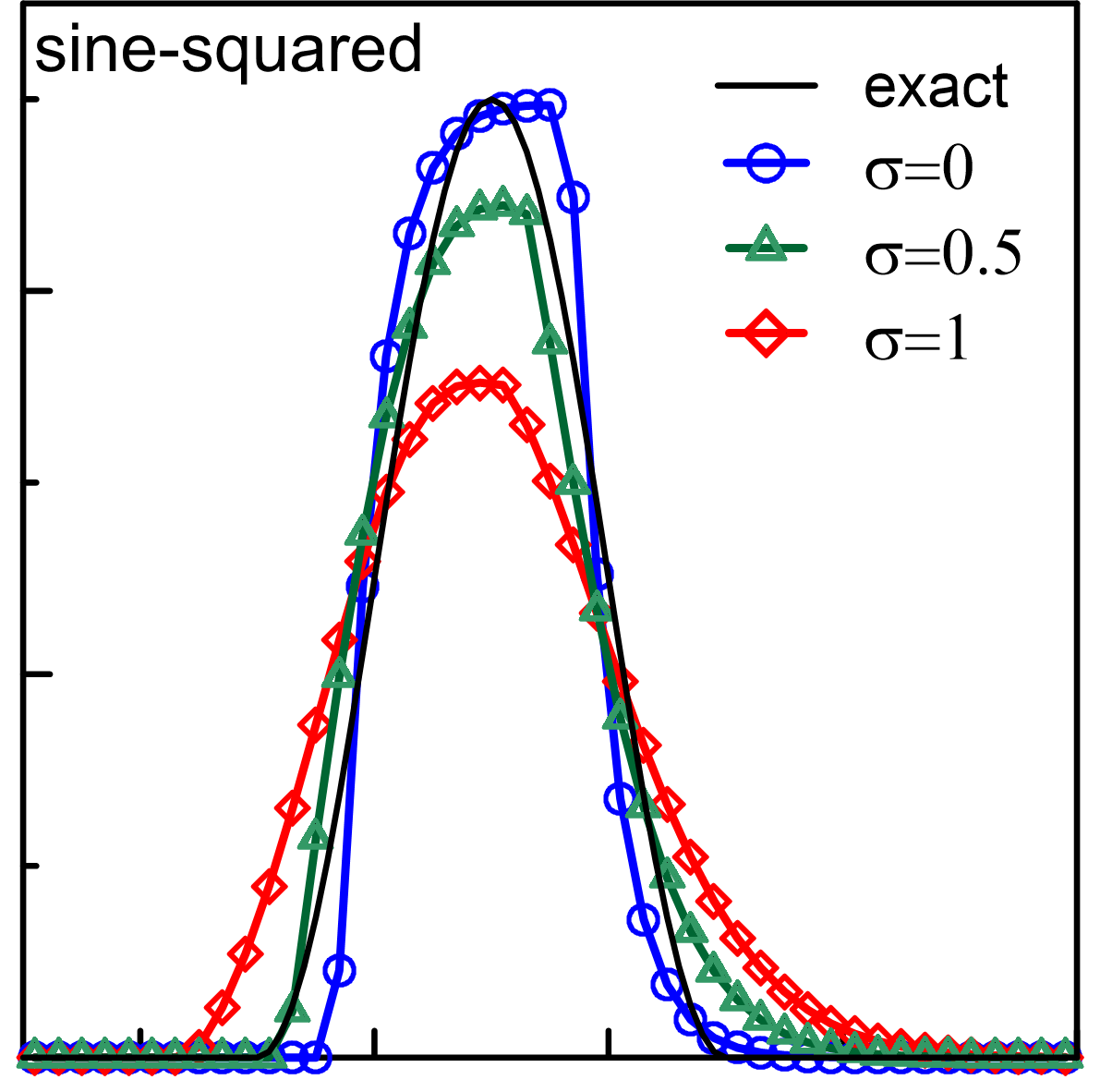}  &
  \includegraphics[height=3.8cm]{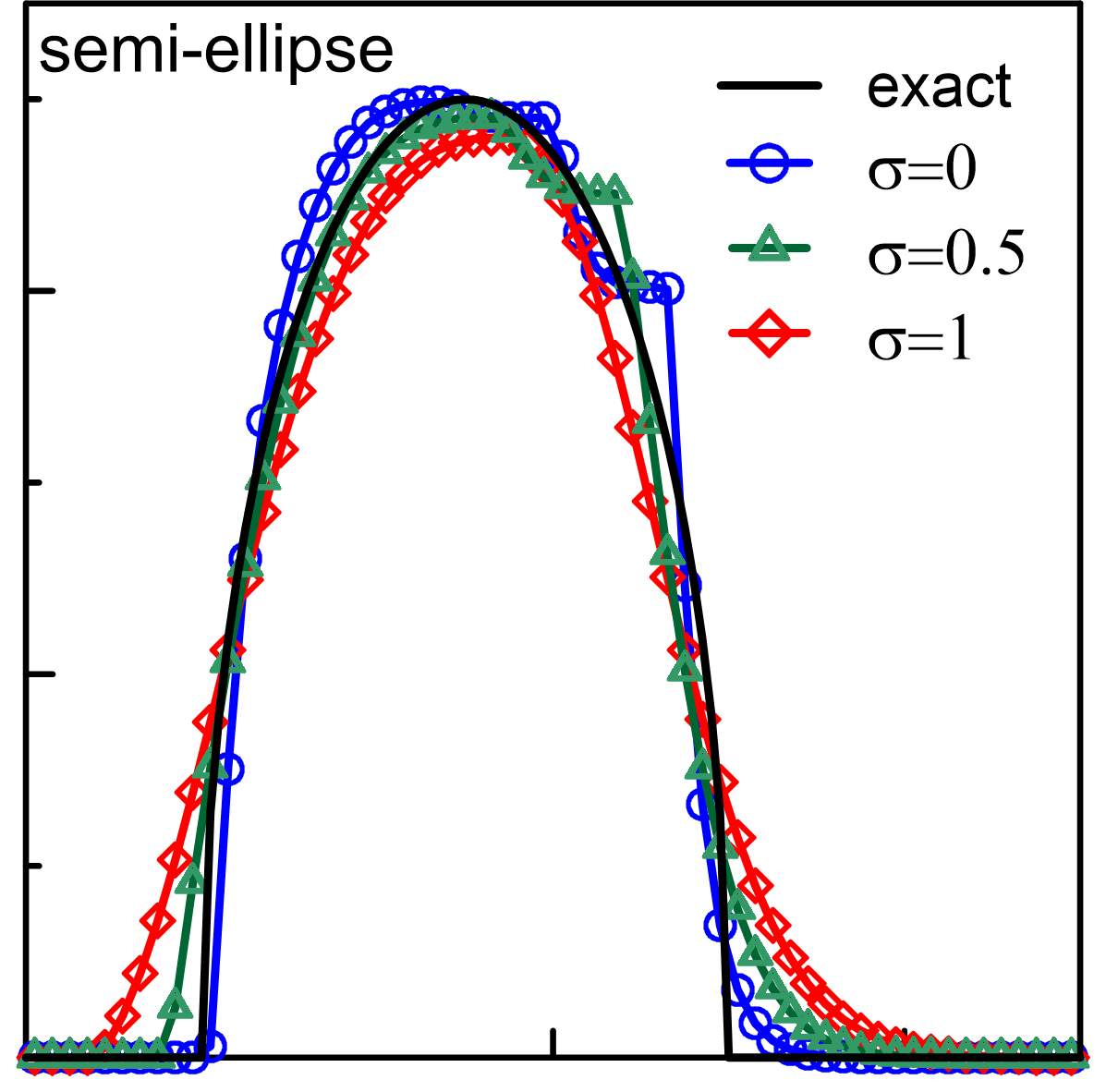} 
\end{tabular}   
\begin{tabular}[t]{cc} 
  \includegraphics[height=3.8cm]{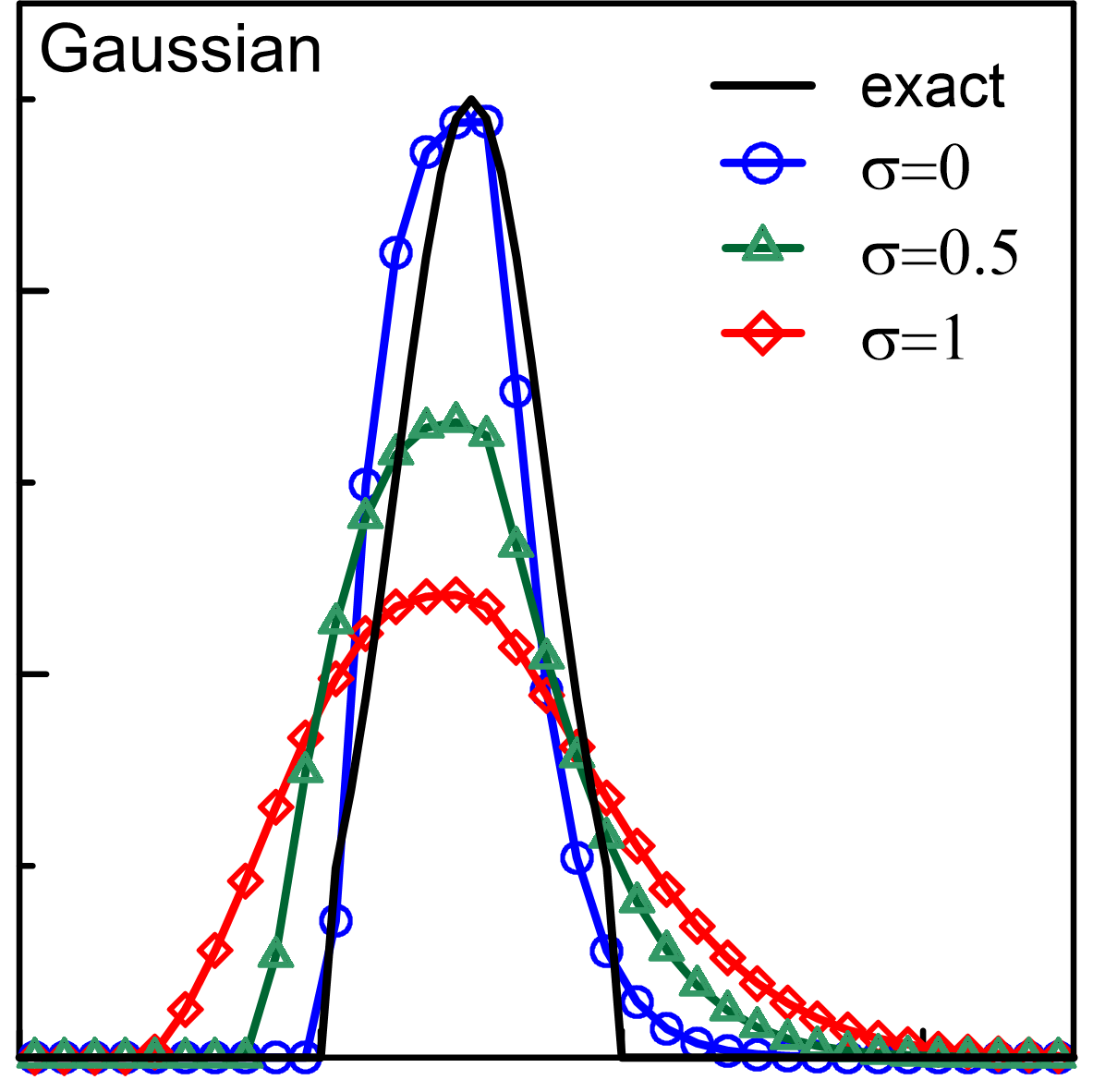}  &
  \includegraphics[height=3.8cm]{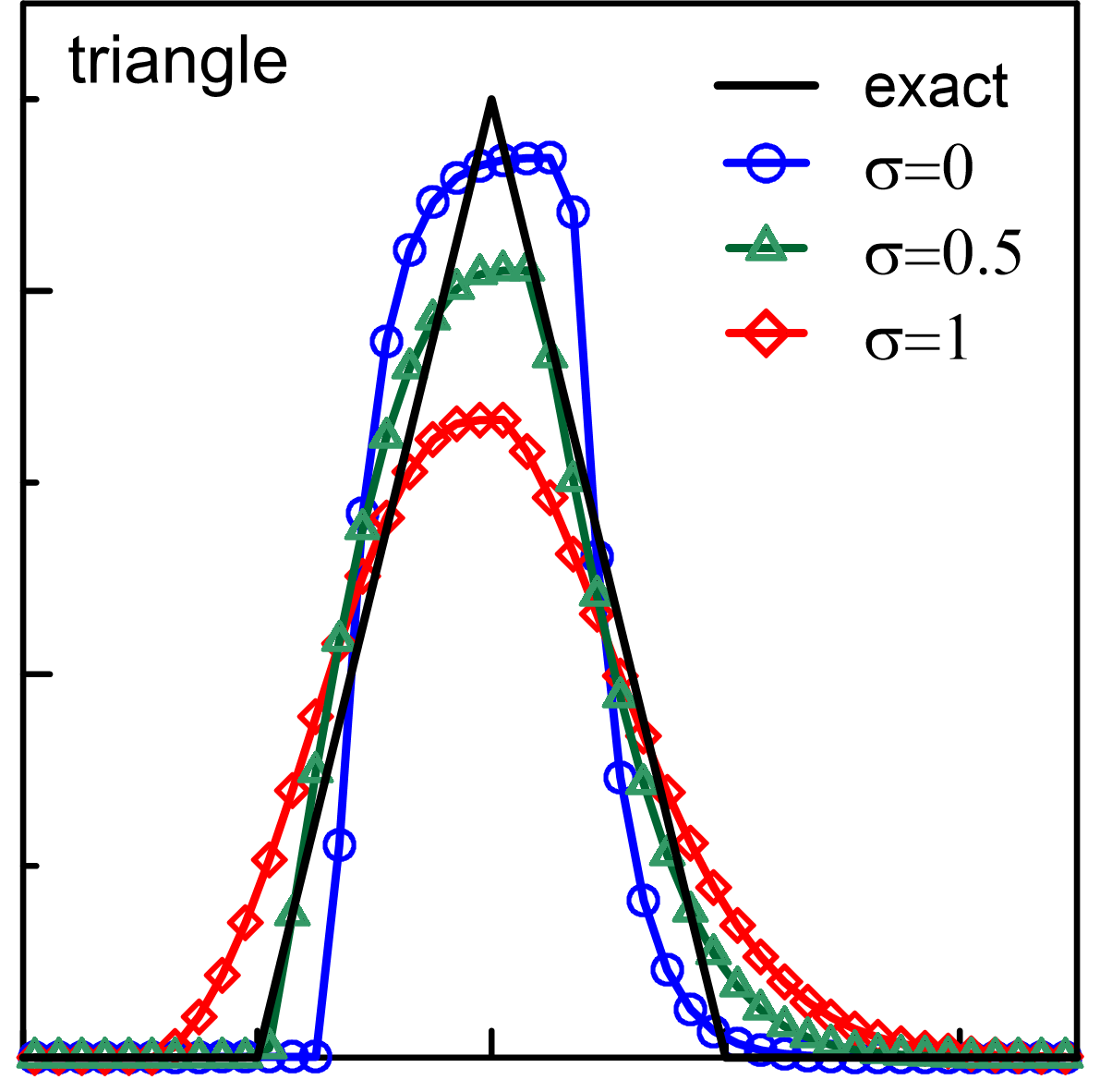}
\end{tabular}   
\caption{Numerical results of the advection test \eqref{eq:56} with the weighted scheme \eqref{eq:26}-\eqref{eq:29} for various weights $\sigma $  .  Flux limiters are calculated by using the linear programming problem  \eqref{eq:243}-\eqref{eq:246}}
\label{fig:1}       
\end{figure*}

\begin{figure*}[!t]
\centering
\begin{tabular}[t]{ccc} 
  \includegraphics[height=3.8cm]{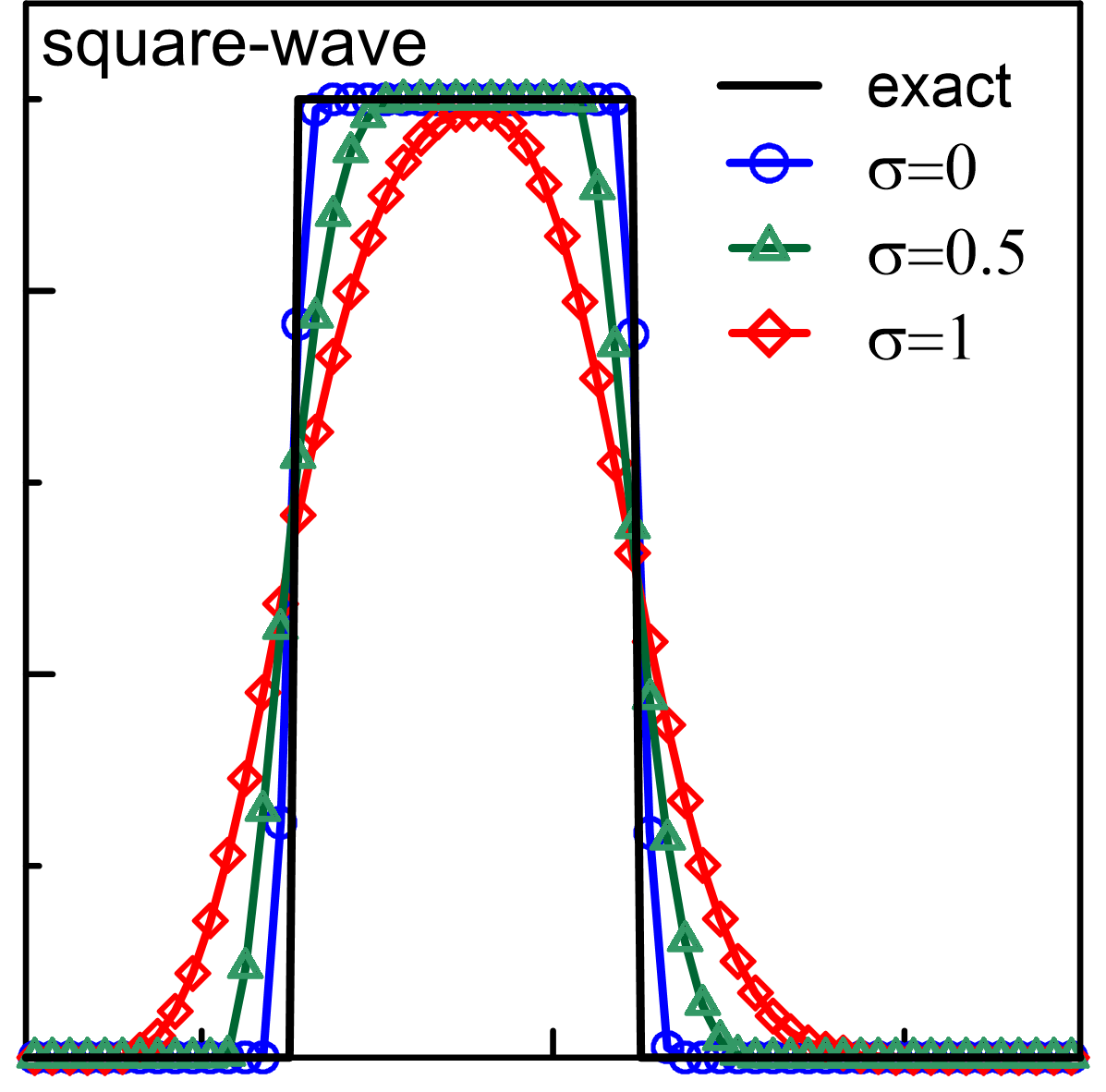} &
  \includegraphics[height=3.8cm]{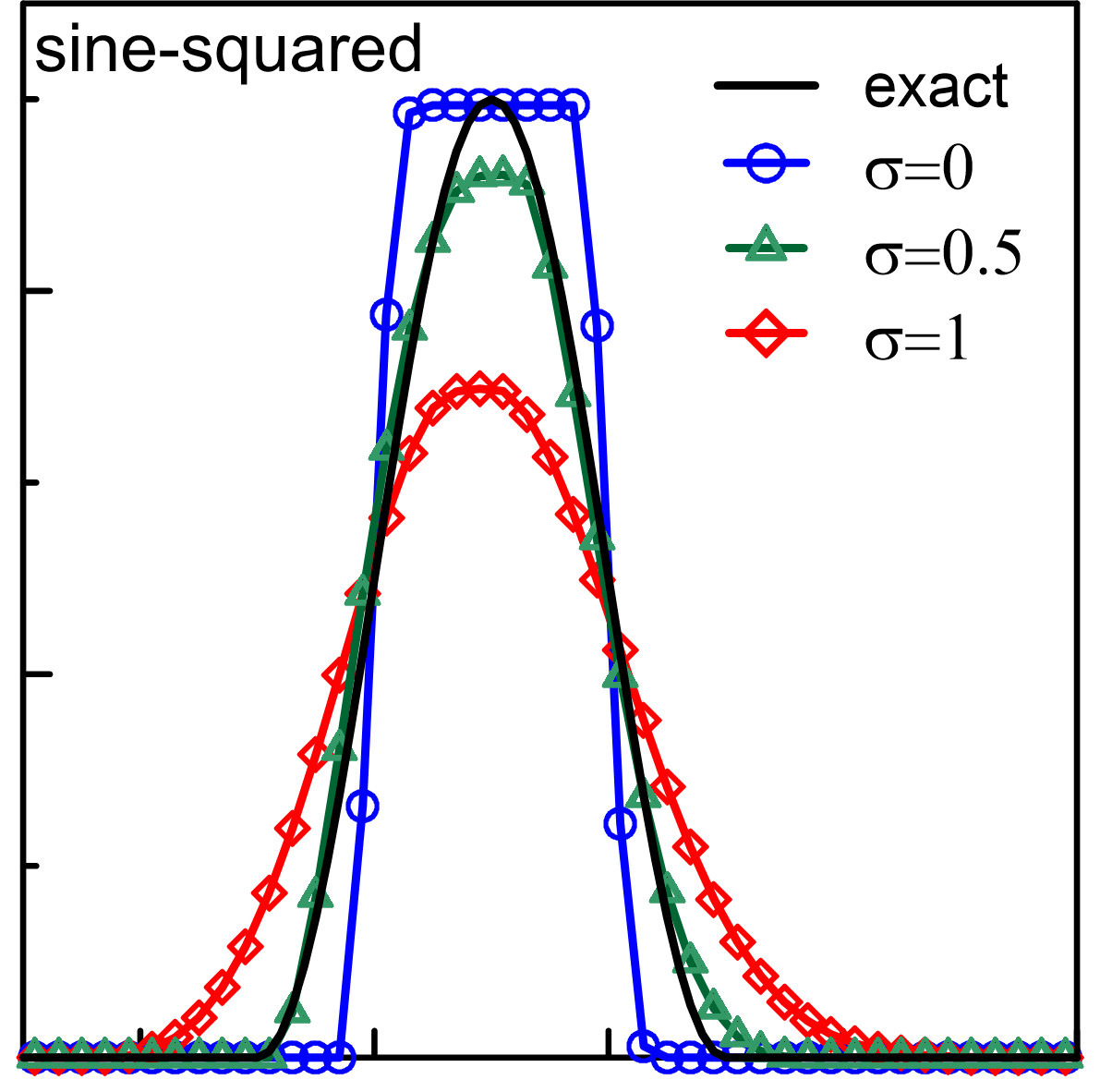} &
  \includegraphics[height=3.8cm]{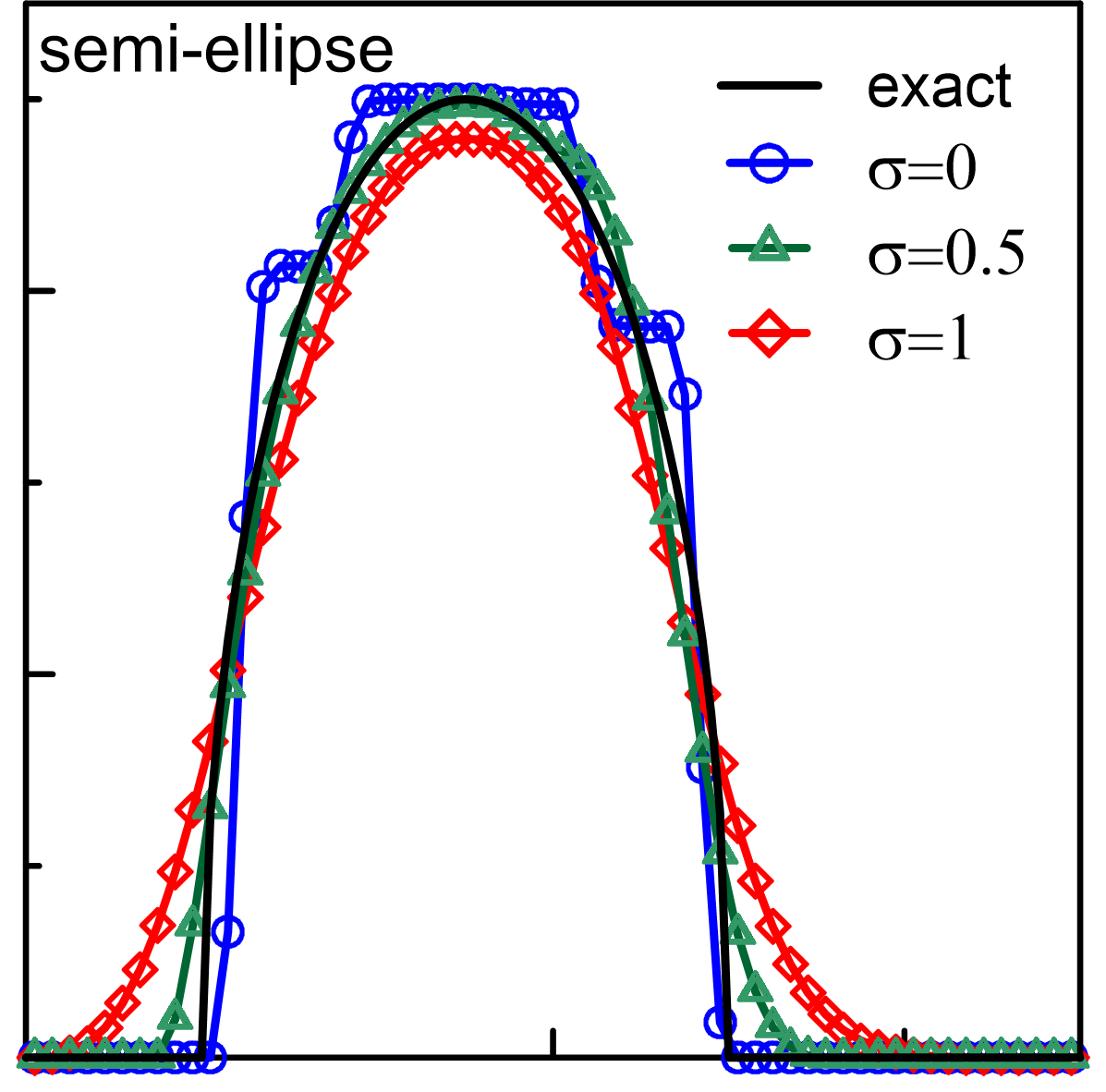}
\end{tabular}   
\begin{tabular}[t]{cc} 
  \includegraphics[height=3.8cm]{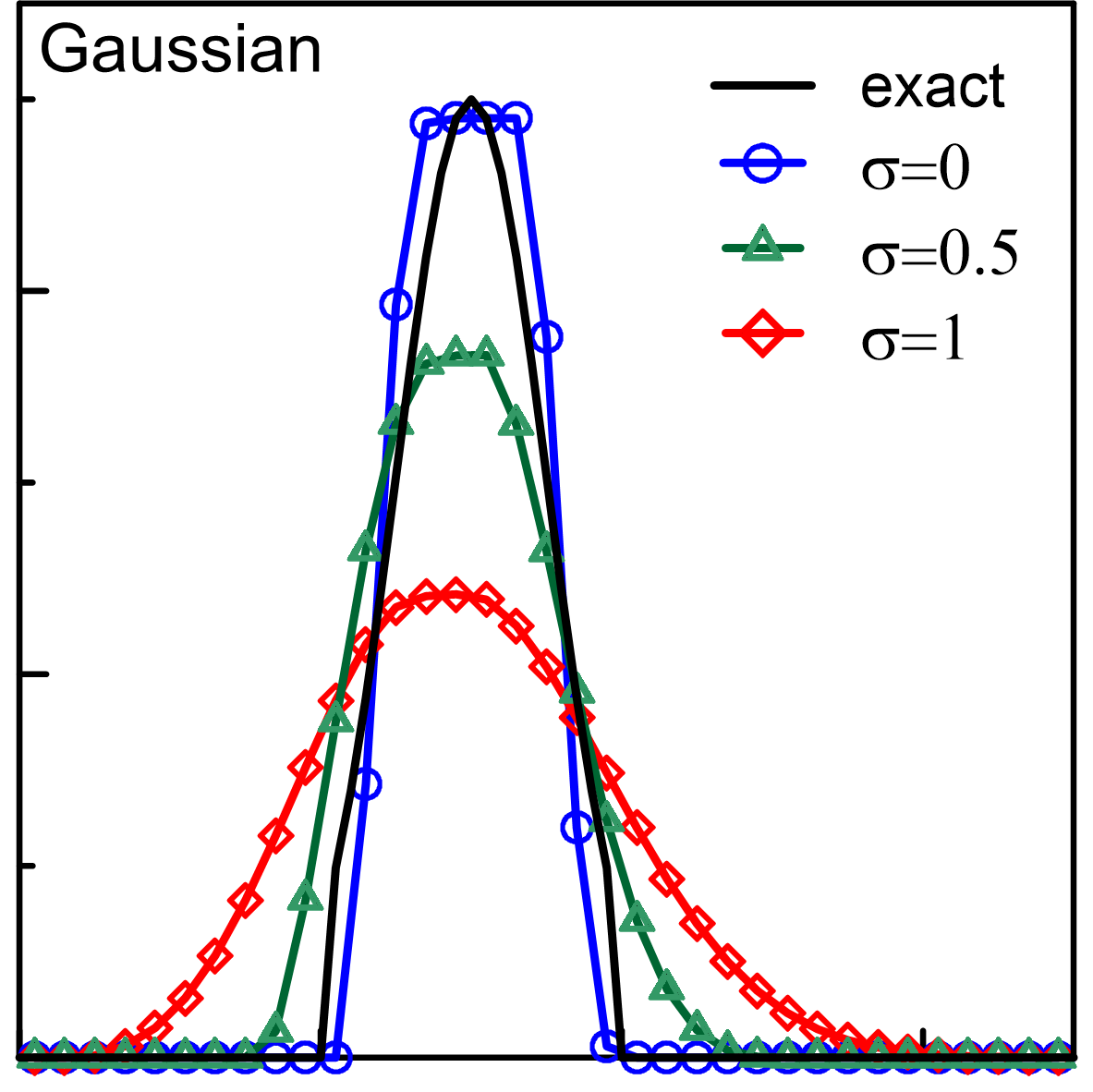} &
  \includegraphics[height=3.8cm]{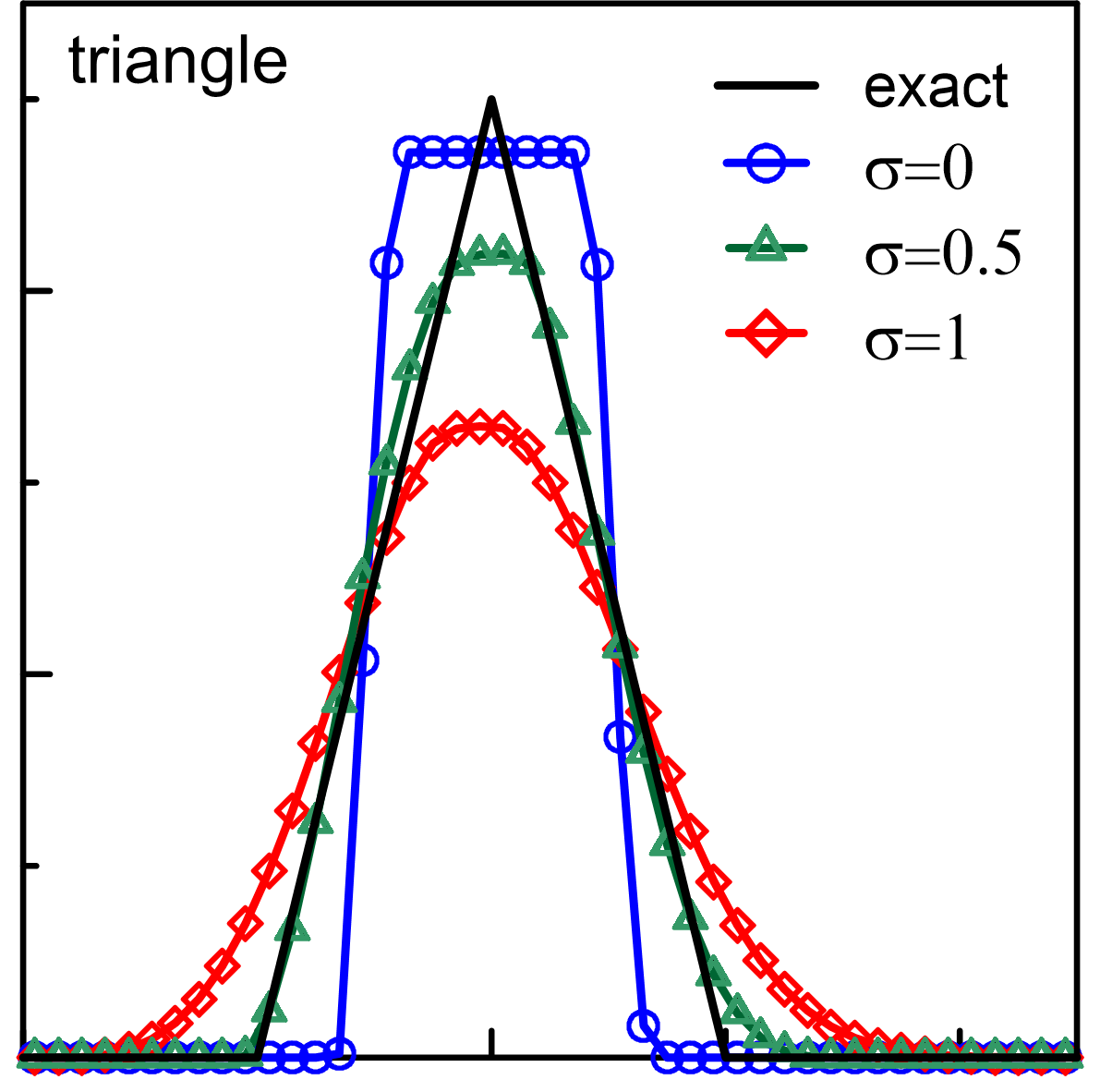} 
\end{tabular}   
\caption{Numerical results of the advection test \eqref{eq:56} with the QUICK scheme for various weights $\sigma$. Flux limiters are calculated by using linear programming}
\label{fig:2}       
\end{figure*}

\subsection{One-Dimensional Advection}  \label{Sec51} 

We consider the one-dimensional advection test of Leonard et al. \cite{b31} on the uniform grid with $\Delta x = 0.01$ and constant velocity. The initial scalar profile consists of five different shapes: square wave, sine-squared, semi-ellipse, Gaussian, and triangle. The initial profile is specified as
\begin{equation}
\label{eq:56}
 y({x_i}) =  \left\{ {\begin{array}{*{20}{l}}
1&{{\rm if} \;\; 0.05 \le {x_i} \le 0.25}&{{\rm{(square}}\;{\rm{wave)}}} \\
{{{\sin }^2}\left[ {\dfrac{\pi }{{0.2}}\left( {{x_i} - 0.85} \right)} \right]}&{{\rm if} \;\; 0.85 \le {x_i} \le 1.05}&{{\rm{(sine - squared)}}} \\
{\sqrt {1 - {{\left[ {\dfrac{1}{{15\Delta x}}\left( {{x_i} - 1.75} \right)} \right]}^2}} }& {{\rm if} \;\; 1.6 \le {x_i} \le 1.9}&{{\rm{(semi - ellipse)}}} \\
{\exp \left[ { - \dfrac{1}{{2{\sigma ^2}}}{{\left( {{x_i} - 2.65} \right)}^2}} \right]}& {{\rm if} \;\; 2.6 \le {x_i} \le 2.7}&{{\rm{(Gaussian)}}} \\
{10\left( {{x_i} - 3.3} \right)}& {{\rm if} \;\; 3.3 \le {x_i} \le 3.4}&{{\rm{(triangle)}}} \\
{1.0 - 10\left( {{x_i} - 3.4} \right)}& {{\rm if} \;\; 3.4 \le {x_i} \le 3.5}&{} \\
0&{{\rm{otherwise}}}&{}
\end{array}} \right.   				
\end{equation} 
The standard deviation for the Gaussian profile is specified as $\sigma  = 2.5$. Periodic boundary conditions are used.

Numerical results for the difference scheme \eqref{eq:26}-\eqref{eq:29} after 400 time steps at Courant number of 0.2 are shown in Fig.~\ref{fig:1}. The flux limiters are calculated by using the linear programming problem \eqref{eq:243}-\eqref{eq:246}. Numerical results for which the flux limiters are calculated by using exact and approximate solutions of the linear programming problem \eqref{eq:243}-\eqref{eq:246} are slightly different. Their $L^1$-norm of errors and maximum values are presented in Table~\ref{tab1}.


\begin{table*}[!t]
\caption{\label{tab1}Advection test \eqref{eq:56} with the weighted scheme \eqref{eq:26}-\eqref{eq:29}. $L^1 $–norm of errors and the maximum values of the numerical results for which flux limiters have been calculated by using exact (LP) and approximate (AP) solutions of linear programming problem}
\centering
\begin{tabular}{@{}p{2.45cm} p{1.2cm} p{2.55cm} p{1.5cm} p{2.55cm} p{1.5cm} @{}}
\hline
 &  &  \multicolumn{2}{c}{LP} & \multicolumn{2}{c}{AP}  \\
 \cline{3-4} \cline{5-6}
  & $\sigma$  &  $L^1$ error & $y_{max}$ & $L^1$ error & $y_{max}$ \\[3pt] \hline
\multirow{3}{*}{Square wave}  
	& 0.0 &	2.1811$\times 10^{-2}$ & 1.0000 & 2.1811$\times 10^{-2}$ & 1.0000 \\
	& 0.5 &	4.3933$\times 10^{-2}$ & 0.9997 & 4.3934$\times 10^{-2}$ & 0.9997 \\
	& 1.0 &	6.9477$\times 10^{-2}$ & 0.9843 & 6.9490$\times 10^{-2}$ & 0.9843 \\
\hline	
\multirow{3}{*}{Sine-squared}
	& 0.0 &	1.6883$\times 10^{-2}$ & 0.9938 & 1.6776$\times 10^{-2}$ & 0.9909 \\
	& 0.5 &	1.6423$\times 10^{-2}$ & 0.8895 & 1.6391$\times 10^{-2}$ & 0.8850 \\
	& 1.0 &	3.9029$\times 10^{-2}$ & 0.7043 & 3.9043$\times 10^{-2}$ & 0.7046 \\
\hline	
\multirow{3}{*}{Semi-ellipse}
	& 0.0 &	1.7926$\times 10^{-2}$ & 0.9973 & 1.7886$\times 10^{-2}$ & 0.9965 \\
	& 0.5 &	1.7913$\times 10^{-2}$ & 0.9810 & 1.7908$\times 10^{-2}$ & 0.9810 \\
	& 1.0 &	3.6078$\times 10^{-2}$ & 0.9601 & 3.6079$\times 10^{-2}$ & 0.9600 \\
\hline	
\multirow{3}{*}{Gaussian}
	& 0.0 &	1.3639$\times 10^{-2}$ & 0.9764 & 1.2237$\times 10^{-2}$ & 0.9512 \\
	& 0.5 &	2.7592$\times 10^{-2}$ & 0.6629 & 2.7154$\times 10^{-2}$ & 0.6601 \\ 
	& 1.0 &	4.3681$\times 10^{-2}$ & 0.4828 & 4.3710$\times 10^{-2}$ & 0.4832 \\
\hline	
\multirow{3}{*}{Triangle}
	& 0.0 &	2.5205$\times 10^{-2}$ & 0.9389 & 2.4952$\times 10^{-2}$ & 0.9365 \\
	& 0.5 &	1.3843$\times 10^{-2}$ & 0.8216 & 1.3672$\times 10^{-2}$ & 0.8197 \\
	& 1.0 &	3.1245$\times 10^{-2}$ & 0.6655 & 3.1260$\times 10^{-2}$ & 0.6653 \\
\hline
\end{tabular}
\end{table*}

Along with the second-order flux \eqref{eq:28}, we apply the QUICK \cite{b32} numerical flux
\begin{eqnarray}
\label{eq:57} 
&  h_{i + {1 /2}}^Q  =  u_{i + {1 /2}}^ + \left( {\dfrac{3}{8}{y_{i + 1}} + \dfrac{3}{4}{y_i} - \dfrac{1}{8}{y_{i - 1}}} \right) + u_{i + {1 /2}}^ - \left( {\dfrac{3}{8}{y_i} + \dfrac{3}{4}{y_{i + 1}} - \dfrac{1}{8}{y_{i + 2}}} \right)  \\ 				
&  =  u_{i + {1 /2}}^ + \;{y_i} + u_{i + {1 /2}}^ - \;{y_{i + 1}} + \left\{ {\dfrac{{3\left| {{u_{i + {1 /2}}}} \right|}}{8}\left( {{y_{i + 1}} - {y_i}} \right) + \dfrac{{u_{i + {1 /2}}^ + }}{8}\left( {{y_i} - {y_{i - 1}}} \right) + \dfrac{{u_{i + {1 /2}}^ - }}{8}\left( {{y_{i + 1}} - {y_{i + 2}}} \right)} \right\}  \nonumber
\end{eqnarray}
which is third-order flux on a uniform grid. We consider the term in braces on the right-hand side of \eqref{eq:57} as antidiffusive. 

\begin{table*}[!t]
\caption{\label{tab2}Advection test \eqref{eq:56} for the weighted scheme \eqref{eq:26}-\eqref{eq:27} with the QUICK numerical flux \eqref{eq:57}. $L^1 $–norm of errors and the maximum values of the numerical results for which flux limiters have been calculated by using exact (LP) and approximate (AP) solutions of linear programming problem}
\centering
\begin{tabular}{@{}p{2.45cm} p{1.2cm} p{2.55cm} p{1.5cm} p{2.55cm} p{1.5cm} @{}}
\hline
 &  &  \multicolumn{2}{c}{LP} & \multicolumn{2}{c}{AP}  \\
 \cline{3-4} \cline{5-6}
  & $\sigma$  &  $L^1$ error & $y_{max}$ & $L^1$ error & $y_{max}$ \\[3pt] \hline
\multirow{3}{*}{Square wave}  
	& 0.0 &	9.8128$\times 10^{-2}$ & 1.0000 & 9.8128$\times 10^{-2}$ & 1.0000 \\
	& 0.5 &	3.2015$\times 10^{-2}$ & 1.0000 & 3.2046$\times 10^{-2}$ & 1.0000 \\
	& 1.0 &	6.6670$\times 10^{-2}$ & 0.9855 & 6.6719$\times 10^{-2}$ & 0.9853 \\
\hline	
\multirow{3}{*}{Sine-squared}
	& 0.0 &	2.5703$\times 10^{-2}$ & 0.9938 & 2.5705$\times 10^{-2}$ & 0.9938 \\
	& 0.5 &	7.3080$\times 10^{-3}$ & 0.9213 & 7.3354$\times 10^{-3}$ & 0.9205 \\
	& 1.0 &	3.7450$\times 10^{-2}$ & 0.6980 & 3.7472$\times 10^{-2}$ & 0.6976 \\
\hline	
\multirow{3}{*}{Semi-ellipse}
	& 0.0 &	2.0788$\times 10^{-2}$ & 0.9995 & 2.0810$\times 10^{-2}$ & 0.9994 \\
	& 0.5 &	1.1353$\times 10^{-2}$ & 0.9937 & 1.1364$\times 10^{-2}$ & 0.9937 \\
	& 1.0 &	3.5186$\times 10^{-2}$ & 0.9585 & 3.5194$\times 10^{-2}$ & 0.9584 \\
\hline	
\multirow{3}{*}{Gaussian}
	& 0.0 &	1.2183$\times 10^{-2}$ & 0.9802 & 1.2201$\times 10^{-2}$ & 0.9801 \\
	& 0.5 &	1.7177$\times 10^{-2}$ & 0.7331 & 1.7147$\times 10^{-2}$ & 0.7319 \\ 
	& 1.0 &	4.1866$\times 10^{-2}$ & 0.4835 & 4.1935$\times 10^{-2}$ & 0.4835 \\
\hline	
\multirow{3}{*}{Triangle}
	& 0.0 &	3.4487$\times 10^{-2}$ & 0.9447 & 3.4505$\times 10^{-2}$ & 0.9445 \\
	& 0.5 &	7.6615$\times 10^{-3}$ & 0.8389 & 7.6448$\times 10^{-3}$ & 0.8384 \\
	& 1.0 &	2.9795$\times 10^{-2}$ & 0.6588 & 2.9813$\times 10^{-2}$ & 0.6585 \\
\hline
\end{tabular}
\end{table*}

Numerical results of the advection test with the QUICK scheme are given in Fig.~\ref{fig:2}. Their $L^1$-norm of errors and maximum values are presented in Table~\ref{tab2}.

\begin{figure}[!b]
  \centering 
  \includegraphics[scale=0.8]{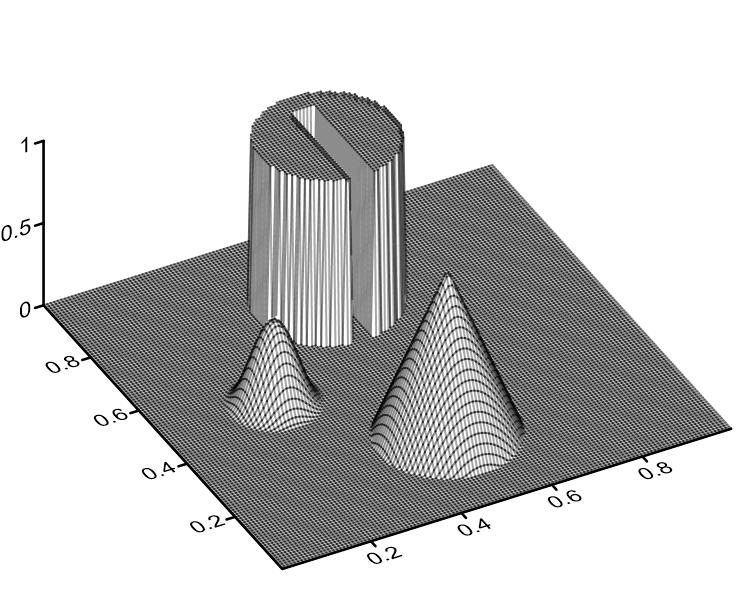}
\caption{Initial data and exact solution at the final time for solid body rotation test}
\label{fig:3}       
\end{figure}

\subsection{Solid Body Rotations}  \label{Sec52}
In this section, we consider the rotation of solid bodies \cite{b33,b7,b2} under an incompressible flow that is described by the linear equation
\begin{equation}
\label{eq:58} 
 \frac{{\partial \rho }}{{\partial t}} + \nabla  \cdot \left( {u\rho } \right) = 0 \qquad 	\text{in} \quad \Omega  = \left( {0,1} \right) \times \left( {0,1} \right)
\end{equation} 							
with zero boundary conditions. The initial condition includes a slotted cylinder, a cone and a smooth hump (Fig.~\ref{fig:3}).  The slotted cylinder of radius 0.15 and height 1 is centered at the point (0.5,0.75) and
\[
  \rho (x,y,0) =\begin{cases}
  1 \qquad  {\rm if} \;\; \left| {x - 0.5} \right| \ge 0.025 \;\; {\rm or} \;\; y \ge 0.85\\
  0 \qquad  \rm{otherwise}
\end{cases}   
\]

\begin{figure*}[!t]
\centering
  \includegraphics[width=0.3\textwidth]{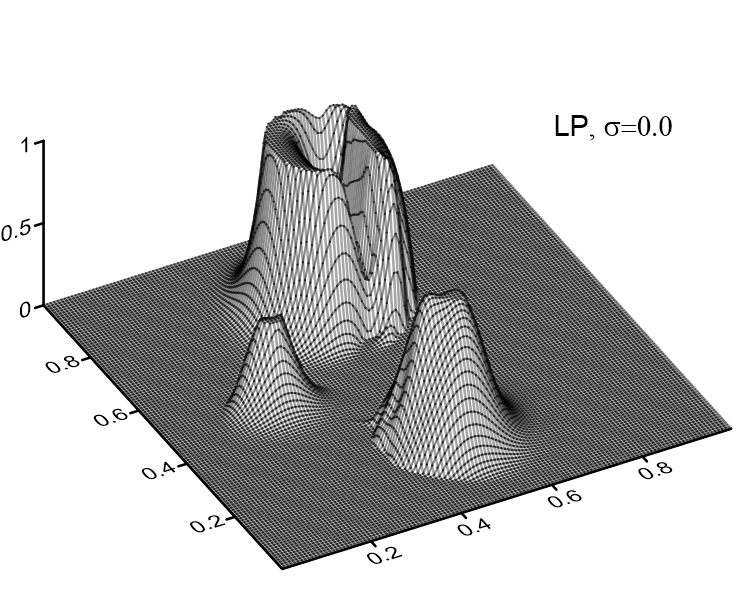}
  \includegraphics[width=0.3\textwidth]{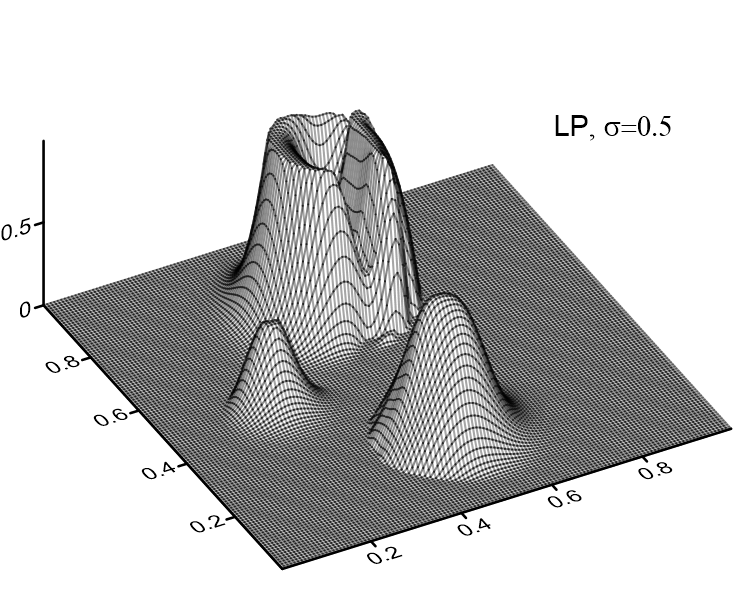}
  \includegraphics[width=0.3\textwidth]{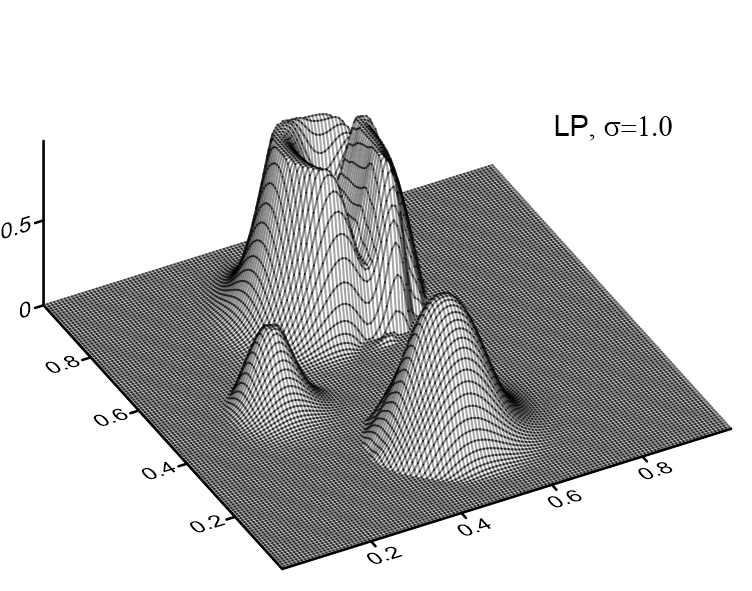}
  \includegraphics[width=0.3\textwidth]{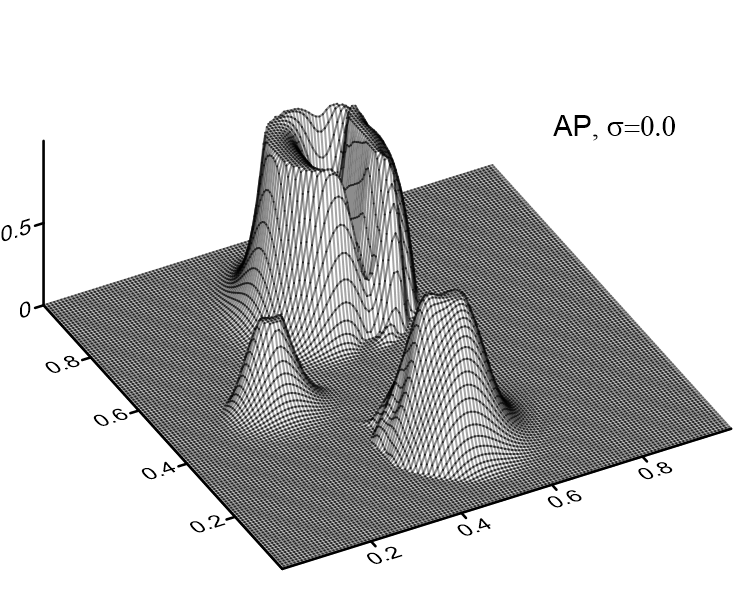}
  \includegraphics[width=0.3\textwidth]{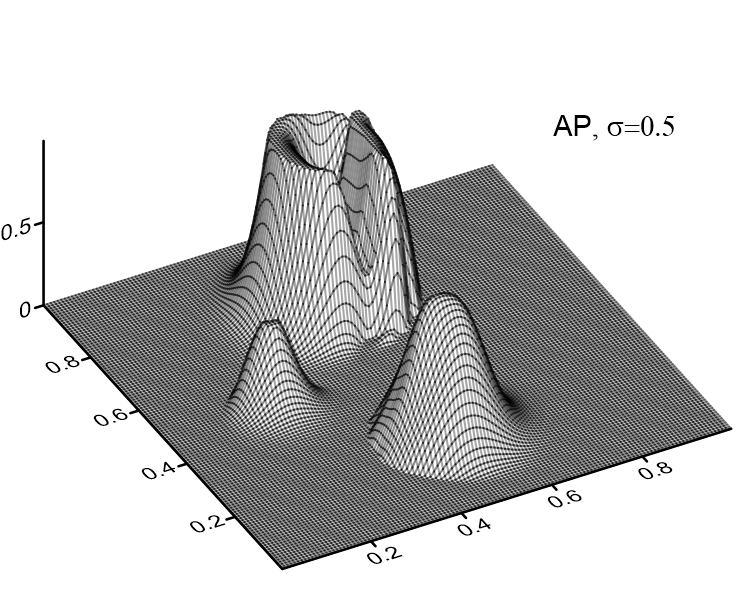}
  \includegraphics[width=0.3\textwidth]{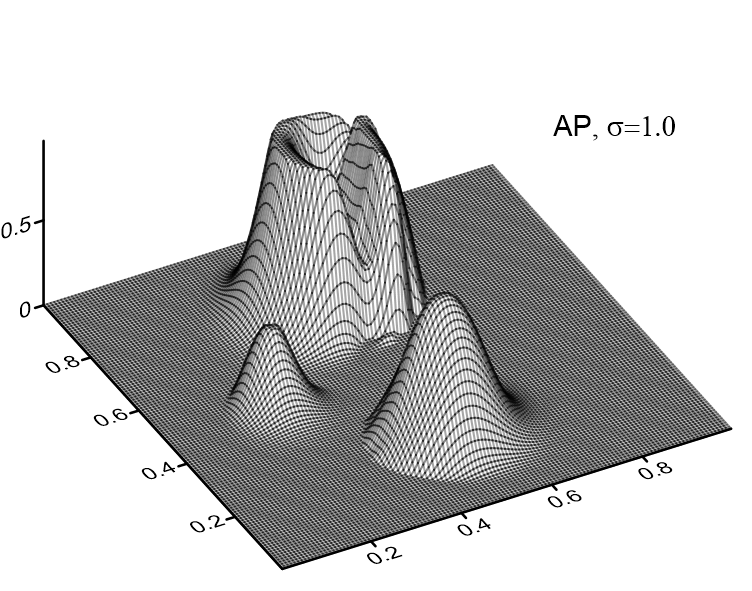}
\caption{Numerical results of the solid body rotation test after one revolution (5000 time steps) with the weighted scheme \eqref{eq:26}-\eqref{eq:28} for various weights $\sigma$. Flux limiters are calculated by using the exact (upper) and approximate (lower) solutions of the linear programming problems \eqref{eq:244}-\eqref{eq:247}}
\label{fig:4}       
\end{figure*}

\begin{figure*}[!tb]
\centering
  \includegraphics[width=0.3\textwidth]{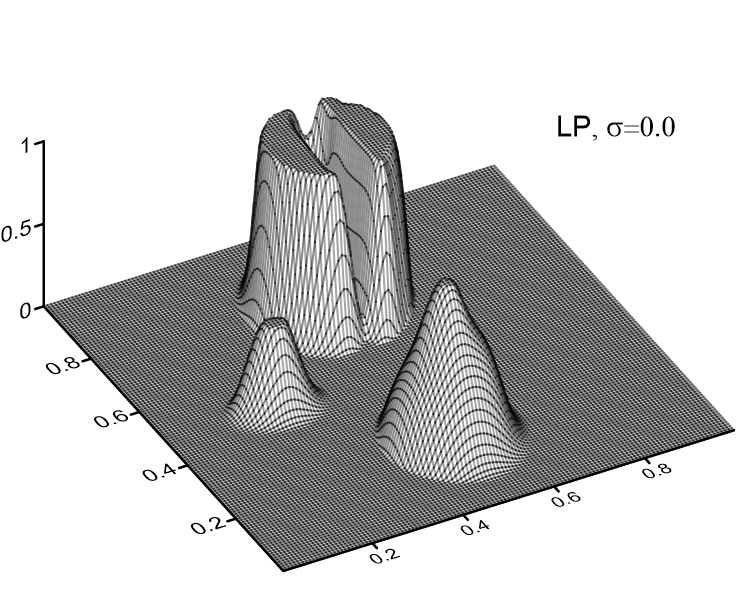}
  \includegraphics[width=0.3\textwidth]{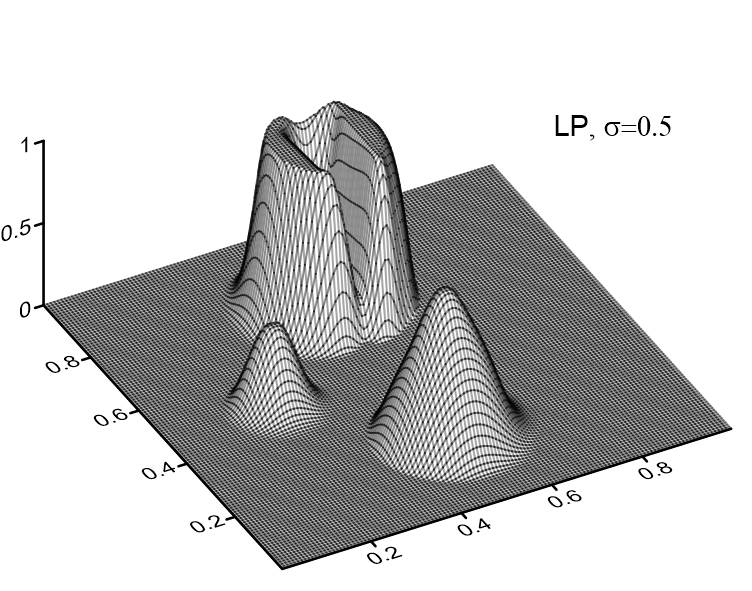}
  \includegraphics[width=0.3\textwidth]{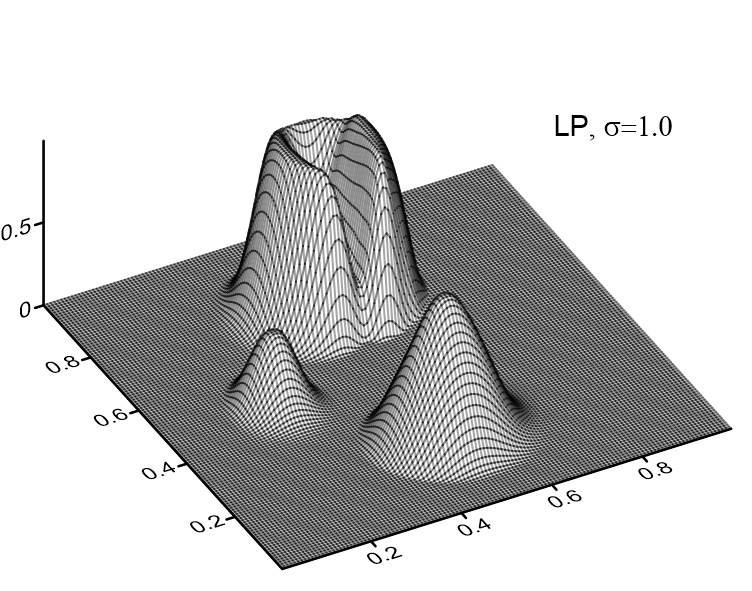}
  \includegraphics[width=0.3\textwidth]{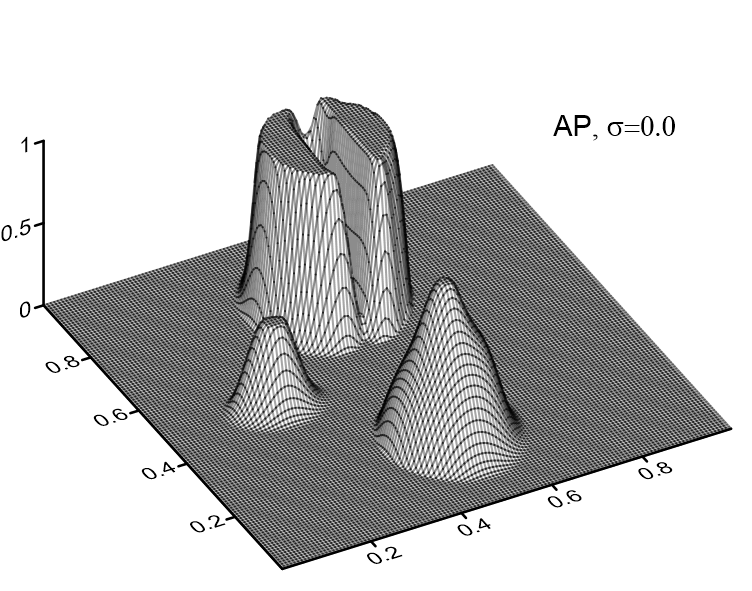}
  \includegraphics[width=0.3\textwidth]{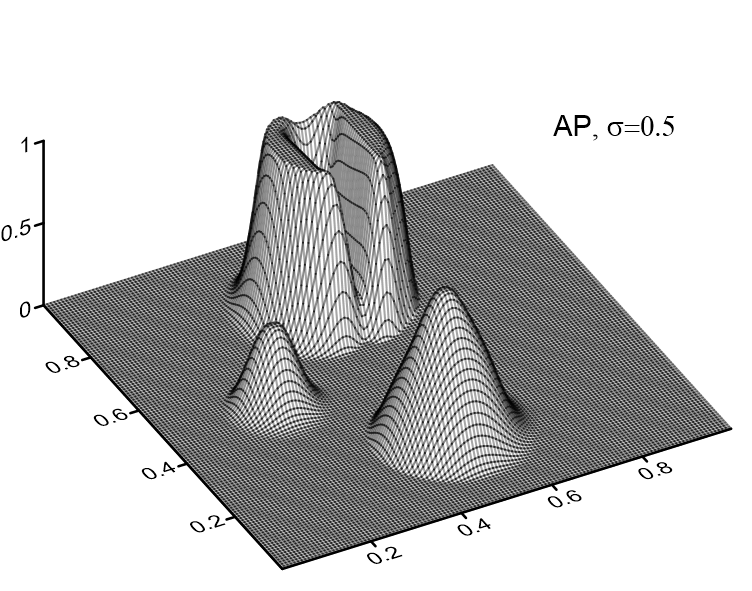}
  \includegraphics[width=0.3\textwidth]{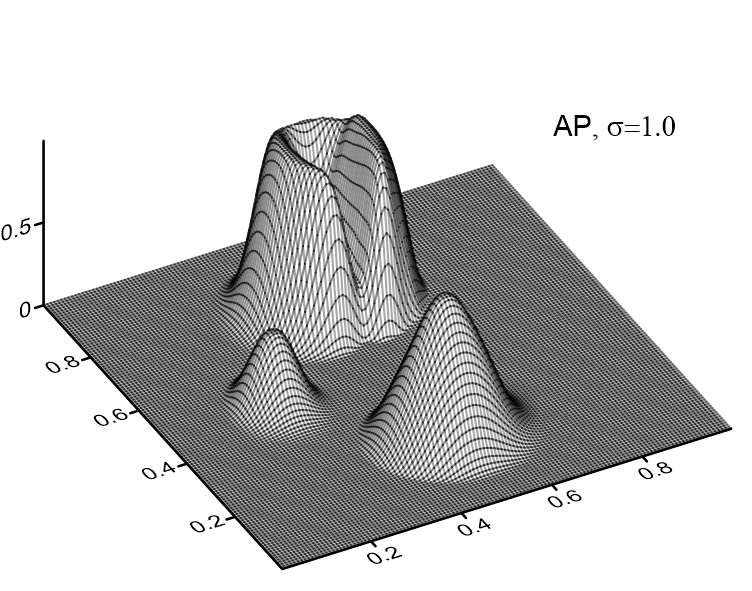}
\caption{Numerical results of the solid body rotation test after one revolution (5000 time steps) with the QUICK scheme for various weights $\sigma$. The flux limiters are calculated by using the exact (upper) and approximate (lower) solutions of the linear programming problems \eqref{eq:244}-\eqref{eq:247}}
\label{fig:5}       
\end{figure*}

The cone of also radius $r_0=0.15$ and height 1 is centered at point $({x_0},{y_0}) = (0.25,0.5)$ and
\[ \rho (x,y,0) = 1 - r(x,y) \] 
where 
\[r(x,y) = \frac{{\min (\sqrt {{{(x - {x_0})}^2} + {{(y - {y_0})}^2}} ,{r_0})}}{{{r_0}}} \]

The hump is given by
 \[ \rho (x,y,0) = \frac{1}{4}(1 + \cos (\pi r(x,y)) \]
where $({x_0},{y_0}) = (0.5,0.25)$ and $r_0=0.1$.

The flow velocity is calculated by $u(x,y) = \left( { - 2\pi (y - 0.5),2\pi (x - 0.5)} \right)$ and in result of which the counterclockwise rotation takes place about domain point (0.5, 0.5). The computational grid consists of uniform $128 \times 128$ cells. The exact solution of \eqref{eq:58}(5.8) reproduces by the initial state after each full revolution.

The numerical results produced by the weighted scheme \eqref{eq:26}-\eqref{eq:27} with the second-order flux \eqref{eq:28} and the QUICK flux \eqref{eq:57} after one full revolution (5000 time steps) with different weights $ \sigma $ are presented in Fig.~\ref{fig:4} and Fig.~\ref{fig:5}, respectively. We calculate flux limiters by using exact (LP) and approximate (AP) solutions of the linear programming problem \eqref{eq:243}-\eqref{eq:246}. The $L^1$-norm of errors and maximum values of the numerical results are given in Table~\ref{tab2} and Table~\ref{tab3}. 
The numerical flux \eqref{eq:28} is a second-order accuracy flux and the QUICK numerical flux \eqref{eq:57} is a third-order accuracy flux. 
Thus, we can see improvement of the numerical results with increasing accuracy order of the high order flux. We also note a good agreement between the numerical results obtained with the exact and approximate solutions of the linear programming problem. 

\begin{table*}[!t]
\caption{\label{tab3} $L^1$-norm of errors and maximum values of numerical solutions for the solid body rotation test with the weighted scheme (2.6)–(2.8)\eqref{eq:26}-\eqref{eq:28}. Flux limiters are calculated by using the exact (LP) and approximate (AP) solutions of the linear programming problem \eqref{eq:244}-\eqref{eq:247}}
\centering
\begin{tabular}{@{}p{3.0cm} p{1.2cm} p{2.55cm} p{1.5cm} p{2.55cm} p{1.5cm} @{}}
\hline
 &  &  \multicolumn{2}{c}{LP} & \multicolumn{2}{c}{AP}  \\
 \cline{3-4} \cline{5-6}
  & $\sigma$  &  $L^1$ error & $y_{max}$ & $L^1$ error & $y_{max}$ \\[3pt] \hline
\multirow{3}{*}{Slotted Cylinder}  
	& 0.0 &	2.5900$\times 10^{-2}$ & 1.0000 & 2.5958$\times 10^{-2}$ & 1.0000 \\
	& 0.5 &	2.8022$\times 10^{-2}$ & 0.9912 & 2.8005$\times 10^{-2}$ & 0.9894 \\
	& 1.0 &	3.0557$\times 10^{-2}$ & 0.9681 & 3.0564$\times 10^{-2}$ & 0.9674 \\
\hline	
\multirow{3}{*}{Cone}
	& 0.0 &	2.9773$\times 10^{-3}$ & 0.8709 & 2.9854$\times 10^{-3}$ & 0.8725 \\
	& 0.5 &	2.1664$\times 10^{-3}$ & 0.8434 & 2.1676$\times 10^{-3}$ & 0.8430 \\
	& 1.0 &	2.4633$\times 10^{-3}$ & 0.8190 & 2.4643$\times 10^{-3}$ & 0.8188 \\
\hline	
\multirow{3}{*}{Hump}
	& 0.0 &	1.2495$\times 10^{-3}$ & 0.4947 & 1.2527$\times 10^{-3}$ & 0.4946 \\
	& 0.5 &	1.2132$\times 10^{-3}$ & 0.4645 & 1.2100$\times 10^{-3}$ & 0.4631 \\
	& 1.0 &	1.4077$\times 10^{-3}$ & 0.4247 & 1.4060$\times 10^{-3}$ & 0.4248 \\
\hline	
\end{tabular}
\end{table*}

\begin{table*}[!t]
\caption{\label{tab4} $L^1$-norm of errors and maximum values of numerical solutions for the solid body rotation test with the QUICK scheme. Flux limiters are calculated by using the exact (LP) and approximate (AP) solutions of the linear programming problem \eqref{eq:244}-\eqref{eq:247}}
\centering
\begin{tabular}{@{}p{3.0cm} p{1.2cm} p{2.55cm} p{1.5cm} p{2.55cm} p{1.5cm} @{}}
\hline
 &  &  \multicolumn{2}{c}{LP} & \multicolumn{2}{c}{AP}  \\
 \cline{3-4} \cline{5-6}
  & $\sigma$  &  $L^1$ error & $y_{max}$ & $L^1$ error & $y_{max}$ \\[3pt] \hline
\multirow{3}{*}{Slotted Cylinder}  
	& 0.0 &	1.3892$\times 10^{-2}$ & 1.0000 & 1.3860$\times 10^{-2}$ & 1.0000 \\
	& 0.5 &	1.9927$\times 10^{-2}$ & 1.0000 & 1.9944$\times 10^{-2}$ & 1.0000 \\
	& 1.0 &	2.5260$\times 10^{-2}$ & 0.9917 & 2.5271$\times 10^{-2}$ & 0.9921 \\
\hline	
\multirow{3}{*}{Cone}
	& 0.0 &	1.5878$\times 10^{-3}$ & 0.9328 & 1.5886$\times 10^{-3}$ & 0.9321 \\
	& 0.5 &	8.4318$\times 10^{-4}$ & 0.8780 & 8.4432$\times 10^{-4}$ & 0.8777 \\
	& 1.0 &	1.6144$\times 10^{-3}$ & 0.8336 & 1.6151$\times 10^{-3}$ & 0.8336 \\
\hline	
\multirow{3}{*}{Hump}
	& 0.0 &	7.6481$\times 10^{-4}$ & 0.4952 & 7.6600$\times 10^{-4}$ & 0.4950 \\
	& 0.5 &	4.2010$\times 10^{-4}$ & 0.4663 & 4.2036$\times 10^{-4}$ & 0.4666 \\
	& 1.0 &	8.9529$\times 10^{-4}$ & 0.4187 & 8.9523$\times 10^{-4}$ & 0.4191 \\
\hline	
\end{tabular}
\end{table*}

\subsection{One-Dimensional Scalar Nonconvex Conservation Law}  \label{Sec53}
In this section we consider the following Riemann problem for nonconvex conservation law~\cite{b34} 
\begin{equation}
\label{eq:59} 
  {\rho _t} + {f_x}(\rho ) = 0										
\end{equation}

\begin{equation}
\label{eq:510} 
 \rho (x,0) = \begin{cases}
2 \qquad  &{\rm if} \;\; x \le 1 \\
- 2 \qquad & {\rm if} \;\; x > 1
\end{cases} 									
\end{equation}
where 
\begin{equation}
\label{eq:511} 
 f(\rho ) = \frac{1}{4}\left( {{\rho ^2} - 1} \right)\left( {{\rho ^2} - 4} \right)  							\end{equation}

\begin{figure*}[!t]
  \centering 
  \includegraphics[scale=0.73]{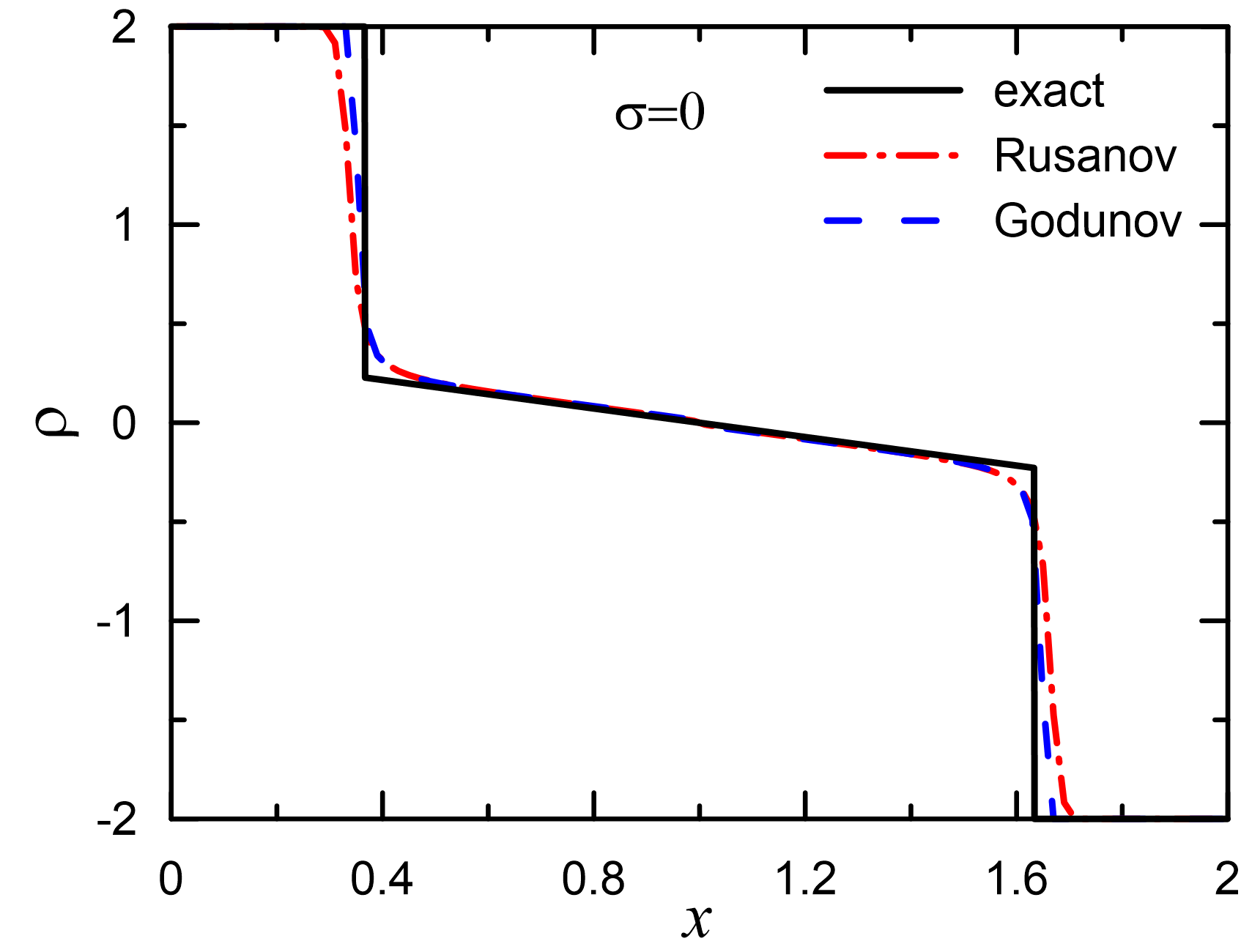}
  \includegraphics[scale=0.73]{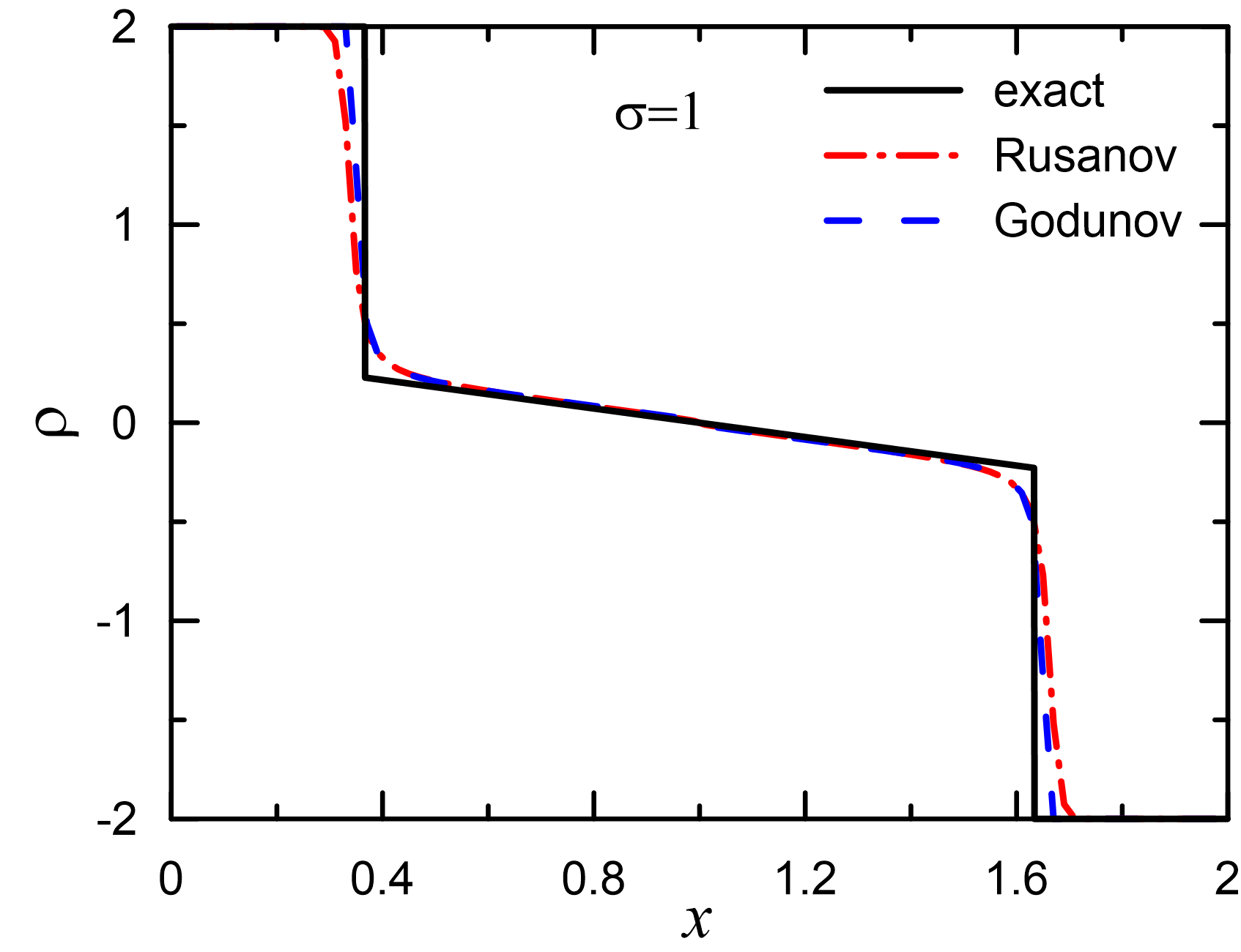}
\caption{Numerical solutions of the Riemann problem for the nonconvex conservation law \eqref{eq:59}-\eqref{eq:511} obtained by using the explicit and implicit monotone schemes with the Godunov and the Rusanov numerical fluxes}
\label{fig:6}       
\end{figure*}
\begin{figure*}[!tb]
  \centering 
  \includegraphics[scale=0.73]{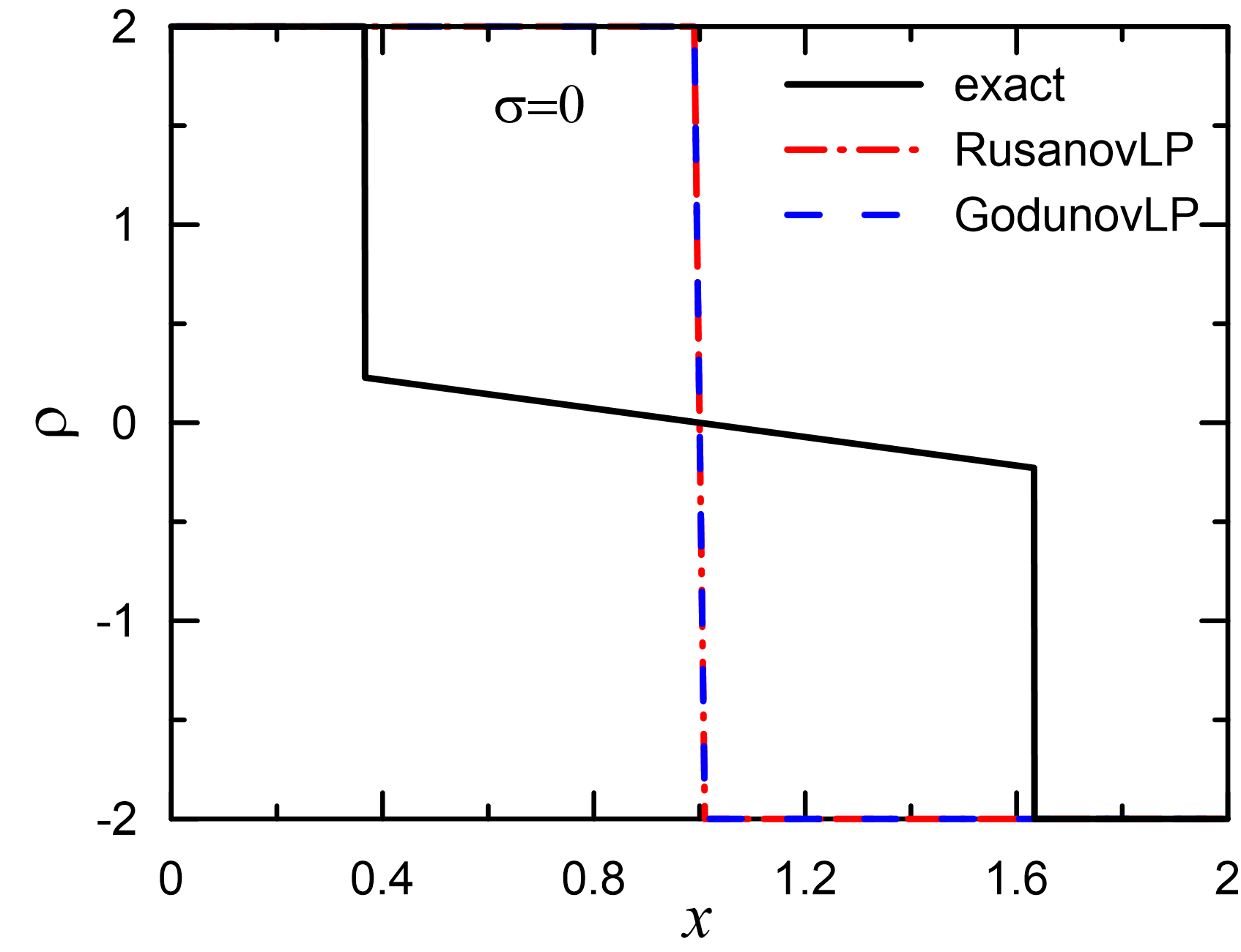}
  \includegraphics[scale=0.73]{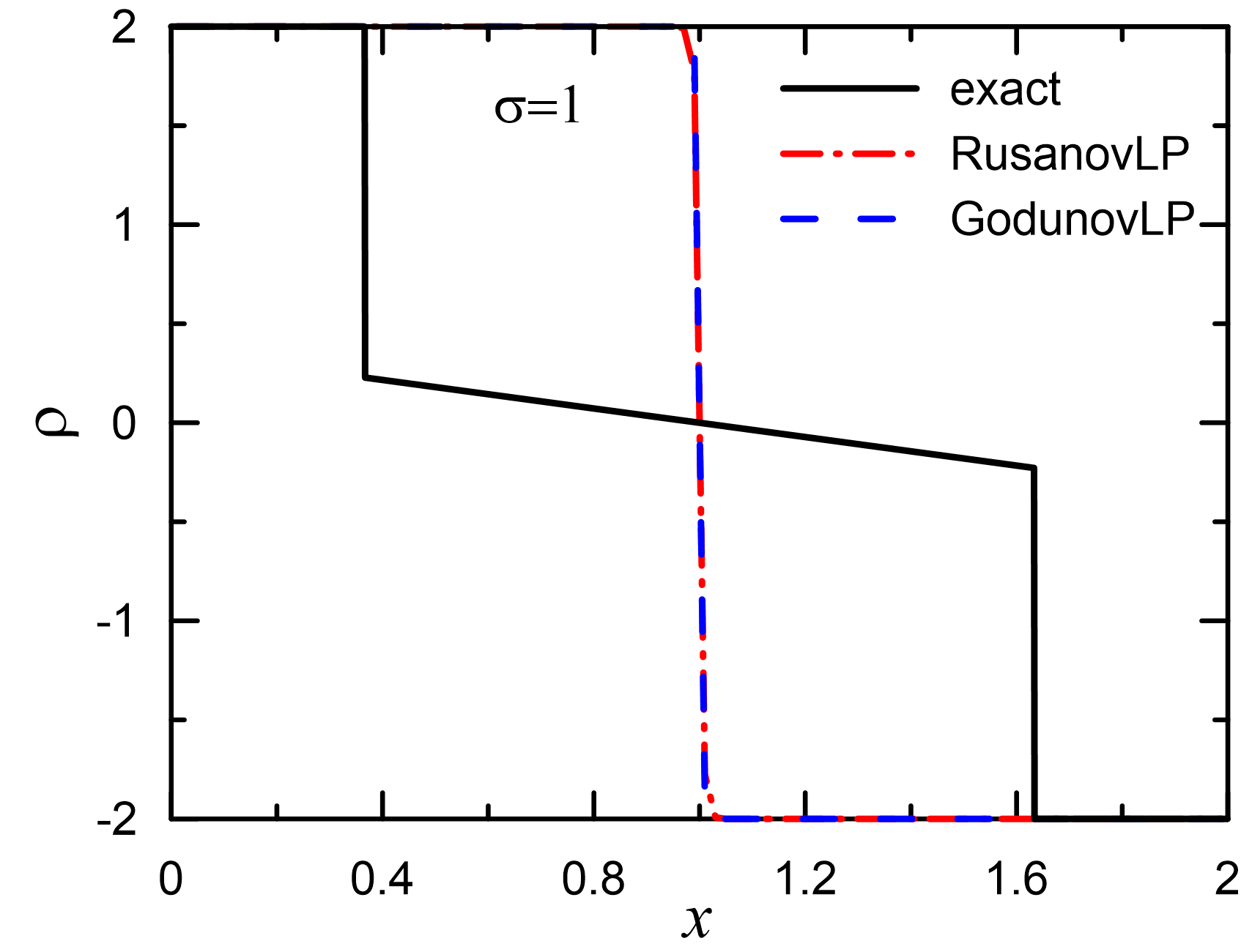}
\caption{Numerical solutions of the Riemann problem for the nonconvex conservation law \eqref{eq:59}-\eqref{eq:511} obtained by using the weighted conservative scheme \eqref{eq:24}-\eqref{eq:25} with the Godunov and Rusanov numerical fluxes. Flux limiters are calculated by linear programming without taking into account the discrete entropy inequality}
\label{fig:7}       
\end{figure*}

\begin{figure*}[!tb]
  \centering 
  \includegraphics[scale=0.73]{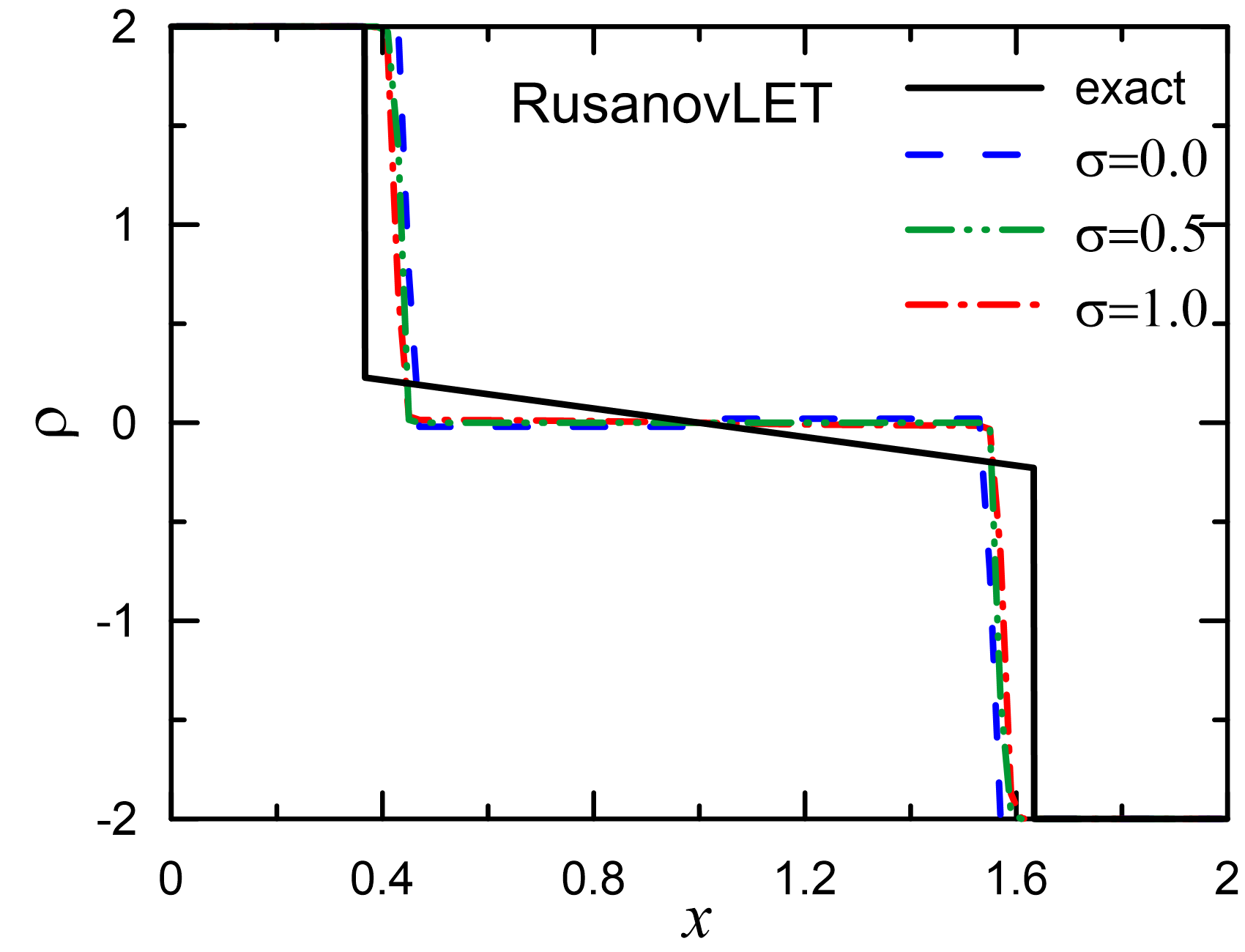}
  \includegraphics[scale=0.73]{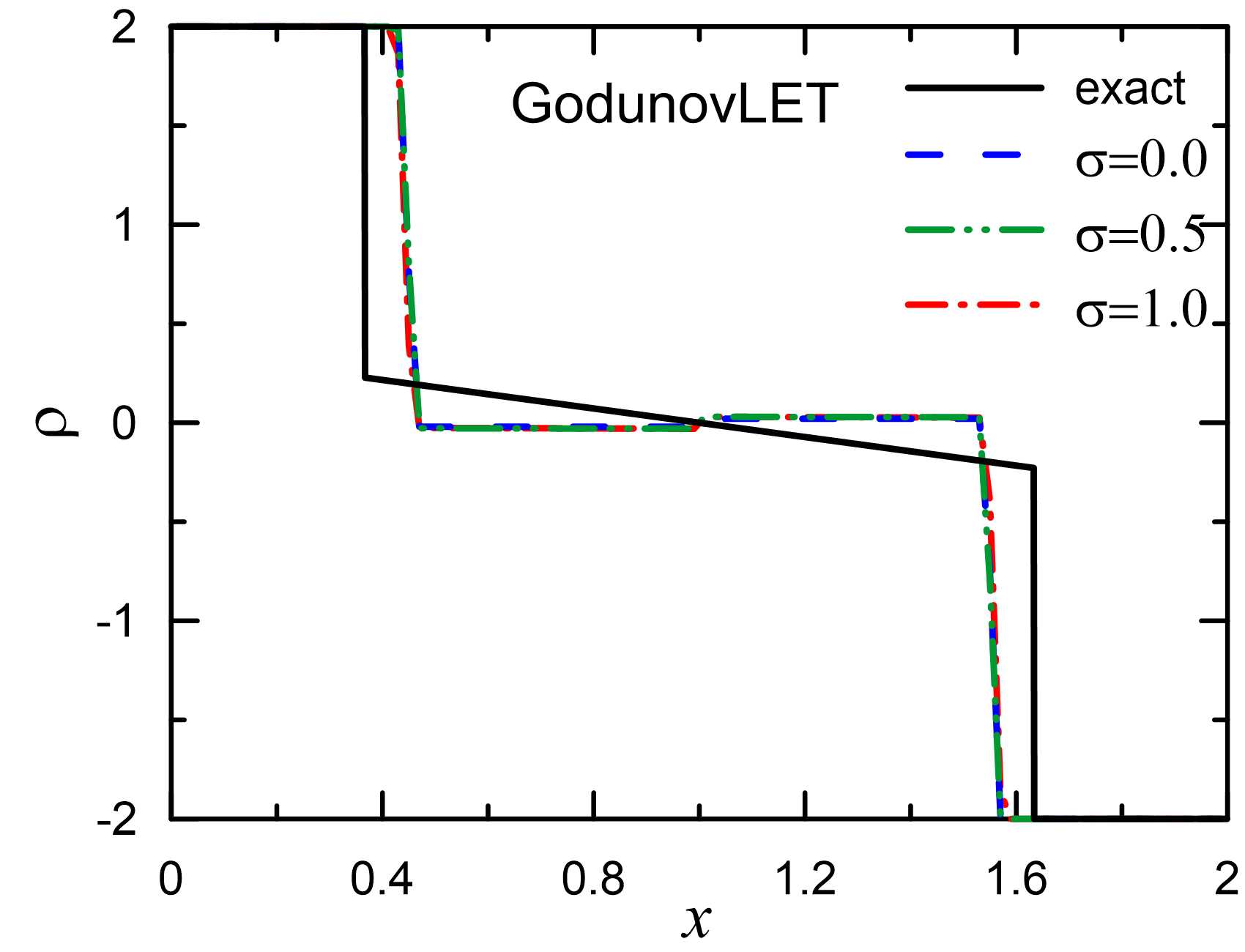}
\caption{Numerical solutions of the Riemann problem for the nonconvex conservation law \eqref{eq:59}-\eqref{eq:511} obtained by using the weighted conservative scheme \eqref{eq:24}-\eqref{eq:25} with the Godunov and Rusanov numerical fluxes. Flux limiters are computed by linear programming and satisfy the discrete entropy inequality \eqref{eq:514} with the Tadmor’s numerical entropy flux}
\label{fig:8}       
\end{figure*}
\begin{figure*}[!tb]
  \centering 
  \includegraphics[scale=0.73]{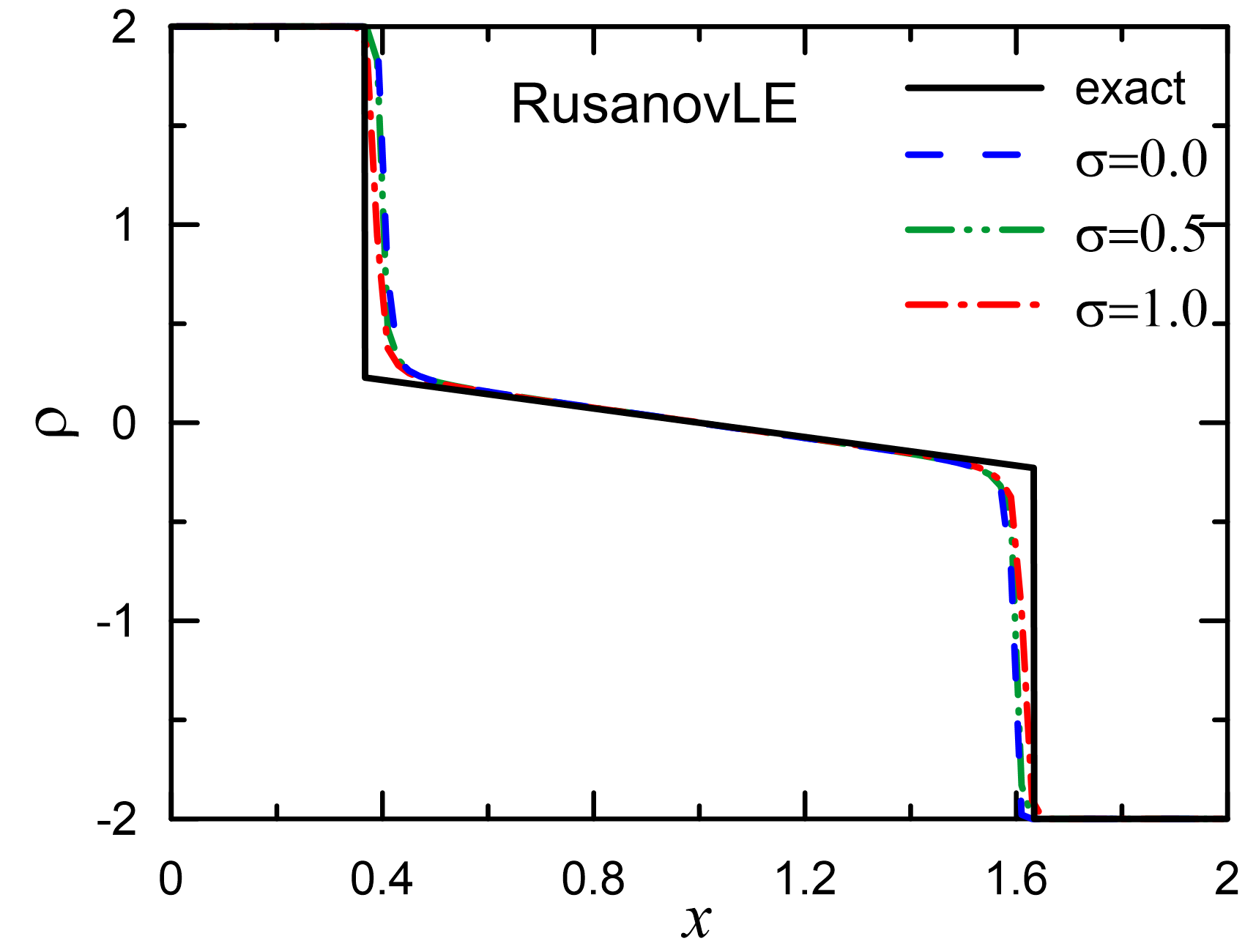}
  \includegraphics[scale=0.73]{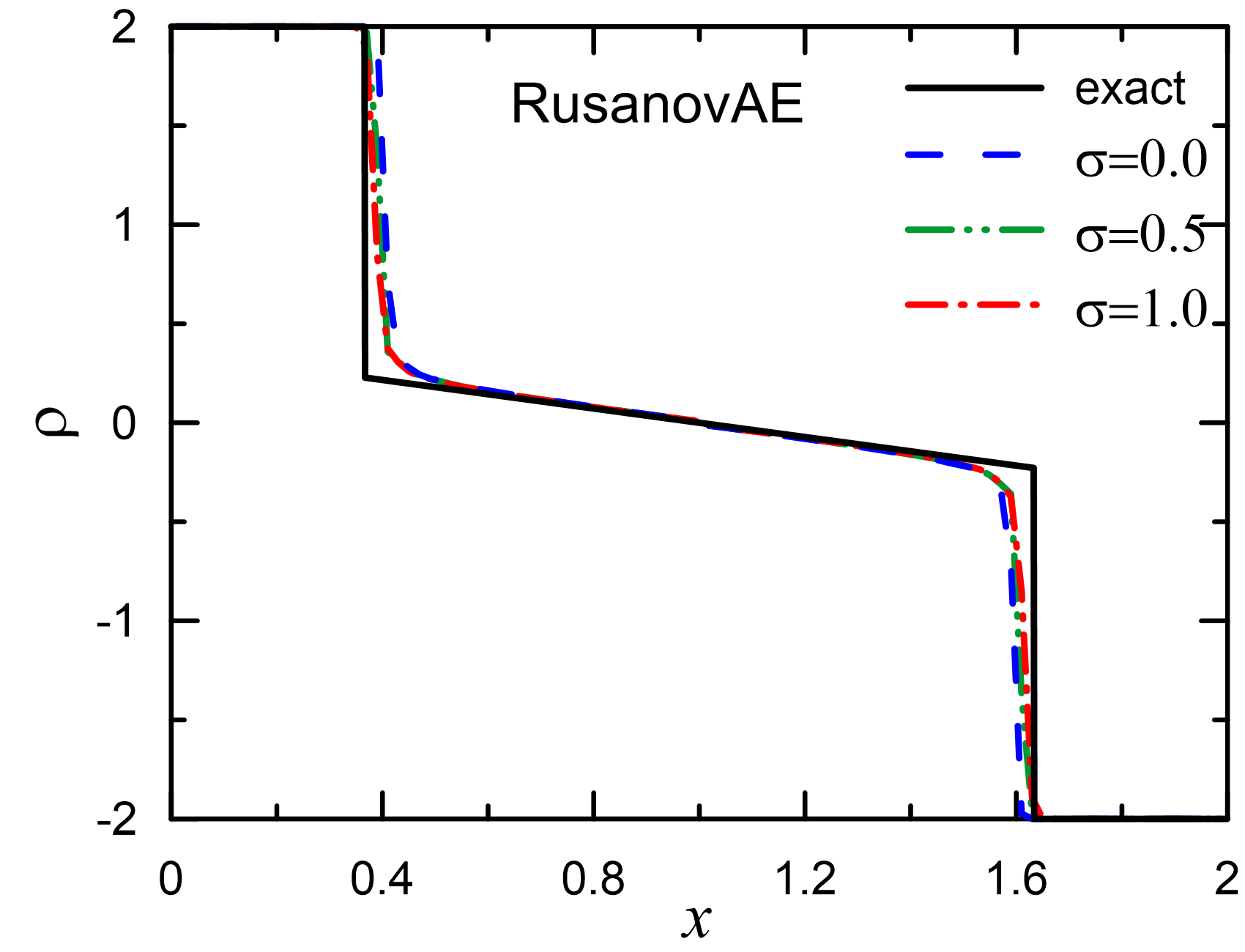}
\caption{Numerical solutions of the Riemann problem for the nonconvex conservation law \eqref{eq:59}-\eqref{eq:511} obtained by using the weighted conservative scheme \eqref{eq:51} with the Rusanov numerical flux. Flux limiters are calculated from exact (left) and approximate (right) solutions of the linear programming problem and satisfy the discrete entropy inequality \eqref{eq:52} with the proper numerical entropy flux}
\label{fig:9}       
\end{figure*}

\begin{figure*}[!tb]
  \centering 
  \includegraphics[scale=0.73]{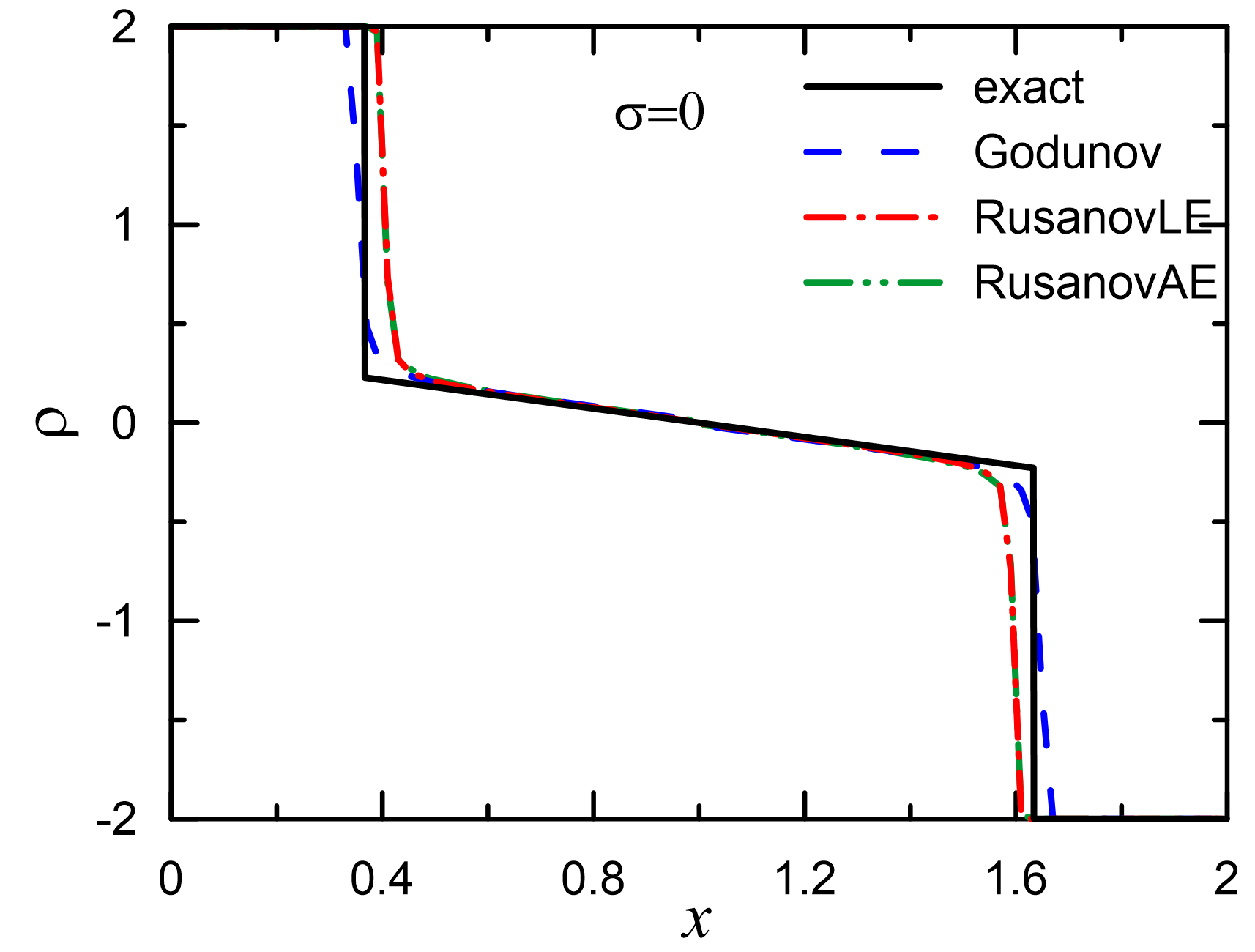}
  \includegraphics[scale=0.73]{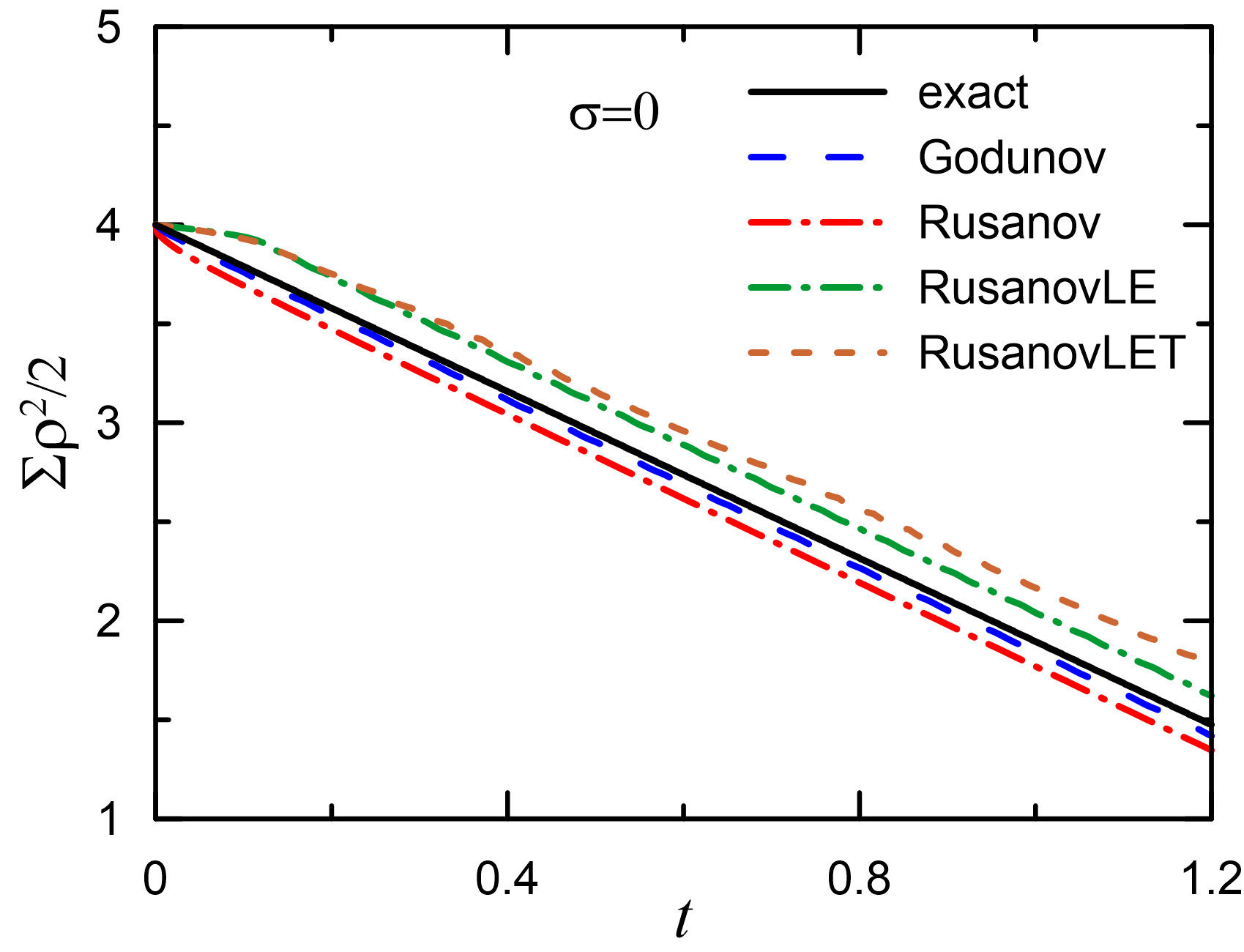}
\caption{Comparison of the numerical solutions of the Riemann problem for the nonconvex conservation law  \eqref{eq:59}-\eqref{eq:511} with explicit Godunov and Rusanov schemes (left). Time series of the entropy integral for the numerical solutions on the interval [0,2] (right)}
\label{fig:10}       
\end{figure*}
\begin{figure*}[!tb]
  \centering 
  \includegraphics[scale=0.73]{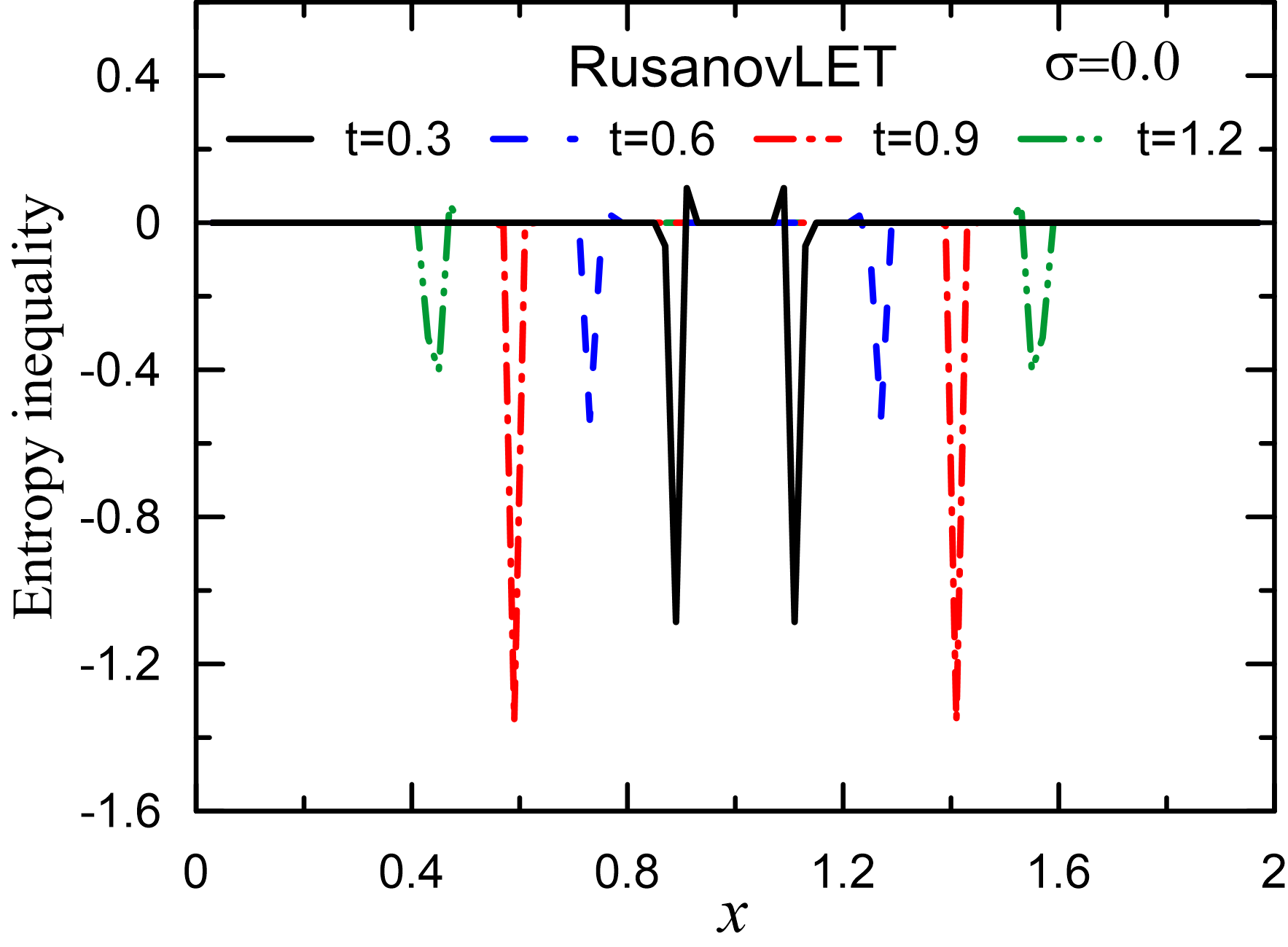}
  \includegraphics[scale=0.73]{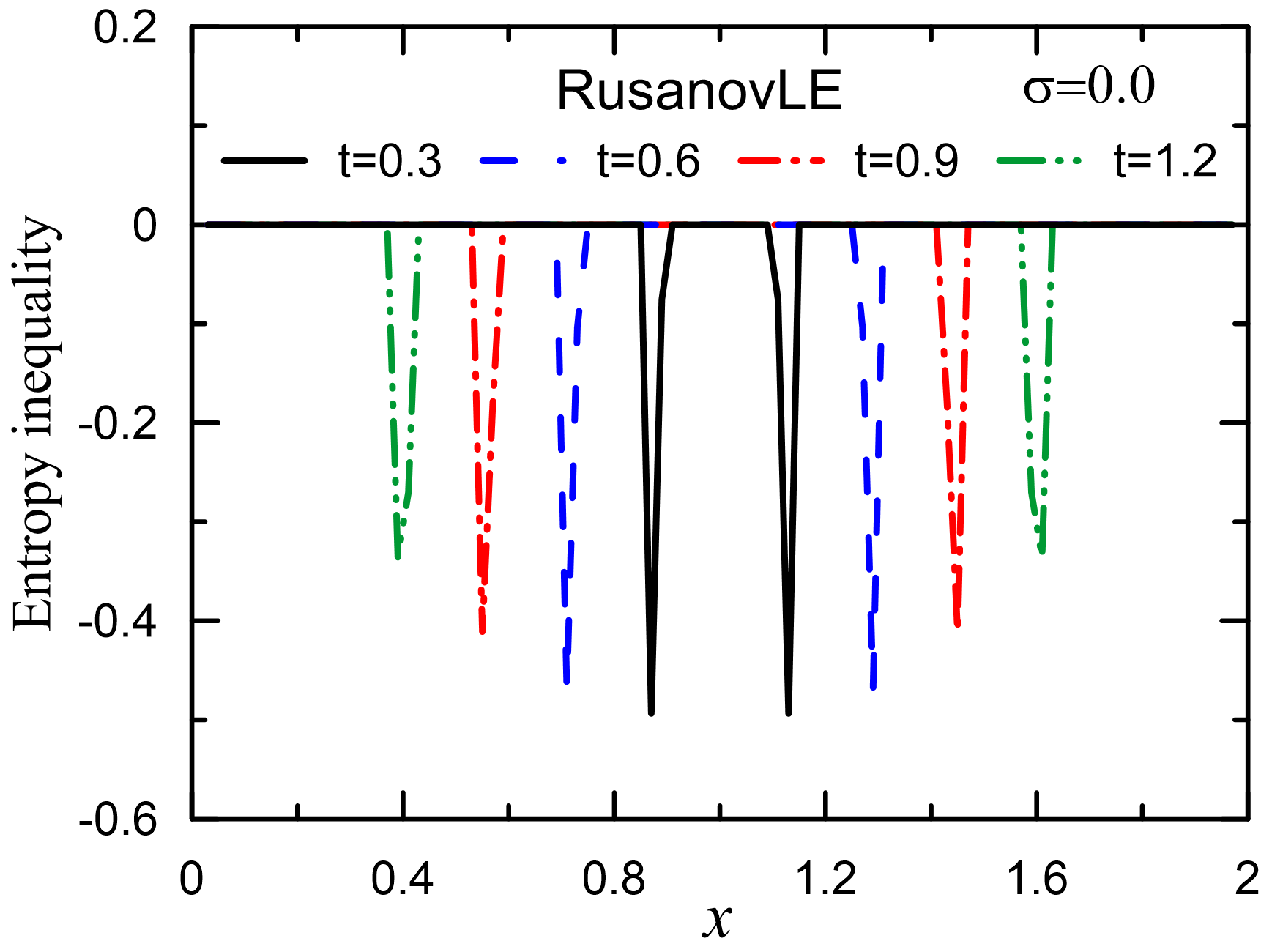}
\caption{IVP for the nonconvex conservation law \eqref{eq:59}-\eqref{eq:511}. The values of the discrete cell entropy inequality with the proper numerical entropy flux for explicit numerical solutions RusanovLET (left) and RusanovLE (right)  at different times}
\label{fig:11}       
\end{figure*}

To solve the Riemann problem \eqref{eq:59}-\eqref{eq:511} numerically, we apply difference scheme \eqref{eq:24}-\eqref{eq:25}, for which along with the Rusanov numerical flux \eqref{eq:34} we use the Godunov numerical flux \eqref{eq:55}. The simulation is performed on the uniform grid with the spatial step of 0.02 and the temporal step of 0.002. All simulation results are depicted at time $T=1.2$.

The numerical solutions of the IVP for the nonconvex conservation law \eqref{eq:59}-\eqref{eq:511} obtained by using explicit and implicit  monotone schemes with the Godunov and Rusanov numerical fluxes are shown in Fig.~\ref{fig:6}. In Fig.~\ref{fig:7} we present the numerical solutions of the Riemann problem \eqref{eq:59}-\eqref{eq:511} that were obtained by using the weighted conservative scheme \eqref{eq:24}-\eqref{eq:25} with the Godunov and Rusanov numerical fluxes. In this case, the flux limiters were computed by linear programming without taking into account the discrete entropy condition. These numerical results are physically incorrect and correspond to the results of the traditional FCT method.

In addition to the proper numerical entropy flux \eqref{eq:311}, we also apply the numerical entropy flux proposed by Tadmor in~\cite{b26}
\begin{equation}
\label{eq:512} 
 H_{i+1/2}^T = \frac{1}{2}(v_i + v_{i+1}) \, {g_{i+1/2}} - \frac{1}{2}[\psi (v_i) + \psi (v_{i+1})]						
\end{equation}
where $v = U'(\rho )$ and the entropy function $U$ is strictly convex. Therefore, the mapping $\rho  \leftrightarrow v$ is one-to-one, and we can consider $\rho $ as a function of $v$. The potential flux $\psi (v)$ is defined as
\begin{equation}
\label{eq:513} 
 g(v) = f(\rho (v)) = \frac{d\psi}{dv}(v), \qquad 	\psi (v) = v \, g(v) - F(\rho (v))   					
\end{equation}
It is easy to verify that the Tadmor’s numerical entropy flux is not the proper numerical entropy flux.

According to Tadmor~\cite[p.~464 Eq.~(3.9)]{b26}, we approximate the entropy inequality \eqref{eq:33} by the following semi-discrete scheme 
\begin{equation}
\label{eq:514} 
\begin{split}
 \frac{d}{dt}U(y_i) + \frac{1}{\Delta x_i} \left[ H_{i+1/2}^T - H_{i - 1/2}^T \right] = \frac{1}{2\Delta x_i}\left( {g_{i+1/2} \; 
 \Delta _{i+1/2}v \, - \Delta _{i+1/2}\psi } \right) \\
 + \frac{1}{2\Delta x_i}\left( g_{i-1/2} \; \Delta _{i-1/2}v \, - \Delta _{i-1/2}\psi \right) \le 0 
\end{split}  
\end{equation}

For the entropy function $U(\rho ) = 0.5{\rho ^2}$, the entropy flux and the entropy flux potential are defined as
\[ F(\rho) = \left( \frac{1}{5} \, {\rho ^2}   -  \frac{5}{6} \right)\,{\rho ^3}  \qquad {\rm and} \qquad  \psi (\rho ) = v \, g - F = \frac{1}{20}{\, \rho ^5} - \frac{5}{12} \, {\rho ^3} + \rho \]

Then, for the discrete entropy inequality \eqref{eq:514} to be valid for the weighted scheme \eqref{eq:24}-\eqref{eq:25}, the flux limiters should satisfy the following inequalities
\begin{eqnarray}
\label{eq:515}  
 & \sigma  \left[ {\alpha_{i+1/2}^{n+1} \left( {\dfrac{v_{i+1}^{n+1} + v_i^{n+1}}{2} - v_i^{(\sigma)}} \right) q_{i+1/2}^{G/{Rus},n+1} - \alpha_{i-1/2}^{n+1} \left( {\dfrac{v_i^{n+1} + v_{i-1}^{n+1}}{2} - v_i^{(\sigma )}} \right) q_{i-1/2}^{G /{Rus},n+1}} \right]  \nonumber \\ 
 &  +  (1- \sigma)   \left[ {\alpha _{i+1/2}^n \left( {\dfrac{v_{i+1}^n + v_i^n}{2} - v_i^{(\sigma )}} \right) q_{i+1/2}^{G/{Rus},n} - \alpha _{i-1/2}^n \left( {\dfrac{v_i^n + v_{i - 1}^n}{2} - v_i^{(\sigma )}} \right)  q_{i-1/2}^{G/{Rus},n}} \right]  \nonumber \\
 &  \le  \sigma  \left[ {\dfrac{1}{2} \Delta_{i+1/2} \psi^{n+1} - \left( {\dfrac{v_{i+1}^{n+1} + v_i^{n+1}}{2} - v_i^{(\sigma )}} \right) \, h_{i+1 /2}^{G/{Rus},n+1} + \dfrac{1}{2} \Delta_{i-1/2} \psi^{n+1} } \right. \\
& +  \left. { \left( \dfrac{v_i^{n+1} + v_{i-1}^{n+1}}{2} - v_i^{(\sigma)} \right)\, h_{i-1/2}^{G/{Rus},n+1}} \right]  
   +  \dfrac{\Delta {x_i}}{\Delta t} \left[ \left( {\sigma v_i^{n+1} + (1 - \sigma ) v_i^n} \right) \, \left(y_i^{n+1} - y_i^n \right) - \left( U_i^{n+1} - U_i^n \right) \right]  \nonumber  \\ 
 &  +  (1 - \sigma)  \left[ {\dfrac{1}{2} \Delta_{i+1/2} \, \psi^n - \left( \dfrac{v_{i+1}^n + v_i^n}{2} - v_i^{(\sigma)} \right) \, h_{i+1/2}^{G/{Rus},n} + \dfrac{1}{2} \Delta_{i-1/2}\psi^n + \left( \dfrac{v_i^n + v_{i-1}^n}{2} - v_i^{(\sigma )} \right) \, h_{i-1/2}^{G/{Rus},n}} \right]  \nonumber 
\end{eqnarray}
where $q_{i + {1/2}}^{G/{Rus}} = 0.5({f_i} + {f_{i + 1}}) - h_{i + {1 /2}}^{G/{Rus}}$.

Numerical solutions of the Riemann problem \eqref{eq:59}-\eqref{eq:511} presented in Fig.~\ref{fig:8} are obtained by using the weighted conservative scheme \eqref{eq:24}-\eqref{eq:25}, the flux limiters of which are computed by linear programming and satisfy the inequalities \eqref{eq:515}. Unfortunately, these numerical results are also physically incorrect. Thus, if a numerical solution of the Riemann problem for scalar hyperbolic conservation law satisfies the discrete entropy inequality with the Tadmor’s numerical entropy flux, this does not mean that this numerical solution is physically correct. Therefore, further, we will only consider the discrete entropy inequality \eqref{eq:52} with the proper numerical entropy flux.

Fig.~\ref{fig:9} shows that the weighted conservative scheme \eqref{eq:51}, the flux limiters of which are computed from exact or approximate solutions of linear programming by taking into account the discrete entropy inequality \eqref{eq:52}, yields the physically correct solution of the Riemann problem \eqref{eq:59}-\eqref{eq:511}. The comparison of the numerical solutions that are obtained by various explicit difference schemes is shown in Fig.~\ref{fig:10} (left). We note a good agreement between the numerical solutions RusanovLE and RusanovAE. Time series of the entropy integral on the interval [0,2] for the numerical solutions that are obtained by different schemes are given in Fig.~\ref{fig:10} (right). Values of the discrete entropy inequality \eqref{eq:52} for explicit numerical solutions RusanovLET and RusanovLE of the IVP \eqref{eq:59}-\eqref{eq:511} at different times are presented in Fig.~\ref{fig:11}. It is easy to see that the solution RusanovLET violates the discrete entropy inequality \eqref{eq:52}.

\begin{figure*}[!tb]
\centering 
\begin{tabular}[t]{lcc} 
  \multirow{2}{*}[5.0em]{\includegraphics[scale=0.78]{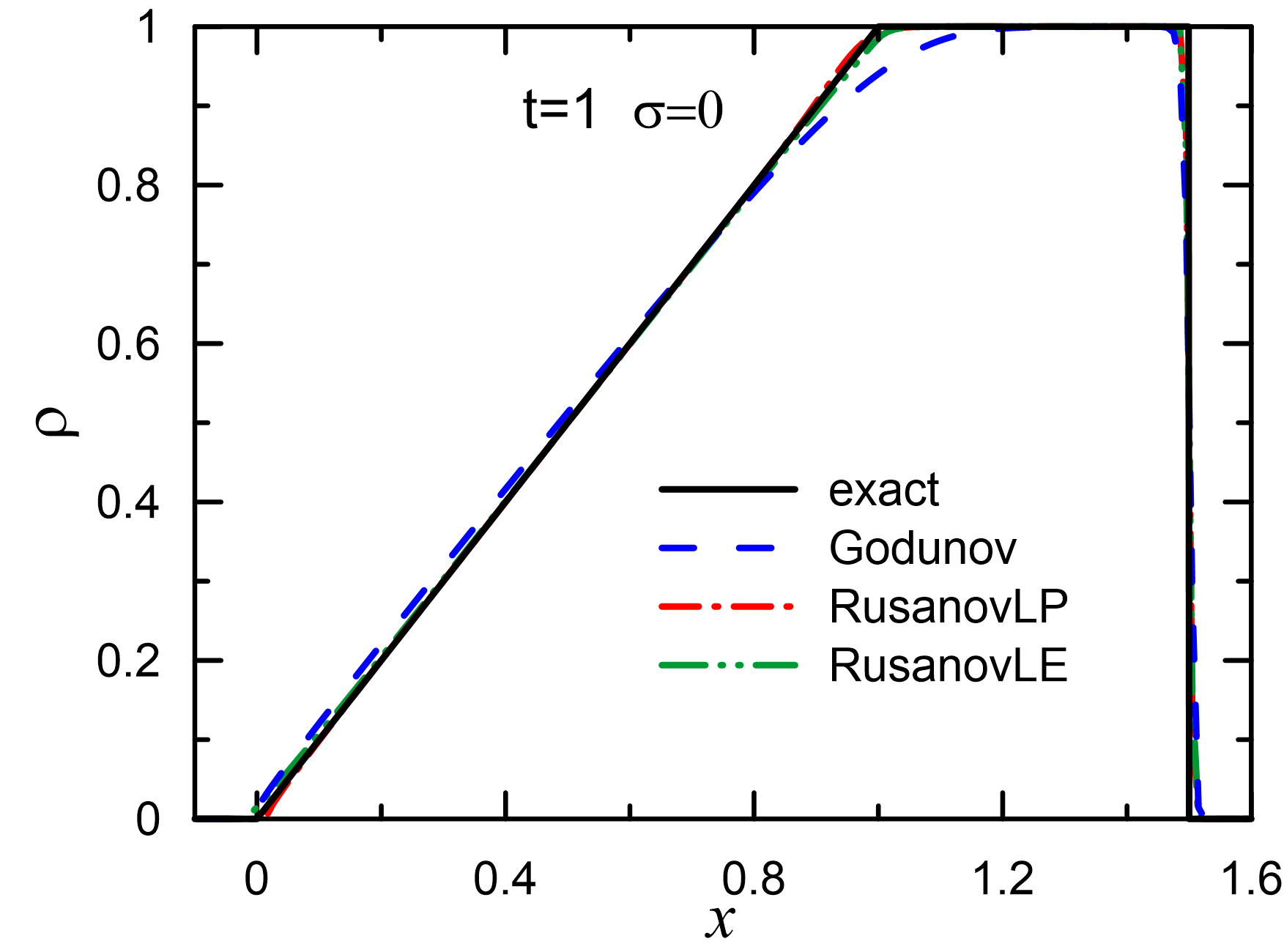}}  
 & \includegraphics[scale=0.52]{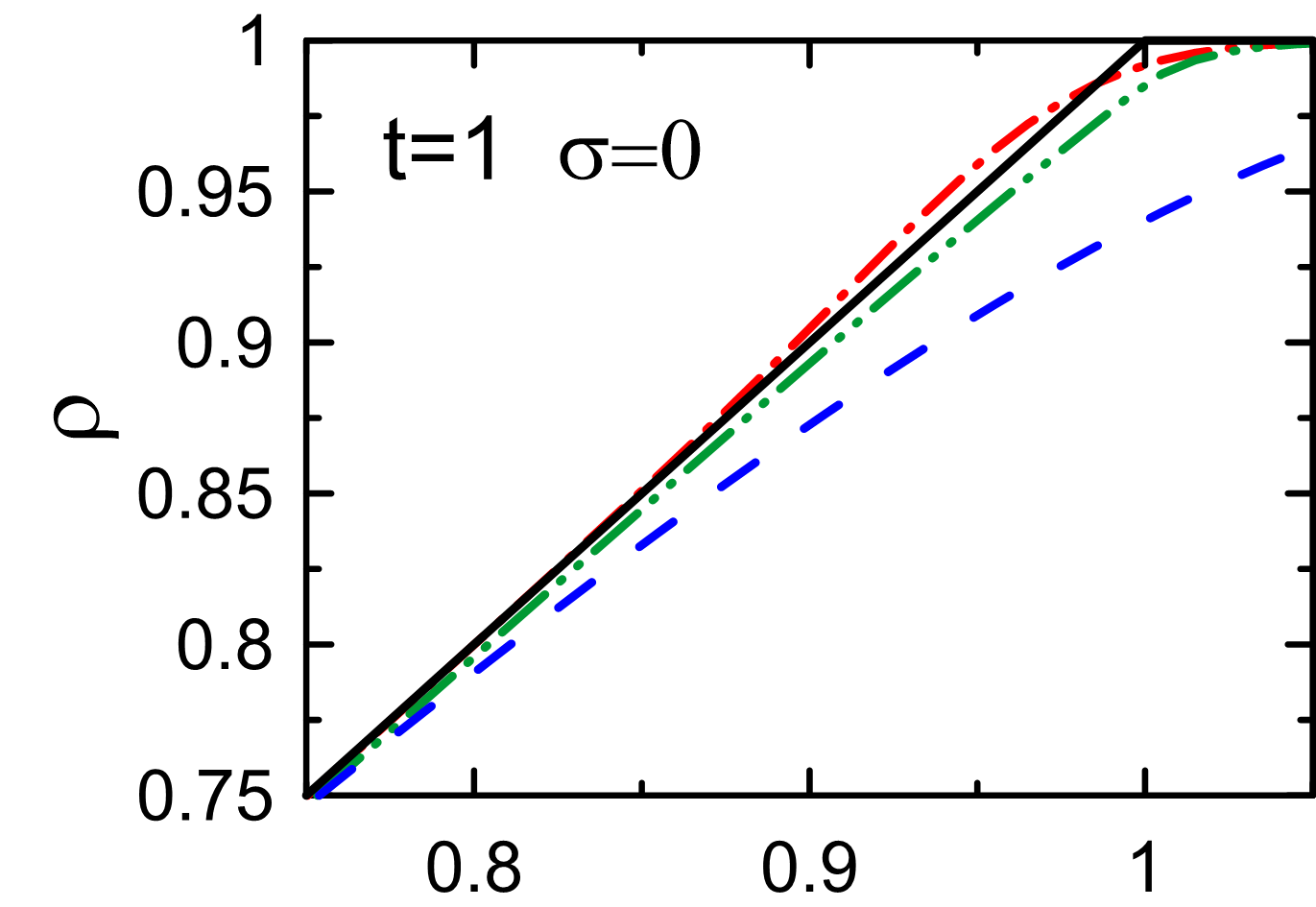} &
  \includegraphics[scale=0.52]{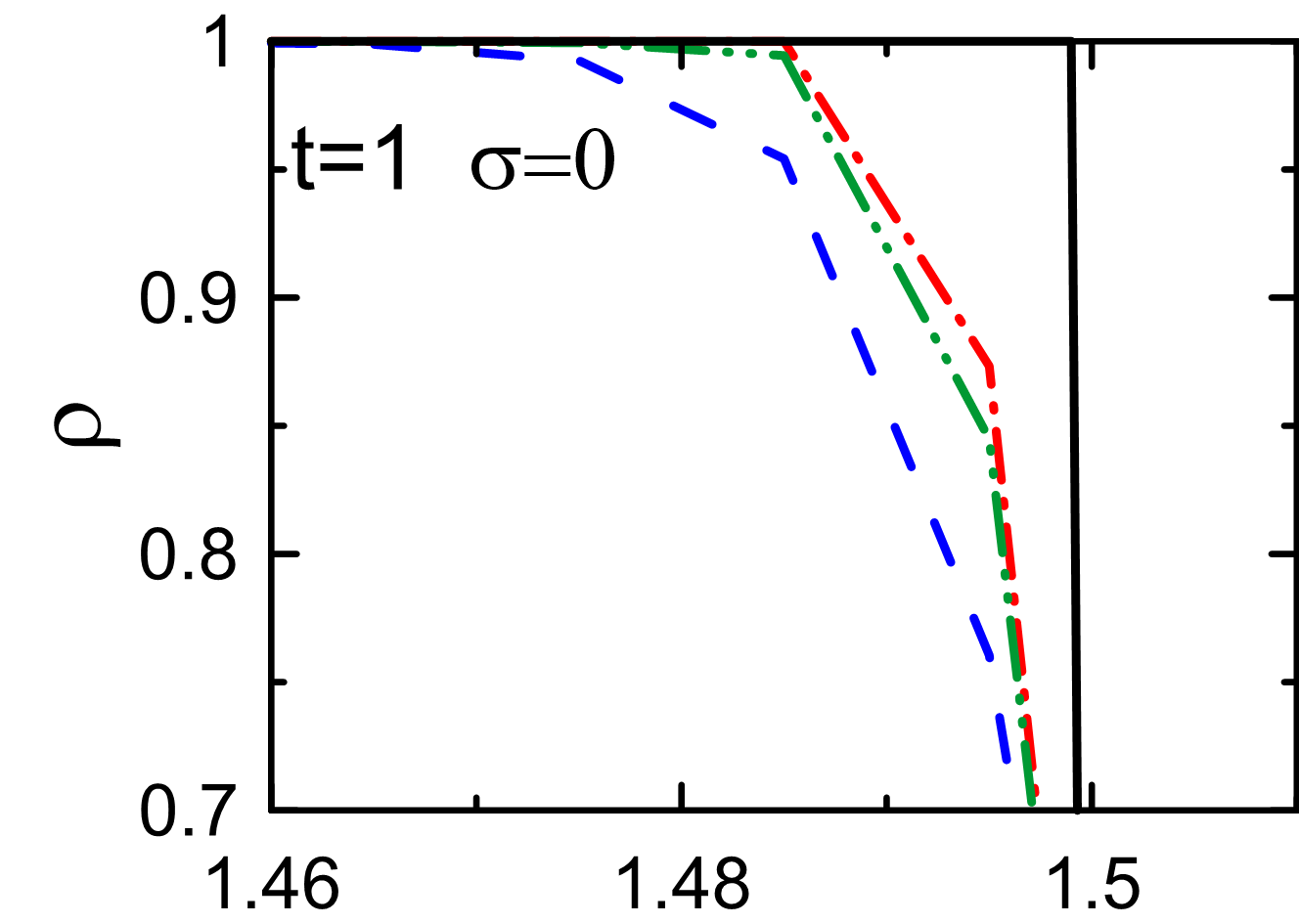} \\
 & \includegraphics[scale=0.52]{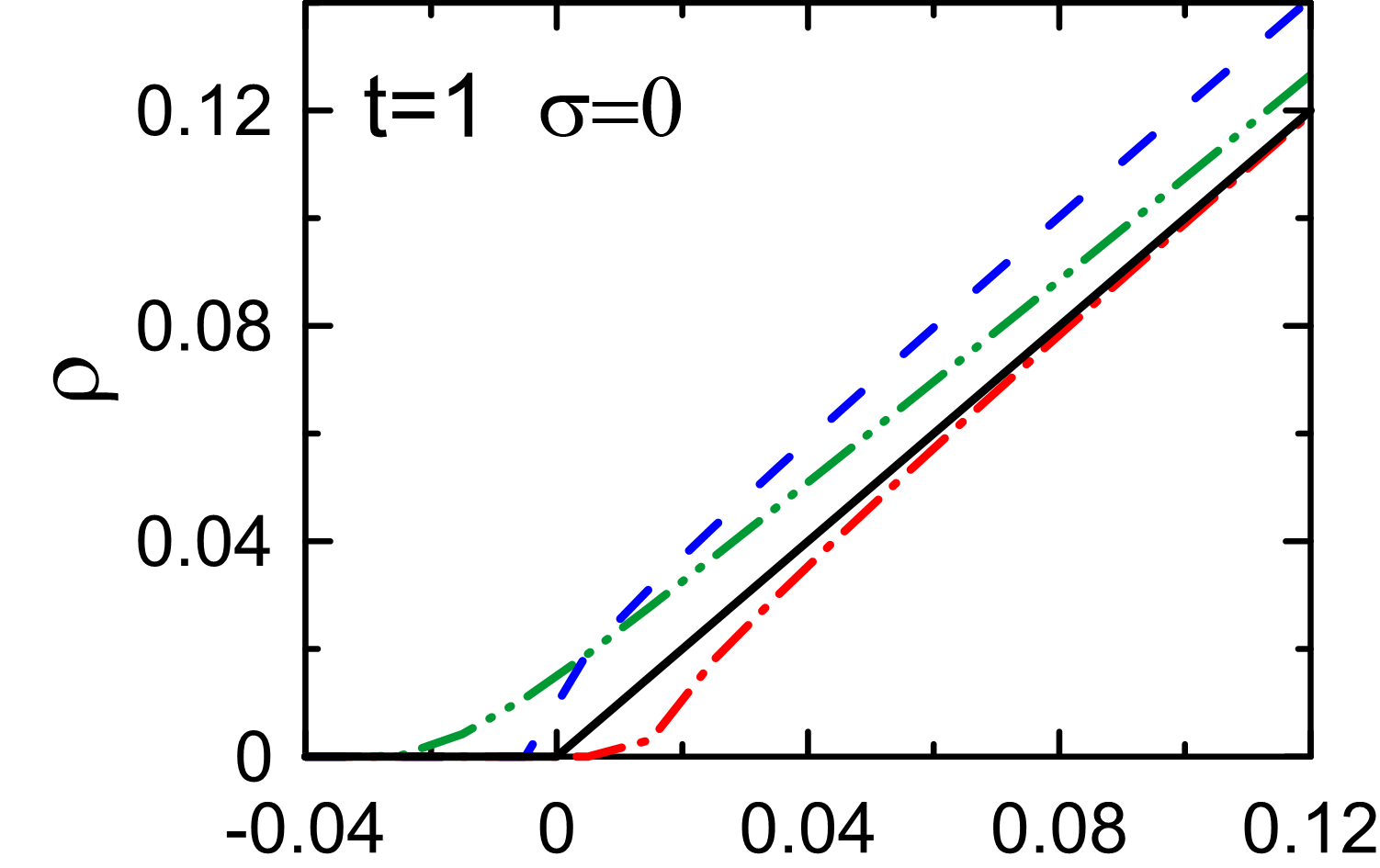} &
  \includegraphics[scale=0.52]{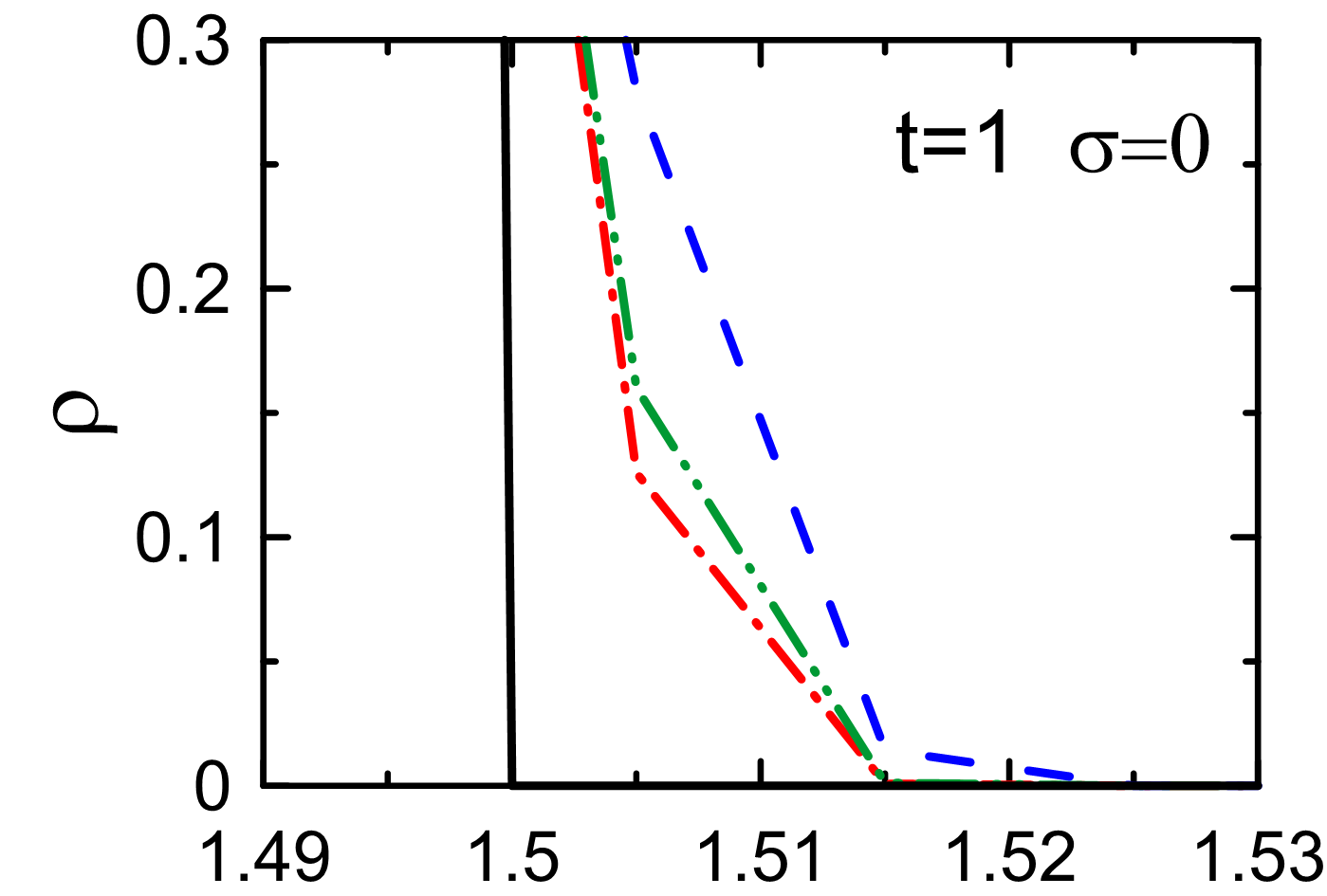} \\
  \multirow{2}{*}[5.0em]{\includegraphics[scale=0.78]{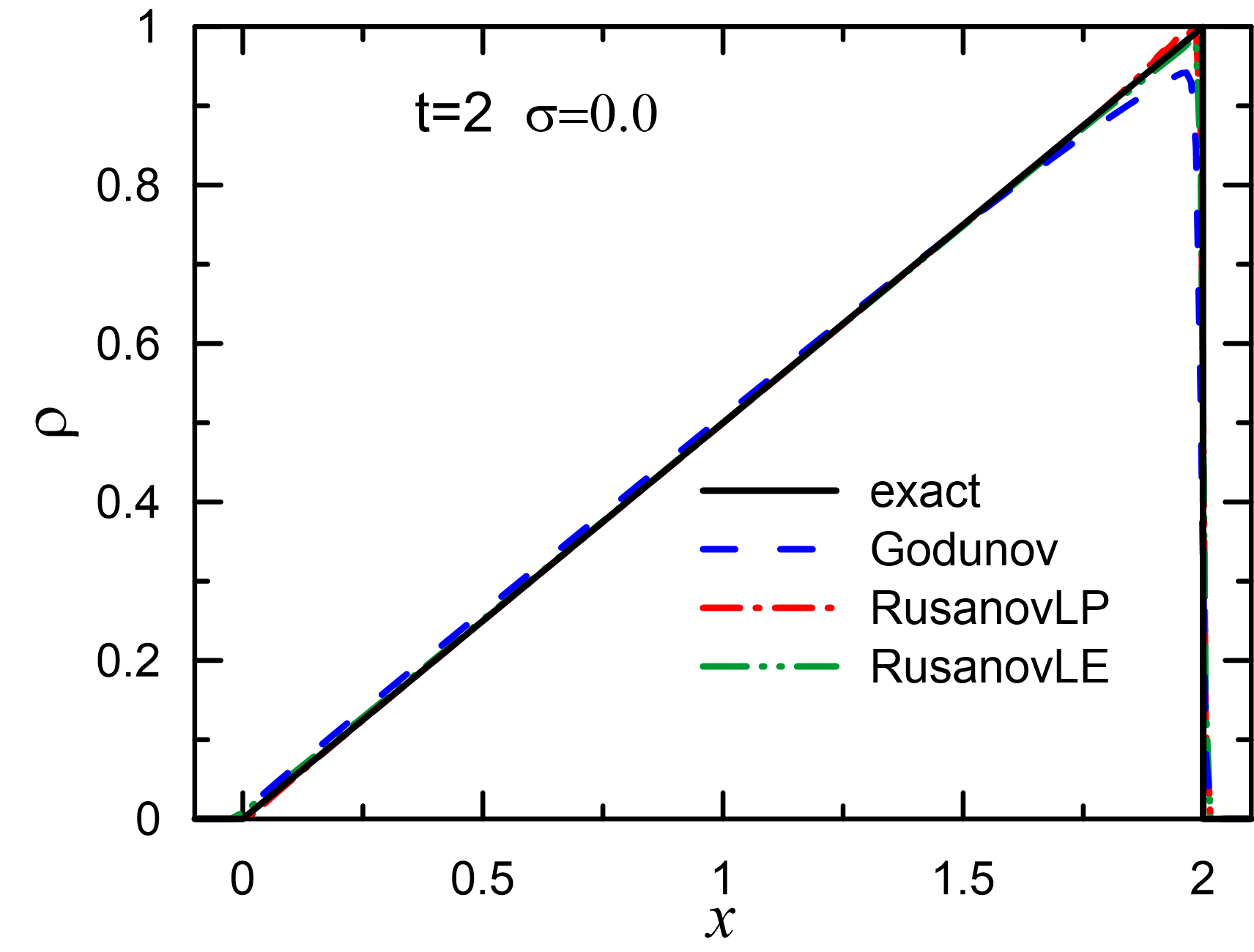}}  
 &  &
  \includegraphics[scale=0.52]{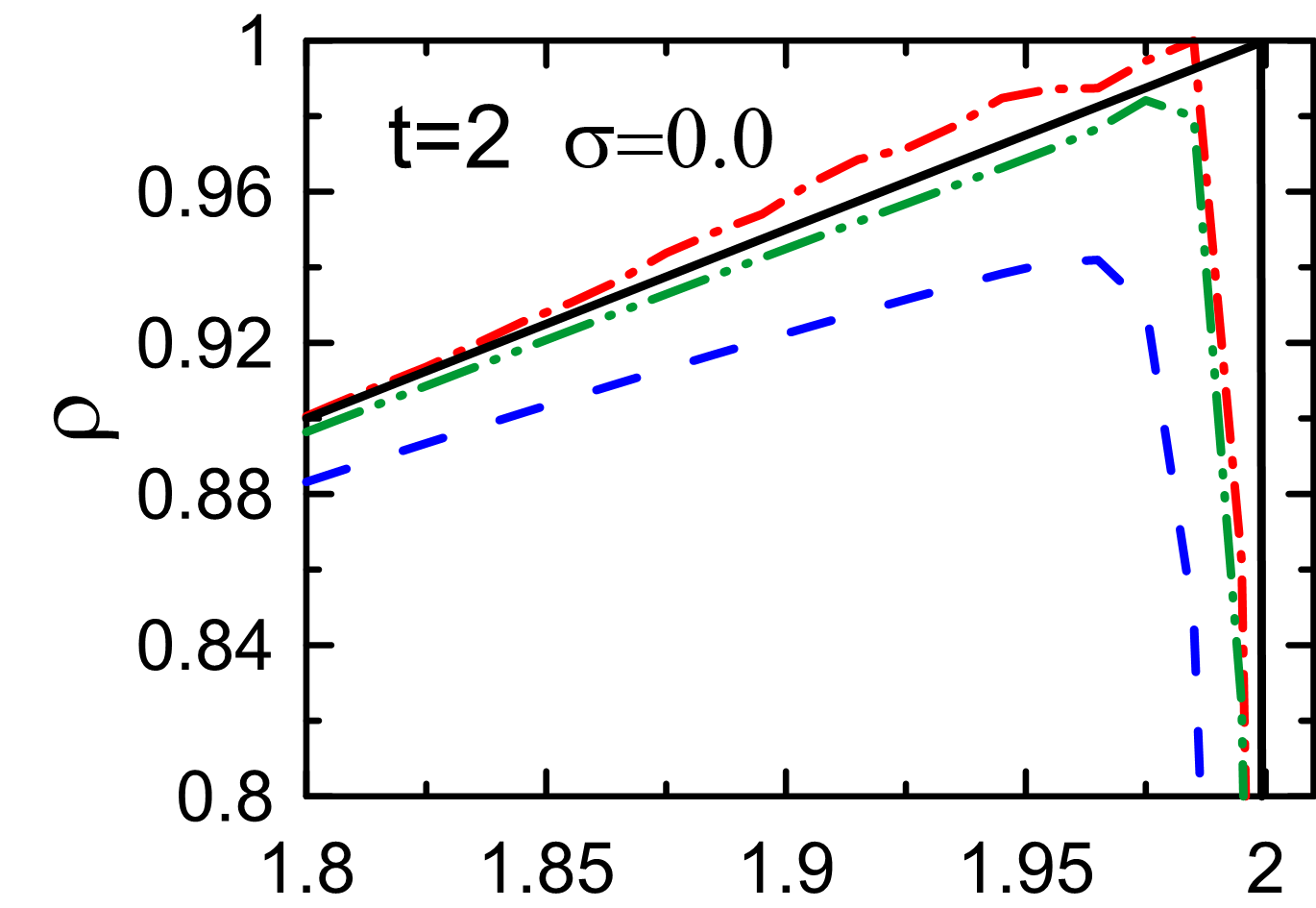} \\
 & \includegraphics[scale=0.52]{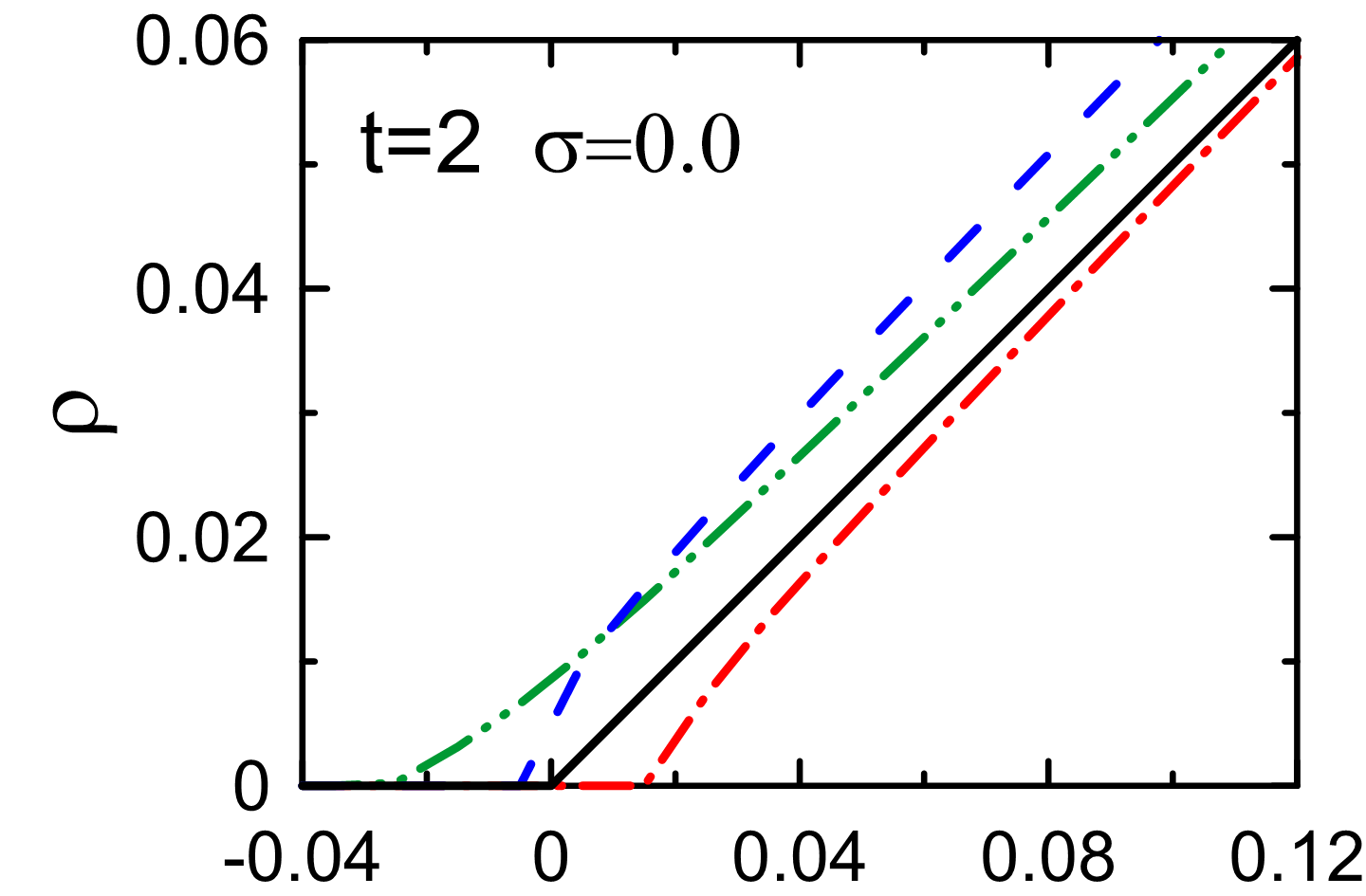} &
  \includegraphics[scale=0.52]{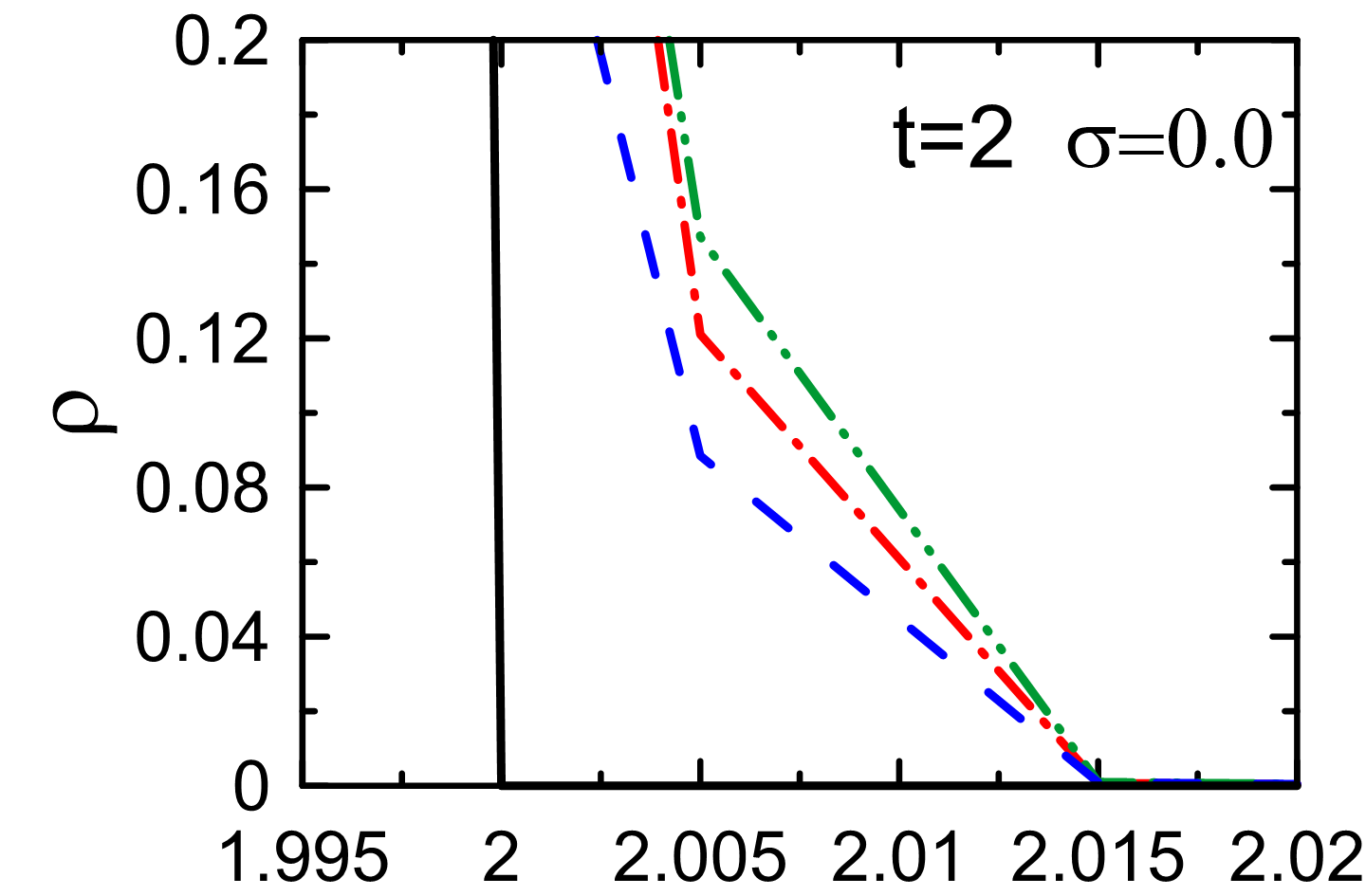} \\
  \multirow{2}{*}[5.0em]{\includegraphics[scale=0.78]{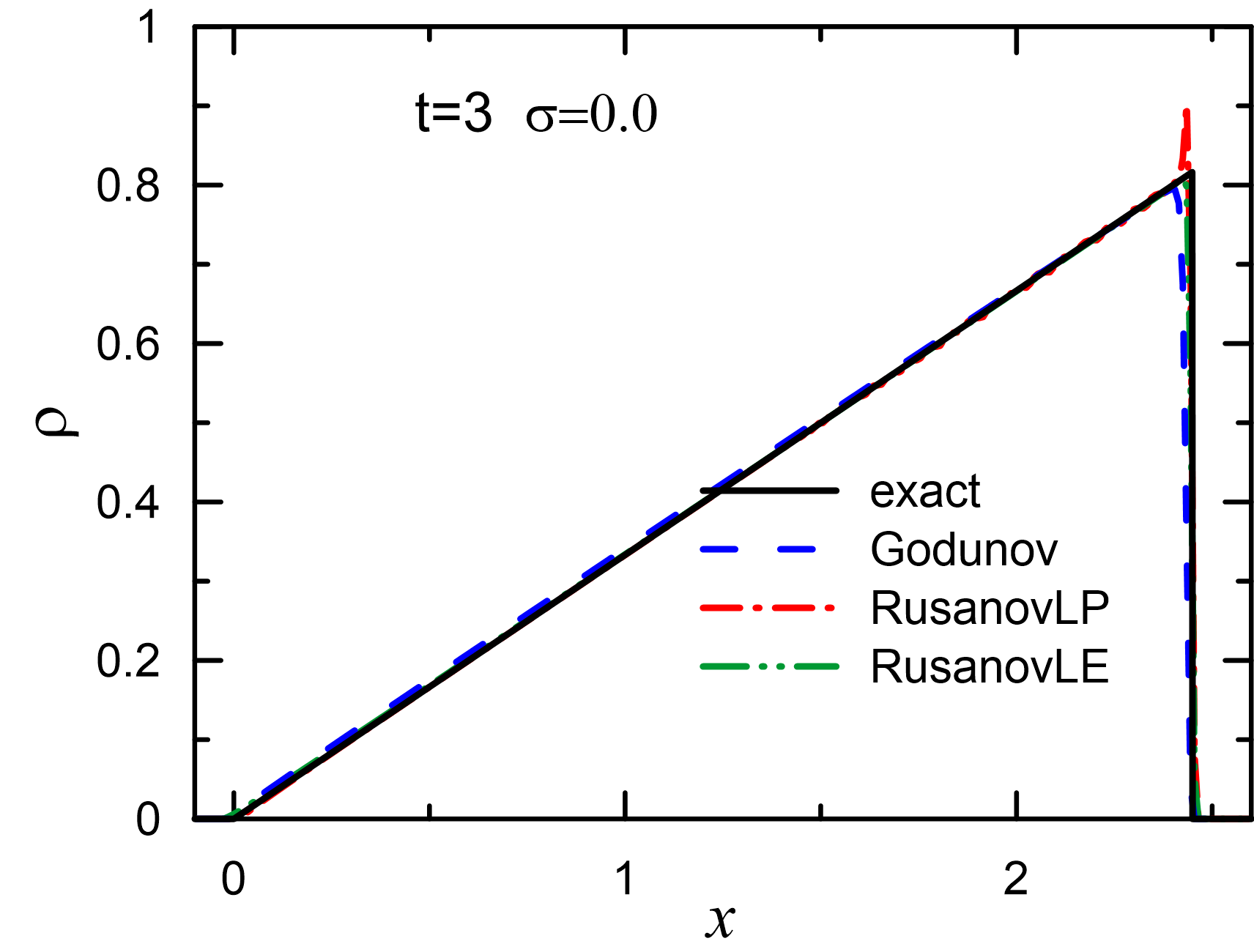}}  
 &  &
  \includegraphics[scale=0.52]{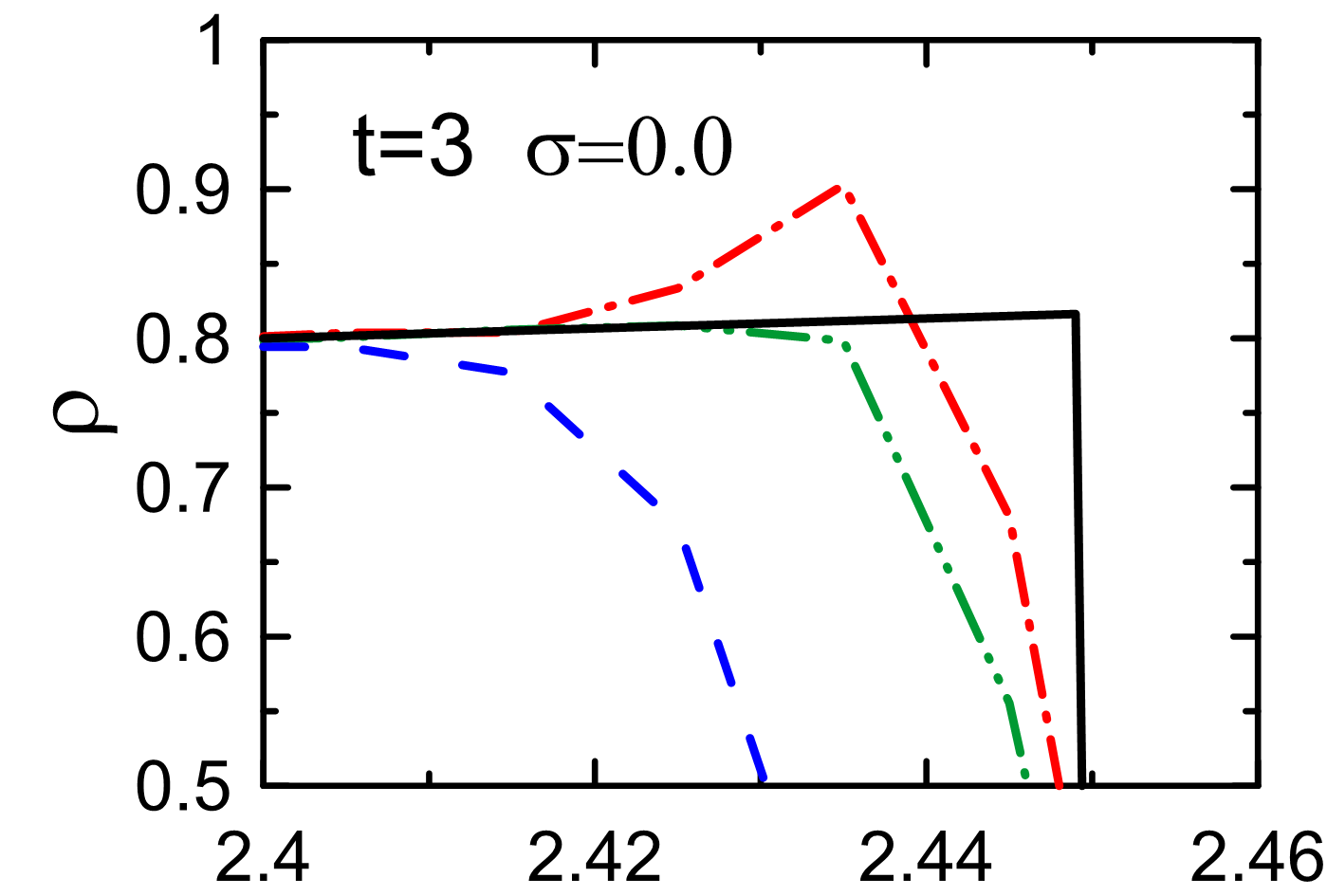} \\
 & \includegraphics[scale=0.52]{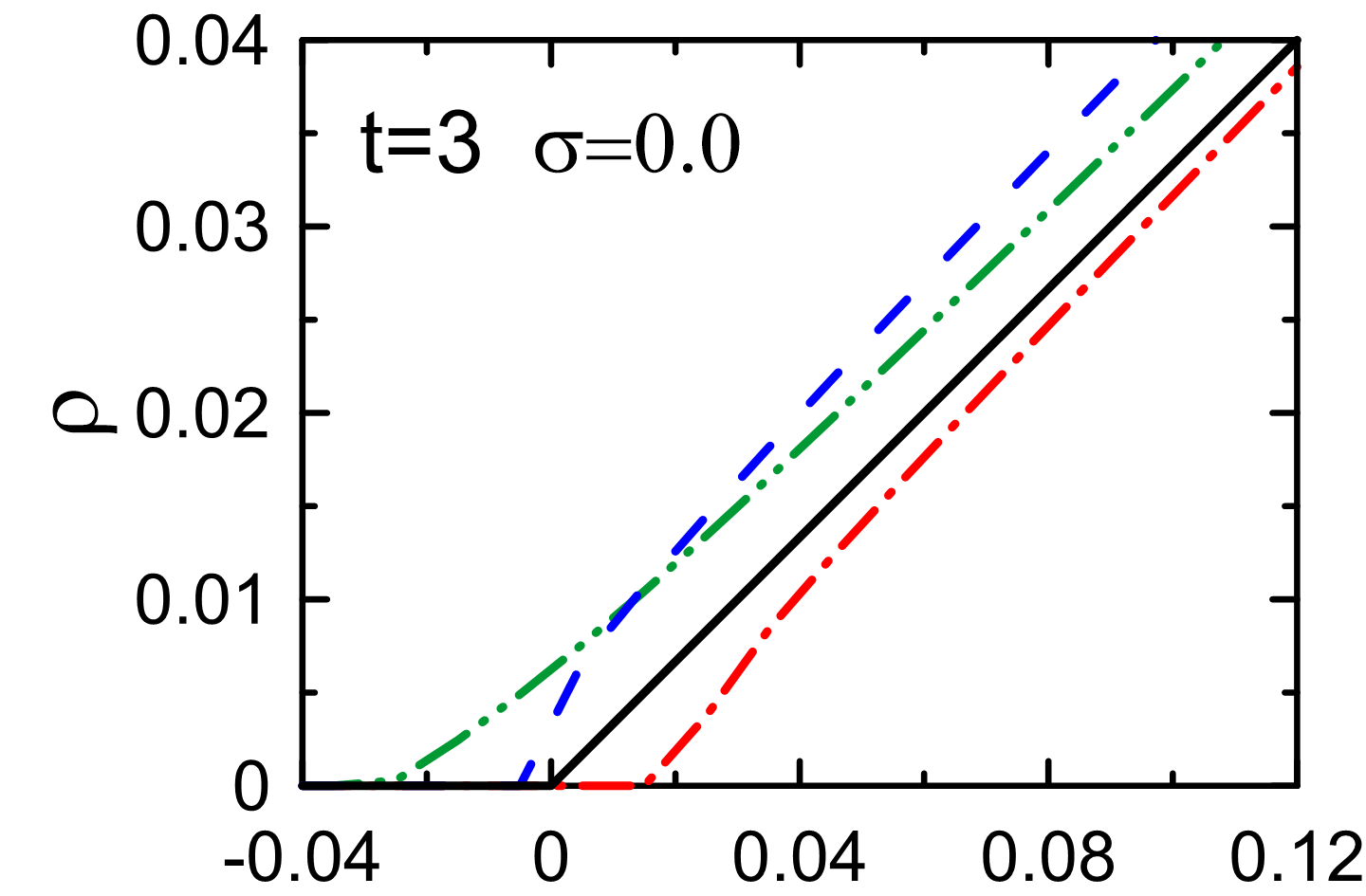} &
  \includegraphics[scale=0.52]{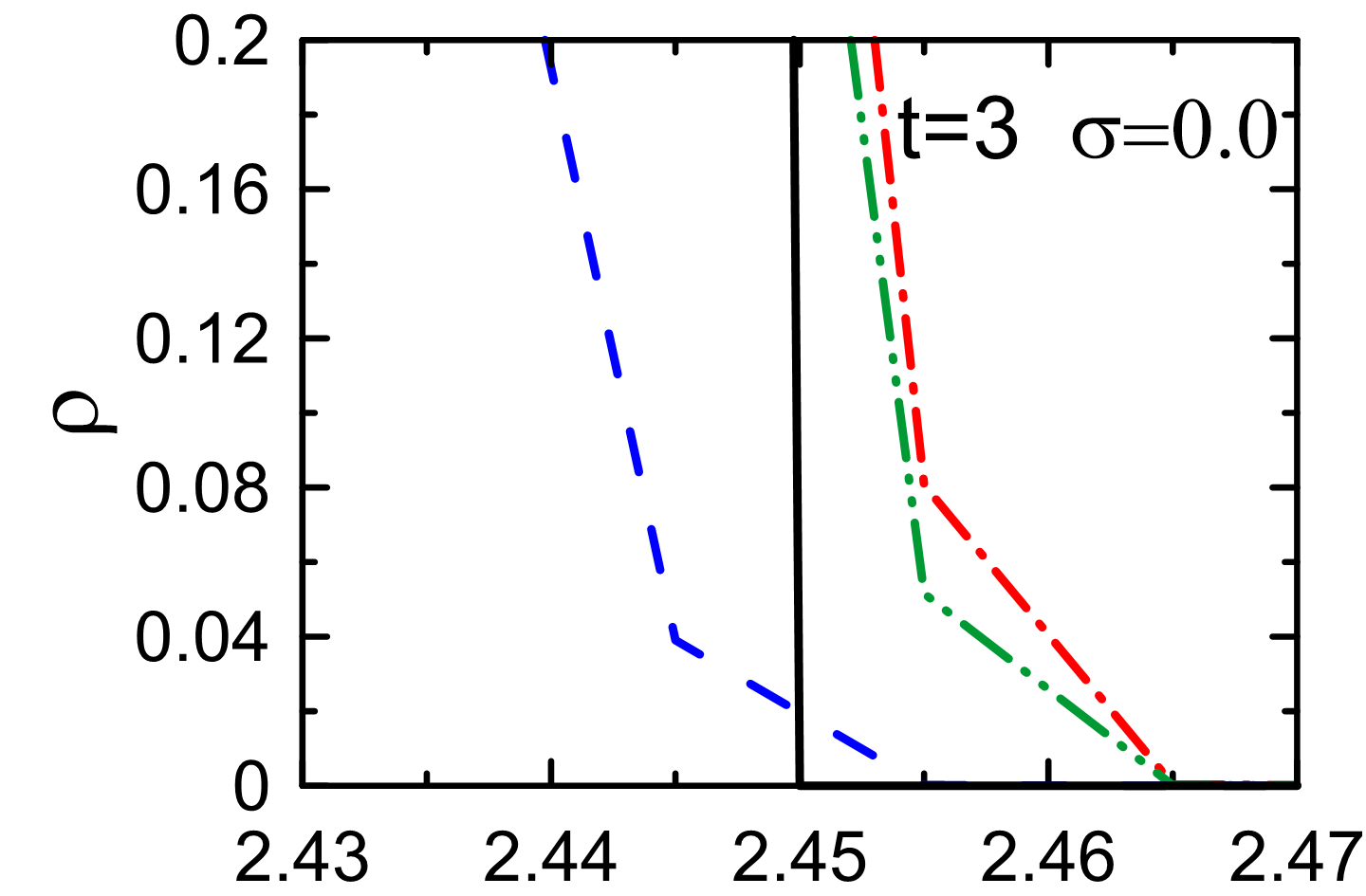} 
\end{tabular}   
\caption{Numerical solutions of the IVP for the inviscid Burgers equation \eqref{eq:516}-\eqref{eq:517} obtained with various explicit schemes at times t=1, t=2  and t=3. Enlarged fragments of the numerical solutions in the corner points are shown in right}
\label{fig:12}       
\end{figure*}

\subsection{Inviscid Burgers Equation}  \label{Sec54}
Consider the IVP for the inviscid Burgers equation
\begin{equation}
\label{eq:516} 
 \frac{\partial \rho }{\partial t} + \frac{1}{2}\frac{\partial \rho^2}{\partial x} = 0 	  	   							
\end{equation}
with the initial condition
\begin{equation}
\label{eq:517} 
  \rho (x,0) = \begin{cases}
0 \qquad & {\rm if} \;\; x < 0 \\
1 \qquad & {\rm if} \;\;  0 \le x \le 1 \\
0 \qquad & {\rm if} \;\; x > 1
\end{cases}
\end{equation}

The well-known solution of \eqref{eq:516}-\eqref{eq:517} for $t \le 2$ consists of a rarefaction wave starting from $x=0$ and a shock wave that travels to the right from $x=1$ with a propagation velocity of 0.5. The shock and rarefaction waves appear together. Then the rarefaction wave encounters with the shock at $t=2$, and a second shock wave is formed at $x=2$ with a speed of $\sqrt 2 $. 

To numerically solve the IVP \eqref{eq:516}-\eqref{eq:517}, we apply the weighted difference scheme \eqref{eq:26} with the Godunov numerical flux \eqref{eq:55} and the weighted difference scheme \eqref{eq:51} with the Rusanov numerical flux. We perform all simulations on uniform grid with the spatial grid size $\Delta x=0.01$ and temporal step size $\Delta t=0.002$. Flux limiters for the scheme \eqref{eq:51} are computed by using exact solution of the linear programming problem \eqref{eq:317}, \eqref{eq:53} with and without taking into account the discrete entropy condition \eqref{eq:54}. For the entropy function $U = \rho^2/2 $, the entropy flux is $F = \rho^3/3$.

\begin{figure*}[!t]
  \centering 
  \includegraphics[scale=0.78]{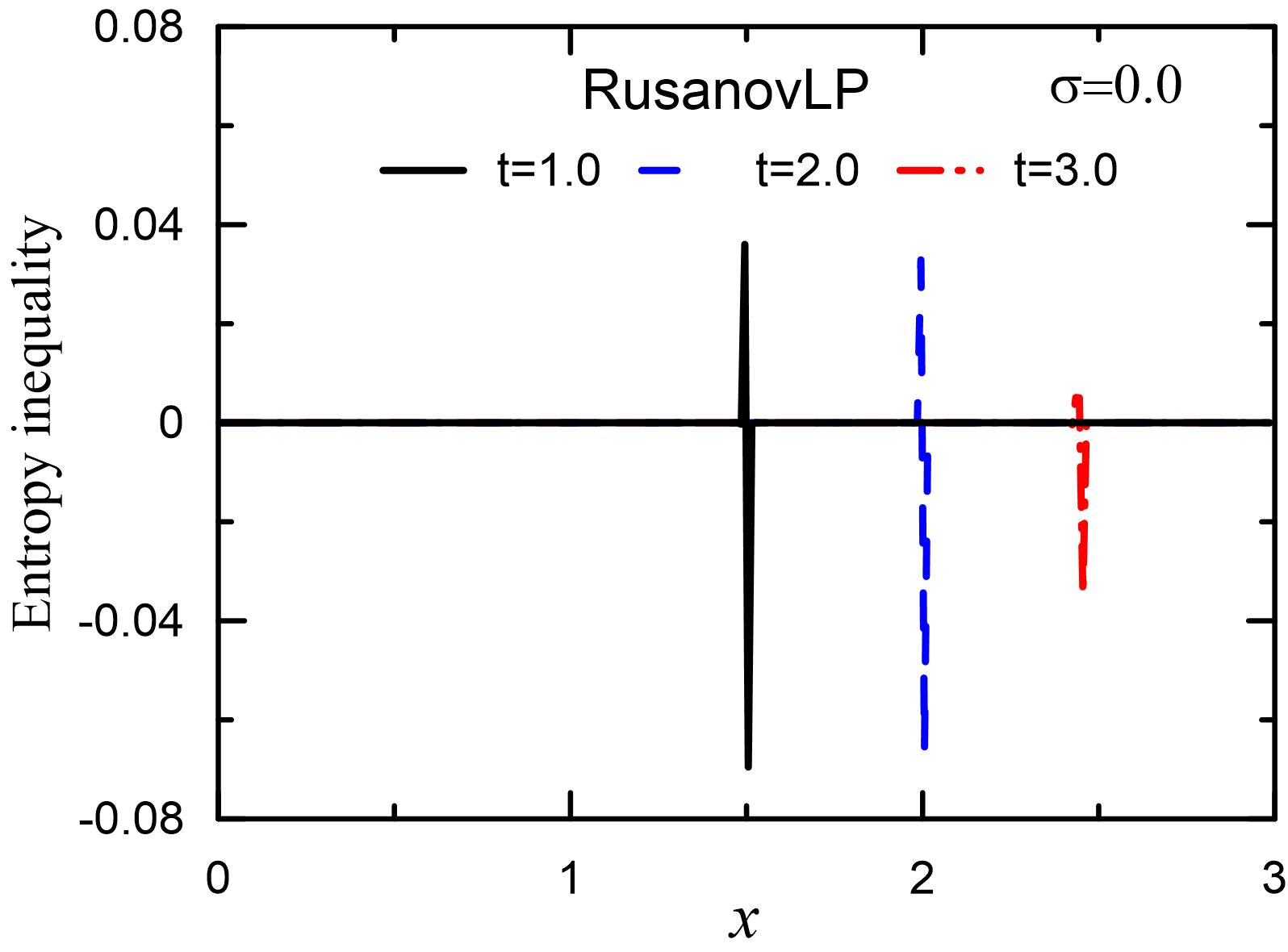}
  \includegraphics[scale=0.78]{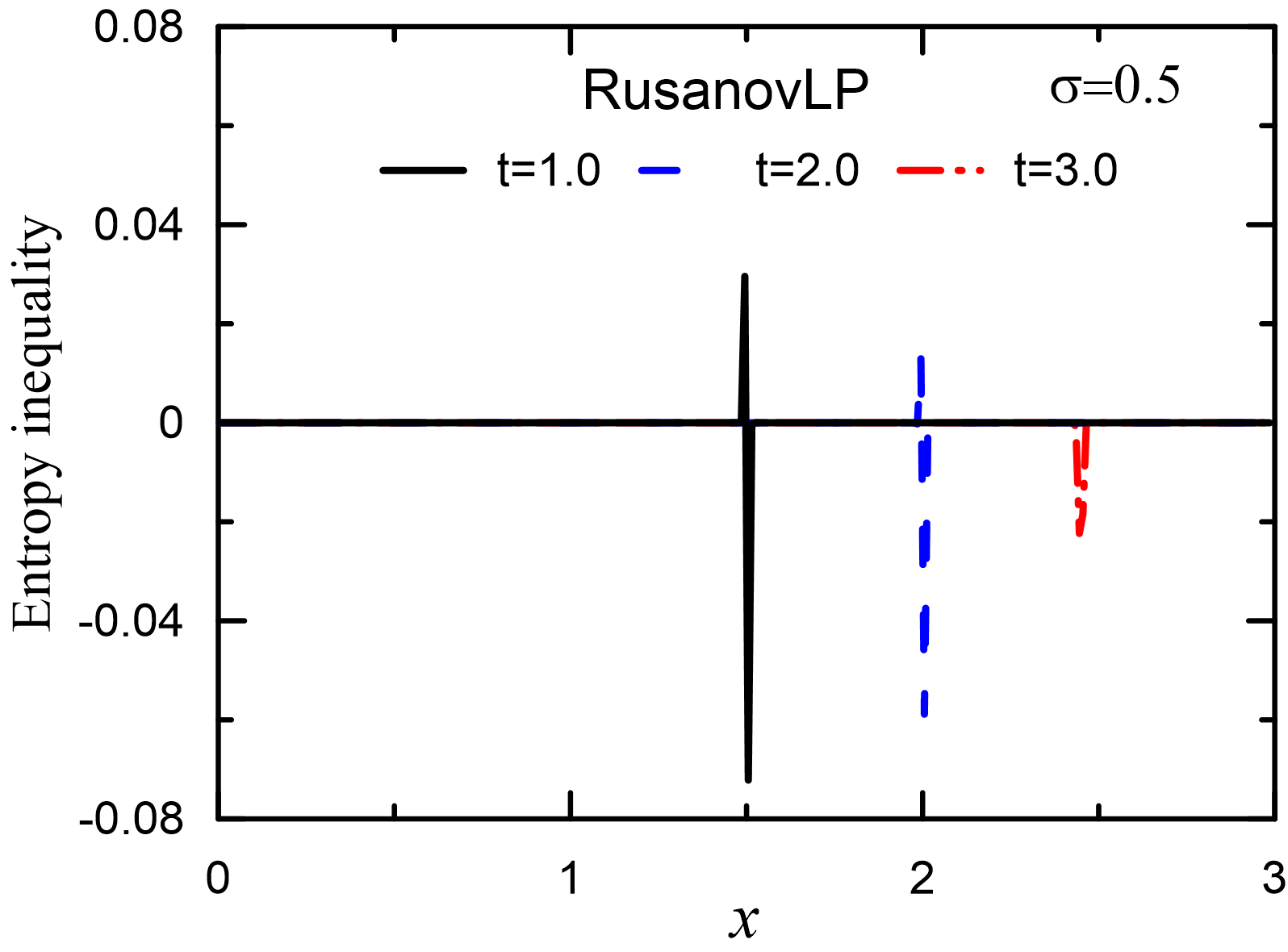}
  \includegraphics[scale=0.78]{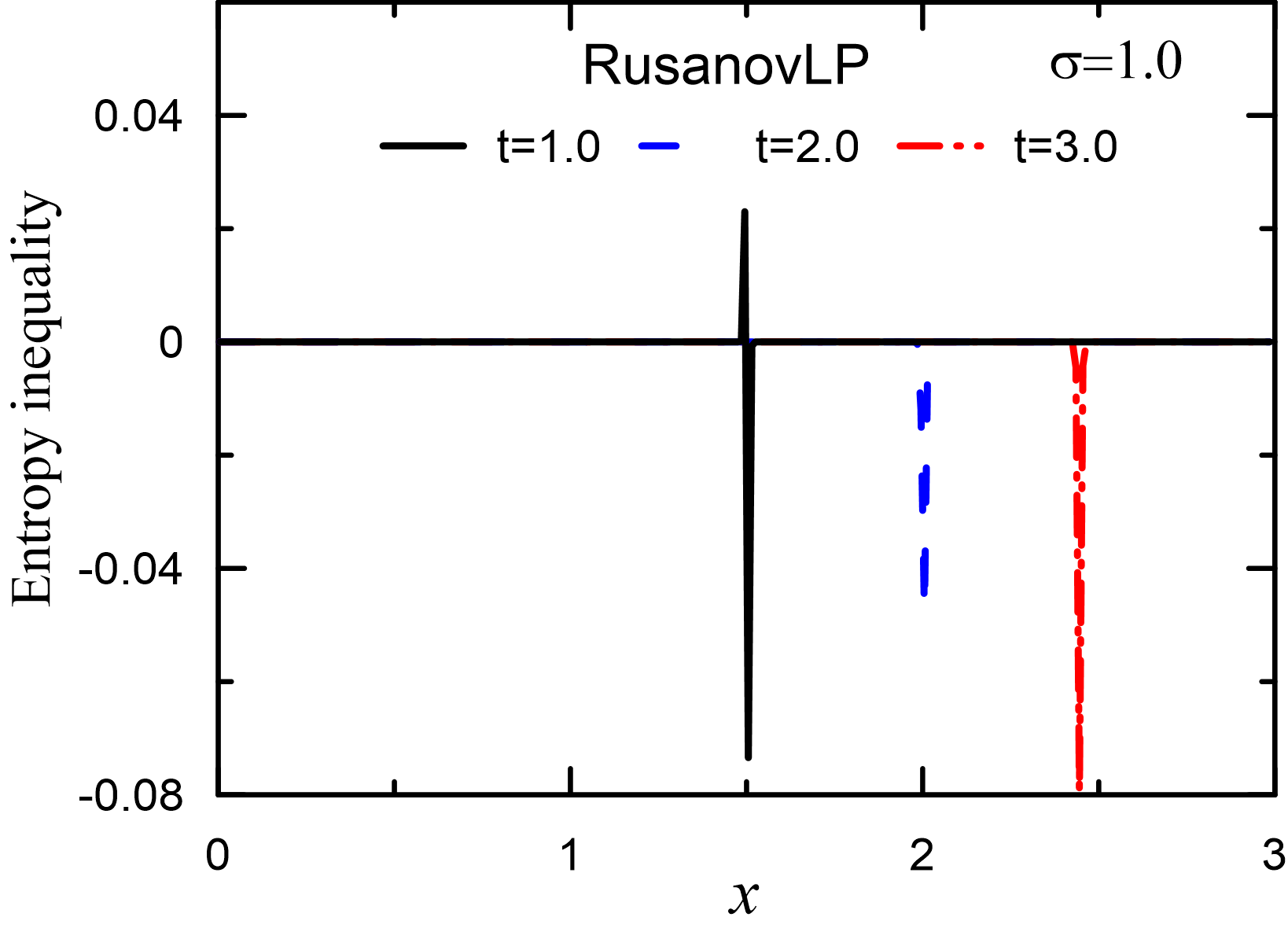}
  \includegraphics[scale=0.78]{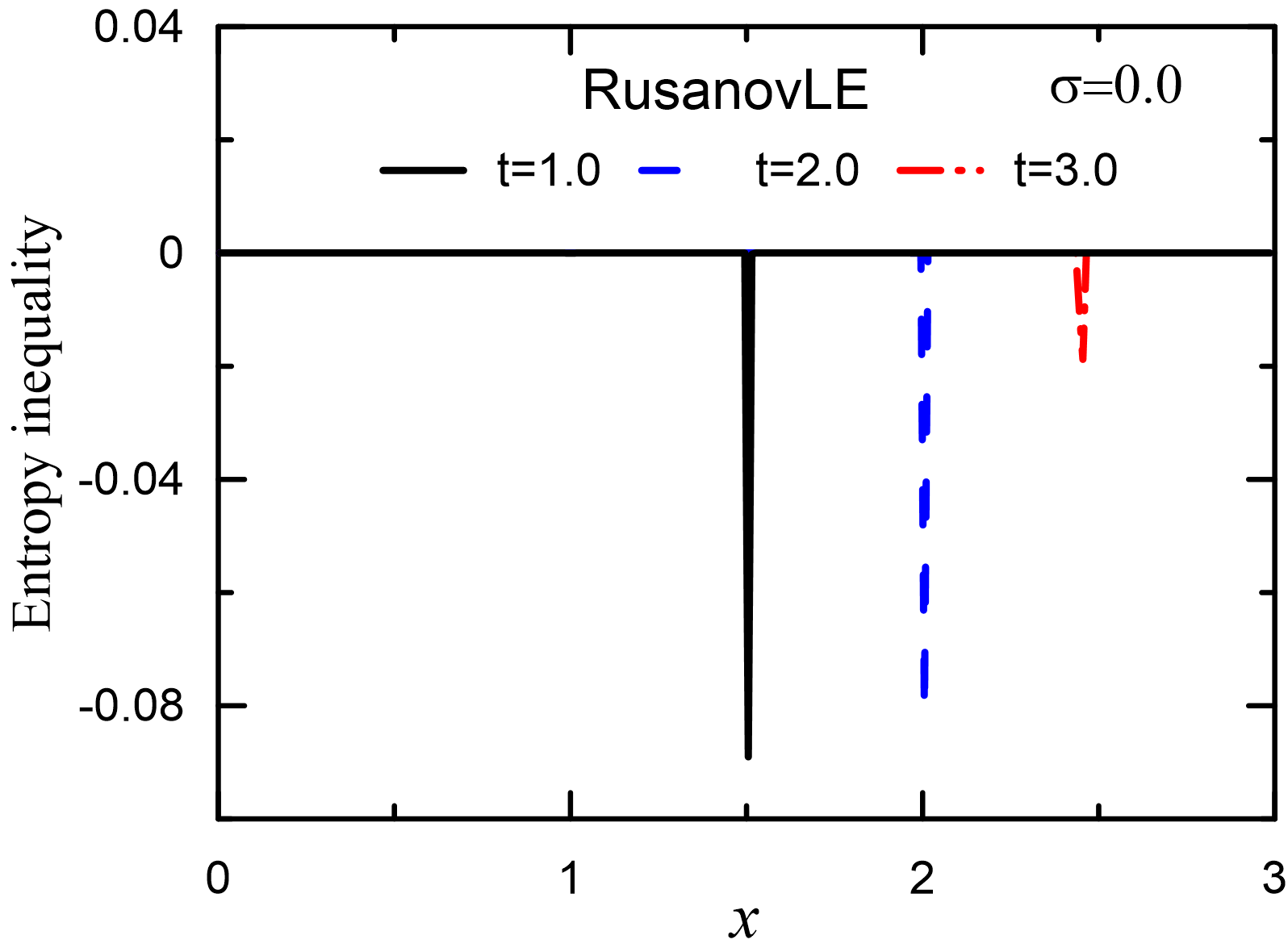}
  \includegraphics[scale=0.78]{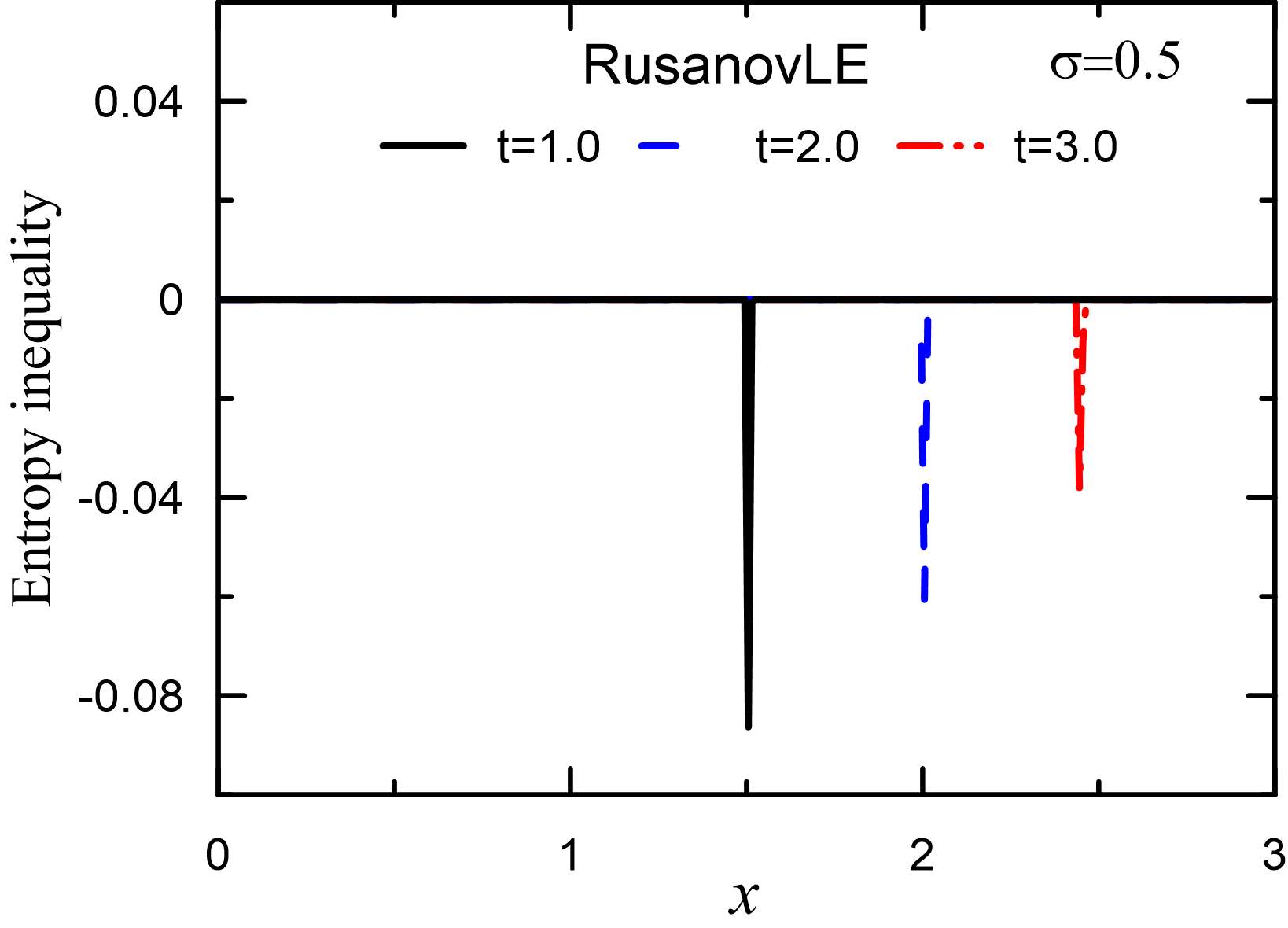}
  \includegraphics[scale=0.78]{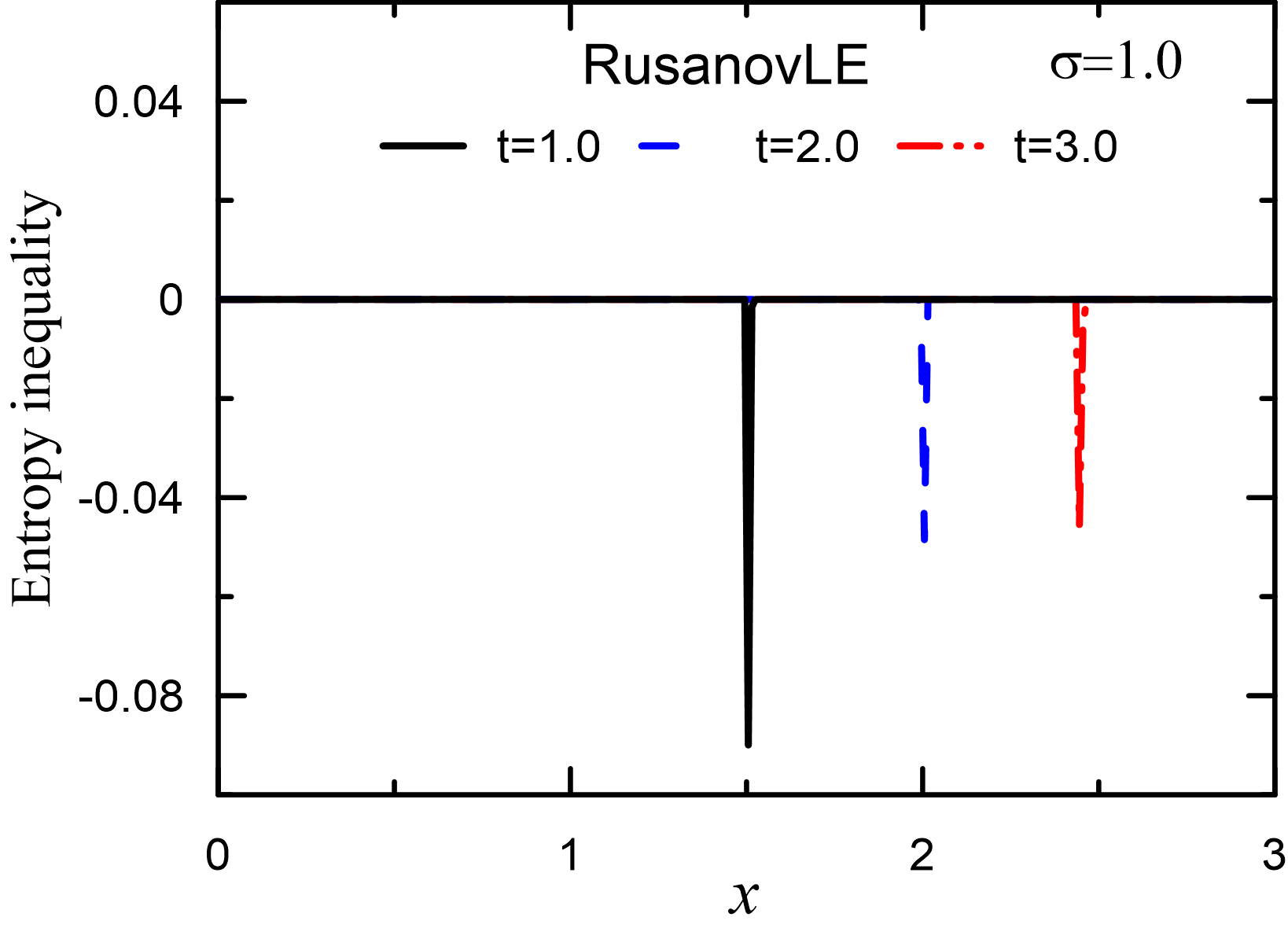}
\caption{The IVP for the inviscid Burgers equation \eqref{eq:516}-\eqref{eq:517}. Values of the discrete cell entropy inequality \eqref{eq:52} for the numerical solutions RusanovLP (upper) and RusanovLE (lower) for various $\sigma$ at different times}
\label{fig:15}       
\end{figure*}

\begin{figure*}[!tb]
  \centering 
  \includegraphics[scale=0.8]{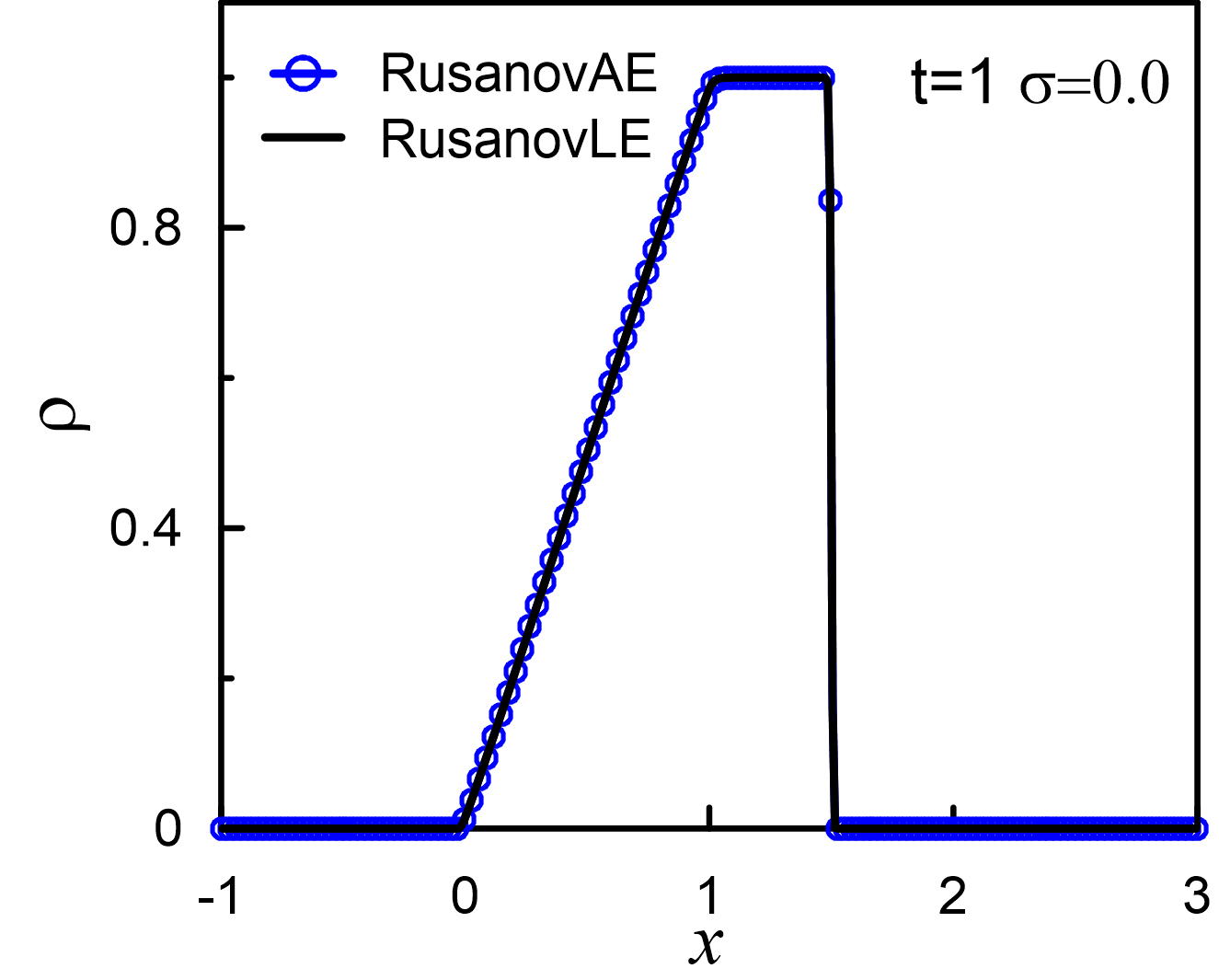}
  \includegraphics[scale=0.8]{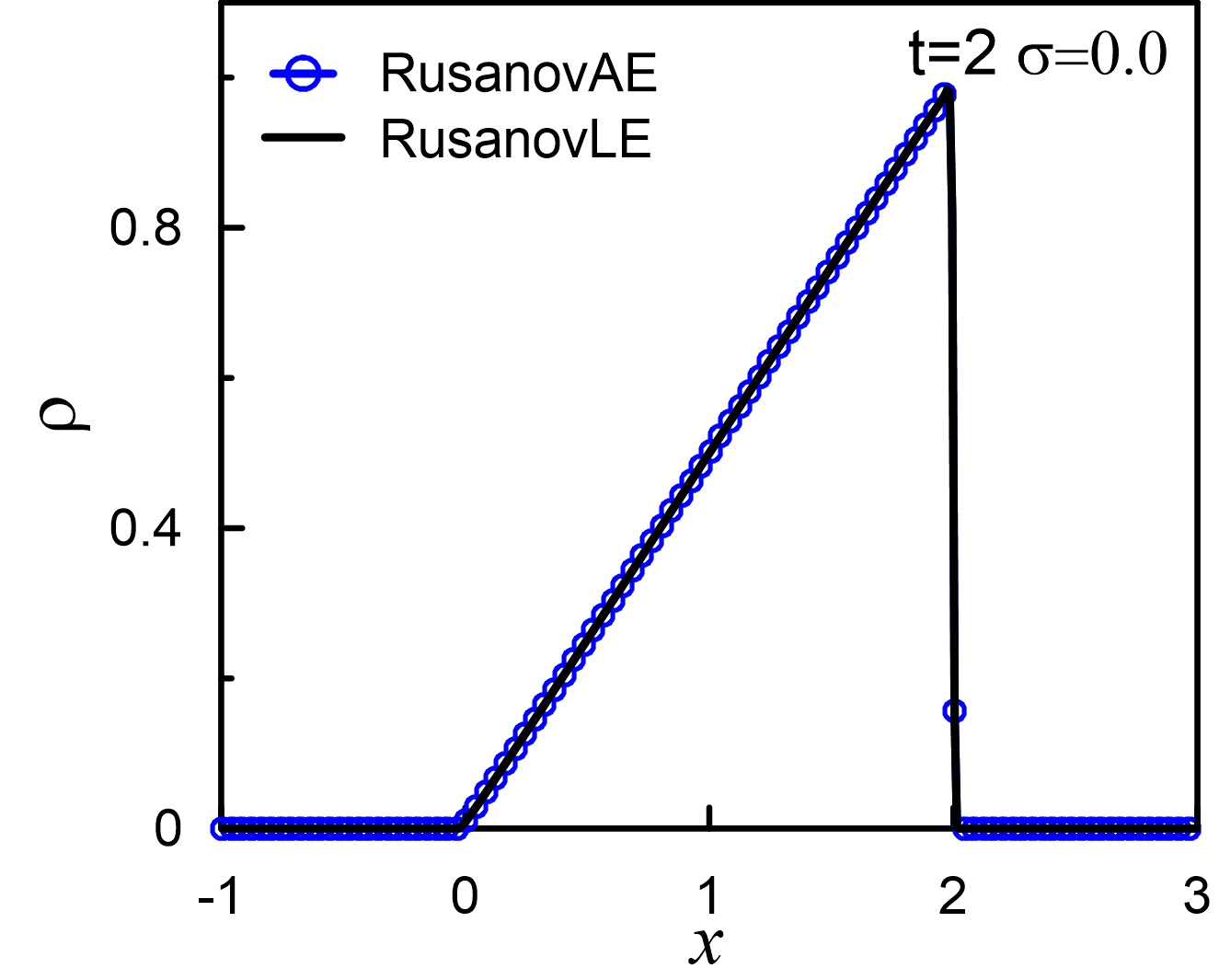}
  \includegraphics[scale=0.8]{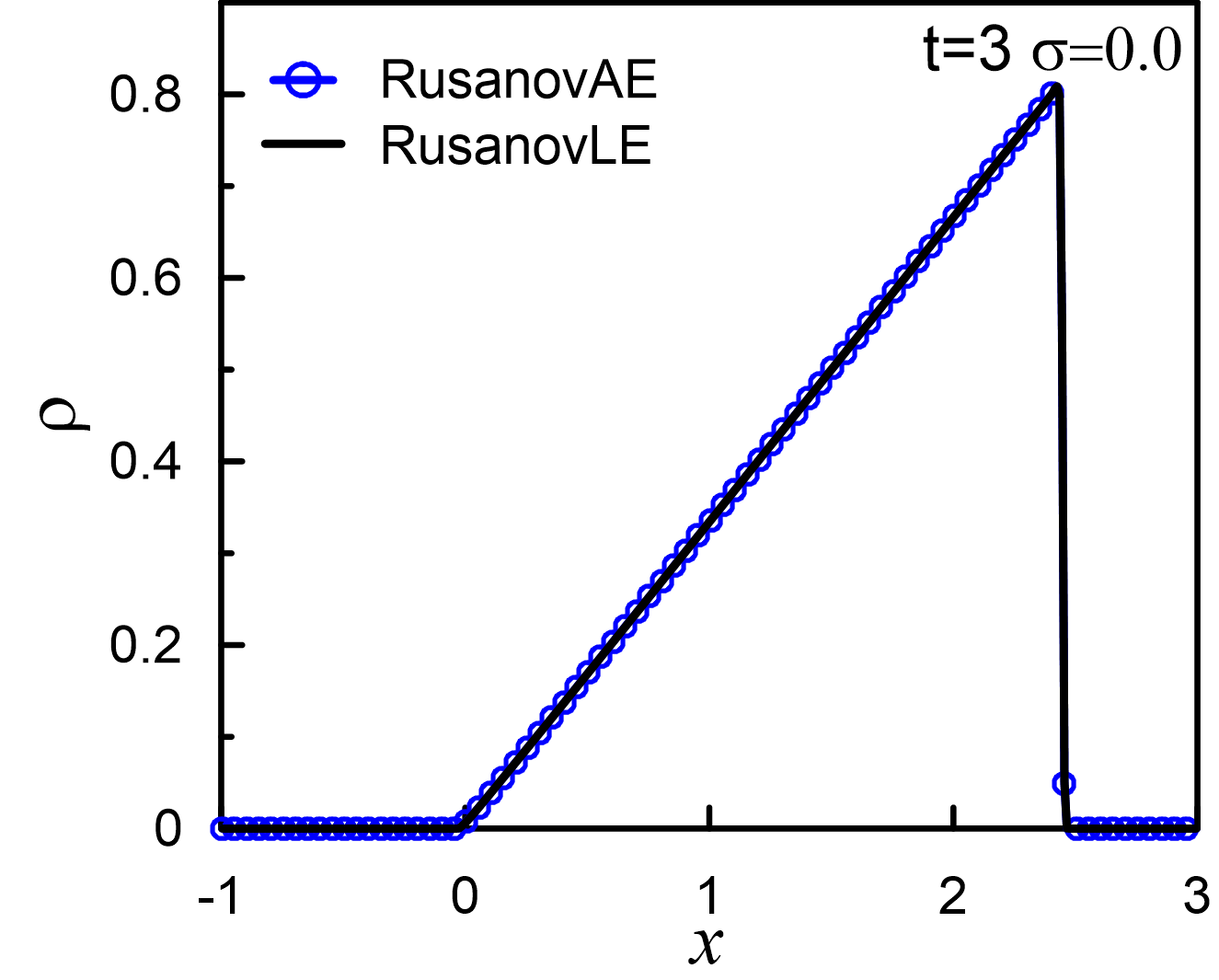}
  \includegraphics[scale=0.8]{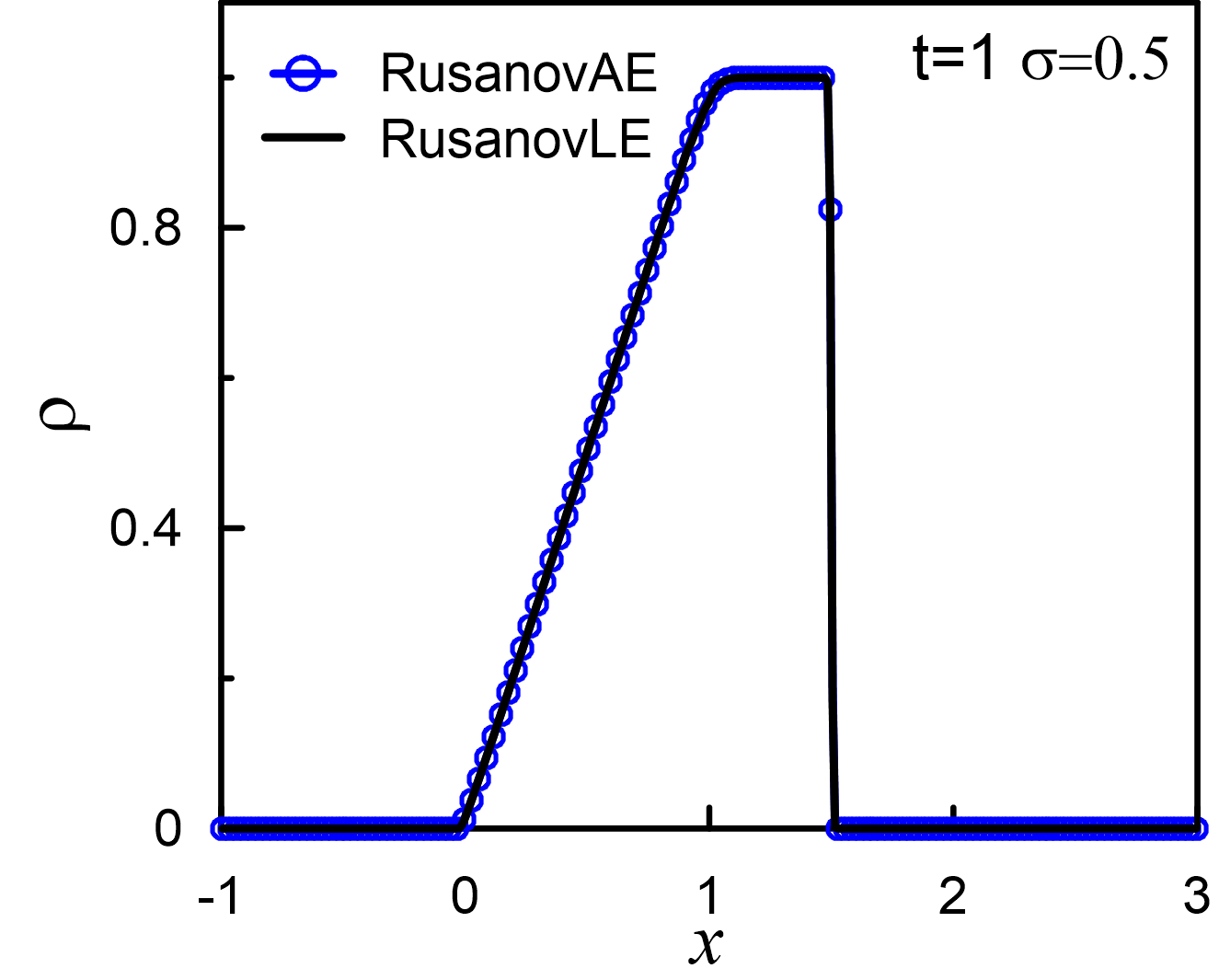}
  \includegraphics[scale=0.8]{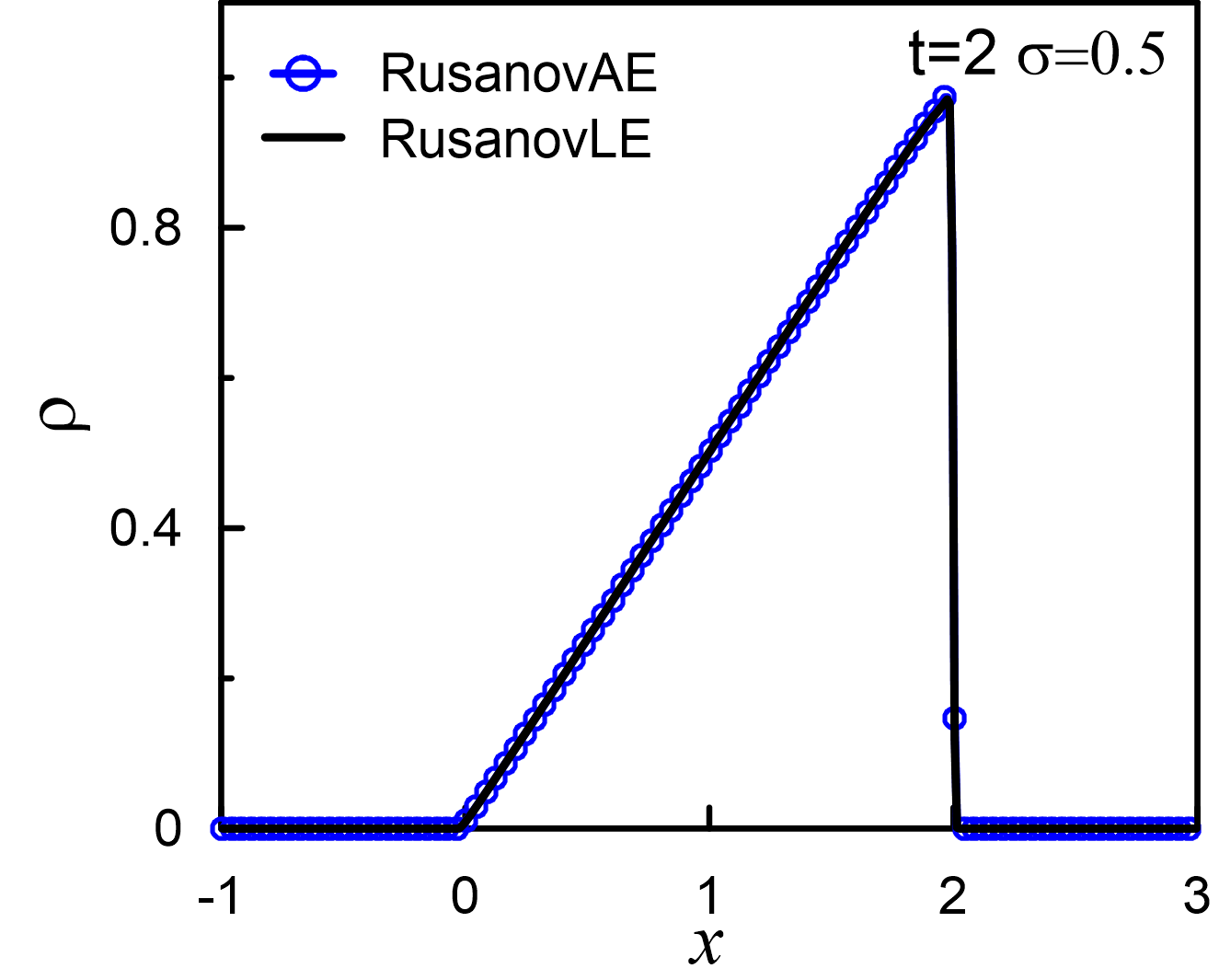}
  \includegraphics[scale=0.8]{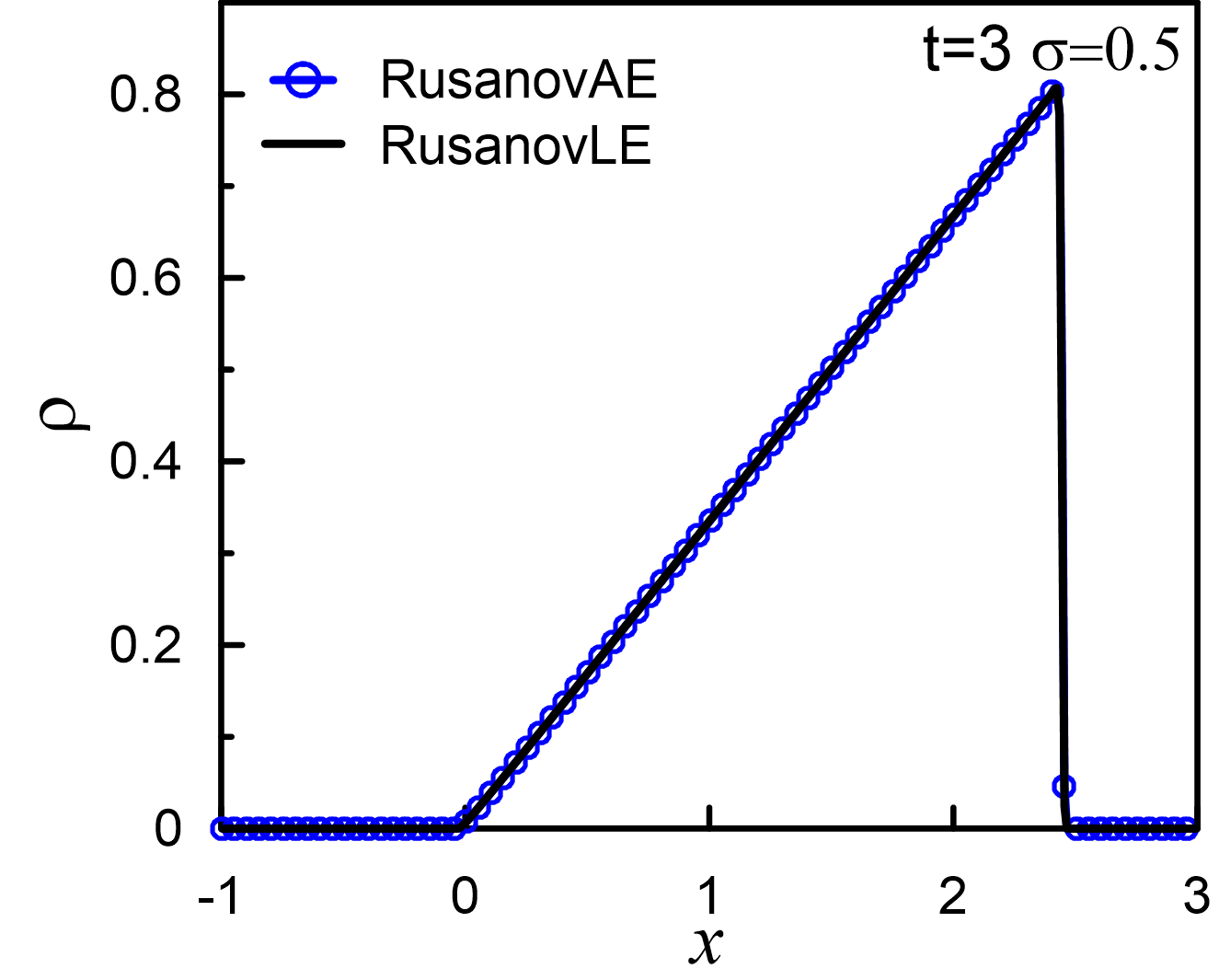}
  \includegraphics[scale=0.8]{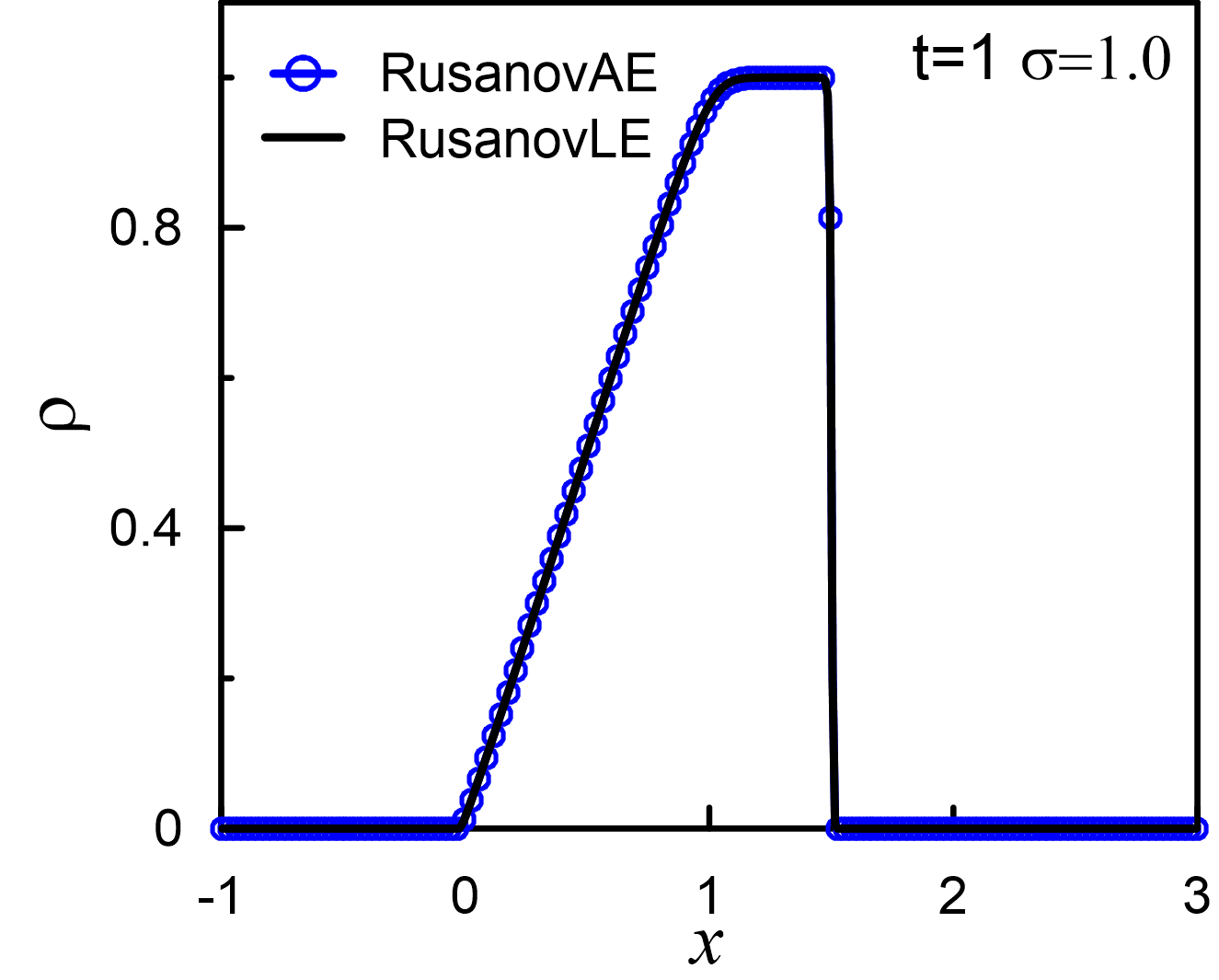}
  \includegraphics[scale=0.8]{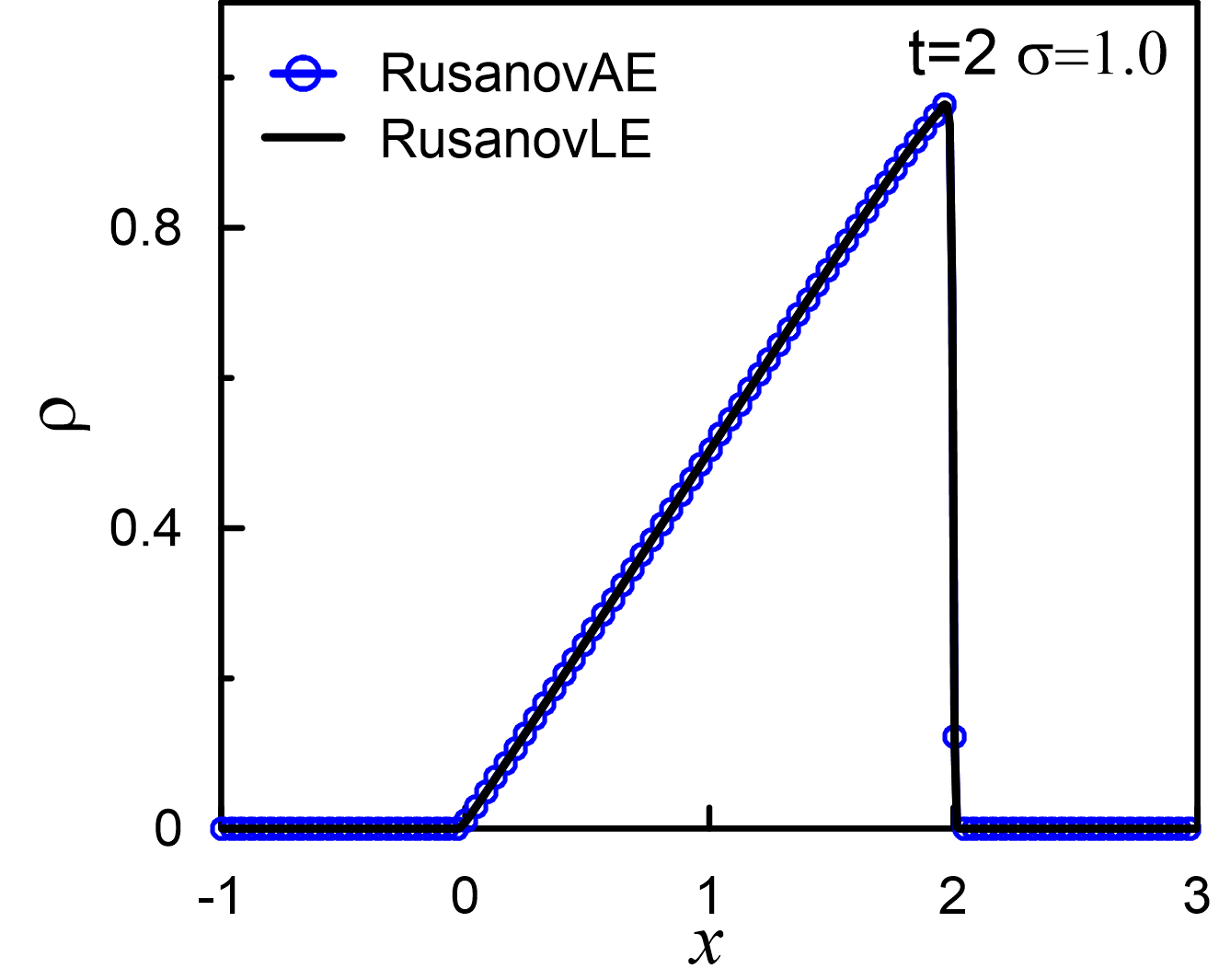}
  \includegraphics[scale=0.8]{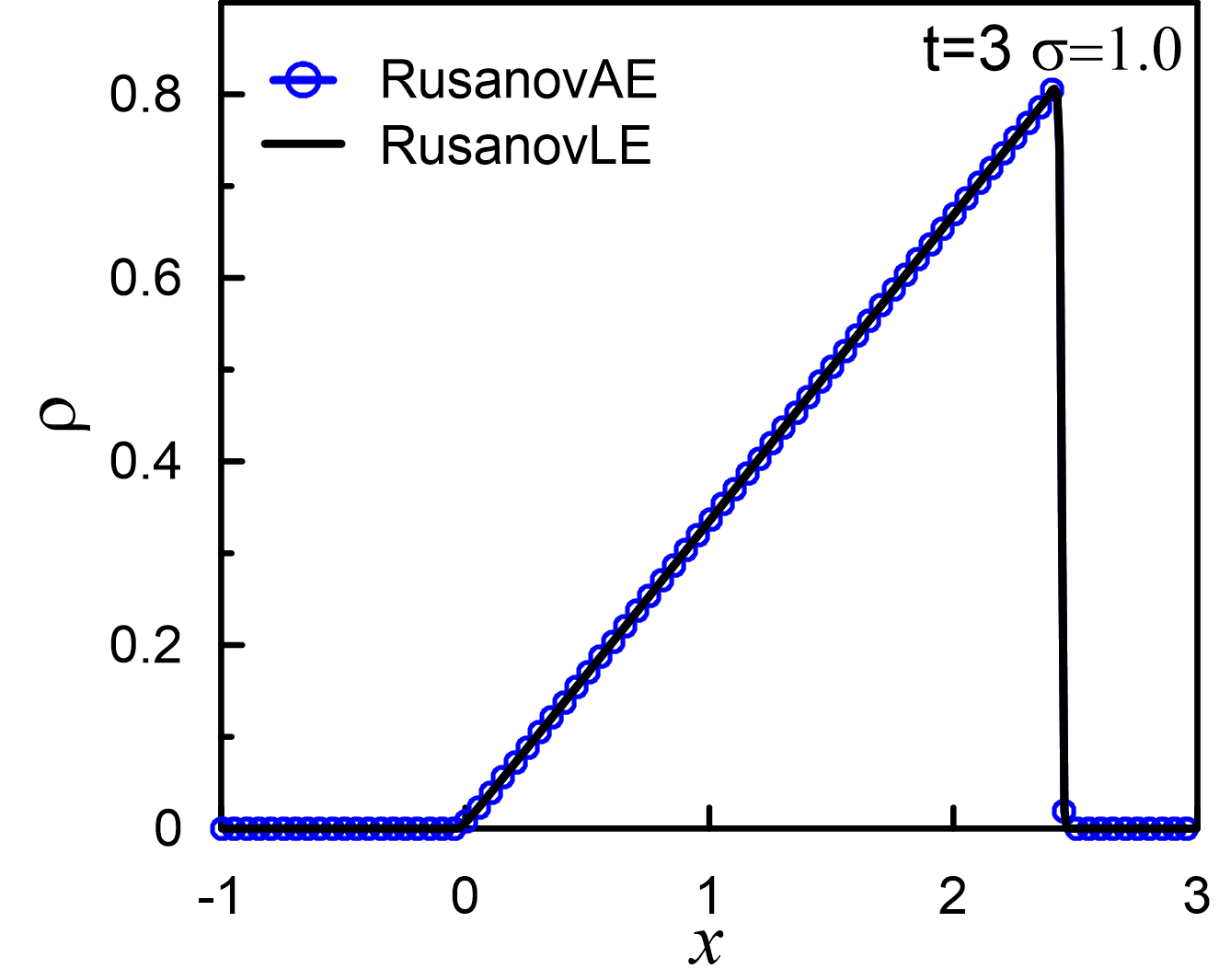}
\caption{The IVP for the inviscid Burgers equation \eqref{eq:516}-\eqref{eq:517}. Comparison of numerical results for which flux limiters are calculated from exact (RusanovLE) and approximate (RusanovAE) solutions of the linear programming problem \eqref{eq:317},\eqref{eq:53}-\eqref{eq:54}}
\label{fig:16}       
\end{figure*}

\begin{figure*}[!tb]
  \centering 
  \includegraphics[scale=0.9]{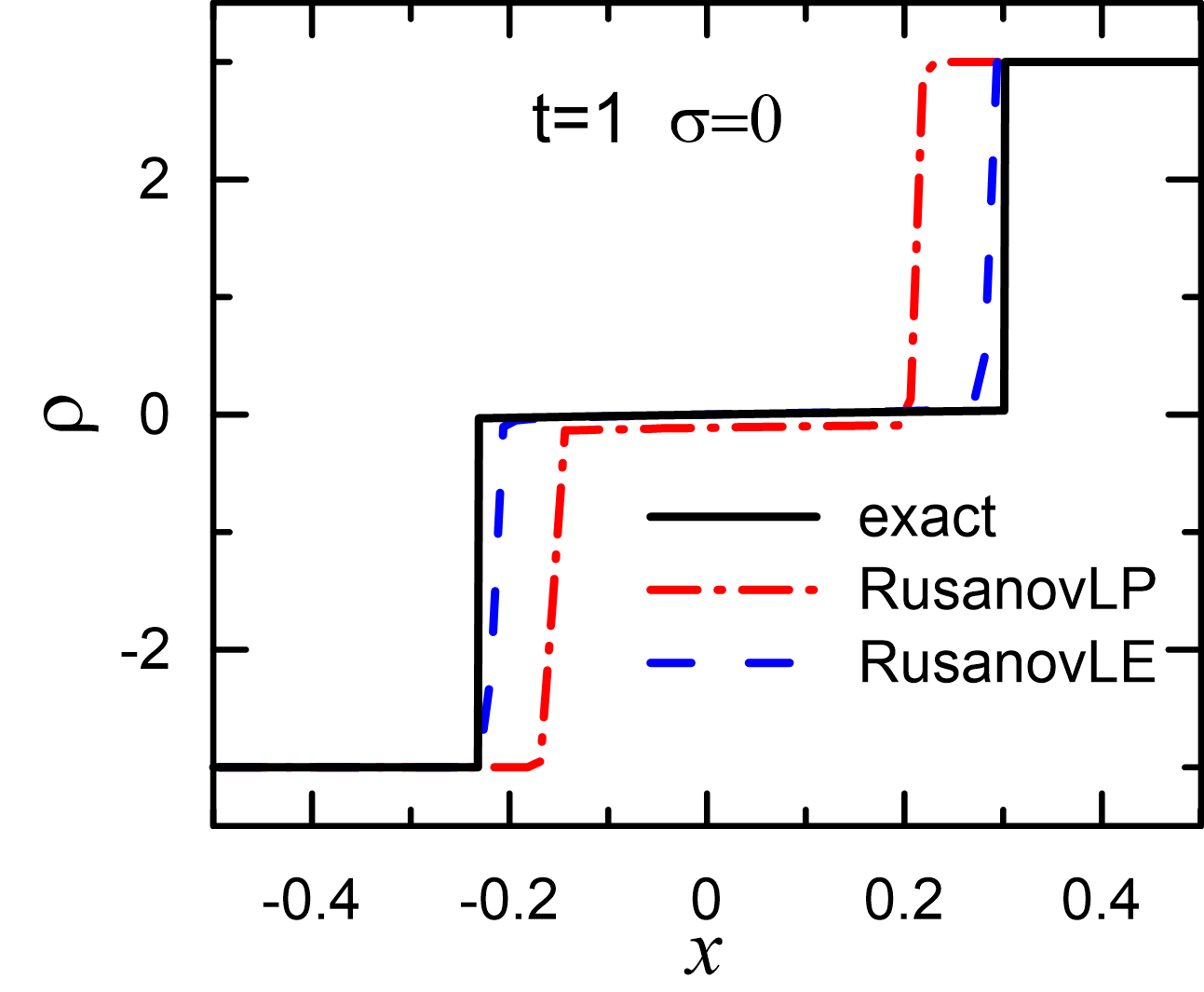}
  \includegraphics[scale=0.9]{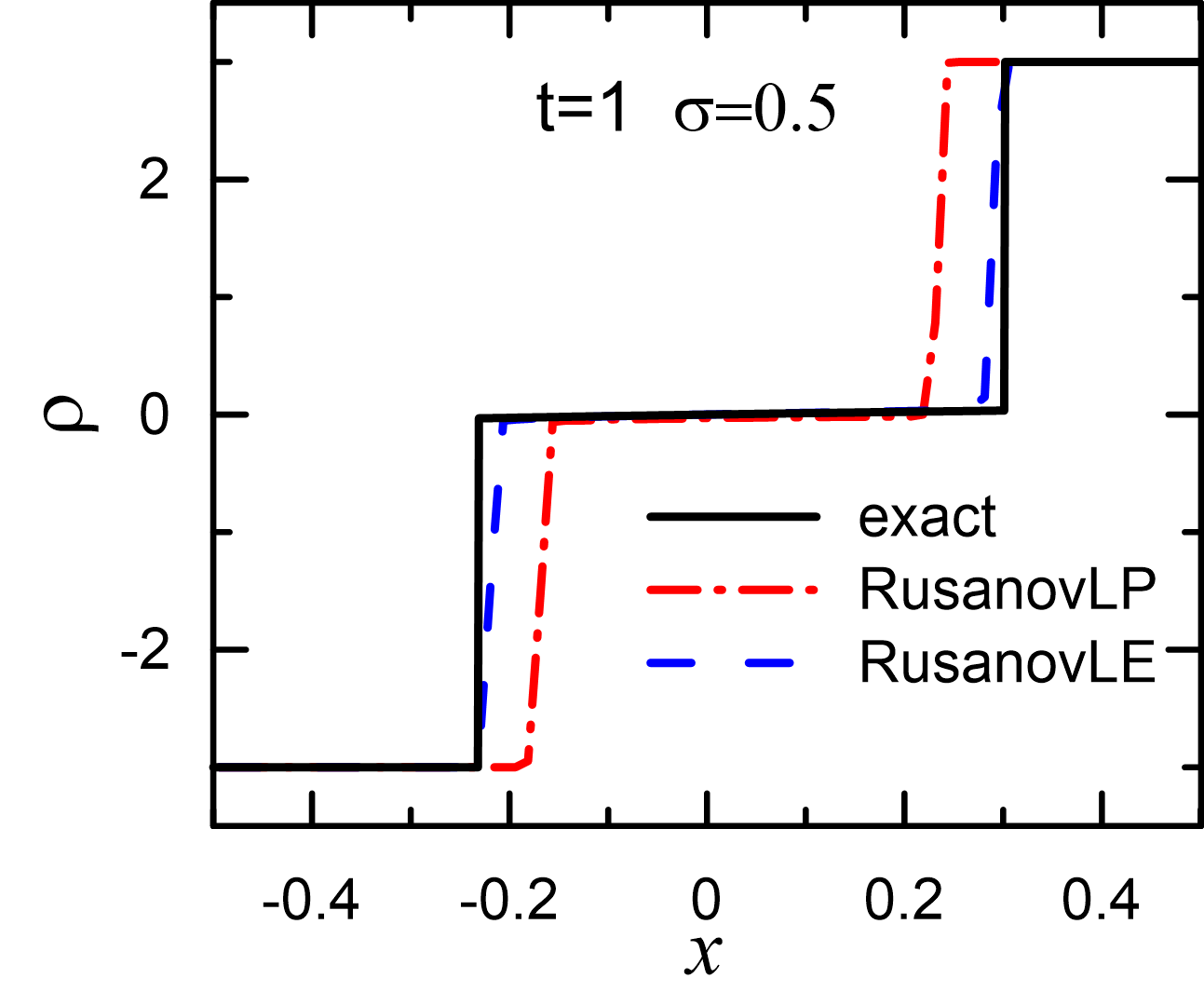}
  \includegraphics[scale=0.9]{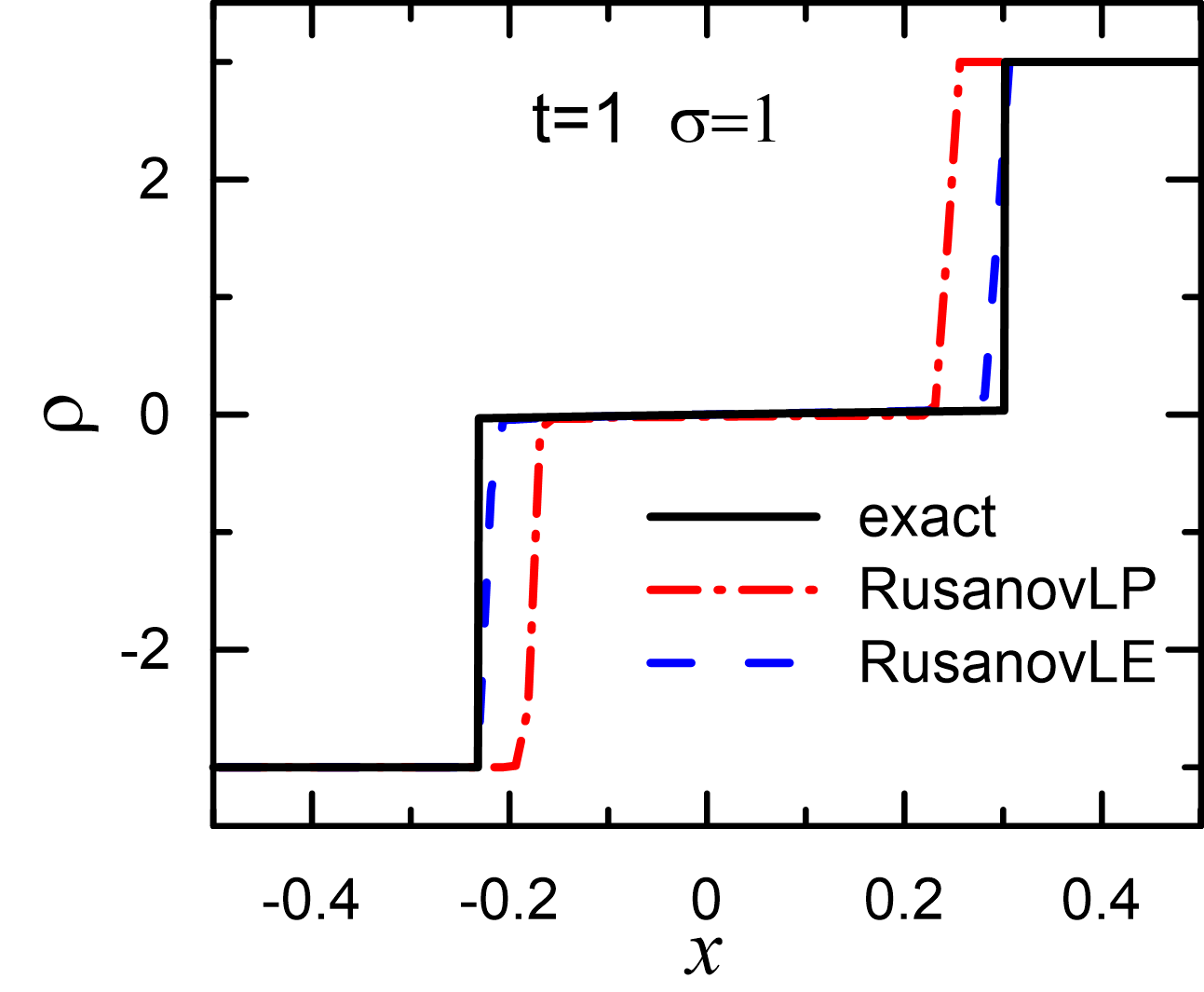}
\caption{Numerical solutions of the Riemann problem for the Buckley–Leverett equation at t=1 by using the weighted scheme \eqref{eq:51} with different $\sigma$. Flux limiters are computed by linear programming with and without taking into account the discrete entropy inequality \eqref{eq:52}}
\label{fig:17}       
\end{figure*}

Numerical results of the IVP \eqref{eq:516}-\eqref{eq:517} for different values of $\sigma$ at times $t=1$, $t=2$ and $t=3$ are presented in Fig.~\ref{fig:12}. The right part of Fig.~\ref{fig:12} illustrates the enlarged fragments of the numerical results at corner points. We note that the schemes RusanovLP and RusanovLE are less diffusive than the Godunov scheme. RusanovLP solution is in good agreement with the exact solution but it is not the entropy solution at all time steps (Fig.~\ref{fig:15}). 
Fig.~\ref{fig:16} shows a good agreement between numerical results for which the flux limiters are calculated from exact (RusanovLE) and approximate (RusanovAE) solutions of the linear programming problem \eqref{eq:317},\eqref{eq:53}-\eqref{eq:54}.

\begin{figure*}[!tb]
  \centering 
  \includegraphics[scale=0.9]{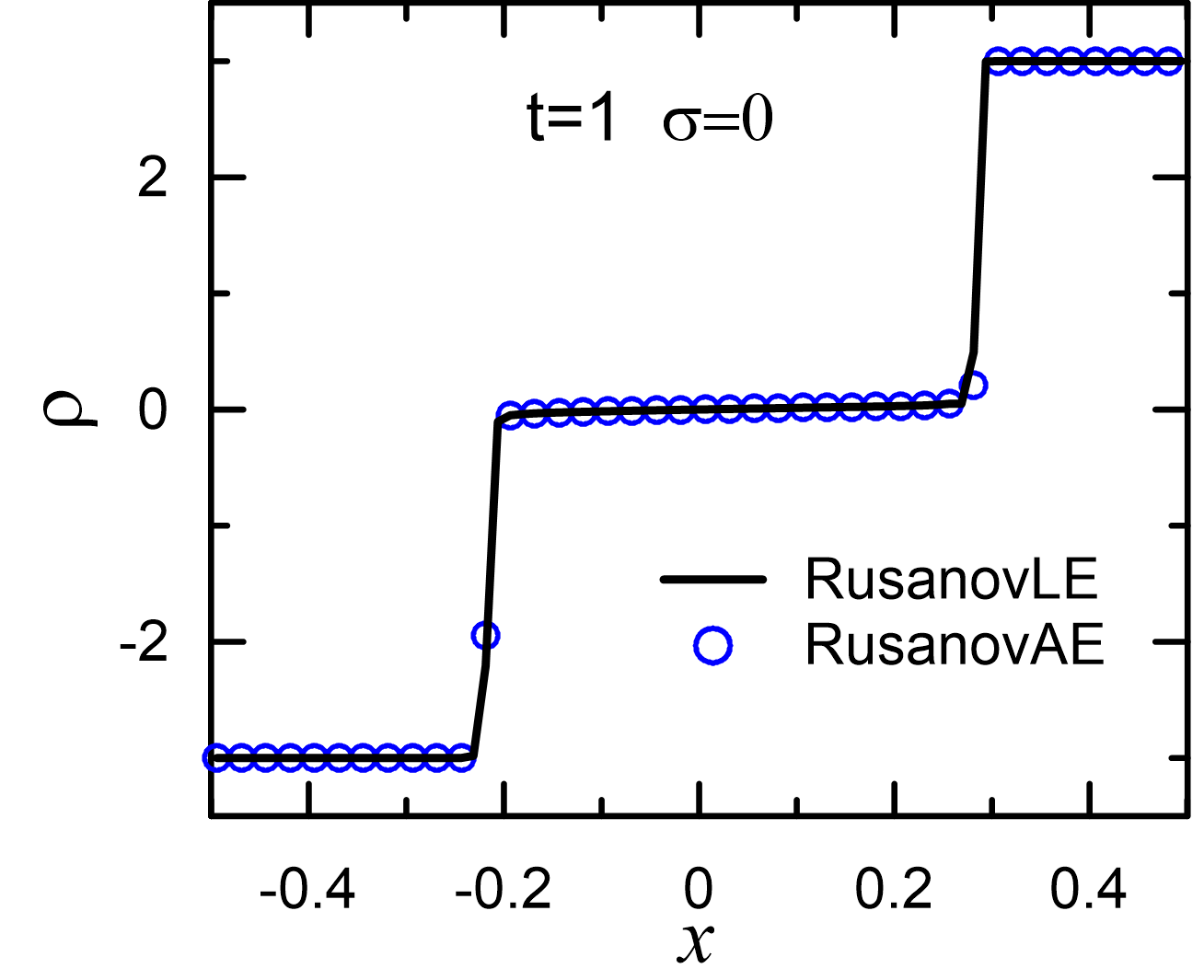}
  \includegraphics[scale=0.9]{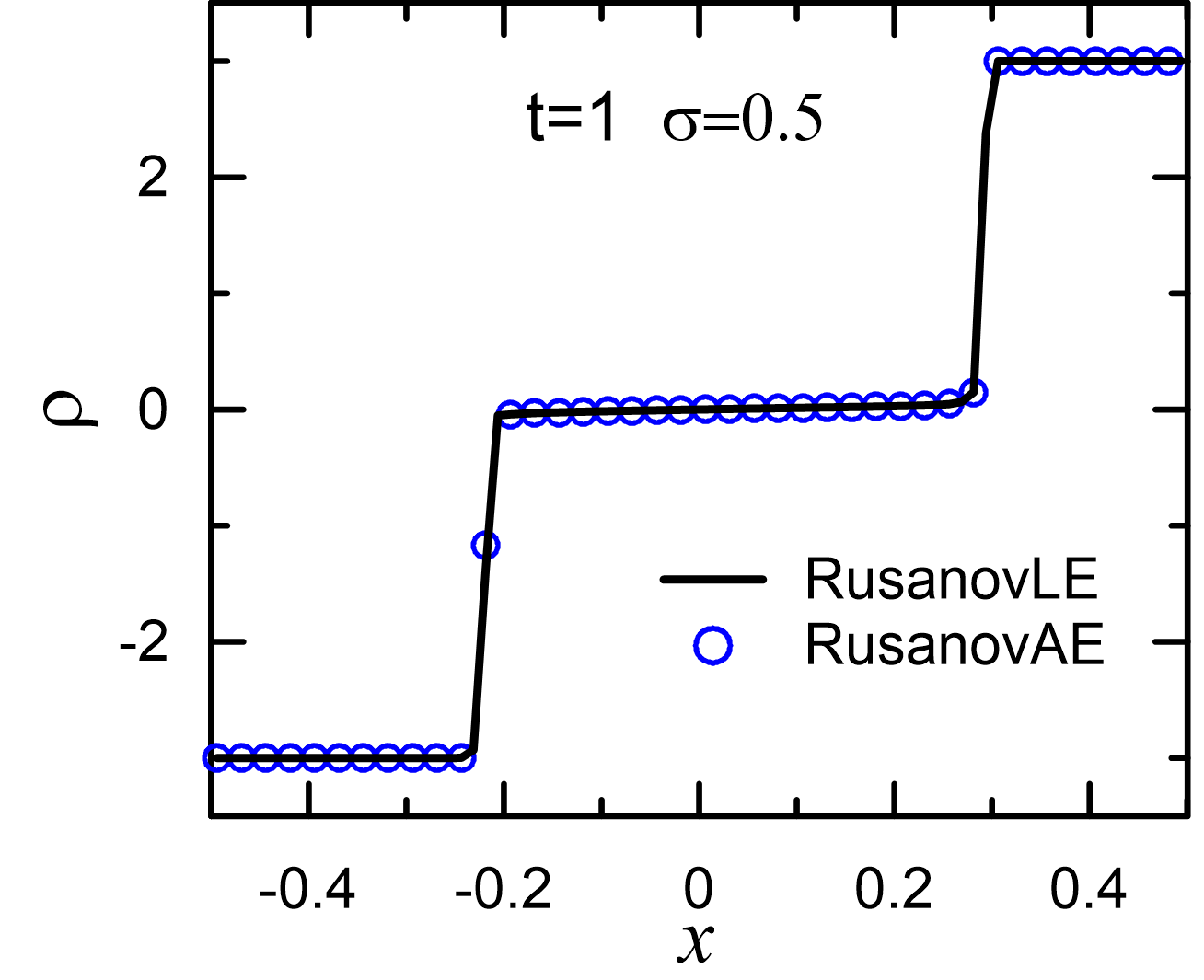}
  \includegraphics[scale=0.9]{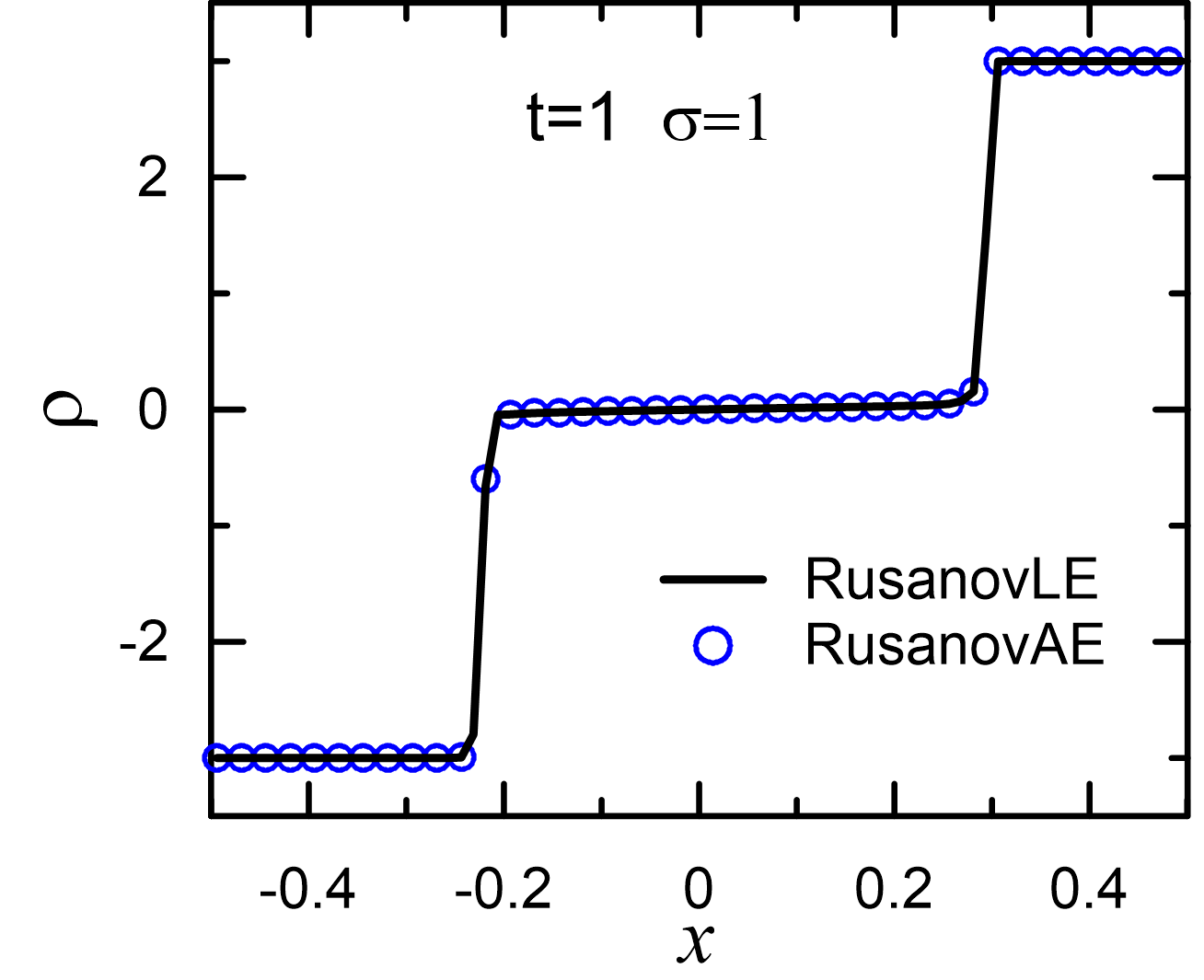}
\caption{Comparison of the numerical solutions of the Riemann problem for the Buckley-Leverett equation obtained with the weighted scheme \eqref{eq:51}. Flux limiters are computed by using exact (RusanovLE) and approximate (RusanovAE) solutions of linear programming taking into account the discrete entropy inequality \eqref{eq:52}}
\label{fig:18}       
\end{figure*}

\subsection{Buckley-Leverett Equation}  \label{Sec55}

As in~\cite{b35}, we consider the following Riemann problem for the Buckley-Leverett equation
\begin{equation}
\label{eq:518a} 
  \frac{\partial \rho }{\partial t} + \frac{\partial }{\partial x}\left( {\frac{4\rho^2}{4\rho^2 + (1 - \rho )^2}} \right) = 0 
\end{equation}
subject to
\begin{equation}
\label{eq:519a} 
  \rho (x,0) = \begin{cases}
-3 \qquad & {\rm if} \;\; x < 0 \\
3  \qquad & {\rm if} \;\; x \ge 0
\end{cases}  
\end{equation}

The exact entropy solution consists of two shock waves and a rarefaction wave that is close to 0. In~\cite{b35} Chen and Shu for the square entropy function $U = \rho^2/2$ obtained non-physical solution by using the entropy stable high order discontinuous Galerkin scheme. They proved that their scheme satisfies the discrete entropy inequality with the Tadmor's numerical entropy flux. Therefore, our purpose is to repeat this test with the same entropy function for the discrete entropy inequality \eqref{eq:52} with the proper numerical entropy flux.

Like~\cite{b35}, the computational domain is [-0.5,0.5] and consists of 80 cells. The end time is $t=1$. Our numerical solutions obtained by using the conservative weighted scheme \eqref{eq:51} with different weights $\sigma$ are shown in Fig.~\ref{fig:17}. Flux limiters are calculated by using linear programming with and without taking into account the discrete entropy inequality \eqref{eq:52}.
Obviously, the numerical solution RusanovLP obtained without taking into account the discrete entropy inequality \eqref{eq:52} and corresponding to the classical FCT solution is not physically correct.

Comparison of the numerical solutions RusanovLE and RusanovAE is presented in Fig.~\ref{fig:18}. Flux limiters for RusanovLE and RusanovAE are calculated by using exact and approximate solutions of linear programming taking into account the discrete entropy inequality \eqref{eq:52}, respectively. We note a good agreement of these numerical solutions.

\begin{figure*}[!t]
  \centering 
  \includegraphics[scale=0.99]{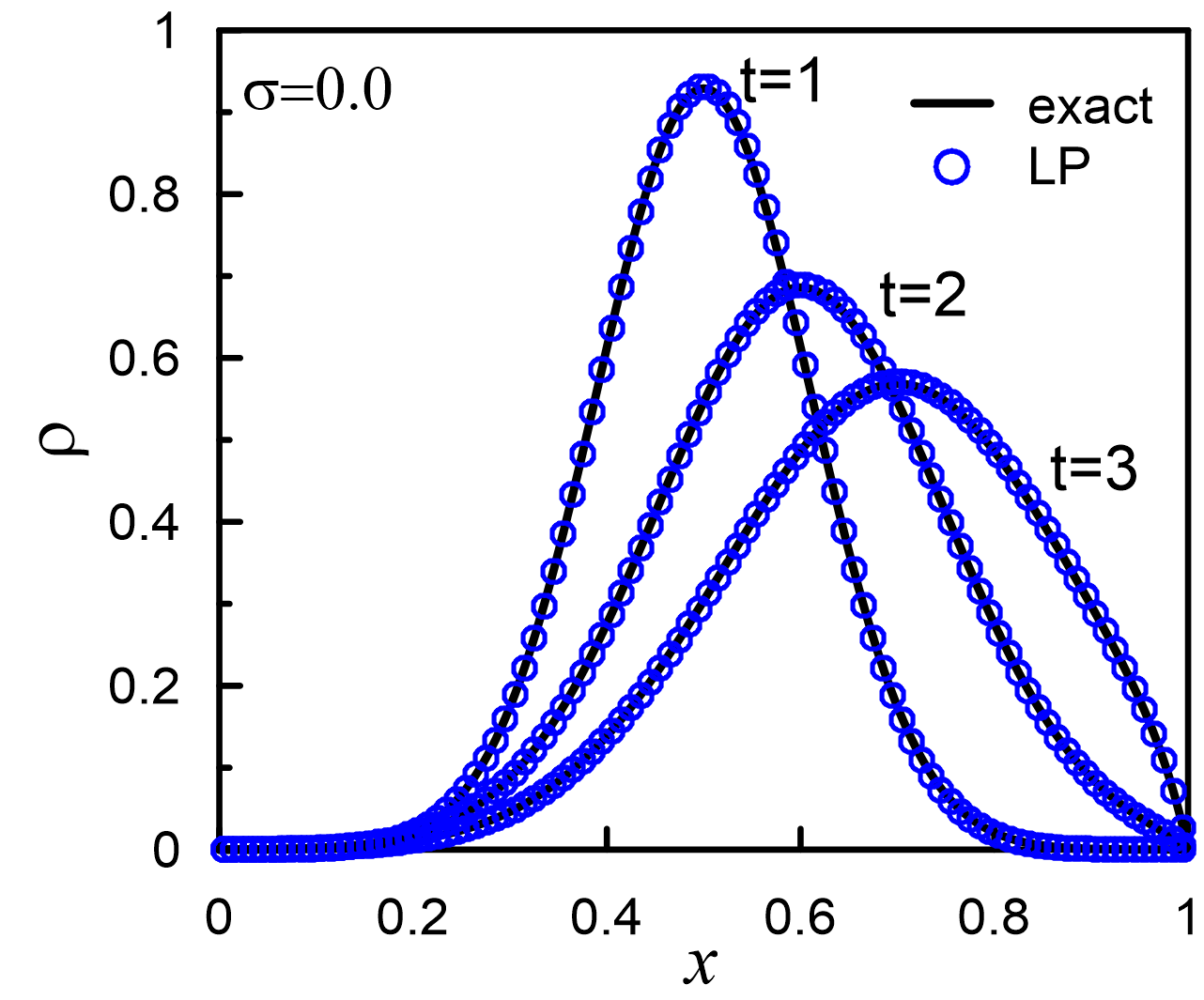}
  \includegraphics[scale=0.99]{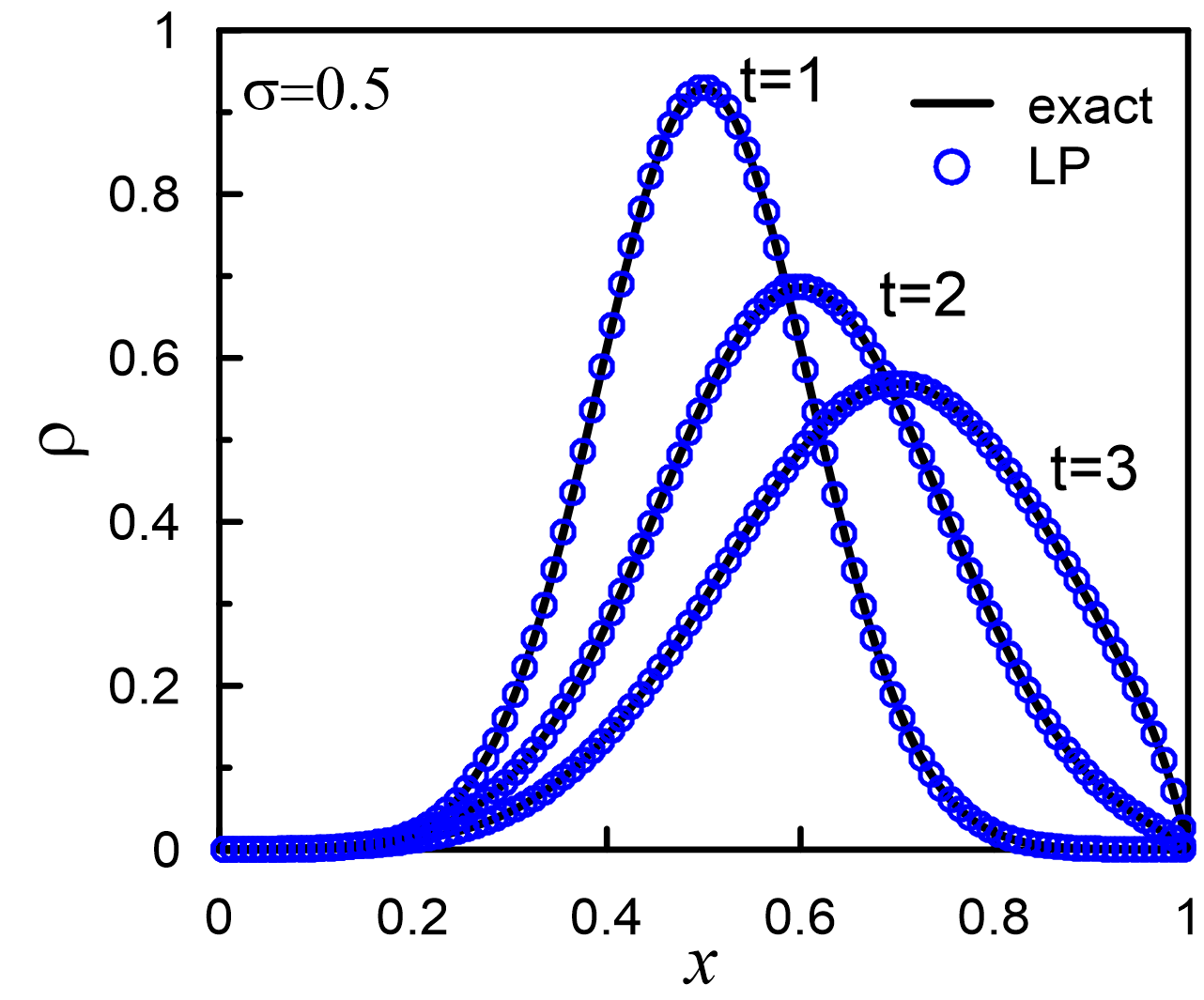}
  \includegraphics[scale=0.99]{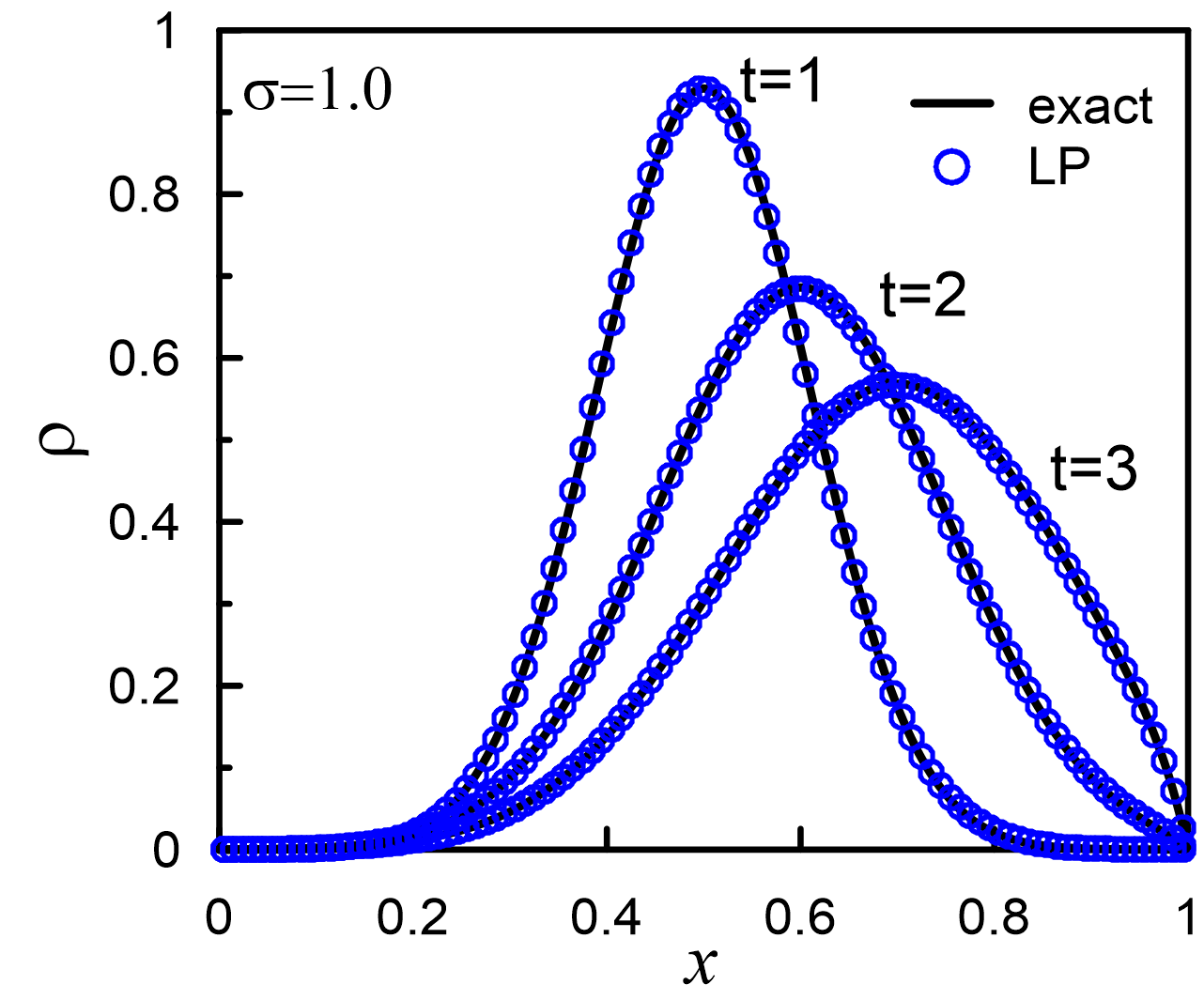}
\caption{Numerical solutions of the IBVP for the convection-diffusion equation \eqref{eq:518}-\eqref{eq:521} obtained by using the scheme \eqref{eq:45}-\eqref{eq:411} with various weight $\sigma$ at different times. Flux limiters are computed from exact solutions of the linear programming problems}
\label{fig:19}       
\end{figure*}

\begin{table*}[!t]
\caption{\label{tab5} $L^1$-norm of errors and maximum values of numerical solutions of the IBVP for the convection-diffusion equation \eqref{eq:518}-\eqref{eq:521} obtained with the weighted scheme \eqref{eq:43}-\eqref{eq:44}. Flux limiters are calculated by using exact (LP) and approximate (AP) solutions of linear programming problems}
\centering
\begin{tabular}{@{}p{1.0cm} p{1.3cm} p{1.3cm} p{2.55cm} p{1.5cm} p{2.55cm} p{1.5cm} @{}}
\hline 
 & exact &  & \multicolumn{2}{c}{LP} & \multicolumn{2}{c}{AP}  \\
 \cline{2-2} \cline{4-5} \cline{6-7}
 t & $y_{max}$ & $\sigma$  &  $L^1$ error & $y_{max}$ & $L^1$ error & $y_{max}$ \\[3pt] \hline
  \multirow{3}{*}{1}  
  &	& 0.0 &	1.1765$\times 10^{-3}$ & 0.93110 & 1.1765$\times 10^{-3}$ & 0.93110 \\
  & 0.92883 & 0.5 &	5.4850$\times 10^{-4}$ & 0.92946 & 5.4850$\times 10^{-4}$ & 0.92946 \\
  &	& 1.0 &	1.9557$\times 10^{-3}$ & 0.92837 & 1.9557$\times 10^{-3}$ & 0.92837 \\
\hline	
  \multirow{3}{*}{2}
  &	& 0.0 &	1.1313$\times 10^{-3}$ & 0.68877 & 1.1313$\times 10^{-3}$ & 0.68877 \\
  & 0.68602 & 0.5 &	4.2613$\times 10^{-4}$ & 0.68663 & 4.2613$\times 10^{-4}$ & 0.68663 \\
  &	& 1.0 &	1.5076$\times 10^{-3}$ & 0.68475 & 1.5076$\times 10^{-3}$ & 0.68475 \\
\hline	
  \multirow{3}{*}{3}
  &	& 0.0 &	1.0828$\times 10^{-3}$ & 0.57126 & 1.0828$\times 10^{-3}$ & 0.57126 \\
  & 0.56863 & 0.5 &	3.6479$\times 10^{-4}$ & 0.56917 & 3.6479$\times 10^{-4}$ & 0.56917 \\
  &	& 1.0 &	1.1901$\times 10^{-3}$ & 0.56723 & 1.1901$\times 10^{-3}$ & 0.56723 \\
\hline	
\end{tabular}
\end{table*}

\subsection{One-Dimensional Convection-Diffusion Equation}  \label{Sec56}

We consider the initial-boundary value problem (IBVP) for the convection-diffusion equation with constant coefficients
\begin{equation}
\label{eq:518} 
  \frac{\partial \rho}{\partial t} + u\frac{\partial \rho}{\partial x} = \varepsilon \frac{{\partial^2}\rho }{\partial {x^2}},	\qquad	x \in \left[ {0,1} \right],\quad t > 0						
\end{equation}
\begin{equation}
\label{eq:519} 
  \rho ( x,0 ) = \begin{cases}
2 \sin \left( {5 \pi (x - 0.3)} \right), \quad & {\rm if} \;\; 0.3 \le x \le 0.5 \\
0, \quad & \rm{otherwise}
\end{cases} 					
\end{equation}

\begin{equation}
\label{eq:520} 
  \rho (0,t) = 0											
\end{equation}
\begin{equation}
\label{eq:521} 
  \rho (1,t) = 0											
\end{equation}

In Fig.~\ref{fig:19} we present the numerical solutions of the IBVP \eqref{eq:518}-\eqref{eq:521} for $u = 0.1$ and $\varepsilon  = 0.005$  at different times. All simulations are performed on the uniform grid with step size $\Delta x_i=0.01$ and $\Delta t=0.01$. In the simulations, we apply the scheme \eqref{eq:43}-\eqref{eq:44} with various weights $\sigma$, the flux limiters of which are calculated from exact (LP) and approximate (AP) solutions  of linear programming problems. Comparison of $L^1$-norm of errors and maximum values of the numerical solutions LP and AP are given in Table~\ref{tab4}. Note that in this case, the exact and approximate solutions of the linear programming problems yield similar numerical results.

\section{Conclusions}  \label{Sec6}
In this paper, we present the design of the formulas for calculating flux limiters of FCT methods for scalar hyperbolic conservation laws and convection-diffusion equations. Following the FCT approach, we consider  a hybrid scheme which is a linear combination of monotone and high-order schemes. The difference between high-order flux and low-order flux is considered as an antidiffusive flux. The finding maximal flux limiters for the  antidiffusive fluxes is treated as an optimization problem with a linear objective function. Constraints for the optimization problem are inequalities that are valid for the monotone scheme and applied to the hybrid scheme. The discrete entropy inequality with the proper numerical entropy flux is used to single out physically correct solutions for scalar hyperbolic conservation laws. Approximate solutions of linear programming problems are applied for computing flux limiters. This approach allows us to reduce the classical two-step FCT to a single-step one for explicit difference schemes and to design flux limiters with desired properties.

The numerical experiments in subsection \ref{Sec53} show that the discrete entropy inequality with the Tadmor’s numerical entropy flux does not guarantee to obtain physically correct solutions for scalar hyperbolic conservation laws. We also note a good agreement between the numerical results for which the flux limiters are computed by using exact and approximate solutions of linear programming problems.

\bibliographystyle{model1-num-names}
\bibliography{refs}

\end{document}